\newtheorem{theorem}{Theorem}[section]
\newtheorem*{theorem*}{Theorem}
\newtheorem{lemma}{Lemma}[section]
\newtheorem{prop}{Proposition}[section]
\newtheorem{definition}{Definition}[section]
\newtheorem{cor}{Corollary}[section]
\newcommand{\llangle}{\langle\mspace{-4mu}\langle}
\newcommand{\rrangle}{\rangle\mspace{-4mu}\rangle}
\DeclareMathAccent{\mpetito}{\mathalpha}{operators}{23}
\newcommand{\dy}{\,{\rm d}y}
\newcommand{\forallt}{\qquad\text{for all }}
\newcommand{\fct}[4]{\arraycolsep=1.4pt\begin{array}{rcl}{#1}&\longrightarrow&{#2}\\{#3}&\longmapsto&{#4}\end{array}}
\providecommand{\keywords}[1]
{
  \small	
  \textbf{\textit{Keywords---}} #1
}
\providecommand{\MSC}[1]
{
  \small	
  \textbf{\textit{MSC---}} #1
}
\title{Square integrable surface potentials on non-smooth domains and application to the Laplace equation in $L^2$}
\begin{document}
%
\date{\today}
\author{Alexandre Munnier\\ {alexandre.munnier@univ-lorraine.fr}}
\affil{Universit\'e de Lorraine, CNRS, Inria, IECL, F-54000 Nancy, France}
\maketitle
\begin{abstract}Motivated by applications in fluid dynamics involving the harmonic Bergman projection,
we aim to extend the theory of single and double layer potentials (well documented for functions with $H^1_{\ell oc}$ regularity) 
to locally square 
integrable functions. Having in mind numerical simulations for which functions are usually defined on a polygonal mesh, we wish this theory to cover 
the cases of non-smooth domains (i.e.with Lipschitz continuous or polygonal boundaries).
\end{abstract}
\noindent\keywords{Layer potentials in $L^2$, non-smooth domains, Laplace equation in $L^2$.}

\noindent\MSC{35C15 , 35D30, 45A05  }
\section{Introduction}
Let $\varOmega$ be  a smooth bounded domain in the plan. The harmonic Bergman projection is the orthogonal projection in $L^2(\varOmega)$
onto the closed subspace of harmonic functions (see \cite[Chap. 8]{Axler:2001aa} and \cite{Straube:1986aa}). This operator is 
known mainly  for  playing an important role in complex analysis and operator theory but 
has also applications in the field of partial differential equations 
(\cite{Kracht:1988aa}, \cite[Chap. 4]{Krantz:2013aa}). In fluid dynamics, it appears in the article \cite{Plotnikov:2009aa} and more 
recently in \cite{Lequeurre:2020aa} for the analysis of the Navier-Stokes equations in non-primitive variables (stream function and vorticity). Indeed, for an incompressible fluid flow, the vorticity field is orthogonal in $L^2$ to the harmonic functions (see \cite{Lequeurre:2020aa} and 
references therein).
\par
The Bergman projection is a kernel operator but this kernel can be explicitly computed only for  particular geometries  (when $\varOmega$ is a disk or a half plane for instance). 
From a numerical point of view, the discretization of the Bergman projection requires the inversion of the mass matrix corresponding to the $L^2$ scalar product restricted to the subspace of harmonic functions. For this purpose, a discrete basis of harmonic functions in $L^2$ is needed and an efficient way to construct such a basis consists in using boundary elements and layer potentials. However, 
while the theory of layer potentials in $H^1_{\ell oc}(\mathbb R^2)$ is well documented (see the classical book \cite{Atkinson:1997aa} for instance), 
little is known on locally 
square integrable layer potentials. In this paper, we aim to  provide a theoretical framework for this notion.
Furthermore, in numerical simulations, functions are usually defined on a polygonal mesh, so we want to cover this case, which adds a substantial difficulty. 
\par
In its classical meaning, the single layer potential 
maps 
the Sobolev space $H^{-1/2}(\varGamma)$ into $H^1_{\ell oc}(\mathbb R^2)$ 
($\varGamma$ stands here for a Lipschitz continuous Jordan curve). A natural guess is that  the $H^1_{\ell oc}$
 regularity  could be lowered to $L^2_{\ell oc}$ by extending the single layer potential to the space $H^{-3/2}(\varGamma)$.
However the space $H^{3/2}(\varGamma)$, and then 
also its dual space $H^{-3/2}(\varGamma)$ are ill defined 
on a Lipschitz continuous boundary, any intrinsic definition of these spaces requiring that the boundary be at least of class $\mathcal C^{1,1}$. 
On the other hand, denoting by $\gamma_d$ 
the classical Dirichlet trace operator on $\varGamma$, the space $\mathcal H^{3/2}(\varGamma)=
\gamma_d H^2_{\ell oc}(\mathbb R^2)$, although complex to describe in terms of Sobolev regularity, is well defined (and coincides with $H^{3/2}(\varGamma)$ when $\varGamma$
 is smooth).
The main idea of the paper is to define the single-layer potentials as Laplacians of biharmonic functions in $\mathbb R^2\setminus\varGamma$, the asymptotic behavior of the functions being taken into account by introducing an appropriate functional framework based on weighted Sobolev spaces. This approach will prove successful  
and will allow to extend the single layer potential 
 to the space $\mathcal H^{-3/2}(\varGamma)$.
\par
Considering the double layer 
potential,  based on similar arguments, it will be extended to $\mathcal H^{-1/2}(\varGamma)$, the dual space of 
$\mathcal H^{1/2}(\varGamma)=\gamma_nH^2_{\ell oc}(\mathbb R^2)$, where $\gamma_n$ stands for the Neumann trace operator on 
$\varGamma$. It is worth noticing that $\mathcal H^{1/2}(\varGamma)$ is equal to $H^{1/2}(\varGamma)$ when $\varGamma$ is smooth but 
this is no longer true as soon as $\varGamma$ has corners for instance. 
\par
Throughout  the paper, we will assume without loss of generality that the logarithmic capacity of $\varGamma$ is lower than 1, using translation and dilatation of the coordinates system if necessary (see \cite[Page 263]{McLean:2000aa} on this matter).
Roughly speaking, we shall prove the following result
(that will be rigorously reformulated in Theorem~\ref{exten_layer} thereafter):
\begin{theorem} 
\label{theo:ext_slp}
Let $\varGamma$ be a Lipschitz Jordan curve. Then the single layer potential, considered as an operator defined on $H^{-1/2}(\varGamma)$ 
valued in $L^2_{\ell oc}(\mathbb R^2)$ 
extends by density to a bounded operator on 
$\mathcal H^{-3/2}(\varGamma)$. The double layer potential, seen as an operator from $H^{1/2}(\varGamma)$ into $L^2_{\ell oc}(\mathbb R^2)$ 
extends by density to a bounded operator on $\mathcal H^{-1/2}(\varGamma)$.
\end{theorem}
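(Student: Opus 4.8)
The plan is to establish the single-layer statement in detail and then indicate the parallel argument for the double layer; everything reduces to a density fact and a uniform bound. For the density fact: the Dirichlet trace of any compactly supported smooth function lies in $\mathcal{H}^{3/2}(\varGamma)=\gamma_d H^2_{\ell oc}(\mathbb R^2)$, and such traces are already dense in $H^{1/2}(\varGamma)$; since moreover $\mathcal{H}^{3/2}(\varGamma)$ embeds continuously in $H^{1/2}(\varGamma)$ (because $H^2_{\ell oc}\subset H^1_{\ell oc}$ and $\gamma_d$ is continuous from $H^1_{\ell oc}$ into $H^{1/2}(\varGamma)$), dualizing yields a continuous dense embedding $H^{-1/2}(\varGamma)\hookrightarrow\mathcal{H}^{-3/2}(\varGamma)$. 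Consequently it suffices to prove that the single layer potential $\mathcal{S}$, restricted to $H^{-1/2}(\varGamma)$ but measured in the $\mathcal{H}^{-3/2}(\varGamma)$-norm, is bounded into $L^2_{\ell oc}(\mathbb R^2)$: the asserted extension is then the unique continuous one.

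For the uniform bound, fix a bounded open set $\mathcal{O}$ (it is enough to treat $\mathcal{O}\supset\varGamma$). Let $\varphi\in H^{-1/2}(\varGamma)$ and $u=\mathcal{S}\varphi\in H^1_{\ell oc}(\mathbb R^2)$, harmonic off $\varGamma$, with the variational characterization $\int_{\mathbb R^2}\nabla u\cdot\nabla\varPsi\dx=\langle\varphi,\gamma_d\varPsi\rangle_\varGamma$ valid for $\varPsi$ in the relevant weighted space. I estimate $\|u\|_{L^2(\mathcal{O})}$ by duality: given $v\in L^2(\mathbb R^2)$ supported in $\mathcal{O}$, let $\varPsi$ be the Newtonian-type potential solving $-\Delta\varPsi=v$ inside the weighted Sobolev space constructed earlier in the paper, so that $\varPsi\in H^2_{\ell oc}(\mathbb R^2)$ has the correct behavior at infinity and is controlled in the weighted norm $\|\cdot\|_W$ by $C\|v\|_{L^2(\mathcal{O})}$. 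Then $\int_{\mathbb R^2}uv\dx=\int_{\mathbb R^2}\nabla u\cdot\nabla\varPsi\dx=\langle\varphi,\gamma_d\varPsi\rangle_\varGamma$, whence
\[
\Bigl|\int_{\mathbb R^2}uv\dx\Bigr|\le\|\varphi\|_{\mathcal{H}^{-3/2}(\varGamma)}\,\|\gamma_d\varPsi\|_{\mathcal{H}^{3/2}(\varGamma)}\le C\,\|\varphi\|_{\mathcal{H}^{-3/2}(\varGamma)}\,\|\varPsi\|_W\le C_\mathcal{O}\,\|\varphi\|_{\mathcal{H}^{-3/2}(\varGamma)}\,\|v\|_{L^2(\mathcal{O})}.
\]
Taking the supremum over such $v$ gives $\|u\|_{L^2(\mathcal{O})}\le C_\mathcal{O}\|\varphi\|_{\mathcal{H}^{-3/2}(\varGamma)}$. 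Equivalently, in the spirit of the paper, one may write $u=\Delta w$ with $w$ the biharmonic function on $\mathbb R^2\setminus\varGamma$ solving the transmission problem attached to $\varphi$ in the weighted framework, bound $\|w\|_W\lesssim\|\varphi\|_{\mathcal{H}^{-3/2}(\varGamma)}$ by a Lax--Milgram argument, and conclude with the interior elliptic estimate $\|u\|_{L^2(\mathcal{O})}=\|\Delta w\|_{L^2(\mathcal{O})}\le\|w\|_{H^2(\mathcal{O}')}\le C_\mathcal{O}\|w\|_W$.

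The step I expect to be the real obstacle is the justification of the identity $\int_{\mathbb R^2}uv\dx=\int_{\mathbb R^2}\nabla u\cdot\nabla\varPsi\dx=\langle\varphi,\gamma_d\varPsi\rangle_\varGamma$, together with the claim that $\gamma_d\varPsi$ lies in $\mathcal{H}^{3/2}(\varGamma)$ with $\|\gamma_d\varPsi\|_{\mathcal{H}^{3/2}(\varGamma)}\le C_\mathcal{O}\|v\|_{L^2(\mathcal{O})}$. In the plane, $u$ and $\varPsi$ only grow logarithmically while $\nabla u$ and $\nabla\varPsi$ decay like $|x|^{-1}$, so $\nabla u\cdot\nabla\varPsi$ is not a priori integrable at infinity and the boundary terms on large circles do not obviously vanish. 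This is precisely what dictates the choice of weighted Sobolev spaces with logarithmic corrections, and what the normalization ``logarithmic capacity of $\varGamma$ less than $1$'' is for: it makes the weighted space complete, the Dirichlet form coercive on it via Hardy/Poincar\'e-type inequalities, and the circle integrals at infinity tend to zero along a suitable sequence of radii. Controlling these asymptotics and the mapping properties of the Newtonian potential in the weighted scale is the technical heart of the argument.

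Finally, the double layer potential $\mathcal{D}$ is handled by the same scheme with $\gamma_d$ replaced by $\gamma_n$: its variational characterization reads $\int_{\mathbb R^2}\nabla(\mathcal{D}\phi)\cdot\nabla\varPsi\dx=\langle\phi,\gamma_n\varPsi\rangle_\varGamma$ for $\phi\in H^{1/2}(\varGamma)$, with the jump conventions fixed earlier. Representing a test function $v\in L^2(\mathcal{O})$ by the same weighted potential $\varPsi$ and using $|\langle\phi,\gamma_n\varPsi\rangle_\varGamma|\le\|\phi\|_{\mathcal{H}^{-1/2}(\varGamma)}\|\gamma_n\varPsi\|_{\mathcal{H}^{1/2}(\varGamma)}\le C_\mathcal{O}\|\phi\|_{\mathcal{H}^{-1/2}(\varGamma)}\|v\|_{L^2(\mathcal{O})}$ yields $\|\mathcal{D}\phi\|_{L^2(\mathcal{O})}\le C_\mathcal{O}\|\phi\|_{\mathcal{H}^{-1/2}(\varGamma)}$. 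The required density of $H^{1/2}(\varGamma)$ in $\mathcal{H}^{-1/2}(\varGamma)$ follows by dualizing the continuous dense embeddings $\mathcal{H}^{1/2}(\varGamma)\hookrightarrow L^2(\varGamma)\hookrightarrow H^{-1/2}(\varGamma)$, and the resulting unique continuous extension $\mathcal{H}^{-1/2}(\varGamma)\to L^2_{\ell oc}(\mathbb R^2)$ is the claimed one.
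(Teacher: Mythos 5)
Your overall architecture is a legitimate dual of what the paper does: where you try to prove the a priori bound $\|\mathscr S_\varGamma\varphi\|_{L^2(\mathcal O)}\le C_{\mathcal O}\|\varphi\|_{\mathcal H^{-3/2}(\varGamma)}$ on the dense subspace $H^{-1/2}(\varGamma)$ and extend by continuity, the paper instead constructs the extension explicitly — it solves $(u_q,\theta)_S=\llangle q,\gamma_d\theta\rrangle_{-\frac32,\frac32}$ by Riesz representation in the weighted space $W^2(\mathbb R^2)$ and sets $\mathscr S^\dagger_\varGamma q=-\Delta u_q+\sum_{j=1}^3\langle\mathsf q_j,\gamma_d u_q\rangle_{-\frac12,\frac12}\mathscr S_\varGamma\mathsf q_j$, boundedness then being free and consistency with the classical potential being checked against the distributional characterization of Proposition~\ref{prop:def_layer}. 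Your density step and the final dualization are fine and match Proposition~\ref{prop:dens2}.

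The gap is exactly where you say you expect it, and it is not merely technical: the chain $\int_{\mathbb R^2}uv\dx=\int_{\mathbb R^2}\nabla u\cdot\nabla\varPsi\dx=\langle\varphi,\gamma_d\varPsi\rangle_\varGamma$ is \emph{false} for a generic solution $\varPsi\in W^2(\mathbb R^2)$ of $-\Delta\varPsi=v$. That equation determines $\varPsi$ only modulo the affine space $\mathscr A$ (Proposition~\ref{mqwxf}, point 3), and replacing $\varPsi$ by $\varPsi+a$ with $a\in\mathscr A$ changes the right-hand side by $\langle\varphi,\gamma_d a\rangle$, which is nonzero whenever $\langle\varphi,\mathbf 1_\varGamma\rangle_{-\frac12,\frac12}\neq0$ or the first moments of $\varphi$ do not vanish, while the left-hand side is fixed. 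So the identity can only hold for one specific normalization (essentially $\varPsi=G*v$, the genuine Newtonian potential), and your proposal never pins this down. Moreover, even with that choice, the middle term is not absolutely convergent: in the plane $\nabla u\cdot\nabla\varPsi=\mathscr O(|x|^{-2})$, which is not integrable at infinity, and the circle integrals at radius $R$ in Green's identity do \emph{not} individually tend to zero — the terms $\int_{|x|=R}u\,\partial_n\varPsi\ds$ and $\int_{|x|=R}\varPsi\,\partial_n u\ds$ each behave like $\frac{1}{2\pi}\langle\varphi,\mathbf 1_\varGamma\rangle(\int v)\ln R$ and only their \emph{difference} converges. Your stated mechanisms (coercivity from the logarithmic-capacity normalization, vanishing of the circle integrals along a subsequence) are therefore not the correct explanation; the capacity condition is about invertibility of $\mathsf S_\varGamma$, not about these asymptotics. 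Finally, the estimate $\|\gamma_d\varPsi\|_{\mathcal H^{3/2}(\varGamma)}\le C_{\mathcal O}\|v\|_{L^2(\mathcal O)}$ is not automatic either: the norm $\|\cdot\|_{\frac32}^S$ controls $\|\Delta\varPsi\|_{L^2(\mathbb R^2)}=\|v\|_{L^2}$ \emph{plus} the finite-rank terms $\langle\mathsf q_j,\gamma_d\varPsi\rangle_{-\frac12,\frac12}$, and you must bound these (it works, via $\langle\mathsf q_j,\gamma_d(G*v)\rangle_{-\frac12,\frac12}=(v,\mathscr S_\varGamma\mathsf q_j)_{L^2(\mathcal O)}$, but it is nowhere addressed). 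These three missing pieces are precisely what the paper's corrected inner product $(\cdot,\cdot)_S$ and the correction terms $\sum_j\langle\mathsf q_j,\gamma_d u_q\rangle\mathscr S_\varGamma\mathsf q_j$ are designed to absorb; without an analogue of them your argument does not close. The same objections apply verbatim to your treatment of the double layer potential, where in addition $\mathscr D_\varGamma\phi$ jumps across $\varGamma$, so $\int_{\mathbb R^2}\nabla(\mathscr D_\varGamma\phi)\cdot\nabla\varPsi\dx$ must in any case be replaced by separate integrations by parts in $\varOmega^\pm$.
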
 
Denote by $\varOmega^-$ the planar open set enclosed by $\varGamma$ and by 
$\varOmega^+$ its complement in $\mathbb R^2$. Providing that $\varGamma$ is a polygon,    we will be able to reach our initial goal  (to represent harmonic functions in $L^2_{\ell oc}$ by surface potentials) by proving (this result is rigorously reformulated later in Corollary~\ref{nkmpq:cor} and Corollary~\ref{kkqpolop:cor}):

\begin{theorem}
\label{gvbhnkopl}
Any harmonic function in $L^2(\varOmega^-)$ can be represented by the restriction 
to $\varOmega^-$ of a single or a double layer potential as defined in Theorem~\ref{theo:ext_slp}.
The same conclusion applies for harmonic functions in $L^2_{\ell oc}(\overline{\varOmega^+})$, assuming  additional properties on their asymptotic behaviors.
\end{theorem}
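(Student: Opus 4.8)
\emph{Proof proposal.} The plan is to deduce the two surjectivity statements from the conjunction of two facts: first, that the restrictions to $\varOmega^-$ (resp.\ to $\varOmega^+$) of the \emph{classical} single and double layer potentials are already dense in the target space of harmonic functions; and second, that the extended operators of Theorem~\ref{theo:ext_slp} satisfy a stability (bounded below) estimate on $H^{-1/2}(\varGamma)$, resp.\ $H^{1/2}(\varGamma)$, measured in the $\mathcal H^{-3/2}(\varGamma)$, resp.\ $\mathcal H^{-1/2}(\varGamma)$, norm. Density together with such a stability estimate upgrades to surjectivity through a routine completion argument, spelled out below.

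For the interior problem I would argue as follows. For $\phi\in H^{-1/2}(\varGamma)$ the restriction $\mathcal S\phi|_{\varOmega^-}$ is the harmonic function in $H^1(\varOmega^-)$ with Dirichlet trace $V\phi$, where $V$ is the single layer boundary integral operator; since the logarithmic capacity of $\varGamma$ is less than $1$, $V:H^{-1/2}(\varGamma)\to H^{1/2}(\varGamma)$ is an isomorphism, so $\{\mathcal S\phi|_{\varOmega^-}:\phi\in H^{-1/2}(\varGamma)\}$ is exactly the space of harmonic $H^1(\varOmega^-)$ functions. These are dense in the harmonic Bergman space of $\varOmega^-$ — a classical approximation fact (e.g.\ via density of harmonic polynomials, using that $\mathbb R^2\setminus\overline{\varOmega^-}$ is connected, or via dilation when $\varOmega^-$ is star-shaped). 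The same argument applies to the double layer potential, with $V$ replaced by $-\tfrac12 I+K$ ($K$ the double layer boundary operator), which is an isomorphism of $H^{1/2}(\varGamma)$ (unique solvability of the interior Dirichlet problem by the double-layer ansatz on Lipschitz curves); hence the classical single and double layer potentials restricted to $\varOmega^-$ are both dense in the harmonic Bergman space.

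Next I would establish the stability estimate, where the biharmonic reformulation underlying Theorem~\ref{theo:ext_slp} enters: for $\phi\in H^{-1/2}(\varGamma)$ one writes $\mathcal S\phi=\Delta w$ with $w$ biharmonic on $\mathbb R^2\setminus\varGamma$, continuous together with $\nabla w$ and $\Delta w$ across $\varGamma$, the density $\phi$ appearing in the jump of $\partial_n\Delta w$, and $w$ constrained by the weighted decay of the functional setting. Testing the definition of the $\mathcal H^{-3/2}(\varGamma)$ norm — the supremum of $\langle\phi,\gamma_d v\rangle$ over $v\in H^2_{\ell oc}(\mathbb R^2)$ with normalized trace — against the Green identities that re-express $\langle\phi,\gamma_d v\rangle$ through integrals of $\mathcal S\phi$ and $w$ over $\mathbb R^2\setminus\varGamma$, one obtains
\[
\|\phi\|_{\mathcal H^{-3/2}(\varGamma)}\ \lesssim\ \|\mathcal S\phi\|,
\]
the right-hand norm being the natural weighted global $L^2$-type norm on $\mathbb R^2\setminus\varGamma$, and similarly $\|\phi\|_{\mathcal H^{-1/2}(\varGamma)}\lesssim\|\mathcal D\phi\|$. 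Granting this, let $u$ be harmonic in $L^2(\varOmega^-)$ and choose $\phi_k\in H^{-1/2}(\varGamma)$ with $\mathcal S\phi_k|_{\varOmega^-}\to u$ in $L^2(\varOmega^-)$; the estimate makes $(\phi_k)_k$ Cauchy in $\mathcal H^{-3/2}(\varGamma)$, so $\phi_k\to\phi$ there, and continuity of the extended operator gives $\mathcal S\phi|_{\varOmega^-}=u$, and likewise for the double layer. For the exterior statement the same scheme is run inside the weighted Sobolev spaces encoding the behavior at infinity: a harmonic function in $L^2_{\ell oc}(\overline{\varOmega^+})$ whose asymptotics are those of a single (resp.\ double) layer potential — these asymptotics being exactly the ``additional properties'' in the statement — is approached in the weighted norm by classical potentials restricted to $\varOmega^+$, and the stability estimate read on $\varOmega^+$ delivers the representing density.

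The main obstacle is the stability estimate. In the $H^1_{\ell oc}$ theory the analogue $\|\phi\|_{H^{-1/2}}\simeq\|\mathcal S\phi\|_{H^1}$ is immediate from the variational formulation and the coercivity (up to the capacity normalization) of the single layer bilinear form; here the natural energy identity sits one derivative higher, at the level of $w$, and $\phi$ is paired against $\gamma_d H^2_{\ell oc}(\mathbb R^2)$ rather than a bona fide fractional Sobolev space on $\varGamma$. Getting a lower bound for $\|\phi\|_{\mathcal H^{-3/2}(\varGamma)}$ therefore requires pinning down precisely the transmission conditions satisfied by $w$ and verifying that the only biharmonic functions on all of $\mathbb R^2$ compatible with the prescribed weighted decay are affine (hence removable), so that $\phi\mapsto w$ is injective with closed range. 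The Lipschitz, and especially the polygonal, setting is where the genuine work lies: traces must be handled through $\gamma_d H^2_{\ell oc}$ and $\gamma_n H^2_{\ell oc}$, and the corner singularities of biharmonic functions have to be absorbed by an appropriate choice of weights — the substantial difficulty announced in the introduction.
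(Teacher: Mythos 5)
There is a genuine gap, and it sits exactly where the paper has to work hardest. Your completion argument needs the stability estimate in the \emph{one-sided} form $\|\phi\|_{\mathcal H^{-3/2}(\varGamma)}\lesssim\|\mathscr S_\varGamma\phi\|_{L^2(\varOmega^-)}$: only then does $\mathscr S_\varGamma\phi_k|_{\varOmega^-}\to u$ in $L^2(\varOmega^-)$ force $(\phi_k)_k$ to be Cauchy in $\mathcal H^{-3/2}(\varGamma)$. What your Green-identity/energy argument can plausibly produce is the \emph{global} estimate (the natural energy $\|\Delta w\|_{L^2(\mathbb R^2)}$ lives on all of $\mathbb R^2$, and indeed $q\mapsto u_q$ is isometric in the paper), but the localized estimate is false on a polygon: the map $q\longmapsto \mathscr S^\dagger_\varGamma q|_{\varOmega^-}$ is not bounded below, and not even injective, because the $L^2$ Dirichlet problem on a polygonal domain is not uniquely solvable --- there exist nonzero harmonic functions in $L^2$ with vanishing Dirichlet trace (the $r^{-2/3}\sin(2\theta/3)$-type singular functions discussed around Fig.~\ref{Fig1}, and the non-injectivity statements of Theorem~\ref{theoo:6}). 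Gluing such an exterior function with $0$ on $\varOmega^-$ and applying the converse-jump theorem produces a nonzero density whose single layer potential vanishes identically on $\varOmega^-$. With only the global estimate, convergence of the restrictions to $\varOmega^-$ gives no control whatsoever on $\mathscr S_\varGamma\phi_k|_{\varOmega^+}$, and the Cauchy argument collapses; the same objection applies verbatim to your treatment of the exterior case and of the double layer potential.

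This missing control of the other side of $\varGamma$ is precisely what the paper supplies, by a different route: given $u^-$ harmonic in $L^2(\varOmega^-)$, it forms the trace $p=\gamma_d^-u^-\in\mathcal H_d^{-1/2}(\varGamma)$ (using the $L^2$ trace theory of Definition~\ref{def_trace_ext}), solves the \emph{exterior} Dirichlet problem with the same datum $p$ in $L^2_{\ell oc}(\overline{\varOmega^+})$ (Theorem~\ref{diri_exterior}, which is the hard analytic step), glues the two pieces into a function with $[\gamma_d\,\cdot\,]_\varGamma=0$, and then identifies the glued function as $\mathscr S^\dagger_\varGamma$ of its Neumann jump via the converse jump theorem (point 2 of Theorem~\ref{mpsdt}). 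Your density step is essentially sound (the paper proves the corresponding global density of $\mathscr H^1(\mathbb R^2\setminus\varGamma)$ in $\mathscr H^0(\mathbb R^2\setminus\varGamma)$), but density plus a global lower bound does not yield one-sided surjectivity. To repair your argument you would have to show that the approximating densities $\phi_k$ can be chosen so that $\mathscr S_\varGamma\phi_k$ also converges on $\varOmega^+$ --- which amounts to solving the exterior Dirichlet problem with rough data, i.e., to the step your proposal omits.
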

The remainder of the introduction is devoted to giving the reader an overview of the main steps of the analysis. As with Theorems~\ref{theo:ext_slp} and \ref{gvbhnkopl}, we do not seek to be rigorous 
at this stage but simply to give a taste of the results. 
For the sake of brevity, we will focus only on the single layer potential. %
\par
The first step of the analysis is to extend the notions of Dirichlet and Neumann traces to functions in $L^2_{\ell oc}(\mathbb R^2)$, 
harmonic in $\mathbb R^2\setminus\varGamma$.
This task will be carried out in the case where $\varGamma$ is a   curvilinear $\mathcal C^{1,1}$ polygon (i.e. a generalization of the notion 
of polygon for which the edges are $\mathcal C^{1,1}$ curves) and requires the introduction of the spaces:
$${\mathcal H}^{3/2}_n(\varGamma)=\big\{\gamma_du\,:\,u\in H^2_{\ell oc}(\mathbb R^2),\,\gamma_nu=0\big\}\qquad\text{and}
\qquad {\mathcal H}^{1/2}_d(\varGamma)=\big\{\gamma_nu\,:\,u\in H^2_{\ell oc}(\mathbb R^2),\,\gamma_du=0\big\}.$$
When $\varGamma$ is smooth, we simply have
${\mathcal H}^{3/2}_n(\varGamma)={\mathcal H}^{3/2}(\varGamma)=H^{3/2}(\varGamma)$ and 
${\mathcal H}^{1/2}_d(\varGamma)={\mathcal H}^{1/2}(\varGamma)=H^{1/2}(\varGamma)$.
However, all these equalities turn out to be false when 
$\varGamma$ is   a $\mathcal C^{1,1}$ polygon (this is what makes the analysis tricky).  The topologies of which these spaces are provided (and which will be specified thereafter) entail 
 the continuity 
and the density of the following inclusions:
$$\mathcal H_d^{1/2}(\varGamma)\subset \mathcal H^{1/2}(\varGamma)\subset L^2(\varGamma)\qquad\text{and}
\qquad \mathcal H^{3/2}_n(\varGamma)\subset \mathcal H^{3/2}(\varGamma)\subset H^{1/2}(\varGamma)\subset L^2(\varGamma).$$
As usual, we denote by ${\mathcal H}^{-3/2}_n(\varGamma)$ 
the dual space of ${\mathcal H}^{3/2}_n(\varGamma)$ and by ${\mathcal H}^{-1/2}_d(\varGamma)$ the dual space of ${\mathcal H}^{1/2}_d(\varGamma)$, using $L^2(\varGamma)$ as pivot space. More interesting for our purpose, the inclusions between dual spaces are also continuous and dense:  
$$L^2(\varGamma)\subset \mathcal H^{-1/2}(\varGamma)\subset \mathcal H^{-1/2}_d(\varGamma)\qquad\text{and}\qquad
L^2(\varGamma)\subset H^{-1/2}(\varGamma)\subset \mathcal H^{-3/2}(\varGamma)\subset \mathcal H^{-3/2}_n(\varGamma).$$ 
\begin{theorem}\label{theo:traces:intro} Any function $u$ in $L^2_{\ell oc}(\mathbb R^2)$, harmonic on both sides of $\varGamma$ admits
one-sided  
Dirichlet traces (denoted by $\gamma^-_du$ and $\gamma_d^+u$) in ${\mathcal H}^{-1/2}_d(\varGamma)$.
The function $u$ admits also one-sided Neumann traces 
(denoted by $\gamma^-_nu$ and $\gamma_n^+u$) in the space ${\mathcal H}^{-3/2}_n(\varGamma)$. 
Moreover, the 
trace operators $\gamma_d^\pm$ and $\gamma_n^\pm$ are the extensions by density of the classical trace operators defined for functions 
in $H^1(\varOmega^-)$ and in $H^1_{\ell oc}(\overline{\varOmega^+})$.
\end{theorem}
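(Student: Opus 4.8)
The plan is to proceed by duality, constructing the one-sided traces $\gamma_d^\pm u$ and $\gamma_n^\pm u$ as continuous linear functionals on the test spaces ${\mathcal H}^{3/2}_n(\varGamma)$ and ${\mathcal H}^{1/2}_d(\varGamma)$ respectively, via Green-type identities. First I would fix $u \in L^2_{\ell oc}(\mathbb R^2)$ harmonic in $\varOmega^- \cup \varOmega^+$, and a test function $v \in H^2_{\ell oc}(\mathbb R^2)$. The starting point is the second Green identity applied in $\varOmega^-$ (and in a truncation $\varOmega^+ \cap B_R$ of the exterior domain): for smooth data,
\[
\int_{\varOmega^-} \big(u\,\Delta v - v\,\Delta u\big)\dx = \int_\varGamma \big(u\,\gamma_n v - v\,\gamma_n u\big)\ds .
\]
Since $\Delta u = 0$ in $\varOmega^-$, the left side reduces to $\int_{\varOmega^-} u\,\Delta v\dx$, which depends only on $u \in L^2(\varOmega^-)$ and $\Delta v \in L^2(\varOmega^-)$, hence is well-defined and continuous in $u$. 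The idea is then to \emph{define} the interior Dirichlet trace by the recipe $\langle \gamma_d^- u, \gamma_n v\rangle := \int_{\varOmega^-} u\,\Delta v\dx + \langle v, \gamma_n u\rangle$ when $v$ ranges over functions with $\gamma_d v = 0$ (so the term $\int_\varGamma u\,\gamma_n v\,$ is isolated), and symmetrically to define $\gamma_n^- u$ by testing against $v$ with $\gamma_n v = 0$. Concretely: for $\phi \in {\mathcal H}^{1/2}_d(\varGamma)$, pick $v \in H^2_{\ell oc}(\mathbb R^2)$ with $\gamma_d v = 0$ and $\gamma_n v = \phi$, set $\langle \gamma_d^- u, \phi\rangle := \int_{\varOmega^-} u\,\Delta v\dx$; for $\psi \in {\mathcal H}^{3/2}_n(\varGamma)$, pick $v$ with $\gamma_n v = 0$ and $\gamma_d v = \psi$, set $\langle \gamma_n^- u, \psi\rangle := -\int_{\varOmega^-} u\,\Delta v\dx$.

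The key steps are then: (i) \emph{well-posedness of the definition}, i.e.\ independence of the choice of lifting $v$ — this reduces to showing that if $\gamma_d v = \gamma_n v = 0$ then $\int_{\varOmega^-} u\,\Delta v\dx = 0$ for every $u$ harmonic in $L^2(\varOmega^-)$; this should follow because such $v$ can be approximated by $C_c^\infty(\varOmega^-)$ functions in a topology making $\Delta v$ converge in $L^2(\varOmega^-)$, combined with the fact that $u \perp \Delta C_c^\infty(\varOmega^-)$ weakly is exactly the statement $\Delta u = 0$ distributionally; (ii) \emph{continuity}, i.e.\ the bound $|\langle \gamma_d^- u,\phi\rangle| \le C\,\|u\|_{L^2(\varOmega^-)}\,\|\phi\|_{{\mathcal H}^{1/2}_d(\varGamma)}$, which follows once the norm on ${\mathcal H}^{1/2}_d(\varGamma)$ is defined as the quotient norm $\inf\{\|v\|_{H^2(B_R)} : \gamma_d v = 0,\ \gamma_n v = \phi\}$ (for a fixed large ball $B_R \supset \overline{\varOmega^-}$) and one uses $\|\Delta v\|_{L^2(\varOmega^-)} \le \|v\|_{H^2(\varOmega^-)} \le \|v\|_{H^2(B_R)}$; (iii) \emph{consistency}, i.e.\ that when $u \in H^1(\varOmega^-)$ the functional $\gamma_d^- u$ coincides with the classical trace — this is immediate from the classical Green identity, and the density assertion then follows because $H^1(\varOmega^-)\cap\{\text{harmonic}\}$ is dense in $L^2(\varOmega^-)\cap\{\text{harmonic}\}$ (approximate by dilating/shrinking, or by Runge-type arguments). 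The exterior traces $\gamma_d^+ u$, $\gamma_n^+ u$ are handled the same way after truncating to $\varOmega^+ \cap B_R$, noting that for $R$ large the boundary terms on $\partial B_R$ vanish because the test functions may be chosen supported in $B_R$.

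The main obstacle I expect is step (i)–(ii) in the \emph{non-smooth} ($\mathcal C^{1,1}$-polygon) setting, precisely because — as the introduction stresses — the spaces ${\mathcal H}^{3/2}_n(\varGamma)$, ${\mathcal H}^{1/2}_d(\varGamma)$ are \emph{not} the usual $H^{3/2}(\varGamma)$, $H^{1/2}(\varGamma)$ and the pair $(\gamma_d, \gamma_n)$ acting on $H^2_{\ell oc}$ is not surjective onto the full Cauchy-data space. One must verify that the constraint "$\gamma_d v = 0$" (resp.\ "$\gamma_n v = 0$") does not force the complementary trace to lie in a strictly smaller space, and that the kernel characterization in (i) genuinely holds despite corner singularities — i.e.\ that $H^2_{\ell oc}$ functions vanishing to first order at $\varGamma$ really are approximable by $C_c^\infty(\varOmega^-)$ in $H^2_{\text{loc}}$, which near a reentrant corner requires a careful truncation/mollification argument respecting the corner geometry. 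A secondary technical point is checking that the one-sided traces from the two sides are \emph{independent} objects (the jump $\gamma_d^+ u - \gamma_d^- u$ and the jump $\gamma_n^+ u - \gamma_n^- u$ carry the layer-potential densities), which just amounts to noting that the interior and exterior definitions use test functions that need not match across $\varGamma$.
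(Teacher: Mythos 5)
Your construction is, in substance, the one the paper carries out in Section~\ref{SEC:jump}: there the jumps are defined for $v\in\mathscr H^0(\mathbb R^2\setminus\varGamma)$ by $\llangle \mathsf J_d v,q\rrangle_{-\frac12,\frac12,d}=-\big(v,\Delta\mathcal L_d q\big)_{L^2(\mathbb R^2)}$ and $\llangle \mathsf J_n v,p\rrangle_{-\frac32,\frac32,n}=-\big(v,\Delta\mathcal L_n p\big)_{L^2(\mathbb R^2)}$, and the one-sided traces are these functionals applied to the zero extensions $v^\pm$ (Definition~\ref{def_trace_ext}); that is exactly your Green-identity pairing, except that the paper always tests against the \emph{canonical} minimal-norm liftings $\mathcal L_d q$, $\mathcal L_n p$ built in the weighted space $W^2(\mathbb R^2)$, so that well-definedness is automatic and the continuity bound is an isometry statement, the norms \eqref{def_norm_10} and \eqref{def_norm_10_bis} being by construction the lifting norms. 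Your variant with an arbitrary lifting does work: your step (i) is precisely the characterization $\ker\gamma_d^\pm\cap\ker\gamma_n^\pm=H^2_0(\varOmega^\pm)$, which the paper quotes as Theorem~\ref{kernel} and which holds on Lipschitz domains (so no ad hoc corner mollification is required), and your quotient $H^2(B_R)$ norm is equivalent to the paper's lifting norm by a cutoff argument, so your bound lands in the same dual spaces $\mathcal H^{-1/2}_d(\varGamma)$, $\mathcal H^{-3/2}_n(\varGamma)$. Consistency with the classical traces via Green's formula is likewise as in the paper.

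The genuine gap is the density step, which is a full clause of the theorem (``extensions by density''). You dispose of it with ``approximate by dilating/shrinking, or by Runge-type arguments'', but dilation requires $\varOmega^-$ to be star-shaped, which a general ($\mathcal C^{1,1}$ curvilinear) polygon need not be, and no $L^2$ Runge-type approximation theorem is invoked; for the exterior side the situation is worse, since one must approximate in $L^2_{\ell oc}(\overline{\varOmega^+})$ by harmonic functions of class $H^1_{\ell oc}(\overline{\varOmega^+})$ while keeping track of the admissible behavior at infinity. The paper does not treat this as routine: it proves, in a dedicated proposition at the end of Section~\ref{sec:polyn}, that $\mathscr H^1(\mathbb R^2\setminus\varGamma)$ is dense in $\mathscr H^0(\mathbb R^2\setminus\varGamma)$, by first identifying $\mathscr H^1(\mathbb R^2\setminus\varGamma)$ with the sums $\mathscr S_\varGamma q+\mathscr D_\varGamma p$, $(q,p)\in E(\varGamma)$ (identity \eqref{ghaqpl}), and then showing through the $W^2(\mathbb R^2)$ framework that any $w=\Delta v$ orthogonal to all such sums forces $v\in\mathscr A$, hence $w=0$. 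Some argument of this type (or a genuine reference covering harmonic $L^2$ approximation on both the polygonal interior and the exterior domain) is needed; as written, the ``extension by density'' part of the statement is unsupported, while the existence and boundedness of the traces themselves are fine modulo the minor norm-equivalence check noted above.
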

This notion of trace being clarified, we will turn again to the layer potentials and investigate the question of their Dirichlet and Neumann traces on $\varGamma$. 
Let $\mathscr S_\varGamma:H^{-1/2}(\varGamma)\longrightarrow H^1_{\ell oc}(\mathbb R^2)$ be the classical  single layer potential 
and recall the properties:
\begin{subequations}
\label{gftgvcdr}
\begin{equation}
\gamma_n^{\pm}\circ \mathscr S_\varGamma : H^{-1/2}(\varGamma)\longrightarrow H^{-1/2}(\varGamma)\qquad\text{and}\qquad 
\gamma_n^+\circ \mathscr S_\varGamma+\gamma_n^-\circ \mathscr S_\varGamma={\rm Id},
\end{equation}
this latter identity being usually called the ``jump relation''. Let now $\mathscr S_\varGamma^\dagger:\mathcal H^{-1/2}(\varGamma)
\longrightarrow L^2_{\ell oc}(\mathbb R^2)$ stands for the extended single layer potential defined in Theorem~\ref{theo:ext_slp}. 
According to Theorem~\ref{theo:traces:intro}
we have in this case:
\begin{equation}
\gamma_n^{\pm}\circ \mathscr S^\dagger_\varGamma : \mathcal H^{-3/2}(\varGamma)\longrightarrow \mathcal H_n^{-3/2}(\varGamma),
\end{equation}
\end{subequations}
where $\mathcal H^{-3/2}(\varGamma)$ is continuously and densely embedded in $\mathcal H_n^{-3/2}(\varGamma)$ but in general 
different from  $\mathcal H_n^{-3/2}(\varGamma)$, which suggests that the jump relation is not likely to apply in this case.
Surprisingly enough, the relation  is well and truly satisfied. More generally, concerning the traces of the single   layer 
potential, we will establish:
\begin{theorem}
\label{bnjkoplk}
The two one-sided  Dirichlet  traces  on   $\varGamma$ of a single layer potential (as defined in Theorem~\ref{theo:ext_slp}) coincide.
The ``jump'' across $\varGamma$ of the one-sided Neumann traces of a single layer potential of density $q\in \mathcal H^{-3/2}(\varGamma)$ is equal 
to $q$.
\end{theorem}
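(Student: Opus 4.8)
The plan is to establish the result first for smooth densities (and a smooth curve, or at least densities in $H^{-1/2}(\varGamma)$), where the classical theory applies, and then to pass to the limit using the density of $H^{-1/2}(\varGamma)$ in $\mathcal H^{-3/2}(\varGamma)$ together with the continuity of the extended potential $\mathscr S_\varGamma^\dagger$ and of the trace operators furnished by Theorem~\ref{theo:traces:intro}. For the \emph{Dirichlet} part, the starting point is the classical fact that the single layer potential $\mathscr S_\varGamma q$ of a density $q\in H^{-1/2}(\varGamma)$ lies in $H^1_{\ell oc}(\mathbb R^2)$, hence has a single (two-sided) trace in $H^{1/2}(\varGamma)$; in particular $\gamma_d^+\mathscr S_\varGamma q=\gamma_d^-\mathscr S_\varGamma q$. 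Since $\mathscr S_\varGamma^\dagger$ agrees with $\mathscr S_\varGamma$ on $H^{-1/2}(\varGamma)$, and since by Theorem~\ref{theo:traces:intro} the one-sided Dirichlet traces $\gamma_d^\pm$ are the continuous extensions (valued in $\mathcal H_d^{-1/2}(\varGamma)$) of the classical traces, both maps $q\mapsto\gamma_d^\pm\mathscr S_\varGamma^\dagger q$ are continuous from $\mathcal H^{-3/2}(\varGamma)$ to $\mathcal H_d^{-1/2}(\varGamma)$ and coincide on the dense subspace $H^{-1/2}(\varGamma)$; therefore they coincide everywhere. The \emph{Neumann jump} part is handled in the same spirit: on $H^{-1/2}(\varGamma)$ the classical jump relation \eqref{gftgvcdr} reads $\gamma_n^+\mathscr S_\varGamma q+\gamma_n^-\mathscr S_\varGamma q=q$; both sides are continuous from $\mathcal H^{-3/2}(\varGamma)$ into $\mathcal H_n^{-3/2}(\varGamma)$ — the left side by \eqref{gftgvcdr} combined with the boundedness of $\mathscr S_\varGamma^\dagger$, the right side because the embedding $\mathcal H^{-3/2}(\varGamma)\hookrightarrow\mathcal H_n^{-3/2}(\varGamma)$ is continuous — so the identity extends to all of $\mathcal H^{-3/2}(\varGamma)$ by density.

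The delicate point I anticipate is \emph{not} the algebra of the limiting argument but the verification that the relevant operators are genuinely continuous with respect to the weak topologies that make this density argument legitimate — in particular that $q\mapsto\gamma_n^\pm\mathscr S_\varGamma^\dagger q$ is bounded from $\mathcal H^{-3/2}(\varGamma)$ to $\mathcal H_n^{-3/2}(\varGamma)$, which is precisely the content of \eqref{gftgvcdr}(b) and relies on the biharmonic/weighted-Sobolev construction of $\mathscr S_\varGamma^\dagger$. I would therefore spell out carefully that the chain
\[
H^{-1/2}(\varGamma)\xrightarrow{\ \mathscr S_\varGamma\ }H^1_{\ell oc}(\mathbb R^2)\xrightarrow{\ \gamma_n^\pm\ }H^{-1/2}(\varGamma)\hookrightarrow\mathcal H_n^{-3/2}(\varGamma)
\]
agrees, on the dense subspace, with the chain obtained by first extending to $\mathscr S_\varGamma^\dagger$ and then applying the extended Neumann trace of Theorem~\ref{theo:traces:intro}; this compatibility is exactly what the last sentence of Theorem~\ref{theo:traces:intro} (\emph{the trace operators are the extensions by density of the classical ones}) is there to provide. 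Once this is in place, both assertions follow because a bounded operator is determined by its values on a dense set, and the target spaces $\mathcal H_d^{-1/2}(\varGamma)$ and $\mathcal H_n^{-3/2}(\varGamma)$ are (Hausdorff) topological vector spaces in which limits are unique.

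A subtlety worth isolating is the role of the standing assumption that the logarithmic capacity of $\varGamma$ is below $1$: it guarantees that $\mathscr S_\varGamma$ is injective on $H^{-1/2}(\varGamma)$ and, more to the point here, that the weighted-Sobolev framework underlying $\mathscr S_\varGamma^\dagger$ is well posed, so that $\mathscr S_\varGamma^\dagger q$ is the \emph{unique} biharmonic-type extension with the prescribed density and decay. I would state explicitly which uniqueness is being invoked, since the identification "jump $=q$" is meaningful only once $\mathscr S_\varGamma^\dagger q$ is pinned down unambiguously. Finally, for completeness I would remark that the coincidence of the two Dirichlet traces can alternatively be seen directly from the construction: writing $\mathscr S_\varGamma^\dagger q=\Delta w$ with $w$ biharmonic on $\mathbb R^2\setminus\varGamma$ and $w\in H^2_{\ell oc}(\mathbb R^2)$ (so that $w$ has matching one-sided Dirichlet traces by the $H^2$-regularity), one gets $\gamma_d^\pm\mathscr S_\varGamma^\dagger q=\gamma_d^\pm\Delta w$ and the two agree because the jump data of $w$ across $\varGamma$ are, by design, supported only in the Neumann-type component — but I would keep the density argument as the primary route, using this remark only as a sanity check.
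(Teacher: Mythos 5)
Your proposal is correct, but it follows a genuinely different route from the paper's. The paper never argues by density at the level of the jump relations: it reads them off from the explicit representation $\mathscr S_\varGamma^\dagger q=-\Delta\mathsf L_d^S\circ\mathsf T_{\!d}^{-1}q+\sum_j\langle\mathsf q_j,\mathsf T_{\!d}^{-1}q\rangle_{-\frac12,\frac12}\mathscr S_\varGamma\mathsf q_j$ of Lemma~\ref{lem:234}. The Dirichlet coincidence follows because $\mathsf L_d^S\circ\mathsf T_{\!d}^{-1}q\in W^2_d(\mathbb R^2)^\perp$ is orthogonal to every $\mathcal L_d\tilde q\in W^2_d(\mathbb R^2)$, so the pairing \eqref{weak_trace0} defining the Dirichlet jump vanishes on the term $\Delta\mathsf L_d^S\circ\mathsf T_{\!d}^{-1}q$, while the classical potentials $\mathscr S_\varGamma\mathsf q_j$ have no Dirichlet jump; the Neumann jump is then a direct consequence of identity \eqref{knklpoo} of Lemma~\ref{qqsiso}. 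Your density argument is shorter and bypasses Lemma~\ref{qqsiso} entirely, at the cost of shifting all the weight onto the two compatibility facts you correctly isolate: boundedness of $\gamma_d^\pm\circ\mathscr S_\varGamma^\dagger$ and $\gamma_n^\pm\circ\mathscr S_\varGamma^\dagger$ on $\mathcal H^{-3/2}(\varGamma)$, and agreement of the extended traces with the classical ones on the dense subspace $H^{-1/2}(\varGamma)$. The second point needs one more line than you give it: for $q\in H^{-1/2}(\varGamma)$ the potential $\mathscr S_\varGamma q$ grows logarithmically and is therefore \emph{not} in $\mathscr H^1(\mathbb R^2\setminus\varGamma)$, so the compatibility proposition of Section~\ref{SEC:jump} does not apply to it directly; one must first subtract a combination $\sum_jc_j\mathscr S_\varGamma\mathsf q_j$ carrying the non-$L^2$ asymptotics (as in \eqref{asymp_sl}), apply the compatibility statement to the remainder, which does lie in $\mathscr H^1(\mathbb R^2\setminus\varGamma)$, and use the classical trace formulas on the subtracted finite-dimensional part. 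With that insertion your argument closes, and it automatically delivers the "regularizing cancellation" (the sum of the two Neumann traces lands in $\mathcal H^{-3/2}(\varGamma)$ even though each summand is only in $\mathcal H_n^{-3/2}(\varGamma)$), since the limit identity identifies the sum with $q$ itself. What the paper's direct computation buys, and the density argument does not, is the structural information reused later (notably in Theorem~\ref{mpsdt} and Lemma~\ref{lem_decomp}) about exactly which component of each one-sided Neumann trace is singular; note also that your closing "sanity check" on the biharmonic lift is, up to sign and notation, precisely the paper's actual proof of the Dirichlet half.
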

Actually, we will show that there do exist single layer potentials for which the one-sided Neumann traces are both in 
${\mathcal H}^{-3/2}_n(\varGamma)$ but not in ${\mathcal H}^{-3/2}(\varGamma)$, although their difference is in this latter space. 
This means that some singular contributions of the  normal derivatives cancel out by forming their difference. 
This notable   phenomenon seems to be 
typical of the single layer potential on non-smooth boundaries.
\par
The next point we shall discuss in the paper  is the solvability of the Dirichlet and Neumann Laplace equations with boundary data 
in $\mathcal H_d^{-1/2}(\varGamma)$ and $\mathcal H_n^{-3/2}(\varGamma)$. As for the 
Laplace equation with Dirichlet boundary conditions for example, we will prove:
%
%
\begin{theorem}
\label{vftploik}
Assume that $\varGamma$ is a (straight) polygon.
For every $p\in\mathcal H^{-1/2}_d(\varGamma)$ there exists a function $u^-\in L^2(\varOmega^-)$ 
harmonic in $\varOmega^-$ such that $\gamma_d^-u^-=p$ 
and there exists a function $u^+\in L^2_{\ell oc}(\overline{\varOmega^+})$ (with a suitable asymptotic behavior), harmonic in $\varOmega^+$ such that 
$\gamma_d^+u^+=p$. There is no uniqueness in general.
\end{theorem}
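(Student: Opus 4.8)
The plan is to reduce the solvability statement to the surjectivity, onto $\mathcal H^{-1/2}_d(\varGamma)$, of the Dirichlet trace operators $\gamma_d^{\pm}$ restricted to the spaces of harmonic functions, which is precisely the content made available by Theorems~\ref{theo:ext_slp}, \ref{theo:traces:intro} and \ref{bnjkoplk}. First I would set up the relevant function spaces: let $\mathcal K^-$ be the space of functions in $L^2(\varOmega^-)$ that are harmonic in $\varOmega^-$, and let $\mathcal K^+$ be the corresponding space of functions in $L^2_{\ell oc}(\overline{\varOmega^+})$ harmonic in $\varOmega^+$ subject to the prescribed asymptotic decay (the same weighted-space condition used to define $\mathscr S_\varGamma^\dagger$ in Theorem~\ref{theo:ext_slp}). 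By Theorem~\ref{theo:traces:intro}, the one-sided Dirichlet traces $\gamma_d^-:\mathcal K^-\to\mathcal H^{-1/2}_d(\varGamma)$ and $\gamma_d^+:\mathcal K^+\to\mathcal H^{-1/2}_d(\varGamma)$ are well defined and bounded, so the task is to show they are onto.

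The key step is to exhibit, for each $p\in\mathcal H^{-1/2}_d(\varGamma)$, an explicit preimage built from a layer potential. Here I would use the single layer potential. Given $p\in\mathcal H^{-1/2}_d(\varGamma)$, I would first produce a density $q\in\mathcal H^{-3/2}(\varGamma)$ whose single layer potential $\mathscr S_\varGamma^\dagger q$ has common Dirichlet trace equal to $p$: by Theorem~\ref{bnjkoplk} the two one-sided Dirichlet traces of $\mathscr S_\varGamma^\dagger q$ coincide, and the composite operator $\gamma_d\circ\mathscr S_\varGamma^\dagger:\mathcal H^{-3/2}(\varGamma)\to\mathcal H^{-1/2}_d(\varGamma)$ is the density extension of the classical operator $\gamma_d\circ\mathscr S_\varGamma:H^{-1/2}(\varGamma)\to H^{1/2}(\varGamma)$, which is invertible for polygons once the logarithmic capacity of $\varGamma$ is less than $1$ (the standing normalization of the paper). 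The main obstacle — and the place where the polygon hypothesis is genuinely used — is to verify that this extended operator is in fact an isomorphism from $\mathcal H^{-3/2}(\varGamma)$ onto $\mathcal H^{-1/2}_d(\varGamma)$, rather than merely bounded with dense range; this requires an open-mapping / closed-range argument exploiting the precise topologies on the $\mathcal H^{s}$-scale introduced earlier, together with the coercivity of the single layer operator that survives the density extension. Once this isomorphism is in hand, $u^- := (\mathscr S_\varGamma^\dagger q)\!\restriction_{\varOmega^-}$ and $u^+ := (\mathscr S_\varGamma^\dagger q)\!\restriction_{\varOmega^+}$ are the desired harmonic functions, with $\gamma_d^\pm u^\pm = p$ by construction, and $u^+$ inherits the correct asymptotic behavior from the definition of $\mathscr S_\varGamma^\dagger$.

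Finally, for the non-uniqueness claim I would argue separately on the two sides. On the exterior, non-uniqueness is immediate: the constant function $1$ belongs to $L^2_{\ell oc}(\overline{\varOmega^+})$, is harmonic, has zero Dirichlet trace difference contribution, and — depending on the precise asymptotic class allowed — either lies in $\mathcal K^+$ outright or can be paired with a decaying harmonic corrector so that adding a multiple of it changes $u^+$ without changing $\gamma_d^+u^+$ once one works modulo the admissible asymptotic behaviors; more robustly, any harmonic function in the exterior with vanishing Dirichlet trace and permissible decay is a nontrivial element of the kernel, and such functions exist because the exterior Dirichlet problem with homogeneous data but prescribed growth at infinity is not rigid. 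On the interior, the situation is subtler because the interior Dirichlet problem with $H^{1/2}$ data is well posed; the point is that $\mathcal H^{-1/2}_d(\varGamma)$ is strictly larger than $H^{1/2}(\varGamma)$ on a polygon, so I would instead note that the representation of a given $u^-\in\mathcal K^-$ is non-unique in the sense that the same boundary datum $p$ can also be realized as $\gamma_d^-$ of a double layer potential (via the second extension in Theorem~\ref{theo:ext_slp}), the two representations differing by a harmonic function; rephrasing, I would observe that adding to $u^-$ any harmonic $L^2(\varOmega^-)$ function whose Dirichlet trace is annihilated by the pairing with $\mathcal H^{1/2}_d(\varGamma)$ leaves $\gamma_d^-u^-$ unchanged, and such functions are nonzero precisely because $\mathcal H^{1/2}_d(\varGamma)\subsetneq L^2(\varGamma)$ for a polygon. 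I would close by remarking that pinning down uniqueness would require supplementing $p$ with finitely many scalar conditions associated to the corners of $\varGamma$, which is exactly the defect measured by the failure of $\mathcal H^{1/2}_d(\varGamma)=H^{1/2}(\varGamma)$.
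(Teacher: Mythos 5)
There is a genuine gap, and it sits exactly at the step you yourself flag as ``the main obstacle''. Your plan rests on the claim that $\gamma_d\circ\mathscr S_\varGamma^\dagger:\mathcal H^{-3/2}(\varGamma)\to\mathcal H^{-1/2}_d(\varGamma)$ is an isomorphism, to be obtained by an open-mapping/closed-range argument from ``the coercivity of the single layer operator that survives the density extension''. No such coercivity survives: in the paper this operator is \emph{not} injective in general (Theorem~\ref{theoo:6}), precisely because on a polygon there are nonzero $L^2$ harmonic functions with vanishing Dirichlet trace, so no bound of the form $\|q\|_{-\frac32}\lesssim\|\gamma_d\mathscr S^\dagger_\varGamma q\|_{-\frac12,d}$ can hold and the closed-range route collapses. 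What you actually need is only surjectivity, but that is not a soft consequence of the classical invertibility of $\mathsf S_\varGamma$ on $H^{-1/2}(\varGamma)\to H^{1/2}(\varGamma)$: a density extension of an isomorphism between strictly larger spaces need be neither injective nor onto. In the paper's logical order the surjectivity of $\gamma_d\circ\mathscr S_\varGamma^\dagger$ is \emph{deduced from} the solvability theorem you are asked to prove (via the transmission problem, Theorem~\ref{nkmpq}), so taking it as the starting point is essentially circular. The paper instead proves solvability directly: for the interior problem it shows, using the $H^2$ a priori estimate on a straight polygon \cite[Theorem 4.3.1.4]{Grisvard:1985aa}, that $(\Delta\mathcal L_dq_1,\Delta\mathcal L_dq_2)_{L^2(\varOmega^-)}+(\mathcal L_dq_1,\mathcal L_dq_2)_{L^2(\varOmega^-)}$ defines an equivalent norm on $\mathcal H^{1/2}_d(\varGamma)$, applies the Riesz representation theorem, and removes the zero-order term through an auxiliary variational problem and the Bergman projection (Theorem~\ref{diri_interior}); the exterior case uses a cut-off, the weighted-space exterior problems of Lemma~\ref{lmmme}, and a correction by $\mathscr S_\varGamma\mathsf q_p$ with $\mathsf q_p\in\mathscr A_S^{-1/2}$ (Theorem~\ref{diri_exterior}). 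This is where the straight-polygon hypothesis genuinely enters, and your proposal contains no substitute for it.

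The non-uniqueness part is also off. Adding the constant $1$ to $u^+$ does change $\gamma_d^+u^+$ (its trace is $\mathbf 1_\varGamma\neq0$), so that gives no kernel element; and the strict inclusion $\mathcal H^{1/2}_d(\varGamma)\subsetneq L^2(\varGamma)$ cannot be the reason for non-uniqueness, since it holds for smooth boundaries as well, where uniqueness does hold. The correct and simple argument, used in the paper, is the explicit corner-singularity example of \cite{Dauge:1987tg}: near a reentrant corner take $U(r,\theta)=r^{-2/3}\sin(2\theta/3)$, multiply by a cut-off $\eta$ and subtract the $H^1_0$ variational solution $X$ of $\Delta X=\Delta(\eta U)$; the function $\eta U-X$ is a nonzero harmonic element of $L^2(\varOmega^-)$ with zero Dirichlet data (and the analogous construction works in $\varOmega^+$). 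If you repair the surjectivity step along the paper's lines and replace the kernel discussion by this example, the overall reduction you describe (restricting a single layer potential to both sides to get $u^\pm$) is fine.
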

The existence of (non zero) harmonic functions in $L^2$ with vanishing Dirichlet data in a domain with corners has long been known (see for instance \cite{Dauge:1987tg} where an example of such a function is provided).

At this point, a kind of reciprocal of Theorem~\ref{bnjkoplk} will still be needed to prove Theorem~\ref{gvbhnkopl}. This result 
can be stated as follows:
\begin{theorem}
\label{vbghuijko}
Let $u$ be in $L^2_{\ell oc}(\mathbb R^2)$, harmonic in $\mathbb R^2\setminus\varGamma$, with an appropriate asymptotic 
behavior. If $u$ satisfies  
 $\gamma_d^-u=\gamma_d^+u$ then $q=\gamma_n^-u+\gamma_n^+u$ is in $\mathcal H^{-3/2}(\varGamma)$ and  $u=\mathscr S_\varGamma^\dagger q$. 
\par
\end{theorem}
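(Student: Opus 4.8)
The plan is to show that $u$ coincides with the single layer potential of $q=\gamma_n^-u+\gamma_n^+u$ by reducing everything to a statement about the classical single layer potential via density. First, I would use the hypothesis $\gamma_d^-u=\gamma_d^+u$ to show that $u$, viewed as a distribution on $\mathbb R^2$, is harmonic away from $\varGamma$ and has a distributional Laplacian supported on $\varGamma$: integrating against a test function $\varphi\in\mathcal D(\mathbb R^2)$ and applying Green's formula separately on $\varOmega^-$ and $\varOmega^+$, the two Dirichlet trace terms cancel (since the traces agree and the normals are opposite) and the two Neumann trace terms combine into the pairing $\langle q,\gamma_d\varphi\rangle$. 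Thus $-\Delta u = q$ in $\mathcal D'(\mathbb R^2)$ in the appropriate weak sense, where the pairing makes sense because $q\in\mathcal H^{-3/2}_n(\varGamma)$ acts on $\gamma_d\varphi\in\mathcal H^{3/2}_n(\varGamma)$ (after checking $\gamma_n\varphi$ can be taken to vanish, or more carefully, that the relevant pairing is well defined on the full trace space using Theorem~\ref{theo:traces:intro}).

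Next, I would establish that $q$ actually lies in the smaller space $\mathcal H^{-3/2}(\varGamma)$, not merely in $\mathcal H^{-3/2}_n(\varGamma)$. This is the crux and mirrors the ``cancellation of singular contributions'' phenomenon highlighted right after Theorem~\ref{bnjkoplk}: the singular parts of $\gamma_n^-u$ and $\gamma_n^+u$ individually live in $\mathcal H^{-3/2}_n(\varGamma)$, but because $\gamma_d^-u=\gamma_d^+u$ forces a matching condition on those singular parts at the corners, their \emph{sum} is regular enough to pair with all of $\mathcal H^{3/2}(\varGamma)=\gamma_d H^2_{\ell oc}(\mathbb R^2)$, i.e. lies in $\mathcal H^{-3/2}(\varGamma)$. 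Concretely, I would pair $q$ against $\gamma_d\varphi$ for arbitrary $\varphi\in H^2_{\ell oc}(\mathbb R^2)$ (no longer requiring $\gamma_n\varphi=0$) and show the pairing is controlled by $\|\varphi\|_{H^2}$ using the Green identity above together with the $L^2_{\ell oc}$ bound on $u$ and the assumed asymptotic behavior — the Dirichlet-trace terms still cancel, so only the harmless $\int u\,\Delta\varphi$ term and controllable boundary contributions at infinity survive.

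Finally, having $q\in\mathcal H^{-3/2}(\varGamma)$, I would form $v=\mathscr S_\varGamma^\dagger q\in L^2_{\ell oc}(\mathbb R^2)$ as in Theorem~\ref{theo:ext_slp} and compare it with $u$. By Theorem~\ref{bnjkoplk}, $v$ has equal one-sided Dirichlet traces and its Neumann jump is $q$; by the Green identity just derived for $u$, the difference $w=u-v$ is harmonic in $\mathbb R^2\setminus\varGamma$, has $\gamma_d^-w=\gamma_d^+w$ and $\gamma_n^-w+\gamma_n^+w=0$, hence $-\Delta w=0$ in $\mathcal D'(\mathbb R^2)$, so $w$ is a global harmonic function in $L^2_{\ell oc}(\mathbb R^2)$; invoking the prescribed asymptotic behavior at infinity (the same decay/growth condition that makes the weighted-Sobolev framework of Theorem~\ref{theo:ext_slp} well posed) forces $w\equiv 0$ by a Liouville-type argument. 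I expect the main obstacle to be the second step — verifying rigorously that the corner singularities of the two one-sided Neumann traces cancel upon addition, which requires a careful analysis of the trace spaces $\mathcal H^{3/2}_n(\varGamma)$ versus $\mathcal H^{3/2}(\varGamma)$ near the vertices and the precise topology placed on them; the first and third steps are essentially Green's formula plus Liouville, modulo bookkeeping of the behavior at infinity.
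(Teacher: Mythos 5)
Your overall plan (first show that the Neumann jump lies in $\mathcal H^{-3/2}(\varGamma)$, then identify $u$ with $\mathscr S_\varGamma^\dagger q$ by a uniqueness/Liouville argument) is reasonable, and you correctly single out your second step as the crux; but as written that step, and with it your first step, has a genuine gap. For $u$ only in $L^2_{\ell oc}$ the one-sided traces are the weak traces of Definition~\ref{def_trace_ext}: the jump $[\gamma_d u]_\varGamma$ acts only on $\mathcal H^{1/2}_d(\varGamma)=\gamma_n W^2_d(\mathbb R^2)$ and $[\gamma_n u]_\varGamma$ only on $\mathcal H^{3/2}_n(\varGamma)=\gamma_d W^2_n(\mathbb R^2)$; they are defined by testing against biharmonic liftings with one vanishing trace, see \eqref{weak_trace0}--\eqref{weak_trace1}. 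Consequently there is no Green identity available for an arbitrary test function $\varphi$: the individual terms $\langle\gamma_d^\pm u,\gamma_n\varphi\rangle$ are simply not defined when $\gamma_n\varphi\notin\mathcal H^{1/2}_d(\varGamma)$, so the assertion that ``the Dirichlet-trace terms still cancel'' is not meaningful term by term, and the pairing $\llangle q,\gamma_d\varphi\rrangle$ that you propose to estimate is precisely the object whose existence you are trying to prove --- a circularity. Moreover, the splitting of boundary data into a part with vanishing Neumann trace and a part with vanishing Dirichlet trace is only available away from the vertices (Lemma~\ref{lem:1}); near the corners $W^2_d(\mathbb R^2)+W^2_n(\mathbb R^2)$ is a proper (dense) subspace of $W^2(\mathbb R^2)$, and that is exactly where the difficulty of the theorem sits.

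What is missing, and what the paper supplies, is a structural substitute for the nonexistent Green formula. The paper first subtracts a classical single layer potential with density in $\mathscr A_S^{-1/2}$ to reduce to $u\in\mathscr H^0(\mathbb R^2\setminus\varGamma)=\Delta\big(({W^2_d}(\mathbb R^2)\cap{W^2_n}(\mathbb R^2))^\perp\big)$ as in \eqref{def_harmL2}, then decomposes $u=\Delta\mathsf L_d^S p_1+\Delta\mathcal L_d q_1$ (Lemma~\ref{lem_decomp}), infers $q_1=0$ from $[\gamma_d u]_\varGamma=0$ via \eqref{vbwppl}, reads off $[\gamma_n u]_\varGamma\in\mathcal H^{-3/2}(\varGamma)$ directly from \eqref{knklpoo}, and finally uses the rigidity statement \eqref{45e} to eliminate the leftover affine contribution, rather than a bare Liouville argument. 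Your duality estimate could in principle be salvaged, but only with these same ingredients: you would need to prove that $-(u,\Delta\varphi)_{L^2(\mathbb R^2)}$ depends only on $\gamma_d\varphi$ once $[\gamma_d u]_\varGamma=0$ (lifting-independence, which rests on \eqref{def_harmL2} and \eqref{weak_trace0}), bound it by the trace norm $\|\gamma_d\varphi\|^S_{\frac32}$ by taking the infimum over all liftings in the weighted space $W^2(\mathbb R^2)$ (not $H^2_{\ell oc}$, which ignores the behavior at infinity), and then invoke the density of $\mathcal H^{3/2}_n(\varGamma)$ in $\mathcal H^{3/2}(\varGamma)$ (Theorem~\ref{dense:prop}) to conclude that $q\in\mathcal H^{-3/2}(\varGamma)$; the same machinery is needed again in your last step, since concluding that $w=u-\mathscr S_\varGamma^\dagger q$ is harmonic across $\varGamma$ from the vanishing of both jumps requires testing against functions supported at the corners. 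None of these points appears in your sketch, and without them the argument does not go through.
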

The proof of Theorem~\ref{gvbhnkopl} now relies on a proper combination of Theorems~\ref{theo:traces:intro}, \ref{bnjkoplk} and \ref{vftploik}. 
Thus, denote by $p$ the one-sided trace $\gamma_d^-u^-$ of a given function $u^-$, harmonic in $L^2(\varOmega^-)$.
Theorem~\ref{vftploik} ensures the existence 
of a harmonic function  $u^+$ in $L^2_{\ell oc}(\overline{\varOmega^+})$ (with an appropriate asymptotic behavior) 
such that $\gamma_d^+u^+=p$. Define $q=\gamma_n^-u^-+\gamma_n^+u^+$ (the jump of the Neumann trace) and conclude, 
applying Theorem~\ref{vbghuijko} that 
$\mathscr S^\dagger_\varGamma q|_{\varOmega^\pm}=u^\pm$.%
\par
We shall   also provide a negative result, contrasting with what happens for harmonic functions $u$ such that $u|_{\varOmega^-}\in H^1(\varOmega^-)$
and $u|_{\varOmega^+}\in H^1_{\ell oc}(\overline{\varOmega^+})$. Indeed, such a function
harmonic in $\mathbb R^2\setminus\varGamma$ (and  with a suitable asymptotic behavior) can be represented as the sum 
of a single and double layer potentials. On the contrary:
\begin{theorem}
\label{theo_repres}
There exist functions in $L^2_{\ell oc}(\mathbb R^2)$, harmonic in $\mathbb R^2\setminus\varGamma$ (with a suitable asymptotic behavior) 
that cannot be represented as the sum of a single and a double layer potentials. 
\end{theorem}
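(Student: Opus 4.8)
The plan is to exhibit an explicit obstruction that single and double layer potentials cannot produce, then build a harmonic function in $L^2_{\ell oc}$ that realizes this obstruction. The key structural facts from the preceding results are: by Theorem~\ref{bnjkoplk}, a single layer potential $\mathscr S_\varGamma^\dagger q$ has \emph{matching} Dirichlet traces, $\gamma_d^-u=\gamma_d^+u$, while its Neumann traces jump by $q\in\mathcal H^{-3/2}(\varGamma)$; dually, a double layer potential $\mathscr D_\varGamma^\dagger p$ (extended to $\mathcal H^{-1/2}(\varGamma)$) should have matching Neumann traces and a Dirichlet jump lying in $\mathcal H^{-1/2}(\varGamma)$. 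Hence if $u=\mathscr S_\varGamma^\dagger q+\mathscr D_\varGamma^\dagger p$, then necessarily the Dirichlet jump $[\gamma_d u]:=\gamma_d^+u-\gamma_d^-u$ lies in $\mathcal H^{-1/2}(\varGamma)$ and the Neumann jump $[\gamma_n u]:=\gamma_n^+u+\gamma_n^-u$ lies in $\mathcal H^{-3/2}(\varGamma)$. The strategy is to construct a harmonic $u$ for which at least one of these two jumps fails to be in the required (smaller) space, living only in the larger dual space $\mathcal H^{-1/2}_d(\varGamma)$ or $\mathcal H^{-3/2}_n(\varGamma)$ respectively, which is possible precisely because $\varGamma$ has corners.

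The concrete construction I would use: pick a corner $c$ of the polygon $\varGamma$ with interior opening angle $\omega$, and take the singular function associated with that corner in local polar coordinates $(r,\theta)$ centred at $c$, of the form $u^-=r^{\pi/\omega}\sin(\tfrac{\pi}{\omega}\theta)$ inside $\varOmega^-$ (suitably cut off away from $c$ and corrected to be globally harmonic and to have the right decay at infinity, e.g.\ by subtracting a harmonic polynomial or adding a decaying harmonic tail in $\varOmega^+$), together with $u^+\equiv 0$ (or a minimal harmonic extension) outside. When $\omega>\pi/2$ the exponent $\pi/\omega<2$, so $u^-\in L^2_{\ell oc}$ but $u^-\notin H^1$ near $c$ (when $\omega>\pi$) or at least $\nabla u^-$ is unbounded; more importantly its Neumann trace picks up a contribution that behaves like $r^{\pi/\omega-1}$ along the edges meeting at $c$, and one checks this does \emph{not} belong to $\mathcal H^{-3/2}(\varGamma)$ while it does belong to $\mathcal H^{-3/2}_n(\varGamma)$ — exactly the ``singular contribution'' phenomenon flagged after Theorem~\ref{bnjkoplk}. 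By Theorem~\ref{theo:traces:intro} the one-sided traces of this $u$ make sense in the appropriate dual spaces; by pairing the Neumann jump against a well-chosen test element of $\mathcal H^{3/2}(\varGamma)\setminus\mathcal H^{3/2}_n(\varGamma)$ (the corner-adapted test functions whose existence is what distinguishes the two trace spaces on a polygon) I would show $[\gamma_n u]\notin\mathcal H^{-3/2}(\varGamma)$. Combined with the necessary condition in the previous paragraph, $u$ admits no representation $\mathscr S_\varGamma^\dagger q+\mathscr D_\varGamma^\dagger p$.

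The main obstacle I anticipate is making rigorous the claim that the Neumann (or Dirichlet) jump of the explicit corner-singular $u$ genuinely lies outside $\mathcal H^{-3/2}(\varGamma)$ (resp.\ $\mathcal H^{-1/2}(\varGamma)$): this requires a precise understanding of the gap between $\mathcal H^{3/2}_n(\varGamma)$ and $\mathcal H^{3/2}(\varGamma)$ — equivalently, producing an explicit $v\in H^2_{\ell oc}(\mathbb R^2)$ whose Dirichlet trace is not the Dirichlet trace of any $H^2_{\ell oc}$ function with vanishing Neumann trace — and then estimating the duality pairing $\langle[\gamma_n u],\gamma_d v\rangle$ to see it is finite and nonzero along a sequence where the $\mathcal H^{3/2}_n$-norm of the test functions blows up. This is essentially a sharp corner-asymptotics computation (in the spirit of Dauge or Grisvard), localizing near $c$, expanding both $u$ and the test function in the basis $\{r^{k\pi/\omega}\sin(k\tfrac{\pi}{\omega}\theta)\}$, and matching the leading singular exponents; I would handle it by reducing to the model problem on an infinite sector and invoking the weighted-Sobolev / Mellin-transform description of the relevant trace spaces already set up earlier in the paper. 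A secondary, more routine point is verifying the asymptotic-behavior hypotheses at infinity, which is arranged by adding a harmonic function decaying like $1/|x|$ in $\varOmega^+$ so that the global $u$ lies in whatever weighted space the earlier theorems require.
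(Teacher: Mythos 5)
Your necessary condition is the right obstruction and is exactly the one the paper exploits: by the jump relations, any $u=\mathscr S_\varGamma^\dagger q+\mathscr D_\varGamma^\dagger p$ has $[\gamma_d u]_\varGamma=p\in\mathcal H^{-1/2}(\varGamma)$ and $[\gamma_n u]_\varGamma=q\in\mathcal H^{-3/2}(\varGamma)$. The problem is the construction meant to violate it. First, the corner function $r^{\pi/\omega}\sin(\tfrac{\pi}{\omega}\theta)$ lies in $H^1$ near the corner for \emph{every} opening $\omega\in(0,2\pi)$ (its gradient is $O(r^{\pi/\omega-1})$, and $\int_0^1 r^{2\pi/\omega-1}\,{\rm d}r<\infty$), so your claim that it leaves $H^1$ when $\omega>\pi$ is false; the resulting $u$ is piecewise $H^1$, its Neumann traces lie in $H^{-1/2}(\varGamma)\subset\mathcal H^{-3/2}(\varGamma)$, and by \eqref{ghaqpl} (or Theorem~\ref{mpsdt}) such a $u$ \emph{is} a sum of layer potentials --- the opposite of what you need. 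The genuinely $L^2$-but-not-$H^1$ singularity has the exponent $-\pi/\omega$ (the paper's $r^{-2/3}\sin(2\theta/3)$ at a reentrant corner, $\omega>\pi$). Second, even after correcting the exponent, the sine-type singular function has vanishing Dirichlet data, so extending it by zero to $\varOmega^+$ gives $[\gamma_d u]_\varGamma=0\in\mathcal H^{-1/2}(\varGamma)$, and then points 1 and 2 of Theorem~\ref{mpsdt} force $[\gamma_n u]_\varGamma\in\mathcal H^{-3/2}(\varGamma)$ and $u=\mathscr S_\varGamma^\dagger[\gamma_n u]_\varGamma$: again representable. So neither explicit candidate has a jump outside the small space, and the step you defer --- certifying via Mellin/corner asymptotics that some jump genuinely escapes $\mathcal H^{-3/2}(\varGamma)$ or $\mathcal H^{-1/2}(\varGamma)$ --- is precisely the content of the theorem and remains unproved in your proposal.

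For comparison, the paper's proof needs no explicit corner analysis: the strict inclusion $\mathcal H^{1/2}_d(\varGamma)\subsetneq\mathcal H^{1/2}(\varGamma)$ on a polygon (Geymonat's counterexample quoted in Section~\ref{main_spaces}) yields by duality and density a $p\in\mathcal H^{-1/2}_d(\varGamma)$ with $p\notin\mathcal H^{-1/2}(\varGamma)$; the solvability of the interior Dirichlet problem in $L^2$ (Theorem~\ref{diri_interior}) gives $u^-\in\mathscr H^0(\varOmega^-)$ with $\gamma_d^-u^-=p$; setting $u=u^-$ in $\varOmega^-$ and $u=0$ in $\varOmega^+$ produces $[\gamma_d u]_\varGamma=p\notin\mathcal H^{-1/2}(\varGamma)$, contradicting the necessary condition. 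If you insist on an explicit example, you must exhibit a harmonic $L^2$ function on $\varOmega^-$ whose Dirichlet trace, taken in $\mathcal H^{-1/2}_d(\varGamma)$, is not in $\mathcal H^{-1/2}(\varGamma)$ (equivalently a Neumann trace outside $\mathcal H^{-3/2}(\varGamma)$); that is a nontrivial quantitative statement about the gap between the trace spaces which your sketch does not supply.
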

%
We will end the article by studying the invertibility of the boundary operators introduced in \eqref{gftgvcdr}. Recall that the logarithmic capacity of $\varGamma$ is assumed to be lower than 1 and define:
$$\widetilde{\mathcal H}_n^{-3/2}(\varGamma)=\big\{q\in \mathcal H_n^{-3/2}(\varGamma)\,:\, \llangle q,\mathbf 1_\varGamma\rrangle_{-\frac32,\frac32,n}=0\big\},$$
where  $\llangle \cdot,\cdot\rrangle_{-\frac32,\frac32,n}$ stands   for the duality pairing 
on $\mathcal H^{-3/2}_n(\varGamma)\times \mathcal H^{3/2}_n(\varGamma)$ that extends the $L^2$ inner product.
\begin{theorem}
\label{theoo:6}
The bounded operators $\gamma_d\circ\mathscr S^\dagger_\varGamma:\mathcal H^{-3/2}(\varGamma)\longrightarrow {\mathcal H}^{-1/2}_d(\varGamma)$, 
$\gamma_n^-\circ\mathscr S^\dagger_\varGamma:\mathcal H^{-3/2}(\varGamma)\longrightarrow \widetilde{\mathcal H}^{-3/2}_n(\varGamma)$ 
and $\gamma_n^+\circ\mathscr S^\dagger_\varGamma:\mathcal H^{-3/2}(\varGamma)\longrightarrow {\mathcal H}^{-3/2}_n(\varGamma)$
are surjective but  not injective in general.
\end{theorem}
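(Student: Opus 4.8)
\textbf{Proof plan for Theorem~\ref{theoo:6}.}

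The plan is to treat the three operators in turn, deriving all three from the structural information already gathered about the extended single layer potential $\mathscr S^\dagger_\varGamma$, its traces (Theorem~\ref{bnjkoplk}), and the solvability of the exterior/interior Dirichlet problem (Theorem~\ref{vftploik}).

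\emph{Surjectivity of $\gamma_d\circ\mathscr S^\dagger_\varGamma$.} Here $\gamma_d$ denotes the common value of the two one-sided Dirichlet traces, which coincide by Theorem~\ref{bnjkoplk}. Given $p\in\mathcal H^{-1/2}_d(\varGamma)$, Theorem~\ref{vftploik} produces $u^-\in L^2(\varOmega^-)$ harmonic with $\gamma_d^-u^-=p$ and $u^+\in L^2_{\ell oc}(\overline{\varOmega^+})$ (with the prescribed asymptotic behaviour) harmonic with $\gamma_d^+u^+=p$. Glue them into a single $u\in L^2_{\ell oc}(\mathbb R^2)$, harmonic in $\mathbb R^2\setminus\varGamma$, whose two Dirichlet traces agree. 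Theorem~\ref{vbghuijko} then gives $q=\gamma_n^-u+\gamma_n^+u\in\mathcal H^{-3/2}(\varGamma)$ with $u=\mathscr S^\dagger_\varGamma q$, hence $\gamma_d(\mathscr S^\dagger_\varGamma q)=p$. The one subtlety is checking that the glued $u$ has exactly the asymptotic behaviour required as a hypothesis in Theorem~\ref{vbghuijko}; this is arranged by fixing the free constants/normalisation in the exterior solution of Theorem~\ref{vftploik} (the logarithmic-capacity assumption on $\varGamma$ is what makes this normalisation available).

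\emph{Surjectivity of $\gamma_n^+\circ\mathscr S^\dagger_\varGamma$ and of $\gamma_n^-\circ\mathscr S^\dagger_\varGamma$.} For the exterior Neumann trace, given $g\in\mathcal H^{-3/2}_n(\varGamma)$ I would solve the exterior Neumann Laplace problem (the analogue of Theorem~\ref{vftploik} for Neumann data, which the paper establishes alongside it) to get $u^+\in L^2_{\ell oc}(\overline{\varOmega^+})$ harmonic with $\gamma_n^+u^+=g$ and the right decay; then solve an interior Dirichlet problem with data $\gamma_d^+u^+\in\mathcal H^{-1/2}_d(\varGamma)$ to get $u^-$ with matching Dirichlet trace, glue, and apply Theorem~\ref{vbghuijko} exactly as above to write $u=\mathscr S^\dagger_\varGamma q$; then $\gamma_n^+(\mathscr S^\dagger_\varGamma q)=\gamma_n^+u^+=g$. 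For the interior Neumann trace the only difference is the compatibility constraint: the interior Neumann problem is solvable only for data of mean zero, $\llangle g,\mathbf 1_\varGamma\rrangle_{-\frac32,\frac32,n}=0$, which is precisely the definition of $\widetilde{\mathcal H}^{-3/2}_n(\varGamma)$; conversely $\gamma_n^-(\mathscr S^\dagger_\varGamma q)$ always lies in that subspace because integrating the interior normal derivative of a harmonic $L^2$ function against $\mathbf 1$ vanishes (the extended Gauss--Green identity behind the pairing $\llangle\cdot,\cdot\rrangle_{-\frac32,\frac32,n}$). So the codomains stated are the correct ones and surjectivity onto them follows. The main obstacle in this part is book-keeping the weighted-space asymptotics so that the solutions produced by the interior and exterior problems match up into a function that is genuinely admissible for Theorem~\ref{vbghuijko}; this is where the polygon hypothesis and the capacity normalisation are used.

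\emph{Non-injectivity.} All three maps factor through (or are controlled by) the passage $q\mapsto u=\mathscr S^\dagger_\varGamma q$ followed by a trace, so it suffices to exhibit a nonzero $q\in\mathcal H^{-3/2}(\varGamma)$ with $\mathscr S^\dagger_\varGamma q$ having vanishing Dirichlet trace and both vanishing Neumann traces. By Theorem~\ref{vftploik} there is a nonzero $u^-\in L^2(\varOmega^-)$, harmonic, with $\gamma_d^-u^-=0$ (the classical Dauge-type example \cite{Dauge:1987tg} on a domain with corners); take $u^+=0$ in $\varOmega^+$. The glued function $u$ has $\gamma_d^-u=\gamma_d^+u=0$, so Theorem~\ref{vbghuijko} yields $q=\gamma_n^-u^-\in\mathcal H^{-3/2}(\varGamma)$ with $\mathscr S^\dagger_\varGamma q=u$. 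This $q$ is nonzero (else $u\equiv0$, contradicting $u^-\neq0$), yet $\gamma_d(\mathscr S^\dagger_\varGamma q)=0$ and $\gamma_n^+(\mathscr S^\dagger_\varGamma q)=\gamma_n^+u^+=0$, while $\gamma_n^-(\mathscr S^\dagger_\varGamma q)=q$ lies in $\widetilde{\mathcal H}^{-3/2}_n(\varGamma)$ and is the single element witnessing that even the third operator fails to be injective once one instead starts from a nonzero interior harmonic function with zero interior Neumann trace and zero exterior part — such a function again exists on a polygon by the same corner-singularity analysis. Assembling these examples shows each of the three operators has nontrivial kernel in general, completing the proof. \qed
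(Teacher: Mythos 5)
Your overall architecture --- solve matching interior/exterior boundary value problems, glue, and invoke Theorem~\ref{vbghuijko} (i.e.\ point 2 of Theorem~\ref{mpsdt}) to recognize the glued function as a single layer potential --- is exactly the paper's strategy (its proof runs through Theorems~\ref{nkmpq}, \ref{diri_interior}, \ref{diri_exterior}, \ref{neuman_inter} and \ref{ghbppl}), and your treatments of $\gamma_d\circ\mathscr S^\dagger_\varGamma$ and of $\gamma_n^-\circ\mathscr S^\dagger_\varGamma$ are sound. Two steps, however, do not go through as written. For the surjectivity of $\gamma_n^+\circ\mathscr S^\dagger_\varGamma$ onto all of $\mathcal H^{-3/2}_n(\varGamma)$ you assume an exterior Neumann solvability result for arbitrary data $g$; what the paper establishes (Theorem~\ref{ghbppl}) covers only mean-zero data in $\widetilde{\mathcal H}^{-3/2}_n(\varGamma)$, and this restriction is not cosmetic: any exterior function built from an $L^2$ harmonic part plus a double layer potential has a one-sided Neumann trace of zero mean (this is precisely why $\gamma_n^\pm$ in Definition~\ref{def_trace_ext} take values in $\widetilde{\mathcal H}^{-3/2}_n(\varGamma)$, and why $\mathsf D_\varGamma$ ranges in $\widetilde H^{-1/2}(\varGamma)$), so data $g$ with $\llangle g,\mathbf 1_\varGamma\rrangle_{-\frac32,\frac32,n}\neq 0$ cannot be reached this way. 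The missing one-dimensional direction requires a logarithmically growing piece; the paper supplies it by adding $(\alpha/c_\varGamma)\mathscr S_\varGamma\mathsf e_\varGamma$, the single layer potential of the equilibrium density, whose exterior Neumann trace $\mathsf e_\varGamma$ has nonzero mean. Your plan omits this ingredient.

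Second, your non-injectivity witness for $\gamma_n^-\circ\mathscr S^\dagger_\varGamma$ fails: if $u^-$ is a nonzero interior harmonic function with $\gamma_n^-u^-=0$ and you extend it by zero to $\varOmega^+$, the glued function has $[\gamma_d u]_\varGamma=-\gamma_d^-u^-\neq 0$ (this trace cannot vanish, otherwise $u^-=0$ by point 3 of Theorem~\ref{mpsdt}); hence Theorem~\ref{vbghuijko} does not apply, the glued function is not a single layer potential, and no nonzero element of $\ker\big(\gamma_n^-\circ\mathscr S^\dagger_\varGamma\big)$ is produced. The repair is to extend not by zero but by the exterior Dirichlet solution of Theorem~\ref{diri_exterior} with data $\gamma_d^-u^-$: then the Dirichlet traces match, Theorem~\ref{nkmpq} gives $u=\mathscr S^\dagger_\varGamma q$ with $q=[\gamma_n u]_\varGamma=\gamma_n^+u|_{\varOmega^+}\neq 0$ (else $u=0$), while $\gamma_n^-\mathscr S^\dagger_\varGamma q=\gamma_n^-u^-=0$. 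Your first example (zero interior Dirichlet data, zero exterior extension) is correct and does exhibit non-injectivity of $\gamma_d\circ\mathscr S^\dagger_\varGamma$ and of $\gamma_n^+\circ\mathscr S^\dagger_\varGamma$; note that the paper itself only writes out the surjectivity part, leaving non-injectivity to the corner examples recalled at the beginning of Section~\ref{sec:repres}, so giving explicit kernel elements as you do is a reasonable complement once the third construction is fixed as above.
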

%
%
%
The paper is organized as follows: The following section is dedicated to the reminder of some basic notions about trace operators and   surface potentials. The main function spaces on which the analysis is based when $\varGamma$ is Lipschitz continuous, are introduced in Section~\ref{main_spaces}. They are used in Section~\ref{layer_pot} to extend the notion of surface potential to square integrable 
functions. From Section~\ref{sec:polyn} the boundary $\varGamma$ is assumed to be a $\mathcal C^{1,1}$ 
(curvilenar) polygon. This additional regularity allows the introduction of new function spaces involved in new trace theorems stated in the next section. The ``jump 
relations'' for surface potentials are proved in Section~\ref{SEC:jump}. From Section~\ref{sec:repres}, the analysis focuses on the case 
where $\varGamma$ is a straight polygon. Section~\ref{sec:repres}  is dedicated to  solvability issues for the 
Laplace equation with Dirichlet and Neumann boundary data. 
Finally, in section~\ref{SEC:trans}, we discuss some transmission problems and address the issue of
representing locally square-integrable harmonic functions as surface potentials. We end the paper with the proof of Theorem~\ref{theoo:6}.
\par
 For the ease of the reader, the appendix contains a list of the main function spaces and operators.
\section{Notations and recalls}
\subsection*{Geometric settings}

Let $\varOmega^-$ be an open and bounded planar domain whose boundary  $\varGamma$ is a Jordan curve. The (unbounded) complement 
of $\overline{\varOmega^-}$ is denoted by $\varOmega^+$. 
In the sequel, we shall consider four levels of regularity for 
$\varGamma$: 
It will be either of class 
$\mathcal C^{1,1}$ (referred to as the smooth case), either Lipschitz continuous (see \cite[Definition 1.2.1.1]{Grisvard:1985aa} 
for a precise definition of this notion), either a 
 $\mathcal C^{1,1}$ polygon (see \cite[Definition 1.4.5.1]{Grisvard:1985aa}), or simply a classical (straight) polygon.
In either case, the unit tangent vector field $\tau$ (oriented counterclockwise)  is a.e. well defined on $\varGamma$
and the same applies to the outer unit normal vector field  $n^-=-\tau^\perp$ and to the inner normal vector field $n^+=\tau^\perp$ 
(the superscript $\perp$ meaning that the vector is counterclockwise rotated of an angle $\pi/2$). To lighten the notations, we shall sometimes write simply $n$ instead of $n^-$. 
%
\subsection*{Traces on the boundary of a Lipschitz domain}
In this subsection, we collect some definitions and properties 
about the Dirichlet and Neumann trace operators in the case where 
$\varGamma$ is Lipschitz continuous. On the space
$$\mathscr D_{\varOmega^\pm}(\mathbb R^2)=\big\{u|_{\varOmega^\pm}\,:\,u\in\mathscr D(\mathbb R^2)\big\},$$
the one-sided 
Dirichlet and Neumann trace operators  are classically defined by:
\begin{equation}
\label{def_trace}
\fct{\gamma_d^\pm:\mathscr D_{\varOmega^\pm}(\mathbb R^2)}{L^2(\varGamma)}{u}{u|_{\varGamma}}\qquad\text{and}\qquad
\fct{\gamma_n^\pm:\mathscr D_{\varOmega^\pm}(\mathbb R^2)}{L^2(\varGamma)}{u}{\nabla u\cdot n^\pm|_\varGamma.}
\end{equation}
According to \cite[\S 9.2]{Agranovich:2015us}, when $\varGamma$ is   Lipschitz continuous, the sobolev space $H^s(\varGamma)$ is well (invariantly) defined only for those indices 
$s$ that belong to $[-1,1]$ and rephrasing \cite[Theorem 9.2.1]{Agranovich:2015us} (or 
\cite[Theorem 3.38]{McLean:2000aa}), we have:
\begin{theorem}
\label{dirichl_bound}
The one-sided Dirichlet trace operators $\gamma_d^\pm$ extend by density to bounded  operators from $H^{s+1/2}(\varOmega^\pm)$
to $H^{s}(\varGamma)$ for every  $0<s<1$.
\end{theorem}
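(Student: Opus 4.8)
The plan is to reduce everything, by density, to a uniform a priori estimate on smooth functions, then to localise near $\varGamma$, flatten the boundary, and invoke the classical half-plane trace inequality --- closing the argument directly for the ``easy'' range of exponents and by a separate device for the ``hard'' one.

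Since $\mathscr D_{\varOmega^\pm}(\mathbb R^2)$ is dense in $H^{s+1/2}(\varOmega^\pm)$ (true on any Lipschitz set; for the unbounded component $\varOmega^+$ one composes a universal extension operator with a cut-off), it is enough to prove
$$\|\gamma_d^\pm u\|_{H^s(\varGamma)}\le C(\varGamma,s)\,\|u\|_{H^{s+1/2}(\varOmega^\pm)}\qquad\text{for all }u\in\mathscr D_{\varOmega^\pm}(\mathbb R^2);$$
the resulting extension then agrees with $u\mapsto u|_\varGamma$ on $\mathscr D_{\varOmega^\pm}(\mathbb R^2)$ and does not depend on any auxiliary choice, since for $0<s<1$ the space $H^s(\varGamma)$ is invariantly defined. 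I would split $\|\gamma_d^\pm u\|_{H^s(\varGamma)}^2\asymp\|\gamma_d^\pm u\|_{L^2(\varGamma)}^2+\iint_{\varGamma\times\varGamma}\frac{|u(x)-u(y)|^2}{|x-y|^{1+2s}}\,\mathrm d\sigma(x)\,\mathrm d\sigma(y)$ (the Gagliardo--Slobodeckij characterisation, legitimate precisely because $0<s<1$), the $L^2$-term being bounded by the standard trace inequality $\|\gamma_d^\pm u\|_{L^2(\varGamma)}\lesssim\|u\|_{H^{1/2+\varepsilon}(\varOmega^\pm)}$. A finite partition of unity subordinate to a cover of $\varGamma$ by charts in which, after a rigid motion, $\varGamma$ is the graph $\{x_2=\varphi(x_1)\}$ of a Lipschitz function and $\varOmega^\pm$ lies on one side of it, reduces the double integral to finitely many local pieces (the off-diagonal ``tails'' being harmless $L^2$-terms). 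For $0<s\le\tfrac12$, each local piece is treated by flattening through the bi-Lipschitz map $\kappa\colon(x_1,x_2)\mapsto(x_1,x_2-\varphi(x_1))$, which turns it into the Gagliardo seminorm of $(u\,\chi)\circ\kappa^{-1}(\cdot,0)$ and is controlled by the half-plane trace inequality $[\,w(\cdot,0)\,]_{H^s(\mathbb R)}^2\lesssim\|w\|_{H^{s+1/2}(\mathbb R^2_+)}^2$ (an elementary Fourier estimate, valid for every $s>0$); since here $s+\tfrac12\le 1$, the map $\kappa$ is an isomorphism of $H^{s+1/2}$ and the estimate closes.

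The main obstacle will be the range $\tfrac12<s<1$: then $s+\tfrac12>1$, and a merely bi-Lipschitz flattening no longer preserves $H^{s+1/2}$, so the previous reduction fails. I would instead estimate the local Gagliardo double integral directly, \emph{without} changing variables in $u$. For boundary points $x,y$ at small distance $\delta=|x-y|$, write
$$u(x)-u(y)=\bigl(u(x)-u(x_\delta)\bigr)+\bigl(u(x_\delta)-u(y_\delta)\bigr)+\bigl(u(y_\delta)-u(y)\bigr),$$
where $x_\delta,y_\delta$ are obtained by moving a distance $\sim\delta$ into $\varOmega^\pm$ along the fixed direction transverse to the graph. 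The middle difference joins two interior points at mutual distance $\sim\delta$ lying at distance $\sim\delta$ from $\varGamma$ and is controlled by an interior path integral of $\nabla u$; each lateral difference satisfies $|u(x)-u(x_\delta)|^2\le\delta\int_0^\delta|\nabla u|^2$ along the transversal segment. Integrating these bounds against $|x-y|^{-1-2s}\,\mathrm d\sigma\,\mathrm d\sigma$ and using Fubini produces the weighted quantity $\int_{\varOmega^\pm}|\nabla u(\xi)|^2\,\mathrm{dist}(\xi,\varGamma)^{1-2s}\,\mathrm d\xi$ (up to $\|u\|_{H^1(\varOmega^\pm)}^2$-type lower-order terms), and the crucial ingredient is the fractional Hardy inequality
$$\int_{\varOmega^\pm}\frac{|v(\xi)|^2}{\mathrm{dist}(\xi,\varGamma)^{2(s-1/2)}}\,\mathrm d\xi\ \lesssim\ \|v\|_{H^{s-1/2}(\varOmega^\pm)}^2,$$
applied to $v=\nabla u$. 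This inequality holds on Lipschitz sets exactly because $s-\tfrac12<\tfrac12$, which is precisely why the statement stops at $s<1$ (for $s\ge1$ one would need $\nabla u$ to vanish on $\varGamma$, which it does not). Since $\|\nabla u\|_{H^{s-1/2}(\varOmega^\pm)}\le\|u\|_{H^{s+1/2}(\varOmega^\pm)}$, reassembling the partition of unity yields the a priori bound; the single leftover value $s=\tfrac12$ is the classical statement $\gamma_d^\pm\colon H^1(\varOmega^\pm)\to H^{1/2}(\varGamma)$, or follows by interpolation between the two ranges already handled.
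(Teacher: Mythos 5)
Your proposal is correct in outline, but note first that the paper contains no proof of Theorem~\ref{dirichl_bound}: it is quoted directly from \cite[Theorem 9.2.1]{Agranovich:2015us} and \cite[Theorem 3.38]{McLean:2000aa}. What you give is a self-contained version of the classical argument, and its structure is sound: density of $\mathscr D_{\varOmega^\pm}(\mathbb R^2)$ reduces everything to an a priori bound; for $0<s\le \tfrac12$ the bi-Lipschitz flattening is legitimate because $s+\tfrac12\le 1$, and the half-plane Fourier estimate closes that range; for $\tfrac12<s<1$ your reduction of the Gagliardo seminorm to the weighted quantity $\int_{\varOmega^\pm}|\nabla u|^2\,\mathrm{dist}(\cdot,\varGamma)^{1-2s}$, followed by the fractional Hardy inequality with exponent $s-\tfrac12\in(0,\tfrac12)$ (which on a Lipschitz domain needs no boundary condition precisely because the exponent stays below $\tfrac12$), is the standard and correct mechanism, and it does explain transparently why the statement stops at $s=1$. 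Three routine points should be made explicit when writing it up: (i) the inward shift producing $x_\delta,y_\delta$ must be by $C\delta$ with $C$ large compared with the Lipschitz constant of the local graph, otherwise the segment joining $x_\delta$ to $y_\delta$, hence your path integral for the middle term, may leave $\varOmega^\pm$; (ii) the dyadic/Fubini bookkeeping converting the three-term splitting into the weighted gradient integral deserves a line of computation, since the connecting path is one-dimensional while the pair $(x,y)$ carries two boundary variables; (iii) for the unbounded component $\varOmega^+$ both the weighted integral and the Hardy inequality should be localized to a bounded collar of $\varGamma$ by a cut-off, and the bound $\|\nabla u\|_{H^{s-1/2}(\varOmega^\pm)}\lesssim\|u\|_{H^{s+1/2}(\varOmega^\pm)}$ uses the extension property of Lipschitz domains. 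With these details supplied your proof is complete; compared with the paper's citation it buys an elementary, chart-level argument that exhibits exactly where the Lipschitz regularity and the restriction $0<s<1$ enter.
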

%
According to \cite[Theorem 1]{Marschall:1987wa} we can also state:
%
\begin{theorem}
\label{kernel}The Dirichlet and Neumann trace operators \eqref{def_trace} extend by density to bounded operators on $H^2(\varOmega^\pm)$ 
(valued in $L^2(\varGamma)$) and $\ker \gamma_d^\pm\cap\ker \gamma_n^\pm=H^2_0(\varOmega^\pm)$,
where we recall that $H^2_0(\varOmega^\pm)$ is the closure of ${\mathscr D(\varOmega^\pm)}$ in $H^2(\varOmega^\pm)$.
\end{theorem}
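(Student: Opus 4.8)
I would split the statement into the boundedness of the extended operators and the kernel identity, handling both signs $\pm$ at once. For the boundedness, Theorem~\ref{dirichl_bound} with $s=\tfrac12$ already gives a bounded operator $\gamma_d^\pm\colon H^1(\varOmega^\pm)\to H^{1/2}(\varGamma)$, hence, through $H^2(\varOmega^\pm)\hookrightarrow H^1(\varOmega^\pm)$ and $H^{1/2}(\varGamma)\hookrightarrow L^2(\varGamma)$, a bounded operator $\gamma_d^\pm\colon H^2(\varOmega^\pm)\to L^2(\varGamma)$. For the Neumann trace I would write $\gamma_n^\pm u=\sum_{i=1}^2 n^\pm_i\,\gamma_d^\pm(\partial_i u)$: when $u\in H^2(\varOmega^\pm)$ each $\partial_i u$ belongs to $H^1(\varOmega^\pm)$, so $\gamma_d^\pm(\partial_i u)\in H^{1/2}(\varGamma)\subset L^2(\varGamma)$, and, $\varGamma$ being Lipschitz, the components $n^\pm_i$ lie in $L^\infty(\varGamma)$; the product is then in $L^2(\varGamma)$, with norm controlled by $\|u\|_{H^2(\varOmega^\pm)}$. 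Since $\mathscr D_{\varOmega^\pm}(\mathbb R^2)$ is dense in $H^2(\varOmega^\pm)$ (the domains $\varOmega^\pm$ admitting a Sobolev extension operator), these bounded operators are the continuous extensions of the ones in \eqref{def_trace}.

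For the kernel identity, the inclusion $H^2_0(\varOmega^\pm)\subset\ker\gamma_d^\pm\cap\ker\gamma_n^\pm$ is immediate: any $\varphi\in\mathscr D(\varOmega^\pm)$ vanishes in a neighbourhood of $\varGamma$, so $\gamma_d^\pm\varphi=\gamma_n^\pm\varphi=0$, and one passes to the closure by the continuity just obtained. For the reverse inclusion I would take $u\in H^2(\varOmega^\pm)$ with $\gamma_d^\pm u=\gamma_n^\pm u=0$ and first prove that its extension by zero $E_0u$ lies in $H^2(\mathbb R^2)$. Using the classical identification $H^1_0(\varOmega^\pm)=\{v\in H^1(\varOmega^\pm):\gamma_d^\pm v=0\}$ for Lipschitz domains, applied to $v=u$, gives $E_0u\in H^1(\mathbb R^2)$ with $\nabla(E_0u)=E_0(\nabla u)$. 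Next I would check that $\gamma_d^\pm(\partial_i u)=0$ for $i=1,2$: on $\varGamma$ the field $\gamma_d^\pm(\nabla u)$ decomposes into its normal component, equal to $\gamma_n^\pm u=0$, and its tangential component $\tau\cdot\gamma_d^\pm(\nabla u)$, which coincides with the arclength derivative of $\gamma_d^\pm u$ — the Dirichlet trace commuting with tangential differentiation along $\varGamma$, as one sees on smooth functions and then by density — and hence vanishes because $\gamma_d^\pm u=0$. Applying the same identification to each $v=\partial_i u\in H^1(\varOmega^\pm)$ yields $E_0(\partial_i u)\in H^1(\mathbb R^2)$, so that $E_0u\in H^2(\mathbb R^2)$, with support in $\overline{\varOmega^\pm}$.

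It then remains to approximate $E_0u$ in $H^2(\mathbb R^2)$ by functions of $\mathscr D(\varOmega^\pm)$. For this I would use a finite partition of unity subordinate to a covering of $\varGamma$ by open sets in which, after a rigid motion, $\varGamma$ is the graph of a Lipschitz function with $\varOmega^\pm$ on one side of it; on each such set one translates the localized piece of $E_0u$ by a small vector pointing into $\varOmega^\pm$ — which moves its support to a compact subset of the \emph{open} set $\varOmega^\pm$ — using continuity of translations in $H^2(\mathbb R^2)$, then mollifies; for the unbounded component $\varOmega^+$ one additionally multiplies by a cut-off $\chi(\cdot/R)$ and lets $R\to+\infty$. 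Reassembling the localized pieces produces functions of $\mathscr D(\varOmega^\pm)$ converging to $u$ in $H^2(\varOmega^\pm)$, so $u\in H^2_0(\varOmega^\pm)$.

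The delicate part is this last step, together with the two facts about Lipschitz curves on which the argument rests: the commutation of $\gamma_d^\pm$ with tangential differentiation, used to transfer the vanishing of $\gamma_d^\pm u$ to $\nabla u$, and the arrangement of the translations so that their direction points into $\varOmega^\pm$ \emph{uniformly} along $\varGamma$; both are classical but genuinely rely on the local Lipschitz-graph description of $\varGamma$. Alternatively, the whole statement can simply be quoted from \cite[Theorem~1]{Marschall:1987wa}.
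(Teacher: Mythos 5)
Your argument is correct, but it is worth saying that the paper itself offers no proof of this statement: it is quoted verbatim from \cite[Theorem 1]{Marschall:1987wa}, which is exactly the fallback you mention in your last sentence. What you have written is therefore a genuine self-contained proof of the special case needed here ($H^2$, two dimensions), and it is sound. The boundedness part is exactly as easy as you say (Theorem~\ref{dirichl_bound} with $s=\tfrac12$ for $\gamma_d^\pm$, and the factorization $\gamma_n^\pm u=\sum_i n_i^\pm\,\gamma_d^\pm(\partial_i u)$ with $n_i^\pm\in L^\infty(\varGamma)$ for $\gamma_n^\pm$). For the kernel identity, the two points you single out are indeed where all the content sits, and both are legitimate: the decomposition $\gamma_d^\pm(\nabla u)=(\partial_\tau\gamma_d^\pm u)\,\tau+(\gamma_n^\pm u)\,n^\pm$ is precisely the structure the paper itself invokes later (via \cite[Theorem 1.5.2.4]{Grisvard:1985aa}) when describing the range of $(\gamma_d,\gamma_n)$ on $H^2$; and to pass it from $\mathscr D_{\varOmega^\pm}(\mathbb R^2)$ to $H^2(\varOmega^\pm)$ by density it suffices to read the identity in $H^{-1}(\varGamma)$, where both sides depend continuously on $u\in H^2(\varOmega^\pm)$, and then observe that the left-hand side lies in $L^2(\varGamma)$. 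The final step --- that a function of $H^2(\mathbb R^2)$ supported in $\overline{\varOmega^\pm}$ is an $H^2(\mathbb R^2)$-limit of elements of $\mathscr D(\varOmega^\pm)$ --- is \cite[Theorem 3.29]{McLean:2000aa}, proved exactly by your partition-of-unity, inward-translation and mollification scheme (with the extra cut-off at infinity for $\varOmega^+$, which you could also mention when asserting the density of $\mathscr D_{\varOmega^+}(\mathbb R^2)$ in $H^2(\varOmega^+)$ in the first part). In short: the paper buys the result by citation; your route buys a transparent, elementary proof at the cost of importing two classical Lipschitz-boundary facts, both of which are available in the references the paper already uses.
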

%
The Neumann trace operator can actually be defined on a larger space than $H^2(\varOmega^\pm)$, namely on:
$$H^1(\varOmega^\pm,\Delta)=\big\{u\in H^1(\varOmega^\pm)\,:\,\Delta u \in L^2(\varOmega^\pm)\big\}.$$
Thus, according to \cite[Lemma 4.3]{McLean:2000aa}, for every $u\in H^1(\varOmega^\pm,\Delta)$, there exists a unique $g_u\in H^{-1/2}(\varGamma)$ such 
that:
$$\big\langle g_u,\gamma_d^\pm v\big\rangle_{-\frac12,\frac12}=(\Delta u,v)_{L^2(\varOmega^\pm)}-(\nabla u,\nabla v)_{L^2(\varOmega^\pm;\mathbb R^2)}
\forallt v\in H^1(\varOmega^\pm),$$
where $\langle\cdot,\cdot\rangle_{-\frac12,\frac12}$ stands for the duality bracket between the spaces $H^{-1/2}(\varGamma)$ 
and $H^{1/2}(\varGamma)$, that extends the $L^2$ inner product. Since $\mathscr D_{\varOmega^\pm}(\mathbb R^2)$ is dense in 
$H^1(\varOmega^\pm,\Delta)$ (see \cite[Lemma 1.5.3.9]{Grisvard:1985aa}), we are allowed to denote $g_u=\gamma^\pm_n u$ and we have
(see  \cite[Theorem 4.4]{McLean:2000aa} for the Green's identity):
\begin{prop}
\label{prop:21}
The Neumann trace operators $\gamma^\pm_n$ defined in \eqref{def_trace} extend   by density to    bounded operators from $H^1(\varOmega^\pm,\Delta)$ 
into $H^{-1/2}(\varGamma)$. Moreover, the second Green's identity holds:
\begin{equation}
\label{green}
(\Delta u,v)_{L^2(\varOmega^\pm)}-(u,\Delta v)_{L^2(\varOmega^\pm)}=\big\langle \gamma^\pm_n u,\gamma^\pm_0v\big\rangle_{-\frac12,\frac12}-
\big\langle \gamma^\pm_n v,\gamma^\pm_0u\big\rangle_{-\frac12,\frac12}\forallt u,v\in  H^1(\varOmega^\pm,\Delta).
\end{equation}
\end{prop}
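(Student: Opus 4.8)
The plan is to build on the variational characterization of $g_u$ recorded just above (\cite[Lemma 4.3]{McLean:2000aa}), whose crux is the following: the linear form $L_u$ on $H^1(\varOmega^\pm)$ sitting on the right-hand side of the relation defining $g_u$ vanishes on $\mathscr D(\varOmega^\pm)$ --- this is exactly the distributional definition of $\Delta u\in L^2(\varOmega^\pm)$ --- hence on $\ker\big(\gamma_d^\pm|_{H^1(\varOmega^\pm)}\big)=H^1_0(\varOmega^\pm)$ by density, so that $L_u$ factors through the Dirichlet trace and produces $g_u\in H^{-1/2}(\varGamma)$, uniquely so because $\gamma_d^\pm$ maps $H^1(\varOmega^\pm)$ onto $H^{1/2}(\varGamma)$. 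It then remains to establish: (i) the continuity of $u\mapsto g_u$ from $H^1(\varOmega^\pm,\Delta)$ into $H^{-1/2}(\varGamma)$; (ii) that $g_u$ coincides with the classical Neumann trace $\gamma_n^\pm u$ of \eqref{def_trace} whenever $u\in\mathscr D_{\varOmega^\pm}(\mathbb R^2)$, which --- combined with the density of $\mathscr D_{\varOmega^\pm}(\mathbb R^2)$ in $H^1(\varOmega^\pm,\Delta)$ (\cite[Lemma 1.5.3.9]{Grisvard:1985aa}) and with (i) --- legitimizes the notation $\gamma_n^\pm u:=g_u$; and (iii) the second Green identity \eqref{green}.

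For (i), I would fix $\phi\in H^{1/2}(\varGamma)$ and choose a lifting $v=E\phi\in H^1(\varOmega^\pm)$, where $E$ is a bounded right inverse of the Dirichlet trace $\gamma_d^\pm:H^1(\varOmega^\pm)\to H^{1/2}(\varGamma)$ (provided by the trace theorem), so that $\gamma_d^\pm v=\phi$ and $\|v\|_{H^1(\varOmega^\pm)}\le C\|\phi\|_{H^{1/2}(\varGamma)}$. Since $|L_u(v)|\le\|u\|_{H^1(\varOmega^\pm,\Delta)}\,\|v\|_{H^1(\varOmega^\pm)}$ by Cauchy--Schwarz, it follows that $\big|\langle g_u,\phi\rangle_{-\frac12,\frac12}\big|=|L_u(v)|\le C\|u\|_{H^1(\varOmega^\pm,\Delta)}\,\|\phi\|_{H^{1/2}(\varGamma)}$, and taking the supremum over $\|\phi\|_{H^{1/2}(\varGamma)}\le1$ gives $\|g_u\|_{H^{-1/2}(\varGamma)}\le C\|u\|_{H^1(\varOmega^\pm,\Delta)}$.

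For (ii), when $u\in\mathscr D_{\varOmega^\pm}(\mathbb R^2)$ --- in particular $\nabla u$ has compact support, which handles the unbounded case --- the relation defining $g_u$ is, after integration by parts, nothing but the classical first Green formula on $\varOmega^\pm$: it states that $\langle g_u,\gamma_d^\pm v\rangle_{-\frac12,\frac12}$ equals $\int_\varGamma(\nabla u\cdot n^\pm)\,\gamma_d^\pm v\ds$, first for $v$ smooth up to $\varGamma$ and then, by density, for every $v\in H^1(\varOmega^\pm)$, so that $g_u=\nabla u\cdot n^\pm|_\varGamma=\gamma_n^\pm u$. Finally, for (iii), since $H^1(\varOmega^\pm,\Delta)\subset H^1(\varOmega^\pm)$ one may use $v\in H^1(\varOmega^\pm,\Delta)$ as a test function in the relation defining $g_u$ and, by symmetry, $u$ in the one defining $g_v$; subtracting the two identities makes the gradient volume terms cancel and leaves exactly \eqref{green}. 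I do not anticipate a genuine obstacle: the whole argument is essentially bookkeeping around the distributional meaning of $\Delta u$ and the surjectivity and lifting properties of the Dirichlet trace, the only mildly delicate point being the passage from an operator-norm bound on $L_u$ to a bound in the dual norm of $H^{-1/2}(\varGamma)$, which is exactly what the bounded lifting $E$ is for.
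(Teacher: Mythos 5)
Your argument is correct and is essentially the same as what the paper relies on: the paper simply invokes \cite[Lemma 4.3, Theorem 4.4]{McLean:2000aa} together with the density of $\mathscr D_{\varOmega^\pm}(\mathbb R^2)$ in $H^1(\varOmega^\pm,\Delta)$ from \cite[Lemma 1.5.3.9]{Grisvard:1985aa}, and your proposal reconstructs exactly that standard variational argument (factorization of $L_u$ through the surjective Dirichlet trace, bounded lifting for the $H^{-1/2}$ estimate, consistency on $\mathscr D_{\varOmega^\pm}(\mathbb R^2)$, and subtraction of the two defining identities for \eqref{green}).
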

The space $H^{3/2}(\varOmega^\pm,\Delta)=\big\{u\in H^{3/2}(\varOmega^\pm)\,:\,\Delta u \in L^2(\varOmega^\pm)\big\}$ (provided 
with the graph norm) is a subspace of $H^1(\varOmega^\pm,\Delta)$ and according to \cite[Lemma 3.2]{Gesztesy:2011aa}:
\begin{prop}
\label{prop:H32}
The operators $\gamma_n^\pm:H^{3/2}(\varOmega^\pm,\Delta)\longrightarrow L^2(\varGamma)$ are bounded and onto.
\end{prop}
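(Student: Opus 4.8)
The plan is to establish boundedness and surjectivity in turn; in both parts the engine is the optimal ($H^{3/2}$) regularity theory for the Laplacian on bounded Lipschitz domains (Jerison--Kenig) together with the endpoint Dirichlet trace estimate $\gamma_d^\pm\colon H^{3/2}(\varOmega^\pm)\to H^1(\varGamma)$, where $H^1(\varGamma)$ denotes the intrinsically defined space of $L^2(\varGamma)$ functions whose tangential gradient lies in $L^2(\varGamma)$. For boundedness, note that $H^{3/2}(\varOmega^\pm,\Delta)\subset H^1(\varOmega^\pm,\Delta)$, so Proposition~\ref{prop:21} already gives $\gamma_n^\pm u\in H^{-1/2}(\varGamma)$ and it suffices to prove the a priori bound $\|\gamma_n^\pm u\|_{L^2(\varGamma)}\lesssim \|u\|_{H^{3/2}(\varOmega^\pm)}+\|\Delta u\|_{L^2(\varOmega^\pm)}$. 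Given $u\in H^{3/2}(\varOmega^\pm,\Delta)$, the trace estimate puts $p:=\gamma_d^\pm u$ in $H^1(\varGamma)$, and I would split $u=u_{\mathrm h}+u_0$ where $u_{\mathrm h}$ solves $\Delta u_{\mathrm h}=0$, $\gamma_d^\pm u_{\mathrm h}=p$ and $u_0$ solves $\Delta u_0=\Delta u$, $\gamma_d^\pm u_0=0$. By the Jerison--Kenig solvability of the Dirichlet problem with $H^1(\varGamma)$ data (resp.\ with $L^2(\varOmega^\pm)$ right-hand side), both pieces lie in $H^{3/2}(\varOmega^\pm)$ and, more to the point, admit $L^2(\varGamma)$ Neumann traces with $\|\gamma_n^\pm u_{\mathrm h}\|_{L^2(\varGamma)}\lesssim\|p\|_{H^1(\varGamma)}\lesssim\|u\|_{H^{3/2}(\varOmega^\pm)}$ and $\|\gamma_n^\pm u_0\|_{L^2(\varGamma)}\lesssim\|\Delta u\|_{L^2(\varOmega^\pm)}$; summing yields the claim. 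Since this decomposition is available for every element of $H^{3/2}(\varOmega^\pm,\Delta)$, no density step is needed.

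If one prefers a self-contained treatment of the zero-Dirichlet-data piece $u_0$, the Rellich--Ne\v{c}as identity handles it directly: testing against a vector field $h$ on $\mathbb R^2$ with $h\cdot n^\pm\ge c_0>0$ a.e.\ on $\varGamma$ — such an $h$ exists because the Lipschitz-graph structure of $\varGamma$ gives uniform transversality, glued by a partition of unity — and using $\gamma_d^\pm u_0=0$ (hence vanishing tangential gradient on $\varGamma$) gives $c_0\|\gamma_n^\pm u_0\|_{L^2(\varGamma)}^2\le\int_\varGamma(h\cdot n^\pm)|\gamma_n^\pm u_0|^2\ds\lesssim\|\nabla u_0\|_{L^2(\varOmega^\pm)}^2+\|\Delta u\|_{L^2(\varOmega^\pm)}\|\nabla u_0\|_{L^2(\varOmega^\pm)}$, with $\|\nabla u_0\|_{L^2}\lesssim\|\Delta u\|_{L^2}$ by Poincar\'e; this integration by parts is legitimate on a Lipschitz domain since $n^\pm\in L^\infty(\varGamma)$ and the arc-length measure is finite. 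In the exterior case the only extra care is that elements of $\mathscr D_{\varOmega^+}(\mathbb R^2)$ are compactly supported and that one localises near $\varGamma$ before invoking the (bounded-domain) Jerison--Kenig theory; the conclusion is unchanged.

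For surjectivity, let $g\in L^2(\varGamma)$. On $\varOmega^-$ I would take $c=|\varOmega^-|^{-1}\int_\varGamma g\,\ds$ and solve the Neumann problem $\Delta w=c$ in $\varOmega^-$, $\gamma_n^- w=g$; the choice of $c$ makes the compatibility condition $\int_{\varOmega^-}\Delta w=\int_\varGamma g\,\ds$ hold, a variational solution $w\in H^1(\varOmega^-)$ exists, Jerison--Kenig regularity for the $L^2$-Neumann problem upgrades it to $w\in H^{3/2}(\varOmega^-)$, and since $\Delta w=c\in L^2(\varOmega^-)$ one gets $w\in H^{3/2}(\varOmega^-,\Delta)$ with $\gamma_n^- w=g$. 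On $\varOmega^+$, fix a ball $B\supset\overline{\varOmega^-}$, set $D=B\setminus\overline{\varOmega^-}$ — a bounded Lipschitz domain whose boundary components $\varGamma$ and $\partial B$ are at positive distance — and solve the mixed problem $\Delta w=0$ in $D$, $\gamma_n^+ w=g$ on $\varGamma$, $w=0$ on $\partial B$ (no compatibility condition is needed now, a Dirichlet condition being imposed on part of $\partial D$). Because $\varGamma\cap\partial B=\varnothing$, the regularity near $\varGamma$ (pure $L^2$-Neumann) and near $\partial B$ (pure smooth Dirichlet) do not interact, so $w\in H^{3/2}(D)$; choosing $\chi\in\mathscr D(B)$ with $\chi\equiv1$ near $\varGamma$ and setting $u=\chi w$ (extended by $0$ to $\varOmega^+$) gives $u\in H^{3/2}(\varOmega^+)$ of compact support, $\Delta u=\chi\Delta w+2\nabla\chi\cdot\nabla w+w\,\Delta\chi\in L^2(\varOmega^+)$ — using $\nabla w\in H^{1/2}(D;\mathbb R^2)\subset L^2$ — and $\gamma_n^+ u=g$ since $\chi\equiv1$ and $\nabla\chi=0$ on $\varGamma$. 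Hence $\gamma_n^\pm$ is onto $L^2(\varGamma)$.

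The whole difficulty is concentrated in the two Lipschitz-domain inputs: the optimal $H^{3/2}$ elliptic regularity of the Laplacian and the endpoint trace estimate $H^{3/2}(\varOmega^\pm)\to H^1(\varGamma)$. These are exactly what makes $L^2(\varGamma)$ — rather than the merely $H^{-1/2}(\varGamma)$ of Proposition~\ref{prop:21}, or the $H^{1-\varepsilon}(\varGamma)$ one could extract from Theorem~\ref{dirichl_bound} — the correct target, and they are precisely where the Lipschitz (versus smooth) geometry bites, since bi-Lipschitz changes of coordinates do not preserve $H^{3/2}$ and the boundary cannot be flattened away. Everything else — the harmonic/zero-data splitting, the Rellich--Ne\v{c}as identity, the transversal vector field, and the cut-off bookkeeping — is routine.
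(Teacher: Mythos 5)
The paper does not actually prove Proposition~\ref{prop:H32}: it is quoted directly from \cite[Lemma 3.2]{Gesztesy:2011aa}, so there is no internal argument to compare with. Your reconstruction from the Jerison--Kenig/Verchota $L^2$ boundary theory (the regularity problem, the $L^2$-Neumann problem, the square-function characterisation of $H^{3/2}$ harmonic functions, Rellich estimates) is exactly the circle of ideas behind the cited lemma, and the surjectivity half (interior Neumann problem with the compatibility constant, exterior mixed problem on $B\setminus\overline{\varOmega^-}$ followed by a cut-off) is sound.

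There is, however, one step in the boundedness half that is not justified as written: the ``endpoint Dirichlet trace estimate'' $\gamma_d^\pm\colon H^{3/2}(\varOmega^\pm)\to H^1(\varGamma)$ for a general Lipschitz boundary, which you use to place $p=\gamma_d^\pm u$ in $H^1(\varGamma)$ with $\|p\|_{H^1(\varGamma)}\lesssim\|u\|_{H^{3/2}(\varOmega^\pm)}$ \emph{before} decomposing. The Lipschitz trace theory gives $\gamma_d^\pm\colon H^{s+1/2}(\varOmega^\pm)\to H^{s}(\varGamma)$ only for $0<s<1$ (Theorem~\ref{dirichl_bound}); the endpoint $s=1$ cannot be reached by flattening (bi-Lipschitz changes of variables do not preserve $H^{3/2}$, as you yourself note), nor by interpolating between $H^1$ and $H^2$, and no standard reference asserts it on all of $H^{3/2}(\varOmega^\pm)$ for a general Lipschitz $\varGamma$ --- this is precisely why Gesztesy--Mitrea state it only on the subspace where $\Delta u\in L^2(\varOmega^\pm)$, i.e.\ it is (a piece of) the proposition being proved, not an available input. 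The repair is simply to reverse the order of your own argument: first define $u_0$ as the zero-Dirichlet solution with right-hand side $\Delta u$ (so $u_0\in H^{3/2}(\varOmega^\pm)$ by the Jerison--Kenig bound for the Dirichlet Laplacian), set $u_{\mathrm h}=u-u_0$, which is harmonic and in $H^{3/2}$, and invoke the Dahlberg/Jerison--Kenig/Verchota equivalence for harmonic functions ($u_{\mathrm h}\in H^{3/2}$ iff the nontangential maximal function of $\nabla u_{\mathrm h}$ lies in $L^2(\varGamma)$ iff $\gamma_d u_{\mathrm h}\in H^1(\varGamma)$) to obtain $\|\gamma_n^\pm u_{\mathrm h}\|_{L^2(\varGamma)}\lesssim\|u_{\mathrm h}\|_{H^{3/2}}\lesssim\|u\|_{H^{3/2}}+\|\Delta u\|_{L^2}$; the membership $\gamma_d^\pm u\in H^1(\varGamma)$ then comes out as a by-product, valid on $H^{3/2}(\varOmega^\pm,\Delta)$ only. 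A smaller point of the same nature: the Rellich--Ne\v{c}as identity for $u_0$ is not legitimate merely because $n^\pm\in L^\infty(\varGamma)$ and the arc-length measure is finite --- the boundary values of $\nabla u_0$ are exactly what is at stake --- and should be justified by Verchota-type approximation of $\varOmega^\pm$ by smooth subdomains with uniform Lipschitz character (or by citing the $L^2$ Neumann-trace property of the domain of the Dirichlet Laplacian).
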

Finally, the following density result will be useful in the sequel:
\begin{prop}
\label{prop:den}
The spaces  $\gamma_d \mathscr D_{\varOmega^\pm}(\mathbb R^2)$ and $\gamma_n \mathscr D_{\varOmega^\pm}(\mathbb R^2)$ 
are  dense in $L^2(\varGamma)$.
\end{prop}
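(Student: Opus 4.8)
The plan is to prove the two density statements separately but with parallel arguments, reducing each to the surjectivity-type results already at hand. Consider first the Dirichlet case, i.e. the density of $\gamma_d\mathscr D_{\varOmega^\pm}(\mathbb R^2)$ in $L^2(\varGamma)$. Since $\mathscr D_{\varOmega^\pm}(\mathbb R^2)$ is dense in $H^{3/2}(\varOmega^\pm,\Delta)$ (indeed it is dense in $H^2(\varOmega^\pm)$, which continuously embeds in that space, so one needs a small check that the graph-norm closure still contains all of $\mathscr D_{\varOmega^\pm}(\mathbb R^2)$ densely — this follows from the density of $\mathscr D_{\varOmega^\pm}(\mathbb R^2)$ in $H^1(\varOmega^\pm,\Delta)$ cited above together with $H^2$-approximation), it suffices to show that $\gamma_d H^{3/2}(\varOmega^\pm,\Delta)$ is dense in $L^2(\varGamma)$. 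For the Neumann case, Proposition~\ref{prop:H32} already tells us that $\gamma_n:H^{3/2}(\varOmega^\pm,\Delta)\to L^2(\varGamma)$ is \emph{onto}; combined with the density of $\mathscr D_{\varOmega^\pm}(\mathbb R^2)$ in $H^{3/2}(\varOmega^\pm,\Delta)$ and the continuity of $\gamma_n$ on that space, the image $\gamma_n\mathscr D_{\varOmega^\pm}(\mathbb R^2)$ is dense in $L^2(\varGamma)$. So the Neumann statement is essentially immediate from the tools quoted.

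For the Dirichlet statement I would argue by duality: suppose $g\in L^2(\varGamma)$ annihilates $\gamma_d\mathscr D_{\varOmega^\pm}(\mathbb R^2)$, i.e. $\int_\varGamma g\,\gamma_d^\pm u\,\ds=0$ for all $u\in\mathscr D_{\varOmega^\pm}(\mathbb R^2)$, and show $g=0$. The idea is to solve an auxiliary boundary value problem with datum $g$ and use Green's identity~\eqref{green} to force $g$ to vanish. Concretely, let $w$ solve $\Delta w=0$ in $\varOmega^\pm$ (in the bounded case; in the exterior case one adds the appropriate behavior at infinity) with $\gamma_n^\pm w = g$ — by Proposition~\ref{prop:H32} one can realize such a $w$ in $H^{3/2}(\varOmega^\pm,\Delta)$, or alternatively solve a Neumann problem variationally in $H^1(\varOmega^\pm,\Delta)$. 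Then for $u\in\mathscr D_{\varOmega^\pm}(\mathbb R^2)$, Green's identity gives $0=(\Delta w,u)-(w,\Delta u)=\langle g,\gamma_d^\pm u\rangle-\langle\gamma_n^\pm u,\gamma_d^\pm w\rangle$, and the hypothesis kills the first term, leaving $\langle\gamma_n^\pm u,\gamma_d^\pm w\rangle=0$ for all such $u$; since $\gamma_n\mathscr D_{\varOmega^\pm}(\mathbb R^2)$ is dense in $L^2(\varGamma)$ (the Neumann half, already proved) and $\gamma_d^\pm w\in L^2(\varGamma)$, this yields $\gamma_d^\pm w=0$, and then $w$ is a harmonic function with vanishing Dirichlet trace, hence $w=0$ by uniqueness for the Dirichlet problem, whence $g=\gamma_n^\pm w=0$. (In the exterior case one invokes the corresponding exterior uniqueness, using the logarithmic-capacity normalization and the prescribed decay.)

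The main obstacle I anticipate is handling the exterior domain $\varOmega^+$ cleanly: the spaces $H^{3/2}(\varOmega^+,\Delta)$ and $H^1(\varOmega^+,\Delta)$ as written implicitly carry global integrability that is too strong for genuinely exterior problems (a harmonic function with $L^2$ gradient on $\varOmega^+$ need not itself be in $L^2(\varOmega^+)$), so one must either work with a large ball $\varOmega^+\cap B_R$ and localize, or interpret these spaces with the understood weighted/asymptotic conventions of the paper and invoke the matching exterior existence and uniqueness results. Localizing to $\varOmega^+\cap B_R$ is the safe route: run the duality argument there against test functions supported away from $\partial B_R$, which is harmless since $\mathscr D(\mathbb R^2)$ restrictions are dense and $\varGamma$ is the only boundary piece that matters. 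A secondary, purely bookkeeping point is the verification that $\mathscr D_{\varOmega^\pm}(\mathbb R^2)$ is dense in $H^{3/2}(\varOmega^\pm,\Delta)$ in its graph norm; this is standard (mollification plus the cited density in $H^1(\varOmega^\pm,\Delta)$) but should be stated. Everything else is a routine assembly of Green's formula, the two cited surjectivity results, and uniqueness for the Laplacian.
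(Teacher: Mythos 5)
Your Dirichlet half breaks at the Green's identity step. You write $0=(\Delta w,u)-(w,\Delta u)$, but $u\in\mathscr D_{\varOmega^\pm}(\mathbb R^2)$ is not harmonic: only $(\Delta w,u)_{L^2(\varOmega^\pm)}=0$, while the volume term $(w,\Delta u)_{L^2(\varOmega^\pm)}$ survives. After the hypothesis kills $\langle g,\gamma_d^\pm u\rangle_{-\frac12,\frac12}$, identity \eqref{green} only yields $(w,\Delta u)_{L^2(\varOmega^\pm)}=\langle \gamma_n^\pm u,\gamma_d^\pm w\rangle_{-\frac12,\frac12}$, from which $\gamma_d^\pm w=0$ does not follow (for $u$ compactly supported in $\varOmega^\pm$ it merely restates that $w$ is harmonic). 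In the same half there are further loose ends: the interior Neumann problem with datum $g$ needs the compatibility condition $(g,\mathbf 1_\varGamma)_{L^2(\varGamma)}=0$ (it does hold, since $\mathbf 1_\varGamma\in\gamma_d\mathscr D_{\varOmega^-}(\mathbb R^2)$, but you never say so), Proposition~\ref{prop:H32} produces a preimage of $g$ under $\gamma_n$ that is not harmonic, and the exterior case is deferred to a localization that interacts with the already broken identity. None of this machinery is needed: $\gamma_d\mathscr D_{\varOmega^\pm}(\mathbb R^2)$ contains the restrictions to $\varGamma$ of all polynomials (multiply by a cut-off equal to $1$ near $\varGamma$), hence is dense in $C(\varGamma)$ by Stone--Weierstrass and a fortiori in $L^2(\varGamma)$; the paper simply quotes this first assertion from \cite[page 88]{Gesztesy:2011aa}.

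Your Neumann half has the same skeleton as the paper's proof (surjectivity of $\gamma_n$ on $H^{3/2}(\varOmega^\pm,\Delta)$ from Proposition~\ref{prop:H32}, plus density of $\mathscr D_{\varOmega^\pm}(\mathbb R^2)$ in $H^{3/2}(\varOmega^\pm,\Delta)$ for the graph norm), but you dismiss that density as bookkeeping, and your sketched justification fails. Density of $\mathscr D_{\varOmega^\pm}(\mathbb R^2)$ in $H^1(\varOmega^\pm,\Delta)$ gives approximation in $H^1$ together with $L^2$ convergence of the Laplacians, and there is no estimate of $\|\cdot\|_{H^{3/2}(\varOmega^\pm)}$ by $\|\cdot\|_{H^1(\varOmega^\pm)}+\|\Delta\cdot\|_{L^2(\varOmega^\pm)}$ without boundary terms, so such approximants need not converge in $H^{3/2}(\varOmega^\pm,\Delta)$; likewise density of $\mathscr D_{\varOmega^\pm}(\mathbb R^2)$ in $H^2(\varOmega^\pm)$ says nothing about density of $H^2(\varOmega^\pm)$ in the strictly larger space $H^{3/2}(\varOmega^\pm,\Delta)$, and mollification up to a Lipschitz boundary is not available off the shelf. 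This density is precisely the non-trivial ingredient for which the paper invokes \cite[Lemma 3]{Costabel:1998aa}. As written, your Neumann half therefore rests on an unproved (though true) theorem, and your Dirichlet half both depends on the Neumann half and contains the Green's identity error above.
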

The first assertion is proved in \cite[page 88]{Gesztesy:2011aa} and the second results from Proposition~\ref{prop:H32} 
and \cite[Lemma 3]{Costabel:1998aa}.
\par
\subsection*{Surface potentials on a Lipschitz boundary}
A general presentation of the theory of surface potentials on the boundary of a Lipschitz domain can be found in the book \cite{McLean:2000aa}, to which 
we will refer in the  following for more details on this subject. 
For the ease of the reader, let us recall some basics:
The fundamental solution of the Laplace's equation is defined by:
$$G(x)=-\frac{1}{2\pi}\ln|x|\forallt x\in\mathbb R^2\setminus\{0\}.$$
The single layer potential is the weakly singular integral operator  
defined for any $q\in L^2(\varGamma)$ by:
$$\mathscr S_\varGamma q(x)=\int_\varGamma G(x-y)q(y)\,{\rm d}y\forallt x\in\mathbb R^2\setminus \varGamma,$$
and extended by density to a bounded operator $\mathscr S_\varGamma:H^{-1/2}(\varGamma)\longrightarrow H^1_{\ell oc}(\mathbb R^2)$.
The double layer potential is the singular integral operator: $\mathscr D_\varGamma:H^{1/2}(\varGamma)\longrightarrow H^1_{\ell oc}(\mathbb R^2)$, defined by:
$$\mathscr D_\varGamma p(x)=\int_\varGamma \nabla G(x-y)\cdot n(y)q(y)\,{\rm d}y\forallt x\in\mathbb R^2\setminus\varGamma.$$
The single layer potentiel and the double layer potential  both admit one-sided Dirichlet 
and Neumann traces on both sides of $\varGamma$. In \cite{McLean:2000aa} it is proved that the following operators are well defined and bounded:
\begin{alignat*}{3}
\gamma_d^\pm\circ \mathscr S_\varGamma:H^{-1/2}(\varGamma)&\longrightarrow H^{1/2}(\varGamma)&\qquad\text{and}\qquad&
\gamma_d^\pm\circ \mathscr D_\varGamma:H^{1/2}(\varGamma)\longrightarrow H^{1/2}(\varGamma),\\
\gamma_n^\pm\circ \mathscr S_\varGamma:H^{-1/2}(\varGamma)&\longrightarrow H^{-1/2}(\varGamma)&\qquad\text{and}\qquad&
\gamma_n^\pm\circ \mathscr D_\varGamma:H^{1/2}(\varGamma)\longrightarrow H^{-1/2}(\varGamma).
\end{alignat*}
Moreover $\gamma_d^+\circ \mathscr S_\varGamma=\gamma_d^-\circ \mathscr S_\varGamma$ (for the single layer 
potential, one-sided Dirichlet traces on 
$\varGamma$ coincide) and
$\gamma_n^+\circ \mathscr D_\varGamma=-\gamma_n^-\circ \mathscr D_\varGamma$ (for 
the double 
layer potential, one-sided Neumann traces on $\varGamma$ have opposite signs). To simplify the notation, we shall drop the superscripts $+$ and $-$ when 
the Dirichlet traces coincide or when the Neumann traces have opposite signs. Thus, we denote $\mathsf S_\varGamma=\gamma_d\circ\mathscr S_\varGamma$ and 
$\mathsf D_\varGamma=\gamma_n\circ\mathscr D_\varGamma$. The operator $\mathsf S_\varGamma:H^{-1/2}(\varGamma)\longrightarrow 
H^{1/2}(\varGamma)$ is an isomorphism 
(recall that the logarithmic capacity of $\varGamma$ 
is assumed to be lower than 1, see \cite[Theorem 8.6]{McLean:2000aa} about this question).
The operator $\mathsf D_\varGamma:H^{1/2}(\varGamma)\longrightarrow 
H^{-1/2}(\varGamma)$ is Fredholm of index 0 with a one dimensional kernel spanned by the function $\mathbf 1_\varGamma$ 
(the constant function equal to one on $\varGamma$)
and with range $\widetilde H^{-1/2}(\varGamma)=\big\{q\in H^{-1/2}(\varGamma)\,:\,\langle q,\mathbf 1_\varGamma\rangle_{-\frac12,\frac12}=0\big\}$. 
Introducing $\widetilde H^{1/2}(\varGamma)=\big\{p\in H^{1/2}(\varGamma)\,:\,(p,\mathbf 1_\varGamma)_{L^2(\varGamma)}=0\big\}$, 
we deduce that $\mathsf D_\varGamma:\widetilde H^{1/2}(\varGamma)\longrightarrow 
\widetilde H^{-1/2}(\varGamma)$ is an isomorphism.
The following identities are usually referred to as the ``jump relations'' on $\varGamma$:
$$\gamma_n^+\circ \mathscr S_\varGamma+\gamma_n^-\circ \mathscr S_\varGamma={\rm Id}\qquad\text{and}\qquad 
\gamma_d^+\circ \mathscr D_\varGamma-\gamma_d^-\circ \mathscr D_\varGamma={\rm Id}.$$
The space $\mathscr A$ of the affine functions in $\mathbb R^2$ 
plays a particular role in the asymptotic behavior of the single layer potential. Indeed, for $|x|$ large, 
the single layer potential admits the following asymptotic expansion:
\begin{subequations}
\label{expand_asymp}
\begin{equation}
\label{asymp_sl}
\mathscr S_\varGamma q(x)=-\frac{1}{2\pi}\langle q,\mathbf 1_\varGamma\rangle_{-\frac12,\frac12} \ln|x|+\frac{1}{2\pi}\frac{x_1}{|x|^2}\langle q,y_1\rangle_{-\frac12,\frac12} +\frac{1}{2\pi}\frac{x_2}{|x|^2}\langle q,y_2\rangle_{-\frac12,\frac12} +
\mathscr O(1/|x|^2).
\end{equation}
The three first terms in the right hand side are not in $L^2(\mathbb R^2)$ while the remainder is.
Let $\mathscr A^{\frac12}_S$ be the three dimensional subspace of $H^{1/2}(\varGamma)$ spanned by the traces of the affine functions. 
Let $\mathscr A^{-\frac12}_S=\mathsf S_\varGamma^{-1}\mathscr A^{\frac12}_S$ (a three dimensional subspace in $H^{-1/2}(\varGamma)$) 
and define $\{\mathsf q_1,\mathsf q_2,\mathsf q_3\}$ a basis of this space normalized in such a way that 
$\langle \mathsf q_j,\mathsf S_\varGamma \mathsf q_k\rangle_{-\frac12,\frac12}=\delta_{j,k}$ (the Kronecker symbol) for every indices 
$j,k\in\{1,2,3\}$. Notice that $\mathscr S_\varGamma\mathsf q_j$ is not an affine function in $\mathbb R^2$ but there exist 
 affine functions $P_j$  such that $\mathscr S_\varGamma\mathsf q_j|_{\varOmega^-}=P_j|_{\varOmega^-}$ ($j=1,2,3$).
\par
Considering now the double layer potential, it can be expanded for $|x|$ large as:
\begin{equation}
\label{asymp_dl}
\mathscr D_\varGamma p(x)=-\frac{1}{2\pi} \frac{x_1}{|x|^2}\langle n_1,p\rangle_{-\frac12,\frac12}
-\frac{1}{2\pi}\frac{x_2}{|x|^2}\langle n_2,p\rangle_{-\frac12,\frac12}+
\mathscr O(1/|x|^2),
\end{equation}
where we recall that $n=(n_1,n_2)$ is the unit normal vector field on $\varGamma$ directed toward the exterior of $\varOmega^-$. 
Let  $\mathscr A^{-\frac12}_D$ be the two dimensional subspace of $\widetilde H^{-1/2}(\varGamma)$ spanned by $n_1$ and $n_2$. Its 
preimage by $\mathsf D_\varGamma$ is a two dimensional subspace of $\widetilde H^{1/2}(\varGamma)$ denoted by $\mathscr A^{\frac12}_D$. 
Let $\{\mathsf p_1,\mathsf p_2\}$ be a basis of this space normalized 
in such a way that $\langle  \mathsf D_\varGamma \mathsf p_j,\mathsf p_k\rangle_{-\frac12,\frac12}=\delta_{j,k}$ ($j,k=1,2$). As for the single 
layer potential, the double layer potential 
$\mathscr D_\varGamma \mathsf p_j$ is not an affine function in $\mathbb R^2$ but there exists 
an affine function $Q_j$ such that $\mathscr D_\varGamma\mathsf p_j|_{\varOmega^-}=Q_j|_{\varOmega^-}$  (for $j=1,2$).
\par
To be complete on the questions of asymptotic behavior of harmonic functions, let us mention a last result borrowed from \cite[Chap. 10, Ex.~1]{Axler:2001aa}. Any   function $v$ harmonic outside a compact set can be expanded in this region as:
\begin{equation}
\label{exp_dfolp}
v(x)=\sum_{j=0}^{+\infty}\mathfrak p_m(x)+\mathfrak q_0\ln|x|+\sum_{j=0}^{+\infty}\frac{\mathfrak q_m(x)}{|x|^{2m}},
\end{equation}
\end{subequations}
where $\mathfrak q_0\in\mathbb R$ and, for every integer $m$, $\mathfrak p_m, \mathfrak q_m$ are harmonic polynomials on $\mathbb R^2$ of degree $m$.
\par
We will mainly rely on the following characterization of the surface potentials in the sequel:
\begin{prop}
\label{prop:def_layer}
Assume that $\varGamma$ is Lipschitz continuous. 
The single layer potential of density $q\in H^{-1/2}(\varGamma)$  is the unique distribution $u\in \mathscr D'(\mathbb R^2)$ satisfying:
\begin{subequations}
\label{def_single}
\begin{alignat}{3}
\label{harm:u}
\langle u,-\Delta\theta\rangle_{\mathscr D'(\mathbb R^2),\mathscr D(\mathbb R^2)}&=\langle q,\gamma_d\theta\rangle_{-\frac12,\frac12}&\quad&\text{for all }\theta\in\mathscr D(\mathbb R^2);\\
\label{asymp:u}
u(x) &= \langle q,\mathbf 1_\varGamma\rangle_{-\frac12,\frac12} G(x)+o(1)&&\text{as}\quad |x|\longrightarrow+\infty.
\end{alignat}
\end{subequations}
The double layer potential of density $p\in H^{1/2}(\varGamma)$ is the unique distribution $v\in \mathscr D'(\mathbb R^2)$ satisfying:
\begin{subequations}
\label{def_double}
\begin{alignat}{3}
\label{harm:v}
\langle v,-\Delta\theta\rangle_{\mathscr D'(\mathbb R^2),\mathscr D(\mathbb R^2)}&=\langle \gamma_n\theta,p\rangle_{-\frac12,\frac12}&\quad&\text{for all }\theta\in\mathscr D(\mathbb R^2);\\
\label{asymp:v}
v(x) &= o(1)&&\text{as}\quad |x|\longrightarrow+\infty.
\end{alignat}
\end{subequations}
\end{prop}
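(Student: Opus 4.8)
\emph{Proof plan.} The plan is to treat the two potentials in parallel: for existence one checks that the explicit potential recalled above satisfies the two prescribed conditions, and for uniqueness one combines the hypoellipticity of $\Delta$ (Weyl's lemma) with Liouville's theorem. I will spell out the single layer case; the double layer is handled \emph{mutatis mutandis}.

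For the single layer, I would first establish \eqref{harm:u} for a density $q\in L^2(\varGamma)$, for which $\mathscr S_\varGamma q$ is represented by the function $x\mapsto\int_\varGamma G(x-y)q(y)\dy$. Writing $\langle\mathscr S_\varGamma q,-\Delta\theta\rangle_{\mathscr D',\mathscr D}=\int_{\mathbb R^2}\mathscr S_\varGamma q(x)\,(-\Delta\theta(x))\dx$ and interchanging the two integrations (legitimate since $-\Delta\theta$ is compactly supported and $G$ is locally integrable) one obtains
$$\langle\mathscr S_\varGamma q,-\Delta\theta\rangle_{\mathscr D',\mathscr D}=\int_\varGamma q(y)\Big(\int_{\mathbb R^2}G(x-y)\,(-\Delta\theta(x))\dx\Big)\dy=\int_\varGamma q(y)\,\theta(y)\dy=\langle q,\gamma_d\theta\rangle_{-\frac12,\frac12},$$
the middle equality being the defining property $-\Delta G=\delta_0$. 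Both sides are continuous in $q$ for the $H^{-1/2}(\varGamma)$ norm — the left one because $\mathscr S_\varGamma:H^{-1/2}(\varGamma)\to H^1_{\ell oc}(\mathbb R^2)$ is bounded and $-\Delta\theta$ has compact support, the right one because $\gamma_d\theta\in H^{1/2}(\varGamma)$ — so the identity extends to every $q\in H^{-1/2}(\varGamma)$ by density of $L^2(\varGamma)$ in $H^{-1/2}(\varGamma)$. Condition \eqref{asymp:u} is then read off the expansion \eqref{asymp_sl}: its leading term is $-\tfrac{1}{2\pi}\langle q,\mathbf 1_\varGamma\rangle_{-\frac12,\frac12}\ln|x|=\langle q,\mathbf 1_\varGamma\rangle_{-\frac12,\frac12}G(x)$ and the remaining terms are $\mathscr O(1/|x|)=o(1)$.

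For uniqueness, suppose $u_1,u_2\in\mathscr D'(\mathbb R^2)$ both satisfy \eqref{def_single} and put $w=u_1-u_2$. Subtracting the two instances of \eqref{harm:u} gives $\langle w,-\Delta\theta\rangle_{\mathscr D',\mathscr D}=0$ for every $\theta\in\mathscr D(\mathbb R^2)$, that is $\Delta w=0$ in $\mathscr D'(\mathbb R^2)$; by hypoellipticity of the Laplacian, $w$ coincides with a function harmonic on all of $\mathbb R^2$, still denoted $w$. Subtracting the two instances of \eqref{asymp:u}, the logarithmic contributions cancel (they carry the same coefficient $\langle q,\mathbf 1_\varGamma\rangle_{-\frac12,\frac12}$), so $w(x)=o(1)$ as $|x|\to+\infty$; hence $w$ is a bounded entire harmonic function, thus constant by Liouville's theorem, and the decay forces $w\equiv0$. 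Therefore $u_1=u_2=\mathscr S_\varGamma q$.

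For the double layer there is no density step, because $\mathscr D_\varGamma p$ is given by its integral formula for every $p\in H^{1/2}(\varGamma)\subset L^1(\varGamma)$: by the same interchange of integrations, now combined with one differentiation in $y$ of the identity $\int_{\mathbb R^2}G(x-y)\,(-\Delta\theta(x))\dx=\theta(y)$, one obtains \eqref{harm:v}, while \eqref{asymp_dl} gives $\mathscr D_\varGamma p(x)=\mathscr O(1/|x|)=o(1)$, that is \eqref{asymp:v}; uniqueness repeats verbatim the Weyl--Liouville argument above. I expect the only delicate points to be bookkeeping ones: the continuity estimates in $q$ (resp.\ $p$) underpinning the density extension of \eqref{harm:u} (resp.\ the direct Fubini argument for \eqref{harm:v}), and the observation that the asymptotic normalisations in \eqref{asymp:u}, \eqref{asymp:v} are precisely what makes the difference of two solutions decay, so that Liouville's theorem can close the argument — there is no substantial obstacle beyond that.
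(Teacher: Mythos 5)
Your proof is correct, and its overall skeleton is the one the paper uses: existence by checking that the classical potentials satisfy the two conditions, uniqueness by combining Weyl's lemma with Liouville's theorem and the cancellation of the logarithmic terms (that part of your argument is essentially verbatim the paper's). The only real difference is in how the distributional identity \eqref{harm:u} (resp.\ \eqref{harm:v}) is verified. The paper does it in one line by applying the second Green's identity \eqref{green} on $\varOmega^-$ and $\varOmega^+$, exploiting the recalled mapping and trace properties of $\mathscr S_\varGamma$ and $\mathscr D_\varGamma$; you instead compute directly with the kernel: Fubini plus the fundamental-solution identity $G*(-\Delta\theta)=\theta$ for $q\in L^2(\varGamma)$, followed by a density/continuity argument to reach $H^{-1/2}(\varGamma)$, while for the double layer you avoid the density step since $H^{1/2}(\varGamma)\subset L^2(\varGamma)$. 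Your route is more elementary and self-contained (it only needs local integrability of $G$ and $\nabla G$ on $\varGamma\times K$ and the boundedness of $\mathscr S_\varGamma$ into $H^1_{\ell oc}$), at the cost of the extra approximation step and of some sign bookkeeping between $\nabla G(x-y)\cdot n(y)$ and $\partial_{n(y)}G(x-y)$ in the double-layer case, which you should fix to match the paper's convention; the paper's route is shorter given the machinery already recalled in Section~2. Either argument is acceptable.
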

Notice that any distribution $u$ satisfying \eqref{harm:u}  and any distribution $v$ satisfying \eqref{harm:v} is 
harmonic in $\varOmega^+$ so that, according to the generalization to distributions of Weyl's lemma,
they are $\mathcal C^\infty$ in $\varOmega^+$ and the asymptotic conditions \eqref{asymp:u} and \eqref{asymp:v}  make 
sens.
\begin{proof}Let $q$ be in $H^{-1/2}(\varGamma)$. Applying the second Green's identity \eqref{green}, we easily verify that the single layer potential $\mathscr S_\varGamma q$ satisfies both conditions \eqref{def_single}. On the other hand, if $u_1$ and $u_2$ are two distributions 
satisfying these conditions, then $u=u_1-u_2$ is a distribution harmonic in the whole plane. According to Weyl's lemma, it 
is $\mathcal C^\infty$ in $\mathbb R^2$ and since it tends to 0 at infinity, we conclude with Liouville's theorem that $u=0$. The same arguments apply for 
the double layer potential.
\end{proof}
Since in Proposition~\ref{prop:def_layer}, $u=\mathscr S_\varGamma q$ and $v=\mathscr D_\varGamma p$, it turns out that the distribution 
$u$ is actually  in 
$H^1_{\ell oc}(\mathbb R^2)$ while the distribution $v$ is such that $v|_{\varOmega^-}\in H^1(\varOmega^-)$ and 
$v|_{\varOmega^+}\in H^1_{\ell oc}(\overline{\varOmega^+})$.
Our purpose is now to weaken the regularity of $q$ and $p$ and to generalize the definition of the single 
and double layer potentials in order to represent every function in $L^2_{\ell oc}(\mathbb R^2)$, harmonic in $\mathbb R^2\setminus\varGamma$ with  asymptotic behaviors as in \eqref{asymp:u} or \eqref{asymp:v}.
\section{Main function spaces}
\label{main_spaces}
Following an idea of \cite[\S 7]{Amrouche:1994aa}, we introduce the weight fonctions $\rho$ and ${\rm lg}$:
\begin{equation}
\label{weighted}
\rho(x)=\sqrt{1+|x|^2}\qquad\text{and}\qquad {\rm lg}(x)=\ln(2+|x|^2)\forallt x\in\mathbb R^2,
\end{equation}
which enter the definition of the weighted Sobolev space:
$$W^2(\mathbb R^2)=\Big\{u\in \mathscr D'(\mathbb R^2)\,:\, \frac{u}{\rho^2\,{\rm lg}}\in L^2(\mathbb R^2),\, \frac{1}{\rho\,{\rm lg}}\frac{\partial u}{\partial x_j}\in L^2(\mathbb R^2)~\text{ and }~\frac{\partial^2 u}{\partial x_j\partial x_k}\in L^2(\mathbb R^2),\,\forall\,j,k=1,2\Big\}.$$
%
%
\begin{prop}
\label{mqwxf}
The space $W^2(\mathbb R^2)$, provided with its natural norm, enjoys the following properties (borrowed from 
\cite[Theorem 7.2]{Amrouche:1994aa}  for the first and second points and from
\cite[Theorem 9.6]{Amrouche:1994aa} for the third one):
\begin{enumerate}
\item 
The space $\mathscr D(\mathbb R^2)$ is dense in $W^2(\mathbb R^2)$;
\item   There exists a sequence of truncation functions $(\phi_k)_{k\geqslant 1}$ in $\mathscr D(\mathbb R^2)$ 
such that, for every $u\in W^2(\mathbb R^2)$, $\phi_k u\longrightarrow u$ in $W^2(\mathbb R^2)$;
\item The Laplace operator $\Delta:W^2(\mathbb R^2)/\mathscr A\longrightarrow L^2(\mathbb R^2)$
is an isomorphism (we recall that $\mathscr A$ is the space of the affine functions in $\mathbb R^2$).
\end{enumerate}
\end{prop}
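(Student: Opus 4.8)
The plan is to prove the three assertions in the order (2), (1), (3): the truncation property yields the density of $\mathscr D(\mathbb R^2)$, and the latter is in turn needed for the isomorphism. For item (2) I would take the $\phi_k$ to be explicit \emph{logarithmic} cut-offs. Fix $\zeta\in\mathscr D(\mathbb R)$ with $\zeta\equiv 1$ on $(-\infty,1]$ and $\zeta\equiv 0$ on $[2,+\infty)$ and set $\phi_k(x)=\zeta(\ln|x|/\ln k)$ for $|x|\ge 1$, extended by $1$ near the origin; then $\phi_k\equiv 1$ on $\{|x|\le k\}$, $\phi_k\equiv 0$ on $\{|x|\ge k^2\}$, and on the transition annulus $A_k=\{k\le|x|\le k^2\}$ one has $|\nabla\phi_k|\lesssim(|x|\ln k)^{-1}$ and $|\nabla^2\phi_k|\lesssim(|x|^2\ln k)^{-1}$. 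Expanding $\nabla^j(\phi_k u)$ by the Leibniz rule, the terms in which every derivative falls on $u$ converge to $\nabla^j u$ in the appropriate weighted $L^2$ by dominated convergence. In each remaining term a weight multiplies a derivative of $\phi_k$ times a lower-order derivative of $u$; since $\ln|x|\le 2\ln k$ on $A_k$, the product (weight)$\times$(derivative of $\phi_k$) is, on $A_k$, bounded by a constant times the weight that accompanies that lower-order derivative of $u$ in the definition of $W^2(\mathbb R^2)$ (for instance $\rho^2\,{\rm lg}\,|\nabla^2\phi_k|\lesssim 1$ and $(\rho\,{\rm lg})\,|\nabla\phi_k|\lesssim (\rho^2\,{\rm lg})^{-1}(\rho\,{\rm lg})^2|x|^{-1}\lesssim\rho\,{\rm lg}$, etc.). Hence each such term is dominated in $L^2(\mathbb R^2)$ by a fixed function, is supported in $\{|x|\ge k\}$, and therefore tends to $0$. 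This proves $\phi_k u\to u$ in $W^2(\mathbb R^2)$.

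For item (1), by item (2) it suffices to approximate a compactly supported $u\in W^2(\mathbb R^2)$ by elements of $\mathscr D(\mathbb R^2)$. On a fixed neighborhood of $\mathrm{supp}\,u$ the weights $\rho$ and ${\rm lg}$ are bounded above and below, so there the $W^2$-norm is equivalent to the $H^2$-norm; thus $u\in H^2(\mathbb R^2)$ with compact support, and the mollifications $u*\eta_\varepsilon$ lie in $\mathscr D(\mathbb R^2)$, have uniformly bounded supports, and converge to $u$ in $H^2$, hence in $W^2(\mathbb R^2)$.

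Item (3) is the main part. The operator $\Delta\colon W^2(\mathbb R^2)\to L^2(\mathbb R^2)$ is bounded and vanishes on $\mathscr A$, so it descends to the quotient. For the a priori estimate I would combine: (i) the elementary identity $\|\nabla^2 w\|_{L^2(\mathbb R^2)}=\|\Delta w\|_{L^2(\mathbb R^2)}$, valid on $\mathscr D(\mathbb R^2)$ by integration by parts and extended to all of $W^2(\mathbb R^2)$ by item (1); (ii) the two weighted Hardy–Poincaré inequalities underlying the space, which in dimension two carry the critical logarithmic factor, namely $\|v/(\rho^2\,{\rm lg})\|_{L^2}\lesssim\|\nabla v/(\rho\,{\rm lg})\|_{L^2}+\|v\|_{L^2(B)}$ and $\|w/(\rho\,{\rm lg})\|_{L^2}\lesssim\|\nabla w\|_{L^2}+\|w\|_{L^2(B)}$ for a fixed ball $B$. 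Applying the second to $w=\partial_j u$ and then the first to $v=u$ gives $\|u\|_{W^2}\lesssim\|\nabla^2 u\|_{L^2}+\|u\|_{H^1(B)}$, hence $\|u\|_{W^2}\lesssim\|\Delta u\|_{L^2}+\|u\|_{H^1(B)}$. Next, $\ker(\Delta\colon W^2(\mathbb R^2)\to L^2(\mathbb R^2))=\mathscr A$: if $\Delta u=0$ then $u$ is harmonic (Weyl), each $\partial_j\partial_k u$ is harmonic and in $L^2(\mathbb R^2)$, hence $\equiv 0$ by the $L^2$–Liouville theorem (mean-value inequality), so $u\in\mathscr A$; conversely $\mathscr A\subset W^2(\mathbb R^2)$ by a direct check of the weighted integrals. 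Since $W^2(\mathbb R^2)\hookrightarrow H^2(B)$ is continuous and $H^2(B)\hookrightarrow H^1(B)$ is compact (Rellich), a standard compactness argument (Peetre's lemma) upgrades the estimate to $\inf_{a\in\mathscr A}\|u-a\|_{W^2}\lesssim\|\Delta u\|_{L^2}$, so $\Delta$ is injective with closed range on $W^2(\mathbb R^2)/\mathscr A$. Finally, for surjectivity, given $f\in\mathscr D(\mathbb R^2)$ the function $u=-G\ast f$ is smooth, solves $\Delta u=f$, and by the standard expansion of the Newtonian potential behaves like $\frac{1}{2\pi}\big(\int_{\mathbb R^2}f\dx\big)\ln|x|+\mathscr O(1/|x|)$ at infinity with derivatives decaying accordingly, so all the weighted norms defining $W^2(\mathbb R^2)$ are finite and $u\in W^2(\mathbb R^2)$. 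Density of $\mathscr D(\mathbb R^2)$ in $L^2(\mathbb R^2)$ together with the a priori estimate then shows that every $f\in L^2(\mathbb R^2)$ is attained, and the open mapping theorem yields that $\Delta\colon W^2(\mathbb R^2)/\mathscr A\to L^2(\mathbb R^2)$ is an isomorphism.

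The only genuinely delicate point is the pair of weighted Hardy–Poincaré inequalities with the logarithmic weight in dimension two: the exponents are critical, so the naive Hardy inequality fails and the ${\rm lg}$-correction has to be inserted exactly as in the definition of $W^2(\mathbb R^2)$; moreover one must retain the local remainder $\|\cdot\|_{L^2(B)}$ (rather than discarding it) for the compactness/quotient argument to go through. Everything else — the logarithmic truncation, the mollification, Weyl and Liouville, Rellich plus Peetre, and the expansion of the Newtonian potential — is routine once these inequalities are available.
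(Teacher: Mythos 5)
Your argument is correct in substance, but note that the paper does not prove this proposition at all: all three points are quoted from Amrouche--Girault--Giroire \cite{Amrouche:1994aa} (Theorems 7.2 and 9.6), so what you have done is reconstruct the proofs rather than follow the paper. The reconstruction is sound. For point 2, the logarithmic cut-offs with $|\nabla\phi_k|\lesssim(|x|\ln k)^{-1}$ and $|\nabla^2\phi_k|\lesssim(|x|^2\ln k)^{-1}$ do make every commutator term dominated by the weight already attached to the lower-order derivative of $u$ (the key point being ${\rm lg}\lesssim\ln k$ on the transition annulus), and point 1 follows by truncation plus local mollification, exactly in the spirit of \cite{Amrouche:1994aa}. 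For point 3, your route --- the identity $\|\nabla^2u\|_{L^2}=\|\Delta u\|_{L^2}$ extended by density, the two log-weighted Hardy--Poincar\'e inequalities, kernel $=\mathscr A$ by Weyl plus the $L^2$-Liouville theorem, Rellich/Peetre to pass to the quotient estimate, and the Newtonian potential of $f\in\mathscr D(\mathbb R^2)$ to get a dense, hence full, range --- is a more elementary and specifically two-dimensional argument than the one behind \cite[Theorem 9.6]{Amrouche:1994aa}, where the isomorphism is an instance of a general family of isomorphism theorems for $\Delta$ between weighted spaces; your version has the merit of making transparent why the kernel is exactly $\mathscr A$ and why the logarithmic weights are forced (they are what admits the affine functions and repairs the critical Hardy inequality). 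The one caveat is that the two weighted Hardy--Poincar\'e inequalities, which carry the entire a priori estimate, are asserted rather than proved; they are true and standard (essentially the foundational estimates of \cite{Amrouche:1994aa}, provable by a one-dimensional Hardy inequality in the radial variable), so your proof is self-contained only modulo that ingredient, i.e.\ you have not entirely dispensed with the reference the paper leans on. A minor blemish: the displayed chain $(\rho\,{\rm lg})\,|\nabla\phi_k|\lesssim(\rho^2\,{\rm lg})^{-1}(\rho\,{\rm lg})^2|x|^{-1}\lesssim\rho\,{\rm lg}$ in your treatment of point 2 is garbled as written; the estimates you actually need, and which do hold on the annulus, are $\rho\,|\nabla\phi_k|\lesssim1$, $\rho\,{\rm lg}\,|\nabla\phi_k|\lesssim1$ and $\rho^2\,{\rm lg}\,|\nabla^2\phi_k|\lesssim1$, which is what your verbal description says anyway.
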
 
%
For $p\in L^2(\varGamma)$, we denote by $\mu(p)$ the mean value of $p$ on $\varGamma$, i.e. 
 $\mu(p)=|\varGamma|^{-1}(\mathbf 1_\varGamma, p)_{L^2(\varGamma)}$. 
  The original idea at this point is to endow the space 
 $W^2(\mathbb R^2)$ with the following inner products  (for $u,v\in W^2(\mathbb R^2)$):
\begin{subequations}
\label{def:norms}
\begin{align}
\label{scal:S}
(u,v)_S&=(\Delta u,\Delta v)_{L^2(\mathbb R^2)}+
\sum_{j=1}^3\langle  \mathsf q_j,\gamma_d u\rangle_{-\frac12,\frac12}\langle \mathsf q_j,\gamma_d v\rangle_{-\frac12,\frac12}\\
\label{scal:D}
(u,v)_D&=(\Delta u,\Delta v)_{L^2(\mathbb R^2)}+\sum_{j=1}^2( \mathsf p_j,\gamma_n u)_{L^2(\varGamma)}( \mathsf p_j,\gamma_n v)_{L^2(\varGamma)}
+\mu(\gamma_d u)\mu(\gamma_d v),
\end{align}
\end{subequations}
where the subscripts $S$ and $D$ refer to ``single'' (layer) and ``double'' (layer), as it will become clear in the sequel.
The corresponding norms, denoted by $\|\cdot\|_S$ and $\|\cdot\|_D$ are both equivalent to the natural norm of $W^2(\mathbb R^2)$, the proof being a 
straightforward consequence of 
\cite[Corollary 8.4]{Amrouche:1994aa}. It is already worth noting that:
\begin{alignat*}{3}
\mathscr A^\perp&=\big\{u\in W^2(\mathbb R^2)\,:\, (\gamma_du,\gamma_d\theta)_{\frac12}=0\quad\forall\,\theta\in\mathscr A\big\}
&&\text{in }(W^2(\mathbb R^2);\|\cdot\|_S),\\
\mathscr A^\perp&=\big\{u\in W^2(\mathbb R^2)\,:\, (\gamma_nu,\gamma_n\theta)_{L^2(\varGamma)}+
\mu(\gamma_du)\mu(\gamma_d\theta)=0\quad\forall\,\theta\in\mathscr A\big\}
&\quad&\text{in }(W^2(\mathbb R^2);\|\cdot\|_D).
\end{alignat*}
Next, we introduce the boundary spaces:
\begin{equation}
\label{bbghyuj}
{\mathcal H}^{3/2}(\varGamma)=\gamma_d W^2(\mathbb R^2)\qquad\text{and}\qquad {\mathcal H}^{1/2}(\varGamma)=\gamma_n W^2(\mathbb R^2).
\end{equation}
Since the weight functions \eqref{weighted} do not modify the local properties of 
the space, we could as well replace the space $W^2(\mathbb R^2)$ by the space $H^2_{\ell oc}(\mathbb R^2)$ in these definitions. 
We emphasize that the superscripts $3/2$ and $1/2$ in \eqref{bbghyuj} have no other meaning than to recall that 
${\mathcal H}^{3/2}(\varGamma)=
H^{3/2}(\varGamma)$ and ${\mathcal H}^{1/2}(\varGamma)=
H^{1/2}(\varGamma)$ when $\varGamma$ is smooth.
We introduce as well the closed subspaces of 
$W^2(\mathbb R^2)$:
$${W_d^2}(\mathbb R^2)=\{u\in W^2(\mathbb R^2)\,:\, \gamma_du=0\}\qquad\text{and}\qquad 
{W_n^2}(\mathbb R^2)=\{u\in W^2(\mathbb R^2)\,:\, \gamma_nu=0\}.$$
The
 images of ${W_d^2}(\mathbb R^2)$ and ${W_n^2}(\mathbb R^2)$ by $\gamma_n$ and $\gamma_d$  respectively are subspaces of 
 ${\mathcal H}^{3/2}(\varGamma)$ and ${\mathcal H}^{1/2}(\varGamma)$. We denote them by:
\begin{equation}
\label{def:Hronde}
{\mathcal H}^{3/2}_n(\varGamma)=\gamma_dW_n^2(\mathbb R^2)\qquad\text{and}\qquad {\mathcal H}^{1/2}_d(\varGamma)=\gamma_nW^2_d(\mathbb R^2).
\end{equation}
It is well known that when $\varGamma$ is of class $\mathcal C^{1,1}$, the spaces ${\mathcal H}^{3/2}_n(\varGamma)$ and ${\mathcal H}^{3/2}(\varGamma)$ coincide, both being equal to $H^{3/2}(\varGamma)$. In the same way, in the smooth case, ${\mathcal H}^{1/2}_d(\varGamma)={\mathcal H}^{1/2}(\varGamma)=H^{1/2}(\varGamma)$. 
This is no longer true however when $\varGamma$ is a $\mathcal C^{1,1}$ (curvilinear) polygon (and a fortiori when $\varGamma$ 
is only Lipschitz continuous) as explained in \cite{Geymonat:2007wx} where a counterexample 
is provided. Indeed in this case, a pair of functions $(f,g)\in H^{1}(\varGamma)\times L^2(\varGamma)$ is equal to the Dirichlet 
and Neumann traces  of 
a function in $H^2_{\ell oc}(\mathbb R^2)$ if and only if  the vector field $({\partial f}/{\partial\tau})\tau+gn$ is in $H^{1/2}(\varGamma;\mathbb R^2)$.
This condition implies in particular that the functions $f$ and $g$ have to 
satisfy some compatibility conditions 
at the vortices of the domain  (as indicated in \cite[Theorem 1.5.2.4]{Grisvard:1985aa}).
\par
For every $p\in{\mathcal H}^{3/2}(\varGamma)$, we define ${\mathsf L}_d^Sp$ as the unique fonction in $W^2(\mathbb R^2)$ achieving:
\begin{equation}
\label{eq:trala}
\inf\big\{\|u\|_S\,:\,u\in W^2(\mathbb R^2),\,\gamma_du=p\big\}.
\end{equation}
Thus ${\mathsf L}_d^Sp$ is the orthogonal projection of any preimage  of $p$ by $\gamma_d$  on the closed subspace $W^2_d(\mathbb R^2)^\perp$ of 
$(W^2(\mathbb R^2),\|\cdot\|_S)$. It is not difficult to verify that for every $p\in{\mathcal H}^{3/2}(\varGamma)$:
\begin{equation}
\label{ex:LO}
\Delta^2({\mathsf L}_d^Sp)=0\quad\text{ in }\mathscr D'(\mathbb R^2\setminus \varGamma)\quad\text{ and }\quad\gamma_d({\mathsf L}_d^Sp)=p.
\end{equation}
In the same fashion, we define ${\mathsf L}_d^Dp$ by replacing the norm $\|\cdot\|_S$ with the norm $\|\cdot\|_D$ in \eqref{eq:trala}. The function 
 ${\mathsf L}_d^Dp$ verifies both identities \eqref{ex:LO} as well.
This allows us to define two scalar products in ${\mathcal H}^{3/2}(\varGamma)$:
$$( p_1,p_2)_{\frac32}^{A}=\big( \mathsf L_d^Ap_1,\mathsf L_d^A p_2\big)_{\!A}\qquad A\in\{S,D\},$$
whose associated norms, denoted by $\|\cdot\|_{\frac32}^{A}$ are equivalent. The space ${\mathcal H}^{3/2}(\varGamma)$ provided with any of these norms is a Hilbert space.
We denote by $\Pi_d^A$ 
the orthogonal projection onto ${W^2_d}(\mathbb R^2)^\perp$ in $(W^2(\mathbb R^2),\|\cdot\|_A)$. The following identities are obvious:%
\begin{equation}
\label{nice:rel}
\gamma_d\circ \mathsf L_d^A={\rm Id}\qquad\text{and}\qquad \mathsf L_d^A\circ\gamma_d=\Pi_d^A.
\end{equation}
The very same procedure can be carried out by replacing the Dirichlet trace operator $\gamma_d$ with the Neumann 
trace operator $\gamma_n$. This leads us to define 
for $A\in\{S,D\}$
the operators $\mathsf L_n^A$, the projectors $\Pi_n^A$,  the scalar products $(\cdot,\cdot)^{A}_{\frac12}$ and the norms $\|\cdot\|_{\frac12}^{A}$ in the space ${\mathcal H}^{1/2}(\varGamma)$. 
As in \eqref{ex:LO}, the functions $\mathsf L_n^Aq$ verify:
\begin{equation}
\label{ex:L1}
\Delta^2(\mathsf L_n^Aq)=0\quad\text{ in }\mathscr D'(\mathbb R^2\setminus \varGamma)\quad\text{ and }\quad\gamma_n(\mathsf L_n^Aq)=q
\forallt q\in\mathcal H^{\frac12}(\varGamma).
\end{equation}
By construction, the following 
operators are isometric for any $A\in\{S,D\}$:
\begin{subequations}
\label{op:L}
\begin{align}
\label{isom:FRT}
\mathsf L_d^A:\big({\mathcal H}^{3/2}(\varGamma),\|\cdot\|_{\frac32}^{A}\big)\longrightarrow \big({W^2_d}(\mathbb R^2)^\perp,\|\cdot\|_A\big),\\
\label{isom:D}
\mathsf L_n^A:\big({\mathcal H}^{1/2}(\varGamma),\|\cdot\|_{\frac12}^{A}\big)\longrightarrow \big({W^2_n}(\mathbb R^2)^\perp,\|\cdot\|_A\big).
\end{align}
\end{subequations}
The space ${\mathcal H}^{3/2}(\varGamma)$ is continuously embedded in $L^2(\varGamma)$ since there 
exists a constant $C_\varGamma>0$ such that:
$$\|p\|_{L^2(\varGamma)}=\|\gamma_d\circ\mathsf L_d^Sp\|_{L^2(\varGamma)}\leqslant C_\varGamma \|\mathsf L_d^Sp
\|_{W^2(\mathbb R^2)}=\|p\|_{\frac32}^S\forallt p\in {\mathcal H}^{3/2}(\varGamma).$$
The embedding is also dense (because the space 
$\gamma_d\mathscr D(\mathbb R^2)$ is densely embedded in $L^2(\varGamma)$ as claimed in 
Proposition~\ref{prop:den}). Identifying $L^2(\varGamma)$ 
with its dual space by means of Riesz representation theorem, we obtain a so-called Gelfand triple of Hilbert 
spaces (see \cite[Appendix A]{Lequeurre:2020aa}):
\begin{subequations}
\label{gelf}
\begin{equation}
\label{gelfa}
{\mathcal H}^{3/2}(\varGamma)\subset L^2(\varGamma)\subset {\mathcal H}^{-3/2}(\varGamma),
\end{equation}
in which ${\mathcal H}^{-3/2}(\varGamma)$ is the dual space of ${\mathcal H}^{3/2}(\varGamma)$ and $L^2(\varGamma)$ is the pivot space. 
Similarly, we define $\mathcal H^{-1/2}(\varGamma)$ the dual space of $\mathcal H^{1/2}(\varGamma)$ and the Gelfand triple:
\begin{equation}
\label{gelfb}
{\mathcal H}^{1/2}(\varGamma)\subset L^2(\varGamma)\subset {\mathcal H}^{-1/2}(\varGamma).
\end{equation}
\end{subequations}
The Gelfand triples \eqref{gelf} justify that the duality brackets $\llangle\cdot,\cdot\rrangle_{-\frac32,\frac32}$ 
(between  the spaces ${\mathcal H}^{-3/2}(\varGamma)$   and ${\mathcal H}^{3/2}(\varGamma)$) and 
$\llangle\cdot,\cdot\rrangle_{-\frac12,\frac12}$ (between  the spaces ${\mathcal H}^{-1/2}(\varGamma)$   and ${\mathcal H}^{1/2}(\varGamma)$) 
``extend'' the $L^2(\varGamma)$  inner product. Concerning embedding results, we can also state:
\begin{prop}
\label{prop:dens2}
The inclusions $\mathcal H^{3/2}(\varGamma)\subset H^{1/2}(\varGamma)$ and $\mathcal H^{1/2}(\varGamma)
\subset H^{-1/2}(\varGamma)$ are continuous and dense. 
\end{prop}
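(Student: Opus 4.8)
The plan is to transfer the question to the bounded domain $\varOmega^-$, where the weights $\rho$ and ${\rm lg}$ of \eqref{weighted} are bounded from above and below, and then to glue together the classical trace estimates on Lipschitz domains (Theorems~\ref{dirichl_bound} and \ref{kernel}, Propositions~\ref{prop:21} and \ref{prop:den}) by means of the lifting operators $\mathsf L_d^S$, $\mathsf L_n^S$ and the equivalence of $\|\cdot\|_S$ with the natural norm of $W^2(\mathbb R^2)$.

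For the continuity of $\mathcal H^{3/2}(\varGamma)\subset H^{1/2}(\varGamma)$, I would first observe that every $u\in W^2(\mathbb R^2)$ restricts to an element of $H^1(\varOmega^-)$ (in fact of $H^2(\varOmega^-)$) with $\|u\|_{H^1(\varOmega^-)}\leqslant C_\varGamma\,\|u\|_{W^2(\mathbb R^2)}$, since $\rho$ and ${\rm lg}$ are harmless on the bounded set $\overline{\varOmega^-}$. Theorem~\ref{dirichl_bound} (with $s=1/2$) then gives $\gamma_d u\in H^{1/2}(\varGamma)$ with a bound by $\|u\|_{W^2(\mathbb R^2)}$; applying this to $u=\mathsf L_d^Sp$ and using $\gamma_d\circ\mathsf L_d^S={\rm Id}$ from \eqref{nice:rel} yields $\|p\|_{H^{1/2}(\varGamma)}\leqslant C\,\|p\|_{\frac32}^S$ for every $p\in\mathcal H^{3/2}(\varGamma)$. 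For the continuity of $\mathcal H^{1/2}(\varGamma)\subset H^{-1/2}(\varGamma)$ I would argue in the same way, replacing Theorem~\ref{dirichl_bound} by Proposition~\ref{prop:21} (which bounds $\gamma_n$ from $H^1(\varOmega^-,\Delta)$, hence a fortiori from $H^2(\varOmega^-)$, into $H^{-1/2}(\varGamma)$) and $\mathsf L_d^S$ by $\mathsf L_n^S$. Theorem~\ref{kernel} would actually provide the sharper $\gamma_n u\in L^2(\varGamma)$, but $H^{-1/2}(\varGamma)$ is all that is claimed here.

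For density, I would note that $\mathscr D(\mathbb R^2)\subset W^2(\mathbb R^2)$, so that $\gamma_d\mathscr D(\mathbb R^2)\subset\mathcal H^{3/2}(\varGamma)$ and $\gamma_n\mathscr D(\mathbb R^2)\subset\mathcal H^{1/2}(\varGamma)$, and it therefore suffices to prove that $\gamma_d\mathscr D(\mathbb R^2)$ is dense in $H^{1/2}(\varGamma)$ and that $\gamma_n\mathscr D(\mathbb R^2)$ is dense in $H^{-1/2}(\varGamma)$. For the first, I would use that the Dirichlet trace $\gamma_d^-:H^1(\varOmega^-)\to H^{1/2}(\varGamma)$ is bounded \emph{and onto} (classical trace theory on Lipschitz domains) and that $\mathscr D_{\varOmega^-}(\mathbb R^2)$ is dense in $H^1(\varOmega^-)$: lifting $f\in H^{1/2}(\varGamma)$ to some $v\in H^1(\varOmega^-)$, approximating $v$ in $H^1(\varOmega^-)$ by functions $v_k\in\mathscr D_{\varOmega^-}(\mathbb R^2)$ and passing to Dirichlet traces produces an approximating sequence $\gamma_d^-v_k\to f$ in $H^{1/2}(\varGamma)$ with $\gamma_d^-v_k\in\gamma_d\mathscr D(\mathbb R^2)$. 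For the second, Proposition~\ref{prop:den} already gives that $\gamma_n\mathscr D(\mathbb R^2)$ is dense in $L^2(\varGamma)$; since $H^{1/2}(\varGamma)\subset L^2(\varGamma)$ is continuous and dense, the transposed inclusion $L^2(\varGamma)\subset H^{-1/2}(\varGamma)$ is continuous and dense as well (this is precisely the Gelfand-triple structure \eqref{gelfb}), whence $\gamma_n\mathscr D(\mathbb R^2)$ is dense in $H^{-1/2}(\varGamma)$.

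The step I expect to be the main obstacle is the density of $\mathcal H^{3/2}(\varGamma)$ in $H^{1/2}(\varGamma)$: since no intrinsic space ``$H^{3/2}(\varGamma)$'' exists on a merely Lipschitz curve, the approximation cannot be performed on $\varGamma$ itself and must be routed through $\varOmega^-$, which forces one to use simultaneously the surjectivity of the $H^1$-trace and the density of $\mathscr D_{\varOmega^-}(\mathbb R^2)$ in $H^1(\varOmega^-)$. Once the weights are disposed of on the bounded piece $\varOmega^-$, the remaining arguments are routine.
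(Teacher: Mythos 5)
Your proof is correct and follows essentially the same route as the paper's (which simply says the first inclusion is handled like $\mathcal H^{3/2}(\varGamma)\subset L^2(\varGamma)$ and the second by composing with $L^2(\varGamma)\subset H^{-1/2}(\varGamma)$): lift with $\mathsf L_d^S$ and $\mathsf L_n^S$, use the classical trace bounds on $\varOmega^-$, and obtain density from $\gamma_d\mathscr D(\mathbb R^2)$ and $\gamma_n\mathscr D(\mathbb R^2)$. One cosmetic remark: the continuity and density of $L^2(\varGamma)\subset H^{-1/2}(\varGamma)$ is the Gelfand-triple structure for the classical spaces $H^{\pm 1/2}(\varGamma)$ rather than \eqref{gelfb}, which concerns the calligraphic spaces, but the fact itself is standard and is exactly what the paper invokes.
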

\begin{proof}
The first inclusion is proved the same way as the inclusion ${\mathcal H}^{3/2}(\varGamma)\subset L^2(\varGamma)$. The second 
inclusion results from the continuity and the density of the inclusion $L^2(\varGamma)\subset H^{-1/2}(\varGamma)$.
\end{proof}
It remains to make precise the topologies of the spaces ${\mathcal H}^{3/2}_n(\varGamma)$ 
and ${\mathcal H}^{1/2}_d(\varGamma)$ introduced in \eqref{def:Hronde}. For every $p\in {\mathcal H}^{3/2}_n(\varGamma)$, we define ${\mathcal L}_np$ as the unique fonction in ${W^2_n}(\mathbb R^2)$ achieving:
\begin{equation}
\label{eq:tralala0}
\inf\big\{\|u\|_D\,:\,u\in {W^2_n}(\mathbb R^2),\,\gamma_du=p\big\}.
\end{equation}
Thus ${\mathcal L}_np$ is the orthogonal projection of any preimage  in   ${W^2_n}(\mathbb R^2)$ of $p$   by $\gamma_d$ 
on the closed subspace $\big({W^2_d}(\mathbb R^2)\cap  {W^2_n}(\mathbb R^2)\big)^\perp$ in the space
$\big({W^2_n}(\mathbb R^2),\|\cdot\|_D\big)$. The function ${\mathcal L}_np$ is biharmonic in $\mathbb R^2\setminus\varGamma$ 
and satisfies $\gamma_d({\mathcal L}_n)p=p$ and  $\gamma_n({\mathcal L}_np)=0$. The space ${\mathcal H}^{3/2}_n(\varGamma)$ is endowed with the inner product:
\begin{equation}
\label{def_norm_10_bis}
(p_1,p_2)_{\frac32,n}=\big({\mathcal L}_np_1,{\mathcal L}_np_2\big)_D=(\Delta{\mathcal L}_np_1,\Delta{\mathcal L}_np_2)_{L^2(\mathbb R^2)}+\mu(p_1)\mu(p_2),\forallt p_1,p_2\in {\mathcal H}^{3/2}_n(\varGamma).
\end{equation}
We denote by $\|\cdot\|_{\frac32,n}$ the corresponding norm. Similarly, for any $q\in {\mathcal H}^{1/2}_d(\varGamma)$, $\mathcal L_dq$ stands for 
the unique function in ${W^2_d}(\mathbb R^2)$ 
achieving:
\begin{equation}
\label{eq:tralala1}
\inf\big\{\|u\|_S\,:\,u\in {W^2_d}(\mathbb R^2),\,\gamma_nu=q\big\}.
\end{equation}
Thus $\mathcal L_dq$ is a function biharmonic in $\mathbb R^2\setminus\varGamma$ 
that satisfies $\gamma_d(\mathcal L_dq)=0$ and  $\gamma_n(\mathcal L_dq)=q$.
The space ${\mathcal H}^{1/2}_d(\varGamma)$ is provided with the scalar product:
\begin{equation}
\label{def_norm_10}
(q_1,q_2)_{\frac12,d}=\big(\mathcal L_dq_1,\mathcal L_dq_2\big)_S=(\Delta\mathcal L_dq_1,\Delta\mathcal L_dq_2)_{L^2(\mathbb R^2)},
\forallt q_1,q_2\in {\mathcal H}^{1/2}_d(\varGamma),
\end{equation}
and the corresponding norm is denoted by $\|\cdot\|_{\frac12,d}$. The spaces $\big({\mathcal H}^{3/2}_n(\varGamma),\|\cdot\|_{\frac32,n}\big)$ 
and $\big({\mathcal H}^{1/2}_d(\varGamma),\|\cdot\|_{\frac12,d}\big)$ are Hilbert spaces and by construction, the following operators are isometric:
\begin{subequations}
\label{op:calL}
\begin{align}
\label{isom:FRT2}
{\mathcal L}_n:\big({\mathcal H}^{3/2}_n(\varGamma),\|\cdot\|_{\frac32,n}\big)&\longrightarrow 
\big(\mathscr B_n(\mathbb R^2),\|\cdot\|_D\big),\\
\label{isom:D2}
\mathcal L_d:\big({\mathcal H}^{1/2}_d(\varGamma),\|\cdot\|_{\frac12,d}\big)&\longrightarrow 
\big(\mathscr B_d(\mathbb R^2),\|\cdot\|_S\big),
\end{align}
\end{subequations}
where $\mathscr B_n(\mathbb R^2)=\big({W^2_d}(\mathbb R^2)\cap {W^2_n}(\mathbb R^2)\big)^\perp\cap {W^2_n}(\mathbb R^2)$
and $\mathscr B_d(\mathbb R^2)=\big({W^2_d}(\mathbb R^2)\cap {W^2_n}(\mathbb R^2)\big)^\perp\cap {W^2_d}(\mathbb R^2)$. The functions 
in $\mathscr B_n(\mathbb R^2)$ are those in $W^2(\mathbb R^2)$ which are biharmonic in $\mathbb R^2\setminus\varGamma$ with homogeneous Neumann 
boundary data and the functions in $\mathscr B_d(\mathbb R^2)$ are biharmonic in $\mathbb R^2\setminus\varGamma$ with homogeneous Dirichlet boundary data.
%
\section{Square integrable surface potentials}
\label{layer_pot}
In this section, we still assume that $\varGamma$ is Lipschitz continuous.
To every $q\in{\mathcal H}^{-3/2}(\varGamma)$ (applying Riesz representation Theorem), we can associate a unique $u_q\in W^2(\mathbb R^2)$ such that:
\begin{subequations}
\begin{equation}
\label{special}
\big(u_q,\theta\big)_S=\llangle q,\gamma_d\theta\rrangle_{-\frac32,\frac32}\forallt \theta\in W^2(\mathbb R^2),
\end{equation}
and we define:
\begin{equation}
\mathscr S_\varGamma^\dagger q=-\Delta u_q+\sum_{j=1}^3
\langle \mathsf q_j,\gamma_d u_q\rangle_{-\frac12,\frac12}\mathscr S_\varGamma \mathsf q_j.
\end{equation}
\end{subequations}
Similarly, to every $p\in{\mathcal H}^{-1/2}(\varGamma)$, we can associate a unique $v_p\in W^2(\mathbb R^2)$ such that:
\begin{subequations}
\begin{equation}
\label{special_1}
\big(v_p,\theta\big)_D=\llangle p,\gamma_n\theta\rrangle_{-\frac12,\frac12}\forallt \theta\in W^2(\mathbb R^2),
\end{equation}
and we define:
\begin{equation}
\mathscr D_\varGamma^\dagger p=-\Delta v_p+\sum_{j=1}^2(\mathsf p_j,\gamma_n v_p)_{L^2(\varGamma)}\mathscr D_\varGamma \mathsf p_j.\end{equation}
\end{subequations}
The expressions of the functions $u_q$ and $v_q$ with respect to $q$ and $p$ can be made precise.
Considering the Gelfand triple \eqref{gelfa} and  \eqref{gelfb}, we can classically  (see \cite[Appendix A]{Lequeurre:2020aa}) 
define the isometric operators 
\begin{equation}
\label{def_gelf_op}
\fct{\mathsf T_{\! d}:{\mathcal H}^{3/2}(\varGamma)}{\mathcal H^{-3/2}(\varGamma)}
{p}{(p,\cdot)_{\frac32}^S}
\qquad\text{and}\qquad
\fct{\mathsf T_{\! n}:{\mathcal H}^{1/2}(\varGamma)}{\mathcal H^{-1/2}(\varGamma)}
{q}{(q,\cdot)_{\frac12}^D.}
\end{equation}
\begin{lemma}
\label{lem:234}
For every $q\in{\mathcal H}^{-3/2}(\varGamma)$, the function $u_q$ defined by \eqref{special} is 
equal to ${\mathsf L}_d^S\circ \mathsf T_{\! d}^{-1}q$. For every $p\in{\mathcal H}^{-1/2}(\varGamma)$, the function $v_p$ defined by 
\eqref{special_1} is 
equal to ${\mathsf L}_n^D\circ \mathsf T_{\! n}^{-1}p$. It follows that the applications:
$$\fct{{\mathcal H}^{-3/2}(\varGamma)}{\big({W^2_d}(\mathbb R^2)^\perp,\|\cdot\|_S\big)}{q}{u_q}
\qquad\text{and}\qquad
\fct{{\mathcal H}^{-1/2}(\varGamma)}{\big({W^2_n}(\mathbb R^2)^\perp,\|\cdot\|_D\big)}{p}{v_p,}
$$
are isometric and that:
\begin{subequations}
\label{bounded_SD}
\begin{alignat}{3}
\label{bounded_SD1}
\mathscr S_\varGamma^\dagger q&=-\Delta {\mathsf L}_d^S\circ \mathsf T_{\! d}^{-1}q+\sum_{j=1}^3
\langle \mathsf q_j,\mathsf T_{\! d}^{-1}q\rangle_{-\frac12,\frac12}\mathscr S_\varGamma \mathsf q_j&&\forallt q\in \mathcal H^{-3/2}(\varGamma),\\
\label{bounded_SD2}
\mathscr D_\varGamma^\dagger p&=-\Delta {\mathsf L}_n^D\circ \mathsf T_{\! n}^{-1}p+\sum_{j=1}^2
\langle\mathsf T_{\! n}^{-1}p, \mathsf p_j\rangle_{-\frac12,\frac12}\mathscr D_\varGamma \mathsf p_j
&&\forallt p\in \mathcal H^{-1/2}(\varGamma).
\end{alignat}
\end{subequations}
\end{lemma}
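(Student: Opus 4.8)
The plan is to unravel the definitions of the three operators in play --- the $L^2(\varGamma)$-pivot Riesz isomorphism $\mathsf T_{\!d}$, the biharmonic lift $\mathsf L_d^S$, and the minimizing property behind the latter --- and reduce the whole statement to the defining variational identity \eqref{special}. First I would fix $q\in\mathcal H^{-3/2}(\varGamma)$, set $p:=\mathsf T_{\!d}^{-1}q\in\mathcal H^{3/2}(\varGamma)$, and note that, by the definition of $\mathsf T_{\!d}$ in \eqref{def_gelf_op}, for every $\theta\in W^2(\mathbb R^2)$ (so that $\gamma_d\theta\in\mathcal H^{3/2}(\varGamma)$ by the definition \eqref{bbghyuj} of that space) one has $\llangle q,\gamma_d\theta\rrangle_{-\frac32,\frac32}=(p,\gamma_d\theta)_{\frac32}^{S}=(\mathsf L_d^Sp,\mathsf L_d^S\gamma_d\theta)_S$, the last equality being just the definition of $(\cdot,\cdot)_{\frac32}^{S}$. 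Using $\mathsf L_d^S\circ\gamma_d=\Pi_d^S$ from \eqref{nice:rel}, this equals $(\mathsf L_d^Sp,\Pi_d^S\theta)_S$.

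The key point is then that $\Pi_d^S$ is the $(\cdot,\cdot)_S$-orthogonal projector onto $W^2_d(\mathbb R^2)^\perp$, hence self-adjoint, and that $\mathsf L_d^Sp\in W^2_d(\mathbb R^2)^\perp$ by the very definition of $\mathsf L_d^Sp$ through the minimization \eqref{eq:trala}; therefore $\Pi_d^S$ fixes $\mathsf L_d^Sp$ and $(\mathsf L_d^Sp,\Pi_d^S\theta)_S=(\mathsf L_d^Sp,\theta)_S$. Comparing with \eqref{special} and using the uniqueness of the Riesz representative in the Hilbert space $(W^2(\mathbb R^2),\|\cdot\|_S)$ gives $u_q=\mathsf L_d^Sp=\mathsf L_d^S\circ\mathsf T_{\!d}^{-1}q$. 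The argument for $v_p$ is literally the same, with $\gamma_d,\mathsf L_d^S,\Pi_d^S,\mathsf T_{\!d},\|\cdot\|_S,\llangle\cdot,\cdot\rrangle_{-\frac32,\frac32}$ replaced by $\gamma_n,\mathsf L_n^D,\Pi_n^D,\mathsf T_{\!n},\|\cdot\|_D,\llangle\cdot,\cdot\rrangle_{-\frac12,\frac12}$ and \eqref{eq:trala} replaced by the analogous minimization defining $\mathsf L_n^D$.

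Once the factorizations are in hand, the isometry assertion is immediate: $\mathsf T_{\!d}^{-1}:\mathcal H^{-3/2}(\varGamma)\to(\mathcal H^{3/2}(\varGamma),\|\cdot\|_{\frac32}^{S})$ is isometric and bijective by the Gelfand-triple/Riesz construction of \eqref{gelfa}, and $\mathsf L_d^S:(\mathcal H^{3/2}(\varGamma),\|\cdot\|_{\frac32}^{S})\to(W^2_d(\mathbb R^2)^\perp,\|\cdot\|_S)$ is isometric and onto by \eqref{isom:FRT}; composing shows $q\mapsto u_q$ is an isometric isomorphism, and likewise $p\mapsto v_p$ via \eqref{isom:D}. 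Finally, \eqref{bounded_SD} follows by substituting $u_q=\mathsf L_d^S\circ\mathsf T_{\!d}^{-1}q$ into the definition of $\mathscr S_\varGamma^\dagger q$ and rewriting the coefficients via $\gamma_d u_q=\gamma_d\circ\mathsf L_d^S(\mathsf T_{\!d}^{-1}q)=\mathsf T_{\!d}^{-1}q$ (from \eqref{nice:rel}); this is legitimate since $\mathsf T_{\!d}^{-1}q\in\mathcal H^{3/2}(\varGamma)\subset H^{1/2}(\varGamma)$ by Proposition~\ref{prop:dens2}, so the brackets $\langle\mathsf q_j,\mathsf T_{\!d}^{-1}q\rangle_{-\frac12,\frac12}$ are meaningful. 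For the double-layer identity in \eqref{bounded_SD} one argues identically, using in addition that $\mathcal H^{1/2}(\varGamma)\subset L^2(\varGamma)$ so that $(\mathsf p_j,\gamma_n v_p)_{L^2(\varGamma)}$ coincides with $\langle\mathsf T_{\!n}^{-1}p,\mathsf p_j\rangle_{-\frac12,\frac12}$.

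The computations carry essentially no analytic content; I expect the only real care to be in bookkeeping the three nested identifications (the $L^2(\varGamma)$-pivot duality, the Riesz maps $\mathsf T_{\!d},\mathsf T_{\!n}$, and the lifts $\mathsf L_d^S,\mathsf L_n^D$) and in checking that each duality bracket written down pairs compatible spaces --- which is precisely what the embeddings of Proposition~\ref{prop:dens2} and of the Gelfand triples \eqref{gelf} are there to guarantee.
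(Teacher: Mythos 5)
Your proposal is correct and follows essentially the same route as the paper: both prove $u_q=\mathsf L_d^S\circ\mathsf T_{\!d}^{-1}q$ by unwinding the definition of $\mathsf T_{\!d}$, writing $\llangle q,\gamma_d\theta\rrangle_{-\frac32,\frac32}=(\mathsf L_d^S\circ\mathsf T_{\!d}^{-1}q,\Pi_d^S\theta)_S=(\mathsf L_d^S\circ\mathsf T_{\!d}^{-1}q,\theta)_S$ via \eqref{nice:rel} and Riesz uniqueness in \eqref{special}, with the double-layer case handled identically. You merely spell out a few steps the paper leaves implicit (the self-adjointness of $\Pi_d^S$, the composition-of-isometries argument, and the compatibility of the duality brackets in \eqref{bounded_SD}), which is fine.
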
 
%
\begin{proof}
Let $q\in{\mathcal H}^{-3/2}(\varGamma)$ and $\theta\in W^2(\mathbb R^2)$. By definition of the operator $\mathsf T_{\! d}$:
$$\llangle q,\gamma_d\theta\rrangle_{-\frac32,\frac32}=\big(\mathsf T_{\! d}^{-1}q,\gamma_d\theta\big)_{\frac32}^S=\big({\mathsf L}_d^S\circ 
\mathsf T_{\! d}^{-1}q,{\mathsf L}_d^S\circ\gamma_d\theta\big)_S.$$
According to \eqref{nice:rel}:
$$\big({\mathsf L}_d^S\circ 
\mathsf T_{\! d}^{-1}q,{\mathsf L}_d^S\circ\gamma_d\theta\big)_S=\big({\mathsf L}_d^S\circ 
\mathsf T_{\! d}^{-1}q,\Pi_d^S\theta\big)_S=\big({\mathsf L}_d^S\circ 
\mathsf T_{\! d}^{-1}q,\theta\big)_S,$$
which means that $u_q={\mathsf L}_d^S\circ \mathsf T_{\! d}^{-1}q$ considering \eqref{special}. 
The result concerning $v_p$ is proved in the same way.
\end{proof}
\begin{theorem}
\label{exten_layer}
The linear operators $\mathscr S_\varGamma^\dagger:{\mathcal H}^{-3/2}(\varGamma)\longrightarrow L^2_{\ell oc}(\mathbb R^2)$ and
$\mathscr D_\varGamma^\dagger:{\mathcal H}^{-1/2}(\varGamma)\longrightarrow L^2_{\ell oc}(\mathbb R^2)$ are bounded and they
satisfy:
\begin{itemize}
\item[--] For every $q\in{\mathcal H}^{-3/2}(\varGamma)$:
\begin{subequations}
\label{def_single_ext}
\begin{alignat}{3}
\label{harm:u_ext}
\big\langle \mathscr S_\varGamma^\dagger q,-\Delta\theta\big\rangle_{\mathscr D'(\mathbb R^2),\mathscr D(\mathbb R^2)}&=\llangle q,\gamma_d\theta\rrangle_{-\frac32,\frac32}&\quad&\text{for all }\theta\in\mathscr D(\mathbb R^2);\\
\label{asymp:u_ext}
\mathscr S_\varGamma^\dagger q(x) &= \llangle q,\mathbf 1_\varGamma\rrangle_{-\frac32,\frac32} G(x)+o(1)&&\text{as}\quad |x|\longrightarrow+\infty;
\end{alignat}
\end{subequations}
\item[--] For every $p\in {\mathcal H}^{-1/2}(\varGamma)$:
\begin{subequations}
\label{def_double_ext}
\begin{alignat}{3}
\label{harm:v_ext}
\langle \mathscr D_\varGamma^\dagger p,-\Delta\theta\rangle_{\mathscr D'(\mathbb R^2),\mathscr D(\mathbb R^2)}&=\llangle p,\gamma_n\theta\rrangle_{-\frac12,\frac12}&\quad&\text{for all }\theta\in\mathscr D(\mathbb R^2);\\
\label{asymp:v_ext}
\mathscr D_\varGamma^\dagger p(x) &= o(1)&&\text{as}\quad |x|\longrightarrow+\infty.
\end{alignat}
\end{subequations}
\end{itemize}
The operators $\mathscr S_\varGamma^\dagger$ and $\mathscr D_\varGamma^\dagger$ are the extensions by density of the classical 
single and double layer potentials to the spaces ${\mathcal H}^{-3/2}(\varGamma)$ and ${\mathcal H}^{-1/2}(\varGamma)$ respectively.
\end{theorem}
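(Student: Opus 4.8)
The strategy is to first establish everything for $q$ in the dense subspace ${\mathcal H}^{-3/2}(\varGamma)\supset H^{-1/2}(\varGamma)$ (resp.\ $p\in{\mathcal H}^{-1/2}(\varGamma)\supset H^{1/2}(\varGamma)$), where the new potentials coincide with the classical ones, then propagate by continuity. Concretely, I would proceed in four steps.

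\emph{Step 1: variational identity \eqref{harm:u_ext}.} Fix $q\in{\mathcal H}^{-3/2}(\varGamma)$ and $\theta\in\mathscr D(\mathbb R^2)$. Using the explicit formula \eqref{bounded_SD1} from Lemma~\ref{lem:234}, write $\mathscr S_\varGamma^\dagger q=-\Delta u_q+\sum_j\langle\mathsf q_j,\gamma_du_q\rangle_{-\frac12,\frac12}\mathscr S_\varGamma\mathsf q_j$ with $u_q={\mathsf L}_d^S\circ\mathsf T_{\!d}^{-1}q\in W^2(\mathbb R^2)$. Pair against $-\Delta\theta$: the term $\langle-\Delta u_q,-\Delta\theta\rangle=(\Delta u_q,\Delta\theta)_{L^2}$ equals $(u_q,\theta)_S-\sum_j\langle\mathsf q_j,\gamma_du_q\rangle_{-\frac12,\frac12}\langle\mathsf q_j,\gamma_d\theta\rangle_{-\frac12,\frac12}$ by the very definition \eqref{scal:S} of $(\cdot,\cdot)_S$; and $(u_q,\theta)_S=\llangle q,\gamma_d\theta\rrangle_{-\frac32,\frac32}$ by \eqref{special}. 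For the remaining sum, apply the classical single-layer identity \eqref{harm:u} (valid since $\mathsf q_j\in H^{-1/2}(\varGamma)$): $\langle\mathscr S_\varGamma\mathsf q_j,-\Delta\theta\rangle=\langle\mathsf q_j,\gamma_d\theta\rangle_{-\frac12,\frac12}$. The two correction sums cancel, leaving exactly \eqref{harm:u_ext}. The argument for $\mathscr D_\varGamma^\dagger$ is identical using \eqref{scal:D}, \eqref{special_1}, \eqref{harm:v} and \eqref{bounded_SD2}, with the caveat that one must check the affine/mean-value correction terms in $(\cdot,\cdot)_D$ (the $\mu(\gamma_du)\mu(\gamma_dv)$ piece and the double-layer asymptotic terms) are annihilated by $-\Delta\theta$ when $\theta\in\mathscr D$; since $\mathscr D_\varGamma\mathsf p_j$ coincides with an affine function $Q_j$ on $\varOmega^-$ this is where a little care is needed, but the affine pieces contribute nothing to $\langle\cdot,-\Delta\theta\rangle$ modulo compactly supported test functions, and the mean-value terms are handled via \eqref{harm:v}.

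\emph{Step 2: asymptotics \eqref{asymp:u_ext}, \eqref{asymp:v_ext}.} Decompose $\mathscr S_\varGamma^\dagger q=-\Delta u_q+\sum_j c_j\mathscr S_\varGamma\mathsf q_j$ with $c_j=\langle\mathsf q_j,\gamma_du_q\rangle_{-\frac12,\frac12}$. Since $u_q\in W^2(\mathbb R^2)$, the weight in the definition of $W^2(\mathbb R^2)$ forces $\Delta u_q\in L^2(\mathbb R^2)$, and moreover $u_q$ is biharmonic outside $\varGamma$, so $\Delta u_q$ is harmonic outside a compact set and lies in $L^2$ near infinity; by the expansion \eqref{exp_dfolp} the only terms compatible with $L^2$-integrability at infinity are those decaying, hence $\Delta u_q(x)=\mathscr O(1/|x|^2)$ and in particular $-\Delta u_q=o(1)$. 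For the sum, use \eqref{asymp_sl}: $\mathscr S_\varGamma\mathsf q_j(x)=-\frac1{2\pi}\langle\mathsf q_j,\mathbf 1_\varGamma\rangle_{-\frac12,\frac12}\ln|x|+o(1)$, so $\sum_jc_j\mathscr S_\varGamma\mathsf q_j(x)=\big(\sum_jc_j\langle\mathsf q_j,\mathbf 1_\varGamma\rangle_{-\frac12,\frac12}\big)G(x)+o(1)$. It remains to identify the coefficient with $\llangle q,\mathbf 1_\varGamma\rrangle_{-\frac32,\frac32}$: this follows from taking $\theta$ in \eqref{special} (or rather its limiting form) associated to $\mathbf 1_\varGamma\in{\mathcal H}^{3/2}(\varGamma)$ together with the normalization $\langle\mathsf q_j,\mathsf S_\varGamma\mathsf q_k\rangle=\delta_{jk}$ and $\mathsf S_\varGamma=\gamma_d\circ\mathscr S_\varGamma$, unwinding that $c_j=\langle\mathsf q_j,\gamma_du_q\rangle$ are the coordinates of the ``$\mathscr A^{1/2}_S$-projection'' of $\gamma_d u_q$. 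For the double layer, \eqref{asymp_dl} has no $\ln|x|$ term and $-\Delta v_p=o(1)$ likewise, giving \eqref{asymp:v_ext} directly.

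\emph{Step 3: boundedness into $L^2_{\ell oc}$.} From Lemma~\ref{lem:234} the map $q\mapsto u_q$ is an isometry ${\mathcal H}^{-3/2}(\varGamma)\to({W^2_d}(\mathbb R^2)^\perp,\|\cdot\|_S)$, so $\|u_q\|_{W^2(\mathbb R^2)}\lesssim\|q\|_{-3/2}$; hence $\|\Delta u_q\|_{L^2(\mathbb R^2)}\lesssim\|q\|_{-3/2}$ and, by the trace bound ${\mathcal H}^{3/2}(\varGamma)\hookrightarrow L^2(\varGamma)$ plus continuity of the finite sum, $|c_j|\lesssim\|q\|_{-3/2}$. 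Since each $\mathscr S_\varGamma\mathsf q_j$ is a fixed function in $H^1_{\ell oc}(\mathbb R^2)\subset L^2_{\ell oc}(\mathbb R^2)$, for any fixed ball $B$ we get $\|\mathscr S_\varGamma^\dagger q\|_{L^2(B)}\lesssim_B\|q\|_{-3/2}$. Same for $\mathscr D_\varGamma^\dagger$.

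\emph{Step 4: extension by density.} For $q\in H^{-1/2}(\varGamma)$ I claim $\mathscr S_\varGamma^\dagger q=\mathscr S_\varGamma q$: indeed both satisfy \eqref{harm:u} (for $\mathscr S_\varGamma^\dagger$ this is Step~1 restricted to $H^{-1/2}$, noting $\llangle q,\gamma_d\theta\rrangle_{-\frac32,\frac32}=\langle q,\gamma_d\theta\rangle_{-\frac12,\frac12}$ on the pivot) and \eqref{asymp:u} (Step~2, with $\llangle q,\mathbf 1_\varGamma\rrangle_{-\frac32,\frac32}=\langle q,\mathbf 1_\varGamma\rangle_{-\frac12,\frac12}$), so by the uniqueness in Proposition~\ref{prop:def_layer} they coincide. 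Since $H^{-1/2}(\varGamma)$ is dense in ${\mathcal H}^{-3/2}(\varGamma)$ (stated in the introduction, and a consequence of the density of $L^2(\varGamma)$, itself dense via Proposition~\ref{prop:den}), and $\mathscr S_\varGamma^\dagger$ is bounded by Step~3, it is the unique bounded extension of $\mathscr S_\varGamma$. Identically, $\mathscr D_\varGamma^\dagger$ extends $\mathscr D_\varGamma$ using the uniqueness for \eqref{def_double} and the density of $H^{1/2}(\varGamma)$ in ${\mathcal H}^{-1/2}(\varGamma)$.

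\textbf{Main obstacle.} The routine parts are Steps 3--4. The genuine work is in Steps 1--2, and specifically in \emph{matching the finite-dimensional correction terms}: one must verify that the ad hoc subtraction $\sum_j\langle\mathsf q_j,\gamma_du_q\rangle\mathscr S_\varGamma\mathsf q_j$ in the definition of $\mathscr S_\varGamma^\dagger$ is precisely what is needed both to cancel the non-variational part of $(\cdot,\cdot)_S$ against \eqref{harm:u} \emph{and} to reproduce the correct logarithmic coefficient $\llangle q,\mathbf 1_\varGamma\rrangle_{-\frac32,\frac32}$ in the asymptotics — the normalization $\langle\mathsf q_j,\mathsf S_\varGamma\mathsf q_k\rangle=\delta_{jk}$ and the identity $\mathsf S_\varGamma=\gamma_d\circ\mathscr S_\varGamma$ are exactly the algebraic facts that make this work, and keeping the duality brackets (which pivot changes between $-\frac12,\frac12$ and $-\frac32,\frac32$) straight is the delicate bookkeeping.
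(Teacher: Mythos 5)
Your Steps 1, 3 and 4 for the single layer potential coincide with the paper's proof: the same cancellation of the correction sum against \eqref{harm:u}, the same identification of the logarithmic coefficient by testing \eqref{special} with the constant function $\mathbf 1_{\mathbb R^2}\in W^2(\mathbb R^2)$ (no ``limiting form'' is needed, since affine functions belong to $W^2(\mathbb R^2)$, and the normalization $\langle\mathsf q_j,\mathsf S_\varGamma\mathsf q_k\rangle_{-\frac12,\frac12}=\delta_{jk}$ is not actually used there), and the same coincidence-plus-density argument via the uniqueness in Proposition~\ref{prop:def_layer}. Your derivation of $-\Delta u_q=o(1)$ differs only cosmetically: the paper bounds $\Delta u_q(x)$ pointwise by $\|\Delta u_q\|_{L^2(\mathbb R^2)}/(\sqrt\pi\, d(x,\varGamma))$ through the mean-value property on $D(x,d(x,\varGamma))$, while you combine the expansion \eqref{exp_dfolp} with square-integrability at infinity; both are valid.

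The one genuine defect is in your treatment of the double layer in Step 1. The scalar product \eqref{scal:D} contains the term $\mu(\gamma_d v_p)\,\mu(\gamma_d\theta)$, and for $\theta\in\mathscr D(\mathbb R^2)$ the number $\mu(\gamma_d\theta)$ has no reason to vanish. This term is a product of two scalars, so it cannot be ``annihilated by $-\Delta\theta$'', and \eqref{harm:v} --- which only converts the terms $(\mathsf p_j,\gamma_n v_p)_{L^2(\varGamma)}(\mathsf p_j,\gamma_n\theta)_{L^2(\varGamma)}$ into pairings of $\mathscr D_\varGamma\mathsf p_j$ with $-\Delta\theta$ --- says nothing about it; the fact that $\mathscr D_\varGamma\mathsf p_j$ is affine on $\varOmega^-$ is irrelevant at this point. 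What is actually needed (and what the paper singles out as ``the only difficulty'') is that $\mu(\gamma_d v_p)=0$. This follows in one line by taking $\theta=\mathbf 1_{\mathbb R^2}$ in \eqref{special_1}: then $\Delta\theta=0$, $\gamma_n\theta=0$ and $\mu(\gamma_d\theta)=1$, so $(v_p,\mathbf 1_{\mathbb R^2})_D=\mu(\gamma_d v_p)=\llangle p,0\rrangle_{-\frac12,\frac12}=0$; equivalently it comes from the minimization characterization of $v_p$, perturbing by constants. With this addition your argument for \eqref{harm:v_ext} closes and the rest of your proof goes through as in the paper.
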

\begin{proof}
For every $q\in\mathcal H^{-3/2}(\varGamma)$, we can rewrite \eqref{special}:
$$(\Delta u_q,\Delta \theta)_{L^2(\mathbb R^2)}+
\sum_{j=1}^3\langle \mathsf q_j,\gamma_d u_q\rangle_{-\frac12,\frac12}\langle \mathsf q_j,\gamma_d \theta\rangle_{-\frac12,\frac12}=
\llangle q,\gamma_d\theta\rrangle_{-\frac32,\frac32}\forallt \theta\in\mathscr D(\mathbb R^2).$$
According to \eqref{harm:u}, we can transform the second term in the left and side to obtain:
$$\Big\langle\Delta u_q-\sum_{j=1}^3\langle \mathsf q_j,\gamma_d u_q\rangle_{-\frac12,\frac12}\mathscr S_\varGamma\mathsf  q_j,\Delta \theta\Big\rangle_{\mathscr D'(\mathbb R^2),
\mathscr D(\mathbb R^2)}=\llangle q,\gamma_d\theta\rrangle_{-\frac32,\frac32}\forallt \theta\in\mathscr D(\mathbb R^2),$$
which is \eqref{harm:u_ext}.
Let now $x$ be a point in $\varOmega^+$ and denote by $d(x,\varGamma)$ the distance from $x$ to $\varGamma$.
On the disk $D(x,d(x,\varGamma))$ of center $x$ and radius $d(x,\varGamma)$, the function $\Delta u_q$ is harmonic. It follows that:
$$\Delta u_q(x)=-\mathscr S^\dagger_\varGamma q(x)+\sum_{k=1}^3\langle q_k,\gamma_d u_q\rangle_{-\frac12,\frac12}
\mathscr S_\varGamma \mathsf q_k(x)
=\frac{1}{\pi d(x,\varGamma)^2}\int_{D(x,d(x,\varGamma))}\Delta u_q(y)\dy,$$
from which we deduce that:
\begin{equation}
\label{tghujio}
\left|\mathscr S^\dagger_\varGamma q(x)-\sum_{k=1}^3\langle \mathsf q_k,\gamma_d u_q\rangle_{-\frac12,\frac12}\mathscr S_\varGamma 
\mathsf q_k(x)\right|
\leqslant \frac{1}{\sqrt{\pi} d(x,\varGamma)}\|\Delta u_q\|_{L^2(\mathbb R^2)}.
\end{equation}
Taking into account the asymptotic expansion \eqref{asymp_sl}, we obtain on the one hand:
\begin{subequations}
\label{sub:1}
\begin{equation}
\sum_{k=1}^3\langle \mathsf q_k,\gamma_d u_q\rangle_{-\frac12,\frac12}\mathscr S_\varGamma \mathsf q_k(x)= \left(
\sum_{k=1}^3\langle \mathsf q_k,\gamma_d u_q\rangle_{-\frac12,\frac12}\langle \mathsf q_k,\mathbf 1_\varGamma\rangle_{-\frac12,\frac12}\right)G(x)+\mathscr O(1/|x|).
\end{equation}
On the other hand, equality \eqref{special} with $\theta=\mathbf 1_{\mathbb R^2}$ yields:
\begin{equation}
\llangle q,\mathbf 1_{\varGamma}\rrangle_{-\frac32,\frac32}=\sum_{k=1}^3\langle \mathsf q_k,\gamma_d u_q\rangle_{-\frac12,\frac12}\langle \mathsf q_k,\mathbf 1_\varGamma\rangle_{-\frac12,\frac12}.
\end{equation}
\end{subequations}
Combining both equations \eqref{sub:1} with \eqref{tghujio} and letting $|x|$ go to $+\infty$, we obtain 
 \eqref{asymp:u_ext}. The proof of   equalities \eqref{def_double_ext} is similar. The only difficulty consists in noticing that the function 
 $v_p$ in \eqref{special_1} achieves:
 $$\min_{v\in W^2(\mathbb R^2)}\frac{1}{2}\|v\|_D^2-\llangle p,\gamma_n v\rrangle_{-\frac12,\frac12},$$
 and therefore that $\mu(v_p)=0$.
 \par
According to \eqref{gelfa} 
 and Propositions~\ref{prop:dens2}, all the following 
 inclusions are continuous and dense:
 $$\mathcal H^{3/2}(\varGamma)\subset H^{1/2}(\varGamma)\subset L^2(\varGamma)\subset H^{-1/2}(\varGamma)\subset \mathcal H^{-3/2}(\varGamma).$$
 It entails that for every $q\in H^{-1/2}(\varGamma)$ and $p\in \mathcal H^{3/2}(\varGamma)$, we are allowed to write:
 $$\llangle q,p\rrangle_{-\frac32,\frac32} =\langle q,p\rangle_{-\frac12,\frac12}.$$
Comparing \eqref{def_single} and \eqref{def_single_ext}, we conclude that   $\mathscr S_\varGamma^\dagger q=\mathscr S_\varGamma q$
for every $q\in H^{-1/2}(\varGamma)$. In the same fashion, we can prove that  $\mathscr D_\varGamma^\dagger p=\mathscr D_\varGamma p$
for every $p\in   H^{1/2}(\varGamma)$. 
It remains only to verify that $\mathscr S^\dagger_\varGamma$ and $\mathscr D^\dagger_\varGamma$ are bounded but this is a 
straightforward consequence of the expressions \eqref{bounded_SD}  in Lemma~\ref{lem:234}.
\end{proof}

\section{Further function spaces}
\label{sec:polyn}
In this section, we assume that $\varGamma$ is a $\mathcal C^{1,1}$ curvilinear polygon and we
denote by $\varGamma_j$ its $\mathcal C^{1,1}$ edges and by $c_j$ its vertices ($j=1,\ldots,N$).
In the sequel we will need some particular test functions in $W^2(\mathbb R^2)$. Their existence is asserted in the Lemma below:
\begin{lemma}
\label{lem:1}
\begin{enumerate}
\item Any function $\theta$ in $W^2(\mathbb R^2)$ supported in $\mathbb R^2\setminus\{c_1,\ldots,c_N\}$ can be decomposed 
into a sum of two functions $\theta_d+\theta_n$ with $\theta_d\in {W^2_d}(\mathbb R^2)$ and $\theta_n\in {W^2_n}(\mathbb R^2)$.
\item For every index $k\in\{1,\ldots,N\}$, there exists a function $\theta$ in ${W^2_n}(\mathbb R^2)$ such that $\theta(c_k)=1$ and $\theta(c_j)=0$ when 
$j\neq k$, $j\in\{1,\ldots,N\}$.
\end{enumerate}
\end{lemma}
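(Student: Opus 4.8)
The plan is to construct the required test functions by hand, using cutoff functions localized near the vertices $c_1,\dots,c_N$, together with the freedom one has to prescribe both $\gamma_d$ and $\gamma_n$ of a $W^2$ function away from the corners. For the first assertion, start with $\theta\in W^2(\mathbb R^2)$ supported in $\mathbb R^2\setminus\{c_1,\dots,c_N\}$. The key point is that on each open edge $\varGamma_j$, away from the vertices, the pair $(\gamma_d\theta,\gamma_n\theta)$ lives in the space characterized in Section~\ref{main_spaces} (after \eqref{def:Hronde}): a pair $(f,g)$ is the Dirichlet/Neumann trace of an $H^2_{\ell oc}$ function iff the vector field $(\partial f/\partial\tau)\tau+gn$ is in $H^{1/2}(\varGamma;\mathbb R^2)$, and the only obstruction to splitting is the compatibility conditions at the vertices. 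Since $\theta$ vanishes in a neighborhood of every $c_k$, both $\gamma_d\theta$ and $\gamma_n\theta$ vanish near each vertex, so these compatibility conditions are trivially satisfied by $(\gamma_d\theta, 0)$ and by $(0,\gamma_n\theta)$ separately. Hence there exist $\theta_d\in H^2_{\ell oc}(\mathbb R^2)$ with $\gamma_d\theta_d=0$, $\gamma_n\theta_d=\gamma_n\theta$, and $\theta_n\in H^2_{\ell oc}(\mathbb R^2)$ with $\gamma_d\theta_n=\gamma_d\theta$, $\gamma_n\theta_n=0$; by Theorem~\ref{kernel}, $\theta-(\theta_d+\theta_n)$ has both traces zero on each side, so it lies in $H^2_0(\varOmega^-)\oplus H^2_0(\varOmega^+)$, which can be absorbed into either summand. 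Finally, multiplying by one of the truncation functions $\phi_k$ from Proposition~\ref{mqwxf}(2) (or rather, arranging compact support and using density) brings $\theta_d,\theta_n$ back into $W^2(\mathbb R^2)$, so that $\theta_d\in W^2_d(\mathbb R^2)$ and $\theta_n\in W^2_n(\mathbb R^2)$.

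For the second assertion, fix $k$ and the vertex $c_k$. I would pick a smooth cutoff $\chi\in\mathscr D(\mathbb R^2)$ equal to $1$ near $c_k$ and supported in a small ball $B$ around $c_k$ containing no other vertex, and look for $\theta$ of the form $\chi\psi$ where $\psi$ is a suitably chosen smooth function with $\psi(c_k)=1$. The only constraint is $\gamma_n\theta=0$ on $\varGamma\cap B$, i.e.\ $\nabla(\chi\psi)\cdot n=0$ on the two edges meeting at $c_k$. The simplest choice is to take $\psi\equiv 1$, giving $\theta=\chi$ with $\gamma_n\theta=\nabla\chi\cdot n$, which need not vanish; so instead I would build $\theta$ directly in local coordinates adapted to the corner: straighten the two edges near $c_k$, use a product of one-variable functions that are even (hence have zero normal derivative) across each edge, and patch with $\chi$. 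Concretely, near a corner one can choose $\theta$ depending only on the distance to each of the two edges through a function that is flat (all derivatives vanishing) on those edges; such a function automatically has $\gamma_n\theta=0$ on $\varGamma\cap B$ while $\theta(c_k)=1$. Away from $B$ set $\theta=0$; since $\theta\equiv 1$ near $c_k$ and the edges are $\mathcal C^{1,1}$, this $\theta$ is $H^2_{\ell oc}$, compactly supported, hence in $W^2_n(\mathbb R^2)$, and clearly $\theta(c_j)=\delta_{jk}$.

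The main obstacle is the second part: one must genuinely exhibit, near a $\mathcal C^{1,1}$ corner, an $H^2$ function with zero Neumann trace on both incident edges but nonzero value at the vertex. This is a local construction that has to respect the $\mathcal C^{1,1}$ regularity of the edges (so a naive reflection argument across a curved edge needs care), and one must check that the value at the vertex is well-defined — which is fine since $H^2_{\ell oc}(\mathbb R^2)\hookrightarrow \mathcal C^0(\mathbb R^2)$ in dimension two. A clean way to do this is: near $c_k$, write the edges as graphs $x_2=\varphi_1(x_1)$ and $x_1=\varphi_2(x_2)$ with $\varphi_i\in\mathcal C^{1,1}$, $\varphi_i(0)=\varphi_i'(0)=0$; then take $\theta(x)=\chi(x)\,\eta\big(x_2-\varphi_1(x_1)\big)\,\eta\big(x_1-\varphi_2(x_2)\big)$ where $\eta\in\mathscr D(\mathbb R)$ is even, $\eta\equiv 1$ near $0$ — evenness forces the normal derivative to vanish on each straightened edge, the $\mathcal C^{1,1}$ change of variables keeps $\theta$ in $H^2$, and $\theta(c_k)=1$ while $\theta$ is supported away from the other vertices. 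This is the step I expect to require the most care; the first assertion is then essentially a bookkeeping exercise combining the trace characterization after \eqref{def:Hronde}, Theorem~\ref{kernel}, and Proposition~\ref{mqwxf}.
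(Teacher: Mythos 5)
Your part 1 is essentially the paper's argument: the paper also works edge by edge with Grisvard's $H^2$ trace theorem, observing that the pairs $(f_j,0)$ with $f_j=\gamma_d\theta|_{\varGamma_j}$ compactly supported in the open edges trivially satisfy the vertex compatibility conditions; the only (harmless) difference is that the paper builds just one lift $\theta_n$ with traces $(\gamma_d\theta,0)$ and sets $\theta_d=\theta-\theta_n$, which avoids your residual-absorption and re-truncation step.

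Part 2, however, has a genuine gap. Your explicit corner function does not have vanishing Neumann trace except in the very special case of a right-angle corner with straight edges. On the edge $\{x_2=\varphi_1(x_1)\}$ the factor $\eta(x_2-\varphi_1(x_1))$ indeed contributes nothing (it is $\equiv 1$ near that edge), but the normal derivative of the other factor survives: up to normalization it equals $\eta'\big(x_1-\varphi_2(x_2)\big)\,\big(-\varphi_1'(x_1)-\varphi_2'(x_2)\big)$, and at points of the edge whose distance to the corner is comparable to the plateau width of $\eta$ one has $\eta'\neq 0$ while $-\varphi_1'-\varphi_2'$ is of order $\cot\omega$ for a corner of opening $\omega$ (your normalization $\varphi_1'(0)=\varphi_2'(0)=0$ silently forces $\omega=\pi/2$), and is still nonzero of order $\delta$ even at a right-angle corner with genuinely curved $\mathcal C^{1,1}$ edges. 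So the proposed $\theta$ is not in $W^2_n(\mathbb R^2)$ for a general $\mathcal C^{1,1}$ curvilinear polygon, which is exactly the setting of the lemma; a product of functions ``even across each edge'' kills the derivative transversal to that same edge but not the cross term coming from the factor attached to the other edge. The paper's proof sidesteps this entirely by the same mechanism you use in part 1: choose $f$ smooth on $\varGamma$, constant near $c_k$ and vanishing near the other vertices, note that the pairs $(f_j,0)$ satisfy the vertex compatibility conditions (since $\partial f_j/\partial\tau$ vanishes near the vertices), and invoke the trace theorem to get $\theta\in W^2(\mathbb R^2)$ with $\gamma_d\theta=f$, $\gamma_n\theta=0$; then $\theta\in W^2_n(\mathbb R^2)$ and the embedding $H^2_{\ell oc}(\mathbb R^2)\subset\mathcal C^0(\mathbb R^2)$ gives $\theta(c_k)=1$, $\theta(c_j)=0$ for $j\neq k$. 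If you want to keep an explicit construction, you would have to correct its nonzero Neumann trace, which amounts to reproving that trace theorem.
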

\begin{proof}
Addressing the first point of the lemma, we  apply \cite[Theorem 1.5.2.4]{Grisvard:1985aa} which makes precise the range 
of the operator $(\gamma_d^-,\gamma_n^-)$ defined on the space $H^2(\varOmega^-)$. Since, in 
\cite[Theorem 10.4.1]{Agranovich:2015us}, the author 
proves the existence of a universal extension operator from $H^2(\varOmega^-)$ to $H^2(\mathbb R^2)$, 
the range of $(\gamma_d^-,\gamma_n^-)$  is the same when we consider 
this operator as defined on $H^2_{\ell oc}(\mathbb R^2)$ or on $W^2(\mathbb R^2)$. So let $\theta$ be given in $W^2(\mathbb R^2)$ and 
denote respectively by $f_j$ and $g_j$ the restrictions of $\gamma_d^-\theta$ and $\gamma_n^-\theta$ 
to the edge $\varGamma_j$ (for $j$ ranging from 1 to $N$). According to \cite[Theorem 1.5.2.8]{Grisvard:1985aa},  the pair $(f_j,g_j)$ 
belongs to the space $H^{3/2}(\varGamma_j)\times H^{1/2}(\varGamma_j)$ for every index $j=1,\ldots,N$. Considering now 
the pairs $(f_j,0)$ in the same space $H^{3/2}(\varGamma_j)\times H^{1/2}(\varGamma_j)$, they trivially satisfy the compatibility conditions at 
the vertices $c_k$ described in \cite[Theorem 1.5.2.8]{Grisvard:1985aa} since every fonction $f_j$ is compactly supported on $\varGamma_j$.
Therefore they belong to the range of $(\gamma^-_d,\gamma^-_n)$ and there exists a function $\theta_n$ in $W^2(\mathbb R^2)$ such 
that $\gamma_d^-\theta_n|_{\varGamma_j}=\gamma_d\theta_n|_{\varGamma_j}=f_j$ and $\gamma_n^-\theta_n|_{\varGamma_j}=\gamma_n\theta_n|_{\varGamma_j}=0$. We define $\theta_d=\theta-\theta_n$ and the former assertion of the lemma is proved.
\par
The proof of the latter rests roughly on the same arguments. Let $k$ be given in $\{1,\ldots,N\}$ and let $f$ be a smooth function defined on $\varGamma$ that vanishes on a neighborhood 
of every vertex $c_j$ when $j\neq k$ and is constant in a neighborhood of $c_k$. Denote by $f_j$ the restriction of $f$ to $\varGamma_j$ ($j=1,\ldots,N$). 
The pairs $(f_j,0)$ belong to $H^{3/2}(\varGamma_j)\times H^{1/2}(\varGamma_j)$ and they trivially satisfy the compatibility conditions at 
the vertices described in \cite[Theorem 1.5.2.8]{Grisvard:1985aa} (since $\partial f_j/\partial\tau$ vanishes near the vertices),
what ensures the existence in $W^2(\mathbb R^2)$ of a preimage $\theta$ by the operator $(\gamma_d,\gamma_n)$. 
\end{proof}
Recall that the spaces ${\mathcal H}^{3/2}_n(\varGamma)$ and ${\mathcal H}^{1/2}_d(\varGamma)$ are defined in 
\eqref{def:Hronde}. The following result will play an important role in the rest of the paper:
\begin{theorem}
\label{dense:prop}
The space ${\mathcal H}^{3/2}_n(\varGamma)$ is dense 
in ${\mathcal H}^{3/2}(\varGamma)$
and the space
${\mathcal H}^{1/2}_d(\varGamma)$ is dense in ${\mathcal H}^{1/2}(\varGamma)$.
\end{theorem}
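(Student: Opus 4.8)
My plan is to transport the statement to the one–dimensional edges $\varGamma_j$ and reduce it to the classical fact that a point has zero $H^{1/2}$–capacity in dimension one. First I would record two preliminary facts. \emph{(i) Topology.} Since the weight functions are harmless locally, $\mathcal H^{3/2}(\varGamma)=\gamma_dH^2_{\ell oc}(\mathbb R^2)$ and $\mathcal H^{1/2}(\varGamma)=\gamma_nH^2_{\ell oc}(\mathbb R^2)$; Grisvard's trace theorem \cite[Theorem 1.5.2.8]{Grisvard:1985aa} identifies the range of $\gamma_d$ with $\{f:f|_{\varGamma_j}\in H^{3/2}(\varGamma_j)\ \forall j,\ f\ \text{continuous at the}\ c_k\}$ and the range of $\gamma_n$ with $\prod_j H^{1/2}(\varGamma_j)$, so by the open mapping theorem the norm of $\mathcal H^{3/2}(\varGamma)$ is equivalent to $\sum_j\|\cdot|_{\varGamma_j}\|_{H^{3/2}(\varGamma_j)}$ and that of $\mathcal H^{1/2}(\varGamma)$ to $\sum_j\|\cdot|_{\varGamma_j}\|_{H^{1/2}(\varGamma_j)}$; in particular point evaluation at a vertex is continuous on $\mathcal H^{3/2}(\varGamma)$. \emph{(ii) Key observation.} If $f$ is $H^{3/2}$ on each edge and vanishes in a neighbourhood of every vertex, then $f\in\mathcal H^{3/2}_n(\varGamma)$: the pair $(f,0)$ then satisfies trivially the vertex compatibility conditions of \cite[Theorem 1.5.2.8]{Grisvard:1985aa} (there $\partial f/\partial\tau$ vanishes near the $c_k$), so $f$ is the Dirichlet trace of a $W^2_n(\mathbb R^2)$ function — this is exactly the reasoning used in the proof of Lemma~\ref{lem:1}(1). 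Symmetrically, $g$ being $H^{1/2}$ on each edge and vanishing near every vertex implies $g\in\mathcal H^{1/2}_d(\varGamma)$.

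Next I would invoke the one–dimensional fact: on each smooth arc $\varGamma_j$, $\mathcal C^\infty_c(\varGamma_j)$ is dense in $H^{1/2}(\varGamma_j)$, and its closure in $H^{3/2}(\varGamma_j)$ is exactly $\{h\in H^{3/2}(\varGamma_j):h=0\ \text{at the two endpoints of}\ \varGamma_j\}$ (a point has zero $H^{1/2}$–capacity, and its $H^{3/2}$–capacity only sees the value). For the Neumann density this already suffices: given $g\in\mathcal H^{1/2}(\varGamma)$, pick $g^m_j\in\mathcal C^\infty_c(\varGamma_j)$ with $g^m_j\to g|_{\varGamma_j}$ in $H^{1/2}(\varGamma_j)$ for every $j$, and let $g_m$ be the function on $\varGamma$ obtained by gluing the $g^m_j$ (they vanish near the shared endpoints, so this is unambiguous). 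Then $g_m$ vanishes near every vertex, hence $g_m\in\mathcal H^{1/2}_d(\varGamma)$ by (ii), and $g_m\to g$ in $\mathcal H^{1/2}(\varGamma)$ by (i). Thus $\mathcal H^{1/2}_d(\varGamma)$ is dense in $\mathcal H^{1/2}(\varGamma)$.

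For the Dirichlet density one extra step is needed, because an element of $\mathcal H^{3/2}(\varGamma)$ carries continuous values at the vertices, which cannot be reached by functions vanishing there. I would first subtract them off: letting $\eta_k\in W^2_n(\mathbb R^2)$ be the functions of Lemma~\ref{lem:1}(2) — so $\gamma_d\eta_k\in\mathcal H^{3/2}_n(\varGamma)$ and $(\gamma_d\eta_k)(c_j)=\delta_{jk}$ — I replace $f\in\mathcal H^{3/2}(\varGamma)$ by $\widetilde f=f-\sum_kf(c_k)\,\gamma_d\eta_k$, which lies in $\mathcal H^{3/2}(\varGamma)$ and vanishes at every vertex; since the $\gamma_d\eta_k$ already belong to $\mathcal H^{3/2}_n(\varGamma)$ it is enough to approximate $\widetilde f$. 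Now $\widetilde f|_{\varGamma_j}\in H^{3/2}(\varGamma_j)$ with zero endpoint values, so by the one–dimensional fact there are $\widetilde f^m_j\in\mathcal C^\infty_c(\varGamma_j)$ with $\widetilde f^m_j\to\widetilde f|_{\varGamma_j}$ in $H^{3/2}(\varGamma_j)$; gluing them over $j$ produces $\widetilde f_m$, smooth and compactly supported on each edge — hence continuous at the vertices with value $0$, and vanishing near them — so $\widetilde f_m\in\mathcal H^{3/2}_n(\varGamma)$ by (ii), while $\widetilde f_m\to\widetilde f$ in $\mathcal H^{3/2}(\varGamma)$ by (i). This concludes the argument.

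The delicate point — and the reason for routing everything through the boundary — is the behaviour at the vertices. Note that the statement is equivalent to the density of $W^2_d(\mathbb R^2)+W^2_n(\mathbb R^2)$ in $W^2(\mathbb R^2)$ (via the isometries $\mathsf L^S_d$, $\mathsf L^D_n$ and the identities $\mathsf L^A\circ\gamma=\Pi^A$), but a naive proof of that density by cutting off neighbourhoods of the vertices fails even with logarithmic cut-offs, because a point has \emph{positive} $H^2$–capacity in the plane. On the one–dimensional edges, by contrast, the relevant capacity — that of a point in $H^{1/2}$ of an interval — vanishes, which is precisely what makes the cut-off (equivalently, the density of $\mathcal C^\infty_c$) work. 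I therefore expect the bulk of the work in a careful write-up to be in establishing (i) and (ii) rigorously (the range characterisations and the vertex-compatibility bookkeeping) and in the one–dimensional capacity statement, rather than in the assembly above.
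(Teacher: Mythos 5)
Your argument is correct in outline, but it takes a genuinely different route from the paper. The paper works on the $W^2(\mathbb R^2)$ side: after the reduction you mention (density of $W^2_d(\mathbb R^2)+W^2_n(\mathbb R^2)$ in $W^2(\mathbb R^2)$), it shows that any $u$ orthogonal to both $W^2_d(\mathbb R^2)$ and $W^2_n(\mathbb R^2)$ must vanish, by observing via Lemma~\ref{lem:1}(1) that $v=-\Delta u+\sum_j(\mathsf p_j,\gamma_nu)_{L^2(\varGamma)}\mathscr D_\varGamma\mathsf p_j$ is harmonic off the vertices, so that $\Delta v$ is a combination of Dirac masses there (derivatives being excluded by $L^2_{\ell oc}$ regularity), i.e.\ $v$ is a combination of logarithms; the asymptotics and the special test functions of Lemma~\ref{lem:1}(2) then kill each logarithmic coefficient. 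You instead push everything to the boundary and rest the proof on the \emph{range characterisations} in your step (i): that $\gamma_n$ maps onto all of $\prod_jH^{1/2}(\varGamma_j)$ and $\gamma_d$ onto the continuous-at-vertices tuples of $\prod_jH^{3/2}(\varGamma_j)$, with equivalent norms. This is strictly more than the paper ever uses (the paper only needs the easy ``sufficient'' direction of \cite[Theorem 1.5.2.8]{Grisvard:1985aa}, for data vanishing near the vertices), and it is where your real work sits: you must show that, given the $f_j$'s (resp.\ the $g_j$'s), the two Hardy-type compatibility conditions at each vertex can be solved for the $g_j$'s (resp.\ the $f_j$'s). This works because the data to be matched can be decomposed in the basis $\{n_j,n_{j+1}\}$ (resp.\ $\{\tau_j,\tau_{j+1}\}$) of $\mathbb R^2$ --- which requires every interior angle to differ from $\pi$. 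At a flat vertex of a $\mathcal C^{1,1}$ curvilinear polygon the two conditions decouple and your product description of $\mathcal H^{1/2}(\varGamma)$ and $\mathcal H^{3/2}(\varGamma)$ fails (a residual $H^{1/2}$-matching condition across that vertex survives); the density conclusion still holds there, since $\mathscr D(I\setminus\{0\})$ is dense in $H^{1/2}(I)$, but (i) needs to be restated. The remaining ingredients are sound: your one-dimensional facts are correct (note that $s=3/2$ is the exceptional index for $H^{s}_{00}$, but the identity $\overline{\mathscr D(I)}^{H^{3/2}}=\{h\in H^{3/2}(I):h|_{\partial I}=0\}$ can be checked directly by differentiating and using the density of mean-zero test functions in $H^{1/2}(I)$), your step (ii) is exactly the mechanism of Lemma~\ref{lem:1}(1), and the vertex-value correction by the $\gamma_d\eta_k$ of Lemma~\ref{lem:1}(2) is the right fix for the Dirichlet case. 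The trade-off: once (i) is written out carefully your route proves more than the theorem (an explicit identification of $\mathcal H^{3/2}(\varGamma)$ and $\mathcal H^{1/2}(\varGamma)$ with concrete product spaces, up to norm equivalence), whereas the paper's argument is softer, bypasses the surjectivity of the individual traces entirely, and is insensitive to degenerate angles.
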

\begin{proof}The proofs of both assertions are similar so let us focus on the latter.
Using the isometric operator \eqref{isom:D} and since $\mathsf L_n^D\circ\gamma_n=\Pi_n^D$,
 we are led to prove 
that $\Pi_n^D {W^2_d}(\mathbb R^2)$ is dense in $\Pi_n^D W^2(\mathbb R^2)=W_n^2(\mathbb R^2)^\perp$. This is equivalent 
to showing that $W^2_d(\mathbb R^2)\oplus W_n^2(\mathbb R^2)$ is dense in $W^2(\mathbb R^2)$ or, still equivalently, that 
$W^2_d(\mathbb R^2)^\perp\cap W_n^2(\mathbb R^2)^\perp=\{0\}$ (where both superscripts $\perp$  refer to the same 
scalar product $(\cdot,\cdot)_D$). 
\par
So, let $u$ be in $W^2_d(\mathbb R^2)^\perp$. Then $u=\Pi_d^Du$ and therefore:
\begin{subequations}
\begin{equation}
\label{lala}
(u,\theta)_D=\big(\Pi_d^D u,\theta\big)_D=\big(\Pi_d^D u,\Pi_d^D\theta\big)_D=\big({\mathsf L}_d^D\circ\gamma_du,{\mathsf L}_d^D\circ\gamma_d\theta\big)_D=
(\gamma_du,\gamma_d\theta)_{\frac32}^{D}\forallt\,\theta\in W^2(\mathbb R^2).
\end{equation}
In the same fashion, assuming that the function $u$ belongs also to $W_n^2(\mathbb R^2)^\perp$ we get:
\begin{equation}
\label{lolo}
(u,\theta)_D=
(\gamma_nu,\gamma_n\theta)_{\frac12}^{D}\forallt\,\theta\in W^2(\mathbb R^2).
\end{equation}
\end{subequations}
In addition, $u$ achieves:
$$\inf\big\{\|v\|_D\,:\,v\in W^2(\mathbb R^2),\,\gamma_nv=\gamma_nu\big\},$$
and therefore $\mu(u)=0$.
Now, recall that $\{c_1,\ldots,c_N\}$ are the vertices of the polygon $\varGamma$. According to Lemma~\ref{lem:1}, every function $\theta$ in $W^2(\mathbb R^2)$ 
compactly supported in $\mathbb R^2\setminus\{c_1,\ldots,c_N\}$ can be decomposed into the sum 
of two functions $\theta_d\in {W^2_d}(\mathbb R^2)$ and $\theta_n\in {W^2_n}(\mathbb R^2)$. It is easy to verify that  both 
functions can be chosen compactly supported in $\mathbb R^2\setminus\{c_1,\ldots,c_N\}$. If follows that for such a function 
$\theta$, we have:
$$(u,\theta)_D=(u,\theta_d)_D+(u,\theta_n)_D=0,$$
where we have used \eqref{lala} for the former term in the right hand side and \eqref{lolo} for the latter. Thus, we have proved in particular that:
$$(\Delta u,\Delta \theta)_{L^2(\mathbb R^2)}+\sum_{j=1}^2(\mathsf p_j,\gamma_n u)_{L^2(\varGamma)}(\mathsf p_j,\gamma_n \theta)_{L^2(\varGamma)}=0
\forallt \theta\in\mathscr D(\mathbb R^2\setminus\{c_1,\ldots,c_N\}),$$
and this can be rewritten, according to \eqref{harm:v} as:
$$\Big\langle-\Delta u+\sum_{j=1}^2(\mathsf p_j,\gamma_n u)_{L^2(\varGamma)}\mathscr D_\varGamma \mathsf p_j,\Delta \theta\Big\rangle_{\mathscr D'(\mathbb R^2),
\mathscr D(\mathbb R^2)}=0
\forallt \theta\in\mathscr D(\mathbb R^2\setminus\{c_1,\ldots,c_N\}).$$
This equality means that the function
\vspace{-1mm}
\begin{equation}
\label{wwqsdrt}
v=-\Delta u+\sum_{j=1}^2(\mathsf p_j,\gamma_n u)_{L^2(\varGamma)}\mathscr D_\varGamma \mathsf p_j,
\end{equation}
is harmonic in $\mathbb R^2\setminus\{c_1,\ldots,c_N\}$ and the distribution $\Delta v$ is  supported 
in the points $c_1,\ldots,c_N$. According to \cite[Theorem 1.5.3]{Hormander:1976uu},  we deduce that this distribution is a finite linear 
combination of Dirac measures and derivatives of Dirac mesures at the points $c_j$ ($j=1,\ldots,N$). Derivatives of Dirac measures 
must be excluded however since $v$ is in 
$L^2_{\ell oc}(\mathbb R^2)$. Finally, $v$ can only take the form:
\begin{equation}
\label{gugu}
v = -\sum_{j=1}^n \frac{\varrho_j}{2\pi} \ln|\cdot-c_j|+h,
\end{equation}
with $\varrho_j\in\mathbb R$ and $h$ harmonic in $\mathbb R^2$. Proceeding as in the proof of Theorem~\ref{exten_layer}, 
we deduce from identity \eqref{wwqsdrt} that $v(x)=o(1)$ as $|x|\longrightarrow+\infty$. It follows that $\sum_{j= 1}^n\varrho_j=0$ 
and $h=0$ with Liouville's theorem. Let 
$k\in\{1,\ldots,N\}$ be given and  let $\theta$ be 
a function in ${W^2_n}(\mathbb R^2)$ compactly supported such that $\theta(c_j)=0$ for $j\neq k$ and $\theta(c_k)=1$. Such a function exists 
according to Lemma~\ref{lem:1} and yields, applying   Green's identity \eqref{green}:
$$(u,\theta)_D=(\Delta u,\Delta \theta)_{L^2(\mathbb R^2)}+\sum_{j=1}^2(\mathsf p_j,\gamma_n u)_{L^2(\varGamma)}(\mathsf p_j,\gamma_n \theta)_{L^2(\varGamma)}=
(v,\Delta\theta)_{L^2(\mathbb R^2)}.$$
Using the expression \eqref{gugu} of the function $v$, we classically obtain that $(u,\theta)_D=\varrho_k$.
On the other hand, identity \eqref{lolo} leads to $(u,\theta)_D=0$, what completes the proof.
\end{proof}
Considering back the Gelfand triples \eqref{gelf}, we are allowed to write when $\varGamma$ is a curvilinear $\mathcal C^{1,1}$ 
polygon:
\begin{subequations}
\label{chain_of_inclusions}
\begin{equation}
{\mathcal H}^{3/2}_n(\varGamma)\subset {\mathcal H}^{3/2}(\varGamma)\subset H^{1/2}(\varGamma)\subset L^2(\varGamma)
\subset H^{-1/2}(\varGamma)\subset {\mathcal H}^{-3/2}(\varGamma)\subset {\mathcal H}^{-3/2}_n(\varGamma),
\end{equation}
where all the inclusions are continuous and dense, $L^2(\varGamma)$ is the pivot space (i.e. the space identified via Riesz 
representation theorem with its dual space) and ${\mathcal H}^{-3/2}_n(\varGamma)$ is the dual space of ${\mathcal H}^{3/2}_n(\varGamma)$. In a similar 
way, we have also:
\begin{equation}
\label{second_chain}
{\mathcal H}^{1/2}_d(\varGamma)\subset {\mathcal H}^{1/2}(\varGamma)\subset L^2(\varGamma)
\subset {\mathcal H}^{-1/2}(\varGamma)\subset {\mathcal H}^{-1/2}_d(\varGamma).
\end{equation}
\end{subequations}
We denote respectively by $\llangle\cdot,\cdot\rrangle_{-\frac32,\frac32,n}$ and $\llangle\cdot,\cdot\rrangle_{-\frac12,\frac12,d}$ the 
duality pairings on $\mathcal H^{-3/2}_n(\varGamma)\times\mathcal H^{3/2}_n(\varGamma)$ and on
$\mathcal H^{-1/2}_d(\varGamma)\times\mathcal H^{1/2}_d(\varGamma)$ and we introduce the isometric operators, based on the 
Gelfand triple structure:
\begin{equation}
\label{tD}
\fct{\mathcal T_{\! d}:{\mathcal H}^{1/2}_d(\varGamma)}{\mathcal H^{-1/2}_d(\varGamma)}
{q}{(q,\cdot)_{\frac12,d},}
\qquad\text{and}\qquad
\fct{\mathcal T_{\! n}:{\mathcal H}^{3/2}_n(\varGamma)}{\mathcal H^{-3/2}_n(\varGamma)}
{p}{(p,\cdot)_{\frac32,n}.}
\end{equation}
\par
We end this section by defining the closed subspaces of $L^2(\mathbb R^2)$ consisting in functions that are harmonic in $\varOmega^+\cup \varOmega^-$:
$$\mathscr H^0(\mathbb R^2\setminus\varGamma)=\big\{u\in L^2(\mathbb R^2)\,:\, (u,\Delta\theta)_{L^2(\mathbb R^2)}=0,
\quad\forall\,\theta\in\mathscr D(\mathbb R^2\setminus\varGamma)\big\}.$$
Combining Proposition~\ref{mqwxf} with Theorem~\ref{kernel}, it follows that: 
\begin{equation}
\label{def_harmL2}
\mathscr H^0(\mathbb R^2\setminus\varGamma)=\big\{\Delta u\,:\,u\in\big({W^2_d}(\mathbb R^2)\cap {W^2_n}(\mathbb R^2)\big)^\perp\big\},
\end{equation}
where the superscript $\perp$ refers to any of the two scalar products \eqref{def:norms} 
defined on $W^2(\mathbb R^2)$ (both leading to the 
same space). We will also consider more regular harmonic functions:
$$\mathscr H^1(\mathbb R^2\setminus\varGamma)=\big\{u\in L^2(\mathbb R^2)\,:\,u|_{\varOmega^+}\in H^1(\varOmega^+)\text{ and }
u|_{\varOmega^-}\in H^1(\varOmega^-)\big\}.$$
\begin{prop}
The space $\mathscr H^1(\mathbb R^2\setminus\varGamma)$ is dense in $\mathscr H^0(\mathbb R^2\setminus\varGamma)$.
\end{prop}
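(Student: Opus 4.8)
The plan is to show density of $\mathscr H^1(\mathbb R^2\setminus\varGamma)$ in $\mathscr H^0(\mathbb R^2\setminus\varGamma)$ by exploiting the characterization \eqref{def_harmL2}, namely $\mathscr H^0(\mathbb R^2\setminus\varGamma)=\Delta\big(({W^2_d}(\mathbb R^2)\cap {W^2_n}(\mathbb R^2))^\perp\big)$, together with the isomorphism $\Delta:W^2(\mathbb R^2)/\mathscr A\longrightarrow L^2(\mathbb R^2)$ of Proposition~\ref{mqwxf}. Concretely, given $u\in\mathscr H^0(\mathbb R^2\setminus\varGamma)$, write $u=\Delta w$ with $w\in ({W^2_d}(\mathbb R^2)\cap {W^2_n}(\mathbb R^2))^\perp$. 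The goal is to approximate $w$ in $W^2(\mathbb R^2)$ by functions $w_k$ such that $\Delta w_k$ is harmonic on each side of $\varGamma$ \emph{and} lies in $H^1$ on each side; then $u_k=\Delta w_k\to u$ in $L^2(\mathbb R^2)$ by continuity of $\Delta$, and $u_k\in\mathscr H^1(\mathbb R^2\setminus\varGamma)$.

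First I would reduce to a local problem via the truncation functions $(\phi_k)$ of Proposition~\ref{mqwxf}, point 2: since $\phi_k w\to w$ in $W^2(\mathbb R^2)$, it suffices to approximate a compactly supported element of $W^2(\mathbb R^2)$, so one may work on a large ball $B$ containing $\varGamma$ and reduce the question to approximating a biharmonic-on-each-side function on $B$. The heart of the argument is then a regularization on each side separately: on $\varOmega^-$ and on $\varOmega^+\cap B$ one wants to replace $w|_{\varOmega^\pm}\in H^2(\varOmega^\pm)$, which is biharmonic in $\varOmega^\pm$, by functions that are still biharmonic but whose Laplacian is in $H^1(\varOmega^\pm)$ — for instance by slightly ``shrinking'' the domain (using a dilation/contraction of coordinates centered appropriately and composing $w$ with it), which turns a function biharmonic on $\varOmega^\pm$ into one biharmonic on a strictly larger open set, hence with $H^1$ (indeed smooth) Laplacian up to $\varGamma$. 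The delicate point is to do the two one-sided regularizations simultaneously while keeping the resulting global function in $W^2(\mathbb R^2)$, i.e. keeping the Dirichlet and Neumann traces matching across $\varGamma$ so that $w_k\in H^2_{\ell oc}$: this is exactly the kind of trace-matching constraint that Lemma~\ref{lem:1} and the Grisvard trace description were set up to handle, and one corrects the mismatch of traces by adding a small element of $W^2(\mathbb R^2)$ whose $W^2$-norm is controlled by the (vanishing) trace defect.

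An alternative, and perhaps cleaner, route is to argue by duality/orthogonality: $\mathscr H^1(\mathbb R^2\setminus\varGamma)$ is dense in $\mathscr H^0(\mathbb R^2\setminus\varGamma)$ iff every $g\in\mathscr H^0(\mathbb R^2\setminus\varGamma)$ that is $L^2(\mathbb R^2)$-orthogonal to $\mathscr H^1(\mathbb R^2\setminus\varGamma)$ vanishes. Taking test functions in $\mathscr H^1$ of the form $\Delta\theta$ with $\theta\in W^2(\mathbb R^2)$ whose Laplacian is $H^1$ on each side (these are plentiful: e.g. $\theta\in H^4_{\ell oc}$), one finds that such a $g=\Delta w$ must satisfy $(\Delta w,\Delta\theta)_{L^2(\mathbb R^2)}=0$ for all such $\theta$; using Green's identity on each side this forces the boundary contributions of $w$ to vanish, and then the characterization of $\mathscr H^0$ combined with $w\in({W^2_d}\cap {W^2_n})^\perp$ forces $w\in \mathscr A$, hence $g=\Delta w=0$.

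I expect the main obstacle to be the first (constructive) approach's trace-matching step on the non-smooth boundary: contracting $\varOmega^-$ and dilating $\varOmega^+$ independently generically destroys the $H^2_{\ell oc}$ gluing across $\varGamma$, and near the vertices $c_j$ the compatibility conditions of \cite[Theorem 1.5.2.8]{Grisvard:1985aa} must be respected by the corrector; controlling the $W^2$-norm of this corrector uniformly as the contraction parameter tends to $1$ is the technical crux. If that proves too cumbersome, the duality argument of the previous paragraph sidesteps it, and the only thing to check carefully there is that the family $\{\Delta\theta:\theta\in W^2(\mathbb R^2),\ \Delta\theta|_{\varOmega^\pm}\in H^1(\varOmega^\pm)\}$ is rich enough to separate points, which follows because it contains $\Delta\theta$ for all $\theta$ in the dense subspace $W^2(\mathbb R^2)\cap H^4_{\ell oc}(\mathbb R^2)$.
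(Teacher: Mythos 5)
Your fallback duality argument has the right overall shape (it is in fact how the paper proceeds), but it contains a genuine gap precisely at the point you dismiss as a routine check: the supply of admissible test functions. Since $g$ is only assumed orthogonal to $\mathscr H^1(\mathbb R^2\setminus\varGamma)$, you may only test against $\Delta\theta$ when $\Delta\theta$ itself belongs to $\mathscr H^1(\mathbb R^2\setminus\varGamma)$, i.e.\ when $\theta$ is \emph{biharmonic in} $\mathbb R^2\setminus\varGamma$ in addition to $\Delta\theta|_{\varOmega^\pm}\in H^1(\varOmega^\pm)$. The family you invoke at the end, $\{\Delta\theta:\theta\in W^2(\mathbb R^2)\cap H^4_{\ell oc}(\mathbb R^2)\}$, drops this harmonicity requirement, so the orthogonality hypothesis simply does not apply to it (testing against all $\theta\in\mathscr D(\mathbb R^2)$ would give $\Delta^2w=0$ in $\mathbb R^2$ at once, but that is not legitimate). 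If instead you impose both requirements, the class trivializes: for $\theta\in H^4_{\ell oc}(\mathbb R^2)$ biharmonic off $\varGamma$, the function $v=\Delta\theta$ lies in $H^2_{\ell oc}(\mathbb R^2)$ with $\Delta v\in L^2_{\ell oc}(\mathbb R^2)$ vanishing a.e., hence $v$ is harmonic in all of $\mathbb R^2$, and being in $L^2(\mathbb R^2)$ it is zero. So your test class is either illegitimate or reduced to $\{0\}$; exhibiting a rich family inside $\mathscr H^1(\mathbb R^2\setminus\varGamma)$ is not a side condition to verify, it is the heart of the proof. The paper supplies it by first proving identity \eqref{ghaqpl}: every element of $\mathscr H^1(\mathbb R^2\setminus\varGamma)$ is a classical potential $\mathscr S_\varGamma q+\mathscr D_\varGamma p$ with $(q,p)$ in the constrained space $E(\varGamma)$ (the three constraints encode square integrability at infinity, via the asymptotic expansions \eqref{expand_asymp} and a Liouville argument). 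Orthogonality is then converted, by Green's identity applied to approximations $v_k\in\mathscr D(\mathbb R^2)$ of $v$ (where $g=\Delta v$, $v\in({W^2_d}(\mathbb R^2)\cap{W^2_n}(\mathbb R^2))^\perp$), into $\langle\gamma_n v,p\rangle_{-\frac12,\frac12}-\langle q,\gamma_d v\rangle_{-\frac12,\frac12}=0$ for all $(q,p)\in E(\varGamma)$. Note that this does \emph{not} force the boundary contributions of $v$ to vanish, as you assert: because of the three constraints defining $E(\varGamma)$, it only forces $(\gamma_d v,\gamma_n v)$ into the span of the Cauchy data of affine functions; one then concludes $v\in\mathscr A$, hence $g=\Delta v=0$, using $\mathscr A\subset({W^2_d}(\mathbb R^2)\cap{W^2_n}(\mathbb R^2))^\perp$. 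Your final step has the right flavour but skips this finite-dimensional defect entirely.

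Your first, constructive route is only a plan, and the obstacle you yourself flag is fatal as stated: one-sided dilations destroy the $H^2$ gluing across $\varGamma$, and the corrector you would add to restore matching Dirichlet and Neumann traces must itself be biharmonic in $\mathbb R^2\setminus\varGamma$ (otherwise $\Delta w_k$ is no longer harmonic on each side and $u_k\notin\mathscr H^1(\mathbb R^2\setminus\varGamma)$), with $W^2$-norm controlled by a trace defect measured in spaces that, near the vertices $c_j$, are exactly the delicate objects of Sections~\ref{main_spaces} and \ref{sec:polyn}; there is also an unaddressed interaction between the exterior dilation and the weighted behaviour at infinity. So neither route is complete, and the missing ingredient in both is what the paper's proof provides through the representation \eqref{ghaqpl}.
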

\begin{proof}
We introduce the closed subspace of $H^{-1/2}(\varGamma)\times H^{1/2}(\varGamma)$:
$$E(\varGamma)=\big\{(q,p)\in H^{-1/2}(\varGamma)\times H^{1/2}(\varGamma)\,:\, 
\langle q,\mathbf 1_\varGamma\rangle_{-\frac12,\frac12}=0,\,
\langle q,y_k\rangle_{-\frac12,\frac12}-\langle n_k,p\rangle_{-\frac12,\frac12}=0,\,k=1,2\big\}.$$
We claim that:
\begin{equation}
\label{ghaqpl}
\mathscr H^1(\mathbb R^2\setminus\varGamma)=\big\{\mathscr S_\varGamma q+\mathscr D_\varGamma p\,:\,(q,p)\in E(\varGamma)\big\}.
\end{equation}
For any $(q,p)\in E(\varGamma)$, the function $u=\mathscr S_\varGamma q+\mathscr D_\varGamma p$ is in
 $\mathscr H^1(\mathbb R^2\setminus\varGamma)$ according to the asymptotic expansions \eqref{expand_asymp}.
Reciprocally, let $u$ be in $\mathscr H^1(\mathbb R^2\setminus\varGamma)$ and denote $q=\gamma_n^+u+\gamma_n^-u$ 
and $p=\gamma_d^+u-\gamma_d^-u$. The function $v=\mathscr S_\varGamma q+\mathscr D_\varGamma p-u$ is harmonic on
$\mathbb R^2$. Since, by hypothesis, $u\in L^2(\mathbb R^2)$ we can proceed as in 
the proof of Proposition~\ref{exten_layer} to show that $u(x)=\mathscr O(1/|x|)$ for $|x|$ large. 
Taking into account \eqref{expand_asymp} again, we deduce that:
$$v(x)=-\frac{1}{2\pi}\langle q,\mathbf 1_\varGamma\rangle_{-\frac12,\frac12} \ln|x|+\mathscr O(1/|x|)
\qquad\text{as }|x|\longrightarrow+\infty,$$
and invoking \cite[Theorem 9.10]{Axler:2001aa}, we conclude that the function $v$ vanishes on $\mathbb R^2$. It follows that 
$u$ is equal to $\mathscr S_\varGamma q+\mathscr D_\varGamma p$ and since this function is in $L^2(\mathbb R^2)$, the pair $(q,p)$ 
is in $E(\varGamma)$ and identity \eqref{ghaqpl} is proved. We can now determine the space 
$\mathscr H^1(\mathbb R^2\setminus\varGamma)^\perp$ in 
$\mathscr H^0(\mathbb R^2\setminus\varGamma)$. Let $w$ be in $\mathscr H^0(\mathbb R^2\setminus\varGamma)$ such that:
$$(w,u)_{L^2(\mathbb R^2)}=0\forallt u\in \mathscr H^1(\mathbb R^2\setminus\varGamma),$$
or equivalently:
$$\big(w,\mathscr S_\varGamma q+\mathscr D_\varGamma p\big)_{L^2(\mathbb R^2)}=0\forallt (q,p)\in E(\varGamma).$$
From \eqref{def_harmL2}, we know that there exists a function $v\in \big({W^2_d}(\mathbb R^2)\cap {W^2_n}(\mathbb R^2)\big)^\perp$ 
such that $w=\Delta v$. Following Proposition~\ref{mqwxf}, $\mathscr D(\mathbb R^2)$ is a dense subspace 
in $W^2(\mathbb R^2)$ so let $(v_k)_{k\geqslant 0}$ be a sequence in $\mathscr D(\mathbb R^2)$ that converges to $v$ in $W^2(\mathbb R^2)$. 
For every $k\geqslant 0$ we can apply the second Green's formula \eqref{green} to obtain:
$$\langle \gamma_n v_k,p\rangle_{-\frac12,\frac12}-\langle q,\gamma_d v_k\rangle_{-\frac12,\frac12}=
\big(-\Delta v_k,\mathscr S_\varGamma q+\mathscr D_\varGamma p\big)_{L^2(\mathbb R^2)}.$$
 Letting $k$ 
go to $+\infty$, it comes:
$$\langle \gamma_n v,p\rangle_{-\frac12,\frac12}-\langle q,\gamma_d v\rangle_{-\frac12,\frac12}=
\big(-\Delta v,\mathscr S_\varGamma q+\mathscr D_\varGamma p\big)_{L^2(\mathbb R^2)}=0,$$
 and since the inclusions 
$H^{1/2}(\varGamma)
\subset \mathcal H^{-1/2}(\varGamma)$ and $H^{-1/2}(\varGamma)\subset \mathcal H^{-3/2}(\varGamma)$ are dense
(see \eqref{chain_of_inclusions}), 
we deduce that:
\begin{subequations}
\label{fgtyhuj}
\begin{equation}
\llangle p,\gamma_n v\rrangle_{-\frac12,\frac12}-\llangle q,\gamma_d v\rrangle_{-\frac32,\frac32}=0,
\end{equation}
for every $(q,p)\in\mathcal H^{-3/2}(\varGamma)\times \mathcal H^{-1/2}(\varGamma)$ such that:
\begin{equation}
\llangle q,\mathbf 1_\varGamma\rrangle_{-\frac32,\frac32}=0\quad\text{and}\quad
\llangle q,y_k\rrangle_{-\frac32,\frac32}-\llangle p,n_k\rrangle_{-\frac12,\frac12}=0\quad(k=1,2).
\end{equation}
\end{subequations}
Notice that, for $k=1,2$, $y_k$ and $n_k$ are the Dirichlet and Neumann traces of affine functions and therefore that they are respectively 
in $\mathcal H^{3/2}(\varGamma)$ and $\mathcal H^{1/2}(\varGamma)$. Both equalities \eqref{fgtyhuj} mean that there exist three 
real numbers $\lambda_1,\lambda_2$ and $\lambda_3$ such that:
$$\begin{pmatrix}
\gamma_d v\\
\gamma_n v
\end{pmatrix}
=\lambda_1
\begin{pmatrix}y_1\\n_1\end{pmatrix}
+\lambda_2
\begin{pmatrix}y_2\\n_2\end{pmatrix}
+\lambda_3
\begin{pmatrix}\mathbf 1_\varGamma\\0\end{pmatrix}.$$
We deduce that $v$ 
minus a linear combination of affine functions is in ${W^2_d}(\mathbb R^2)\cap {W^2_n}(\mathbb R^2)$. 
But since $\mathscr A\subset \big({W^2_d}(\mathbb R^2)\cap {W^2_n}(\mathbb R^2)\big)^\perp$ and 
$v\in  \big({W^2_d}(\mathbb R^2)\cap {W^2_n}(\mathbb R^2)\big)^\perp$, this function is also in
$\big({W^2_d}(\mathbb R^2)\cap {W^2_n}(\mathbb R^2)\big)^\perp$. If follows that $v\in\mathscr A$ and $w=\Delta v=0$, which concludes 
the proof.
\end{proof}
\section{Traces and jump relations}
\label{SEC:jump}
 According to Theorem~\ref{dirichl_bound} and Proposition~\ref{prop:21}, the one-sided trace operators $\gamma_d^{\pm}$ and 
$\gamma_n^{\pm}$ are well defined on $\mathscr H^1(\mathbb R^2\setminus\varGamma)$ and bounded.
The purpose of this section is to extend these operators to the space 
 $\mathscr H^0(\mathbb R^2\setminus\varGamma)$. Since the single and double layer potentials defined in Theorem~\ref{exten_layer}
are equal to the sum of a function in $\mathscr H^0(\mathbb R^2\setminus\varGamma)$ plus a classical single or double layer potential, 
we will be able to deduce at once traces results for surface potentials. As in Section~\ref{sec:polyn}, we continue assuming 
 that $\varGamma$ is a $\mathcal C^{1,1}$ curvilinear polygon. Recall that $\mathscr B_n(\mathbb R^2)=\big({W^2_d}(\mathbb R^2)\cap {W^2_n}(\mathbb R^2)\big)^\perp\cap {W^2_n}(\mathbb R^2)$
and $\mathscr B_d(\mathbb R^2)=\big({W^2_d}(\mathbb R^2)\cap {W^2_n}(\mathbb R^2)\big)^\perp\cap {W^2_d}(\mathbb R^2)$.
\par
To every $v\in \mathscr H^0(\mathbb R^2\setminus\varGamma)$, we associate  $\mathsf J_dv$ the element of ${\mathcal H}^{-1/2}_d(\varGamma)$ defined by:
\begin{equation}
\label{weak_trace0}
\llangle \mathsf J_dv,q\rrangle_{-\frac12,\frac12,d}=-\big(v,\Delta\mathcal L_d q\big)_{L^2(\mathbb R^2)}\forallt q\in
{\mathcal H}^{1/2}_d(\varGamma),
\end{equation}
where the operator $\mathcal L_d$ is given in \eqref{isom:D2}. We are going to show that $\mathsf J_dv$ is the ``jump'' of the 
one-sided Dirichlet 
traces of $v$ across $\varGamma$.
We denote by $\mathscr H_d^0(\mathbb R^2\setminus\varGamma)$ the image of the space 
$\mathscr B_d(\mathbb R^2)$ by the Laplacian. The operator:
$$\fct{\Delta_d : \mathscr B_d(\mathbb R^2)}
{\mathscr H_d^0(\mathbb R^2\setminus\varGamma)}{u}{\Delta u,}$$
being isometric,  $\mathscr H_d^0(\mathbb R^2\setminus\varGamma)$ is closed and we denote by $\Pi_d^0$ the orthogonal 
projection on this space in $L^2(\mathbb R^2)$.
%
%
It can be readily verify that: 
\begin{equation}
\label{hjuiqqw}
\mathsf J_dv=-\mathcal T_d\circ\mathcal L_d^{-1}\circ\Delta_d^{-1}\circ\Pi_d^0 v\forallt v\in \mathscr H^0(\mathbb R^2\setminus\varGamma),
\end{equation}
where the operator $\mathcal T_d$ is defined in \eqref{tD}.
Since the operators $\mathcal T_d$, $\mathcal L_d$ and $\Delta_d$ are isometric, it follows that:
\begin{equation}
\label{bound:1}
\|\mathsf J_dv\|_{-\frac12,d}=\|\Pi_d^0 v\|_{L^2(\mathbb R^2)}\leqslant \|v\|_{L^2(\mathbb R^2)}\forallt 
v\in \mathscr H^0(\mathbb R^2\setminus\varGamma).
\end{equation}
%
%
We turn now our attention to the Neumann trace operator. For every $v\in  \mathscr H^0(\mathbb R^2\setminus\varGamma)$, we denote by $\mathsf J_nv$ the element of ${\mathcal H}^{-3/2}_n(\varGamma)$ defined by:
\begin{equation}
\label{weak_trace1}
\llangle \mathsf J_nv,p\rrangle_{-\frac32,\frac32,n}=-\big(v,\Delta\mathcal L_n p\big)_{L^2(\mathbb R^2)}\forallt p\in
{\mathcal H}^{3/2}_n(\varGamma).
\end{equation}
%
When $p=\mathbf 1_\varGamma$ (which is indeed in $\mathcal H_n^{3/2}(\varGamma)$), $\mathcal L_n p=\mathbf 1_{\mathbb R^2}$ and 
therefore, the operator $\mathsf J_n$ is valued in:
\begin{equation}
\widetilde{\mathcal H}^{-3/2}_n(\varGamma)=\big\{q\in {\mathcal H}^{-3/2}_n(\varGamma)\,:\,\llangle q,\mathbf 1_\varGamma\rrangle_{-\frac32,\frac32,n}=0
\big\}.
\end{equation}
Looking for an expression like \eqref{hjuiqqw} for $\mathsf J_n$, we introduce the orthogonal decomposition 
$\mathscr B_n=\widetilde{\mathscr B}_n\oplus\langle \mathbf 1_{\mathbb R^2}\rangle$ of the space $\mathscr B_n$, where 
$\widetilde{\mathscr B}_n=\{u\in \mathscr B_n\,:\, \mu(\gamma_du)=0\}$. We introduce as well 
the space $\widetilde{\mathscr H}_n^0(\mathbb R^2\setminus\varGamma)=\{v\in  {\mathscr H}_n^0(\mathbb R^2\setminus\varGamma)\,:\,
(\mathbf 1_{\varOmega^-},v)_{L^2(\mathbb R^2)}=0\}$ and the isometric operator:%
$$\fct{\widetilde\Delta_n : \big(\widetilde{\mathscr B}_n,\|\cdot\|_D\big)}
{\widetilde{\mathscr H}_n^0(\mathbb R^2\setminus\varGamma)}{u}{\Delta u.}$$
Recalling that the operator $\mathcal T_{\!n}$ is defined in \eqref{tD} and denoting by $\widetilde{\Pi}_n^0$ the orthogonal projector 
onto $\widetilde{\mathscr H}_n^0(\mathbb R^2\setminus\varGamma)$ in $L^2(\mathbb R^2)$, we establish that:
$$\mathsf J_nv=-\mathcal T_{\!n}\circ\mathcal L_n^{-1}\circ\widetilde{\Delta}_n^{-1}\circ\widetilde{\Pi}_n^0 v
\forallt v\in \mathscr H^0(\mathbb R^2\setminus\varGamma).$$
We deduce that:
\begin{equation}
\label{bound:2}
\|\mathsf J_nv\|_{-\frac32,n}=\|\widetilde{\Pi}_n^0 v\|_{L^2(\mathbb R^2)}\leqslant \|v\|_{L^2(\mathbb R^2)}\forallt 
v\in \mathscr H^0(\mathbb R^2\setminus\varGamma).
\end{equation}
On the other hand, for every $v\in  \mathscr H^0(\mathbb R^2\setminus\varGamma)$ we can define $v^+$ and $v^-$ in $\mathscr H^0(\mathbb R^2\setminus\varGamma)$ by:
$$v^+=\begin{cases} 0&\text{ on }\varOmega^-\\
v|_{\varOmega^+}&\text{ on }\varOmega^+
\end{cases}\qquad\text{and}\qquad
v^-=\begin{cases} v|_{\varOmega^-}&\text{ on }\varOmega^-\\
0&\text{ on }\varOmega^+.
\end{cases}
$$
We can now  make precise the notion of trace for functions in $\mathscr H^0(\mathbb R^2\setminus\varGamma)$:
\begin{definition}
\label{def_trace_ext}
For every function $v\in  \mathscr H^0(\mathbb R^2\setminus\varGamma)$, we define the one-sided Dirichlet trace operators 
$\gamma_d^+$ and $\gamma_d^-$ by:
\begin{subequations}
\label{tr_ext}
\begin{equation}
\label{diri_ext}
\fct{\gamma_d^\pm:\mathscr H^0(\mathbb R^2\setminus\varGamma)}{{\mathcal H}^{-1/2}_d(\varGamma)}{v}{\pm \mathsf J_d{v^\pm}.}
\end{equation}
We define as well the one-sided Neumann trace operators  $\gamma_n^+$ and $\gamma_n^-$ by:
\begin{equation}
\label{neu_ext}
\fct{\gamma_n^\pm:\mathscr H^0(\mathbb R^2\setminus\varGamma)}{{\widetilde{\mathcal H}}^{-3/2}_n(\varGamma)}{v}{\mathsf J_n{v^\pm}.}
\end{equation}
\end{subequations}
\end{definition}
As expected, we have:
\begin{prop}
The operators \eqref{tr_ext} are bounded and are the extensions by density of the operators $\gamma_d^{\pm}$ and $\gamma_n^{\pm}$ 
(defined on $\mathscr H^1(\mathbb R^2\setminus\varGamma)$) to $\mathscr H^0(\mathbb R^2\setminus\varGamma)$. 
\end{prop}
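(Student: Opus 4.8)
The statement comprises three assertions: the boundedness of the operators introduced in Definition~\ref{def_trace_ext}, the fact that on $\mathscr H^1(\mathbb R^2\setminus\varGamma)$ they reduce to the classical one-sided trace operators, and the density of $\mathscr H^1(\mathbb R^2\setminus\varGamma)$ in $\mathscr H^0(\mathbb R^2\setminus\varGamma)$. The last of these is exactly the proposition just proved, so the plan is to establish the first two; combining the three then gives that the operators of Definition~\ref{def_trace_ext} are the unique bounded extensions of the classical ones.

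Boundedness is immediate from what precedes. Since $v\mapsto v^\pm$ has norm at most $1$ on $L^2(\mathbb R^2)$, formula \eqref{diri_ext} together with \eqref{bound:1} yields $\|\gamma_d^\pm v\|_{-\frac12,d}=\|\mathsf J_dv^\pm\|_{-\frac12,d}\leqslant\|v\|_{L^2(\mathbb R^2)}$, and \eqref{neu_ext} together with \eqref{bound:2} yields $\|\gamma_n^\pm v\|_{-\frac32,n}=\|\mathsf J_nv^\pm\|_{-\frac32,n}\leqslant\|v\|_{L^2(\mathbb R^2)}$, for every $v\in\mathscr H^0(\mathbb R^2\setminus\varGamma)$.

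For the compatibility I would fix $v\in\mathscr H^1(\mathbb R^2\setminus\varGamma)$ and note that $v^+$ and $v^-$ again lie in $\mathscr H^1(\mathbb R^2\setminus\varGamma)$, each harmonic on both sides of $\varGamma$ and vanishing identically on one of $\varOmega^\pm$. Given $q\in\mathcal H^{1/2}_d(\varGamma)$, recall that $\mathcal L_dq\in W^2_d(\mathbb R^2)$ is biharmonic in $\mathbb R^2\setminus\varGamma$, with $\gamma_d\mathcal L_dq=0$ and $\gamma_n\mathcal L_dq=q$. The goal is to rewrite $\llangle\mathsf J_dv^\pm,q\rrangle_{-\frac12,\frac12,d}=-(v^\pm,\Delta\mathcal L_dq)_{L^2(\mathbb R^2)}$ by means of Green's second identity \eqref{green} applied on $\varOmega^+$ and on $\varOmega^-$ separately. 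Using $\Delta v=0$ on $\varOmega^\pm$, the homogeneous Dirichlet datum $\gamma_d\mathcal L_dq=0$ and the Neumann datum $\gamma_n\mathcal L_dq=q$, and keeping track of the orientations of the normals (for a function of $H^2_{\ell oc}(\mathbb R^2)$ one has $\gamma_d^\pm=\gamma_d$ and $\gamma_n^\pm=\pm\gamma_n$, with $n=n^-$), this should give $\llangle\mathsf J_dv^\pm,q\rrangle_{-\frac12,\frac12,d}=\pm\big(\gamma_d^\pm v,q\big)_{L^2(\varGamma)}$, where $\gamma_d^\pm v$ denotes the classical one-sided Dirichlet trace; by Definition~\ref{def_trace_ext} this precisely says that the operator $v\mapsto\pm\mathsf J_dv^\pm$ coincides, on $\mathscr H^1(\mathbb R^2\setminus\varGamma)$, with the classical $\gamma_d^\pm$ read in $\mathcal H^{-1/2}_d(\varGamma)$ through the embeddings \eqref{second_chain}. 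Repeating the computation with $\mathcal L_np$ in place of $\mathcal L_dq$ (for $p\in\mathcal H^{3/2}_n(\varGamma)$, so that $\gamma_d\mathcal L_np=p$ and $\gamma_n\mathcal L_np=0$) should similarly give $\mathsf J_nv^\pm=\gamma_n^\pm v$, the classical one-sided Neumann trace, read in $\widetilde{\mathcal H}^{-3/2}_n(\varGamma)$ through \eqref{chain_of_inclusions}. As a by-product of the same manipulation applied to $v$ itself, one gets the ``jump'' formulas $\mathsf J_dv=\gamma_d^+v-\gamma_d^-v$ and $\mathsf J_nv=\gamma_n^+v+\gamma_n^-v$ for $v\in\mathscr H^1(\mathbb R^2\setminus\varGamma)$, which explains the notation.

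The step I expect to be the main obstacle is the use of Green's identity on the unbounded domain $\varOmega^+$. Indeed $\mathcal L_dq$ (resp. $\mathcal L_np$) is only known to belong to $H^2_{\ell oc}(\mathbb R^2)$, so its restriction to $\varOmega^+$ need not be in $H^1(\varOmega^+,\Delta)$ and Proposition~\ref{prop:21} does not apply there directly; a careless use of Green's formula would moreover leave an unwanted boundary term at infinity. To get around this I would approximate $\mathcal L_dq$ by a sequence $(w_k)_{k\geqslant 1}$ in $\mathscr D(\mathbb R^2)$ converging to it in $W^2(\mathbb R^2)$ (Proposition~\ref{mqwxf}): for each $w_k$, which is compactly supported, Green's identity on $\varOmega^+$ only involves a bounded region and is harmless, and one may pass to the limit because $\Delta\colon W^2(\mathbb R^2)\to L^2(\mathbb R^2)$ and the trace operators $\gamma_d,\gamma_n$ on $W^2(\mathbb R^2)$ are continuous, the only possibly delicate boundary contribution being annihilated by $\gamma_d^+\mathcal L_dq=\gamma_d\mathcal L_dq=0$ (resp. by $\gamma_n^+\mathcal L_np=\gamma_n\mathcal L_np=0$). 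On the bounded domain $\varOmega^-$, Proposition~\ref{prop:21} applies verbatim. With the compatibility established, it remains only to invoke the boundedness of the new operators and the density of $\mathscr H^1(\mathbb R^2\setminus\varGamma)$ in $\mathscr H^0(\mathbb R^2\setminus\varGamma)$ to conclude.
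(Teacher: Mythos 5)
Your proposal is correct and follows essentially the same route as the paper: boundedness is read off from \eqref{bound:1} and \eqref{bound:2}, and the compatibility with the classical traces is obtained by testing against $\Delta\mathcal L_dq$ and $\Delta\mathcal L_np$ and applying Green's identity \eqref{green}, the homogeneous boundary data of $\mathcal L_dq$ and $\mathcal L_np$ killing one boundary term, so that \eqref{weak_trace0}--\eqref{weak_trace1} identify $\mathsf J_dv^\pm$ and $\mathsf J_nv^\pm$ with the classical one-sided traces via the embeddings \eqref{chain_of_inclusions}--\eqref{second_chain}. The paper carries this out explicitly only for $v^-$ on the bounded domain and declares the remaining cases analogous; your truncation argument for the exterior domain, where $\mathcal L_dq|_{\varOmega^+}$ need not lie in $H^1(\varOmega^+,\Delta)$, is a legitimate and welcome way of justifying that step.
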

\begin{proof}
The boundedness results from \eqref{bound:1} and \eqref{bound:2}.
Let $v$ be in $\mathscr H^1(\mathbb R^2\setminus\varGamma)$ and $q$ be in $\mathcal H^{1/2}_d(\varGamma)$. 
Green's formula \eqref{green} 
leads to:
$$\big(v^-,\Delta\mathcal L_d q\big)_{L^2(\mathbb R^2)}=\langle q,\gamma_d^-v\rangle_{-\frac12,\frac12}
=(q,\gamma_d^-v)_{L^2(\mathbb R^2)}=\llangle \gamma_d^-v,q\rrangle_{-\frac12,\frac12,d},$$
the last equality resulting from \eqref{second_chain} (the inclusions allowing $\gamma_d^-v$ to be considered as an element 
of $\mathcal H^{-1/2}_d(\varGamma)$ and asserting that the duality pairing 
$\llangle \cdot,\cdot\rrangle_{-\frac12,\frac12,d}$ extends the $L^2$ inner product).
On the other hand, according to \eqref{weak_trace0}:
$$\big(v^-,\Delta\mathcal L_d q\big)_{L^2(\mathbb R^2)}=-\llangle \mathsf J_d{v^-},q\rrangle_{-\frac12,\frac12,d}.$$
It follows that $\llangle \mathsf J_d{v^-}+\gamma_d^-v,q\rrangle_{-\frac12,\frac12,d}=0$ for every $q\in\mathcal H^{1/2}_d(\varGamma)$
and therefore $\mathsf J_d{v^-}=-\gamma_d^-v$ in $\mathcal H^{-1/2}_d(\varGamma)$. The proof of the other equalities ($\gamma_d^+v=
\mathsf J_d{v^+}$ and 
$\gamma^\pm_nv=\mathsf J_n{v^\pm}$)  follows from the same arguments.
\end{proof}
%
For every $u\in\mathscr H^0(\mathbb R^2\setminus\varGamma)$, we introduce the classical notations:
$$\big[\gamma_du\big]_\varGamma=\gamma_d^+u^+-\gamma_d^-u^-\qquad\text{and}\qquad
\big[\gamma_nu\big]_\varGamma=\gamma_n^+u^++\gamma_n^-u^-,$$
so that $\big[\gamma_du\big]_\varGamma=\mathsf J_dv$ in \eqref{weak_trace0} and $\big[\gamma_nu\big]_\varGamma=\mathsf J_nv$ in 
\eqref{weak_trace1}.
Before proving the jump relations for the single and the double layer potentials, we need to establish a preliminary technical result:
\begin{lemma}
\label{qqsiso}
\begin{enumerate}
\item
For every $p\in \mathcal H^{3/2}(\varGamma)$, $\big[\gamma_d \Delta  {\mathsf L}_d^S p\big]_\varGamma=0$ and for every 
$q\in\mathcal H^{1/2}(\varGamma)$,  $\big[\gamma_n \Delta  {\mathsf L}_n^D p\big]_\varGamma=0$.
\item Recall the the operator $\mathsf T_{\!d}$ and $\mathsf T_{\!n}$ are defined in \eqref{def_gelf_op} and the operators $\mathcal T_d$ and $\mathcal T_{\!n}$ in \eqref{tD}. The following identities hold:
\vspace{-2mm}
\begin{subequations}
\begin{alignat}{3}
\label{knklpoo}
-\big[\gamma_n\Delta {\mathsf L}_d^S p\big]_\varGamma+\sum_{k=1}^3\langle \mathsf q_k,p\rangle_{-\frac12,\frac12}
\mathsf q_k&=\mathsf T_{\! d}p&&\forallt p\in\mathcal H^{3/2}(\varGamma),\\
\label{knklpoogt}
-\big[\gamma_d\Delta \mathsf L_n^D q\big]_\varGamma
+\sum_{j=1}^2(\mathsf p_j,q)_{L^2(\varGamma)}\mathsf p_j&=\mathsf T_{\!n}q&&\forallt q\in\mathcal H^{1/2}(\varGamma),\\
\label{ghbnhk}
-\big[\gamma_n\Delta {\mathcal L}_np\big]_\varGamma
+\mu(p)|\varGamma|^{-1}\mathbf 1_\varGamma&=\mathcal T_{\!n} p&&\forallt p\in {\mathcal H}^{3/2}_n(\varGamma)\\
\label{vbwppl}
-\big[\gamma_d\Delta \mathcal L_dq\big]_\varGamma&=\mathcal T_d q&&\forallt q\in {\mathcal H}^{1/2}_d(\varGamma).
\end{alignat}
From \eqref{knklpoo} and \eqref{knklpoogt} we deduce in particular that the operators:
\begin{equation}
\label{45e}
\mathsf T_{\! d}:\mathscr A_S^{1/2}\longrightarrow \mathscr A_S^{-1/2}\qquad\text{and}\qquad
\mathsf T_{\! n}:\mathscr A_D^{-1/2}\longrightarrow \mathscr A_D^{1/2}
\end{equation}
are isometric. 
\end{subequations}
%
\end{enumerate}
\end{lemma}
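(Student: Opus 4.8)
\emph{Strategy and the one orthogonality fact.} The plan is: $(a)$ isolate a single orthogonality identity, $(b)$ deduce the first assertion by a one–line pairing, $(c)$ prove the four identities \eqref{knklpoo}--\eqref{vbwppl} by one and the same computation, tested against a family of boundary elements that exhausts the relevant dual pairing, and $(d)$ specialise \eqref{knklpoo}--\eqref{knklpoogt} to affine densities to get \eqref{45e}. I expect $(a)$ to be the only substantive point. That fact is: applying $\Delta$ to the $(\cdot,\cdot)_S$-orthogonal splitting $W^2(\mathbb R^2)=\big({W^2_d}(\mathbb R^2)\cap{W^2_n}(\mathbb R^2)\big)\sumperp\big({W^2_d}(\mathbb R^2)\cap{W^2_n}(\mathbb R^2)\big)^\perp$ and using that the rank term of $(\cdot,\cdot)_S$ (see \eqref{scal:S}) disappears on ${W^2_d}(\mathbb R^2)$, one gets the $L^2$-orthogonal splitting $L^2(\mathbb R^2)=\Delta\big({W^2_d}(\mathbb R^2)\cap{W^2_n}(\mathbb R^2)\big)\sumperp\mathscr H^0(\mathbb R^2\setminus\varGamma)$, the second summand being identified via \eqref{def_harmL2}. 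Hence, if $U\in W^2(\mathbb R^2)$ is biharmonic in $\mathbb R^2\setminus\varGamma$, then $\Delta U\in\mathscr H^0(\mathbb R^2\setminus\varGamma)$ and $(\Delta U,\Delta w)_{L^2(\mathbb R^2)}=0$ for every $w\in{W^2_d}(\mathbb R^2)\cap{W^2_n}(\mathbb R^2)$; i.e. $(\Delta U,\Delta z)_{L^2(\mathbb R^2)}$ depends only on the two traces of $z$. In particular, since $\mathcal L_n\gamma_d\theta$ and $\theta$ share the Dirichlet trace $\gamma_d\theta$ and the (vanishing) Neumann trace for $\theta\in{W^2_n}(\mathbb R^2)$, one may replace $\Delta\mathcal L_n\gamma_d\theta$ by $\Delta\theta$ inside such a bracket, and likewise $\Delta\mathcal L_d\gamma_n\theta$ by $\Delta\theta$ for $\theta\in{W^2_d}(\mathbb R^2)$.

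\emph{The first point.} Here $\Delta{\mathsf L}_d^Sp\in\mathscr H^0(\mathbb R^2\setminus\varGamma)$ (it is $L^2$ since ${\mathsf L}_d^Sp\in W^2(\mathbb R^2)$, harmonic off $\varGamma$ by \eqref{ex:LO}), so $[\gamma_d\Delta{\mathsf L}_d^Sp]_\varGamma$ is defined; testing \eqref{weak_trace0} against $q\in{\mathcal H}^{1/2}_d(\varGamma)$ and using $\gamma_d\mathcal L_dq=0$ to drop the rank-three term of \eqref{scal:S},
\[\llangle[\gamma_d\Delta{\mathsf L}_d^Sp]_\varGamma,q\rrangle_{-\frac12,\frac12,d}=-\big(\Delta{\mathsf L}_d^Sp,\Delta\mathcal L_dq\big)_{L^2(\mathbb R^2)}=-\big({\mathsf L}_d^Sp,\mathcal L_dq\big)_S=0,\]
the last equality because ${\mathsf L}_d^Sp\in{W^2_d}(\mathbb R^2)^\perp$ while $\mathcal L_dq\in{W^2_d}(\mathbb R^2)$. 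The claim on ${\mathsf L}_n^D$ is the mirror image for $(\cdot,\cdot)_D$: now $\gamma_n\mathcal L_np=0$ removes the rank-two term of \eqref{scal:D}, and $\mu(\gamma_d{\mathsf L}_n^Dq)=0$ removes the $\mu$-term — this last point because ${\mathsf L}_n^Dq$ minimises $\|\cdot\|_D$ under $\gamma_n\cdot=q$ and subtracting the constant $\mu(\gamma_d{\mathsf L}_n^Dq)\mathbf 1_{\mathbb R^2}$ would otherwise strictly lower the norm — leaving $-({\mathsf L}_n^Dq,\mathcal L_np)_D=0$ by ${\mathsf L}_n^Dq\in{W^2_n}(\mathbb R^2)^\perp$, $\mathcal L_np\in{W^2_n}(\mathbb R^2)$.

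\emph{The four identities.} I pair \eqref{knklpoo}, \eqref{ghbnhk} against $\gamma_d\theta$, $\theta\in{W^2_n}(\mathbb R^2)$, and \eqref{knklpoogt}, \eqref{vbwppl} against $\gamma_n\theta$, $\theta\in{W^2_d}(\mathbb R^2)$; by \eqref{def:Hronde} these test elements run over all of ${\mathcal H}^{3/2}_n(\varGamma)$, resp. ${\mathcal H}^{1/2}_d(\varGamma)$, so matching the two sides there settles the identity in the corresponding dual (the brackets on the two sides are compatible, since the right-hand members lie in ${\mathcal H}^{-3/2}(\varGamma)$, resp. ${\mathcal H}^{-1/2}(\varGamma)$, and all brackets extend the $L^2(\varGamma)$ product). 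Take \eqref{knklpoo} as the template. By \eqref{weak_trace1} and the orthogonality fact, $-\llangle[\gamma_n\Delta{\mathsf L}_d^Sp]_\varGamma,\gamma_d\theta\rrangle_{-\frac32,\frac32,n}=(\Delta{\mathsf L}_d^Sp,\Delta\theta)_{L^2(\mathbb R^2)}$; adding $\sum_k\langle\mathsf q_k,p\rangle_{-\frac12,\frac12}\langle\mathsf q_k,\gamma_d\theta\rangle_{-\frac12,\frac12}$ and using $p=\gamma_d{\mathsf L}_d^Sp$ reconstructs $({\mathsf L}_d^Sp,\theta)_S$ through \eqref{scal:S}; while on the other side $\langle\mathsf T_{\!d}p,\gamma_d\theta\rangle_{-\frac32,\frac32}=(p,\gamma_d\theta)_{\frac32}^S=({\mathsf L}_d^Sp,\Pi_d^S\theta)_S=({\mathsf L}_d^Sp,\theta)_S$ by \eqref{nice:rel} and ${\mathsf L}_d^Sp\in{W^2_d}(\mathbb R^2)^\perp$. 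Identities \eqref{knklpoogt}, \eqref{ghbnhk}, \eqref{vbwppl} come out by the identical bookkeeping, expanding $(\cdot,\cdot)_D$ through \eqref{scal:D} and $(\cdot,\cdot)_{\frac32,n}$, $(\cdot,\cdot)_{\frac12,d}$ through \eqref{def_norm_10_bis}, \eqref{def_norm_10}: in \eqref{knklpoogt} one uses $q=\gamma_n{\mathsf L}_n^Dq$ and that $\gamma_d\theta=0$ kills the $\mu$-term of \eqref{scal:D}; in \eqref{ghbnhk} the summand $\mu(p)|\varGamma|^{-1}\mathbf 1_\varGamma$ paired with $\gamma_d\theta$ supplies exactly the $\mu$-part $\mu(p)\mu(\gamma_d\theta)$ of $(p,\gamma_d\theta)_{\frac32,n}$; and in \eqref{vbwppl} no correction term is needed because $(\cdot,\cdot)_{\frac12,d}$ carries none.

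\emph{The isometries \eqref{45e}, and the main obstacle.} Affine functions belong to $W^2(\mathbb R^2)$ and are annihilated by $\Delta$, so for affine $P$ the $\|\Delta\cdot\|_{L^2(\mathbb R^2)}^2$ part of $\|\cdot\|_S$ is minimal ($=0$) at $P$ itself — the rest being fixed by the constraint $\gamma_d\cdot=\gamma_dP$ — whence ${\mathsf L}_d^S(\gamma_dP)=P$ and $\Delta{\mathsf L}_d^S(\gamma_dP)=0$; then \eqref{knklpoo} collapses to $\mathsf T_{\!d}(\gamma_dP)=\sum_k\langle\mathsf q_k,\gamma_dP\rangle_{-\frac12,\frac12}\mathsf q_k\in\mathscr A_S^{-1/2}$. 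Likewise, for the coordinate function $x_k$ one has ${\mathsf L}_n^D(n_k)=x_k-\mu(\gamma_dx_k)$ (it makes both the $\|\Delta\cdot\|_{L^2(\mathbb R^2)}^2$ part and the $\mu$-part of $\|\cdot\|_D$ vanish), so $\Delta{\mathsf L}_n^D(n_k)=0$ and \eqref{knklpoogt} gives $\mathsf T_{\!n}(n_k)=\sum_j(\mathsf p_j,n_k)_{L^2(\varGamma)}\mathsf p_j\in\mathscr A_D^{1/2}$. Thus $\mathsf T_{\!d}(\mathscr A_S^{1/2})\subseteq\mathscr A_S^{-1/2}$ and $\mathsf T_{\!n}(\mathscr A_D^{-1/2})\subseteq\mathscr A_D^{1/2}$; as these pairs of spaces have equal dimensions ($3$ and $2$) and $\mathsf T_{\!d},\mathsf T_{\!n}$ are the isometric isomorphisms of \eqref{def_gelf_op}, the inclusions are equalities and the restrictions remain isometric. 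The only part requiring real care is the orthogonality fact of the first paragraph; beyond that, the difficulty is purely bookkeeping — keeping track, at each of \eqref{knklpoo}--\eqref{vbwppl}, of which $\perp$, which inner product and which trace space is in play — and the computations themselves are routine.
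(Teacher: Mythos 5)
Your proof is correct and follows essentially the same route as the paper: the first point is the orthogonality $({\mathsf L}_d^Sp,\mathcal L_dq)_S=0$ (resp.\ its $D$-analogue) read through \eqref{weak_trace0}--\eqref{weak_trace1}, and the four identities are obtained by testing against $\gamma_d\theta$ with $\theta\in W^2_n(\mathbb R^2)$ (resp.\ $\gamma_n\theta$, $\theta\in W^2_d(\mathbb R^2)$) and expanding the relevant inner product — your ``orthogonality fact'' is exactly what the paper exploits implicitly by choosing $\theta=\mathcal L_n\tilde p$. The treatment of \eqref{45e} (affine minimisers, hence $\Delta{\mathsf L}_d^S(\gamma_dP)=0$, plus a dimension count) also matches the paper's argument.
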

\begin{proof}
The first assertion of the Lemma results from the combination of \eqref{op:L} and  \eqref{op:calL} (that make precise the ranges of the operators ${\mathsf L}_d^S$, 
${\mathsf L}_n^D$, $\mathcal L_d$ and $\mathcal L_n$) and  the definitions \eqref{weak_trace0} and \eqref{weak_trace1} of 
the jumps of  the Dirichlet and Neumann one-sided traces.
For the second assertion, let us verify that for every $p\in{\mathcal H}^{3/2}(\varGamma)$:
$$
-\big[\gamma_n\Delta {\mathsf L}_d^S p\big]_\varGamma+\sum_{k=1}^3\langle \mathsf q_k,p\rangle_{-\frac12,\frac12}
\mathsf q_k=\mathsf T_{\! d}p,
$$
where $\mathsf T_{\! d}$ is the isometric operator defined in \eqref{def_gelf_op}.
Thus, we have: 
$$\llangle \mathsf T_{\! d}p,\gamma_d\theta\rrangle_{-\frac32,\frac32}=\big({\mathsf L}_d^Sp, {\mathsf L}_d^S\circ \gamma_d\theta\big)_S
\forallt \theta\in W^2(\mathbb R^2).$$
On the other hand, according to \eqref{nice:rel}:
$$( {\mathsf L}_d^Sp, {\mathsf L}_d^S\circ \gamma_d\theta)_S=( {\mathsf L}_d^Sp,\Pi^S_d\theta)_S=( {\mathsf L}_d^Sp,\theta)_S.$$
Choosing $\theta={\mathcal L}_n\tilde p$ with $\tilde p$ any element in the space ${\mathcal H}^{3/2}_n(\varGamma)$, we get:
$$\llangle \mathsf T_{\! d}p,\tilde p\rrangle_{-\frac32,\frac32}=({\mathsf L}_d^Sp,{\mathcal L}_n\tilde p)_S=(\Delta {\mathsf L}_d^Sp,
\Delta {\mathcal L}_n\tilde p)_{L^2(\mathbb R^2)}+\sum_{k=1}^3\langle \mathsf q_k,p\rangle_{-\frac12,\frac12}
\langle \mathsf q_k,\tilde p\rangle_{-\frac12,\frac12},$$
which, once compared with \eqref{weak_trace1}, yields the result. The proof of the other equalities are similar.
\par
Regarding \eqref{45e}, it suffices to notice that $\mathsf L_d^Sp$ is an affine function when $p\in\mathscr A_S^{1/2}$ and the same 
observation applies to $\mathsf L_n^Dq$ if $q$ belongs to $\mathscr A_D^{-1/2}$. We conclude with 
equalities \eqref{knklpoo} and \eqref{knklpoogt}.
\end{proof}
With the definition of the trace operators given in Definition~\ref{def_trace_ext}:
\begin{prop}[Jump relations]
\label{jps}
The following equalities hold:
\begin{subequations}
\begin{alignat}{3}
\label{eq_jump_1}
\gamma_d^+\circ \mathscr S^\dagger_\varGamma-\gamma_d^-\circ \mathscr S^\dagger_\varGamma&=0&
\gamma_n^+\circ \mathscr D^\dagger_\varGamma+\gamma_n^-\circ \mathscr D_\varGamma^\dagger&=0\\
\label{eq_jump_2}
\gamma_n^+\circ \mathscr S^\dagger_\varGamma+\gamma_n^-\circ \mathscr S^\dagger_\varGamma&={\rm Id}&\qquad
\gamma_d^+\circ \mathscr D^\dagger_\varGamma-\gamma_d^-\circ \mathscr D^\dagger_\varGamma&={\rm Id}.
\end{alignat}
\end{subequations}
\end{prop}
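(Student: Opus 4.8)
The strategy is to reduce the four identities to the classical jump relations via the canonical splitting of the extended potentials into an $\mathscr H^0(\mathbb R^2\setminus\varGamma)$-part, whose jumps are governed by Lemma~\ref{qqsiso}, plus a classical layer potential. Fix $q\in\mathcal H^{-3/2}(\varGamma)$ and set $p:=\mathsf T_{\!d}^{-1}q\in\mathcal H^{3/2}(\varGamma)$; by Lemma~\ref{lem:234} we have $u_q=\mathsf L_d^Sp$ and, by \eqref{nice:rel}, $\gamma_du_q=p$, so the very definition of $\mathscr S_\varGamma^\dagger$ gives
\[\mathscr S_\varGamma^\dagger q=-\Delta u_q+\mathscr S_\varGamma r_q,\qquad r_q:=\sum_{j=1}^3\langle\mathsf q_j,p\rangle_{-\frac12,\frac12}\mathsf q_j\in H^{-1/2}(\varGamma).\]
Here $-\Delta u_q\in\mathscr H^0(\mathbb R^2\setminus\varGamma)$ (it lies in $L^2(\mathbb R^2)$ since $u_q\in W^2(\mathbb R^2)$, and is harmonic off $\varGamma$ since $u_q$ is biharmonic there by \eqref{ex:LO}), while $\mathscr S_\varGamma r_q$ is a classical single layer potential. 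Hence, using additivity of the one-sided traces and the fact that the operators of Definition~\ref{def_trace_ext} extend the classical ones, the traces of $\mathscr S_\varGamma^\dagger q$ split as the traces of $-\Delta u_q$ plus those of $\mathscr S_\varGamma r_q$. The analogous splitting for the double layer is $\mathscr D_\varGamma^\dagger p=-\Delta v_p+\mathscr D_\varGamma t_p$ with $v_p=\mathsf L_n^D\tilde p$, $\tilde p:=\mathsf T_{\!n}^{-1}p$, $\gamma_nv_p=\tilde p$, and $t_p:=\sum_{j=1}^2(\mathsf p_j,\tilde p)_{L^2(\varGamma)}\mathsf p_j\in H^{1/2}(\varGamma)$.

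Next I would evaluate each jump term by term. For the $\mathscr H^0$-parts, Lemma~\ref{qqsiso}(1) yields the two ``trivial'' jumps $[\gamma_d\Delta\mathsf L_d^Sp]_\varGamma=0$ and $[\gamma_n\Delta\mathsf L_n^D\tilde p]_\varGamma=0$, while its second assertion supplies the other two: $-[\gamma_n\Delta\mathsf L_d^Sp]_\varGamma+r_q=\mathsf T_{\!d}p$ by \eqref{knklpoo}, and $-[\gamma_d\Delta\mathsf L_n^D\tilde p]_\varGamma+t_p=\mathsf T_{\!n}\tilde p$ by \eqref{knklpoogt}. For the classical parts, the classical jump relations recalled earlier give $[\gamma_d\mathscr S_\varGamma r_q]_\varGamma=0$ and $[\gamma_n\mathscr S_\varGamma r_q]_\varGamma=r_q$ (one-sided Dirichlet traces of a single layer potential coincide, and its Neumann traces sum to the density), and likewise $[\gamma_n\mathscr D_\varGamma t_p]_\varGamma=0$ and $[\gamma_d\mathscr D_\varGamma t_p]_\varGamma=t_p$.

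Assembling, $[\gamma_d\mathscr S_\varGamma^\dagger q]_\varGamma=-[\gamma_d\Delta\mathsf L_d^Sp]_\varGamma+[\gamma_d\mathscr S_\varGamma r_q]_\varGamma=0$ and $[\gamma_n\mathscr S_\varGamma^\dagger q]_\varGamma=-[\gamma_n\Delta\mathsf L_d^Sp]_\varGamma+r_q=\mathsf T_{\!d}p=q$, which are the first identities in \eqref{eq_jump_1} and \eqref{eq_jump_2}; the other two follow identically, $[\gamma_n\mathscr D_\varGamma^\dagger p]_\varGamma=0$ and $[\gamma_d\mathscr D_\varGamma^\dagger p]_\varGamma=-[\gamma_d\Delta\mathsf L_n^D\tilde p]_\varGamma+t_p=\mathsf T_{\!n}\tilde p=p$. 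Note that the identity $[\gamma_n\mathscr S_\varGamma^\dagger q]_\varGamma=q$ in particular re-proves that this jump lands in $\mathcal H^{-3/2}(\varGamma)$, even though the individual one-sided Neumann traces need not.

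The only genuinely delicate point is the splitting of the trace invoked in the first paragraph: one of the two summands — the classical layer potential — is \emph{not} in $\mathscr H^0(\mathbb R^2\setminus\varGamma)$, because of its logarithmic growth at infinity, so Definition~\ref{def_trace_ext} does not apply to $\mathscr S_\varGamma^\dagger q$ (resp. $\mathscr D_\varGamma^\dagger p$) as a whole. What is needed is that the one-sided trace operators be well defined and additive on the sum $\mathscr H^0(\mathbb R^2\setminus\varGamma)+\{\text{classical layer potentials}\}$ and agree there with the classical ones — precisely the content of the trace results of this section. Granting that, the proposition is pure bookkeeping with Lemma~\ref{qqsiso} and the classical jump relations, the ``miraculous'' cancellation of the singular normal-derivative contributions being encoded entirely in \eqref{knklpoo}--\eqref{knklpoogt}.
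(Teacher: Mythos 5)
Your proof is correct and follows essentially the same route as the paper: decompose the extended potentials via \eqref{bounded_SD}, kill the jumps of the $\mathscr H^0$-parts with (the content of) Lemma~\ref{qqsiso}, and combine with the classical jump relations and \eqref{knklpoo}--\eqref{knklpoogt}. Your explicit remark on why the trace splitting is legitimate (the classical layer-potential summand not lying in $\mathscr H^0(\mathbb R^2\setminus\varGamma)$) is a point the paper only addresses implicitly at the start of Section~\ref{SEC:jump}, and is a welcome clarification.
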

Notice that the operators $\gamma_n^\pm\circ \mathscr S^\dagger_\varGamma$ map $\mathcal H^{-3/2}(\varGamma)$ 
into the larger space $\mathcal H^{-3/2}_n(\varGamma)$. The first relation in \eqref{eq_jump_2}
means that there is some sort of compensation which makes the sum of 
both terms $\gamma_n^\pm\circ \mathscr S^\dagger_\varGamma$ more regular than each one taken separately. The same remark holds for 
the second identity in  \eqref{eq_jump_2} with the spaces $\mathcal H^{-1/2}(\varGamma)$ and $\mathcal H^{-1/2}_d(\varGamma)$. This  contrasts with 
what happens when the domain is of class $\mathcal C^{1,1}$ where
$\mathcal H^{-3/2}(\varGamma)=\mathcal H^{-3/2}_n(\varGamma)=H^{-3/2}(\varGamma)$ and $\mathcal H^{-1/2}(\varGamma)=\mathcal H^{-1/2}_d(\varGamma)=H^{-1/2}(\varGamma)$.
\begin{proof}Let $q$ be in $\mathcal H^{-3/2}(\varGamma)$. According to \eqref{bounded_SD1}:
$$\mathscr S_\varGamma^\dagger q=-\Delta {\mathsf L}_d^S\circ \mathsf T_{\! d}^{-1}q+\sum_{j=1}^3
\langle \mathsf q_j,\mathsf T_{\! d}^{-1}q\rangle_{-\frac12,\frac12}\mathscr S_\varGamma \mathsf q_j.$$
For every $\tilde q\in \mathcal H^{1/2}_d(\varGamma)$:
$$\big(\Delta {\mathsf L}_d^S\circ \mathsf T_{\! d}^{-1}q,\Delta \mathcal L_d \tilde q\big)_{L^2(\mathbb R^2)}=
\big(  {\mathsf L}_d^S\circ \mathsf T_{\! d}^{-1}q, \mathcal L_d \tilde q\big)_S=0,$$
because ${\mathsf L}_d^S\circ \mathsf T_{\! d}^{-1}q\in W^2_d(\mathbb R^2)^\perp$ (see \eqref{isom:FRT}) and 
$\mathcal L_d \tilde q\in W^2_d(\mathbb R^2)$ (see \eqref{isom:D2}). According to \eqref{weak_trace0}, we deduce that 
$$\mathsf J_d \big(\Delta {\mathsf L}_d^S\circ \mathsf T_{\! d}^{-1}q\big)= 
\big[\gamma_d \Delta {\mathsf L}_d^S\circ \mathsf T_{\! d}^{-1}q\big]_\varGamma=0,$$
and since $\big[\gamma_d\mathscr S_\varGamma \mathsf q_j\big]_\varGamma=0$ for $j=1,2,3$, we have proved the first equality in 
\eqref{eq_jump_1}. Continuing with the single layer potential, the first equality in \eqref{eq_jump_2} is a straightforward consequence of
\eqref{knklpoo}. The proofs of the relations related to 
 the double layer potential are  similar.
\end{proof}
The rest of this section is devoted to establishing additional properties 
concerning the traces of harmonic functions in $L^2_{\ell oc}(\mathbb R^2)$. To do this, we must first establish a technical lemma.
\begin{lemma}
\label{lem_decomp}
For every $u\in \mathscr H^0(\mathbb R^2\setminus\varGamma)$, there exist 
\begin{itemize}
\item[--] $p_1\in\mathcal H^{3/2}(\varGamma)$ and $q_1\in\mathcal H_d^{1/2}$ such that $u=\Delta\mathsf L_d^Sp_1+\Delta\mathcal L_dq_1$;
\item[--] $p_2\in\mathcal H^{3/2}_d(\varGamma)$ and $q_2\in\mathcal H^{1/2}$ such that $u=\Delta\mathcal L_n p_2+\Delta\mathsf L_n^Dq_2$.
\end{itemize}
\end{lemma}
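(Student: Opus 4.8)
The plan is to represent an arbitrary $u\in\mathscr H^0(\mathbb R^2\setminus\varGamma)$ by the antecedent $w\in\big(W^2_d(\mathbb R^2)\cap W^2_n(\mathbb R^2)\big)^\perp$ with $\Delta w=u$ provided by \eqref{def_harmL2}, and then to split $w$ using the two orthogonal decompositions of $W^2(\mathbb R^2)$ that underlie the operators $\mathsf L_d^S,\mathcal L_d$ on one hand and $\mathcal L_n,\mathsf L_n^D$ on the other. I will only write out the first bullet point; the second is obtained by the symmetric argument, exchanging the roles of $\gamma_d$ and $\gamma_n$ (and of the norms $\|\cdot\|_S$ and $\|\cdot\|_D$), so the proof will say so explicitly rather than repeat it.

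First I would fix $w\in\big(W^2_d(\mathbb R^2)\cap W^2_n(\mathbb R^2)\big)^\perp$ with $\Delta w=u$, the orthogonal complement being taken with respect to $(\cdot,\cdot)_S$. The key structural fact is that $\big(W^2_d(\mathbb R^2)\cap W^2_n(\mathbb R^2)\big)^\perp$ decomposes, in the $S$-geometry, as the orthogonal sum $\mathscr B_d(\mathbb R^2)\oplus^\perp \big(W^2_d(\mathbb R^2)^\perp\big)$: indeed $\mathscr B_d(\mathbb R^2)=\big(W^2_d\cap W^2_n\big)^\perp\cap W^2_d$ and $W^2_d(\mathbb R^2)^\perp\subset\big(W^2_d\cap W^2_n\big)^\perp$, and these two summands are mutually orthogonal because one lies in $W^2_d$ and the other in its orthogonal. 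Writing $w=w_1+w_2$ accordingly, with $w_1\in W^2_d(\mathbb R^2)^\perp$ and $w_2\in\mathscr B_d(\mathbb R^2)$, I set $p_1=\gamma_d w_1\in\mathcal H^{3/2}(\varGamma)$ and $q_1=\gamma_n w_2\in\mathcal H^{1/2}_d(\varGamma)$ (the latter lies in $\mathcal H^{1/2}_d(\varGamma)$ precisely because $w_2\in W^2_d(\mathbb R^2)$, by definition \eqref{def:Hronde}). By \eqref{nice:rel}, $\mathsf L_d^S p_1=\mathsf L_d^S\circ\gamma_d w_1=\Pi_d^S w_1=w_1$ since $w_1\in W^2_d(\mathbb R^2)^\perp$; and by the isometry \eqref{isom:D2} together with the characterization of $\mathcal L_d$ as the minimizer in \eqref{eq:tralala1}, $\mathcal L_d q_1$ is the $S$-orthogonal projection of $w_2$ onto $\mathscr B_d(\mathbb R^2)$, which is $w_2$ itself. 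Hence $u=\Delta w=\Delta w_1+\Delta w_2=\Delta\mathsf L_d^S p_1+\Delta\mathcal L_d q_1$.

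The point requiring a little care — and what I expect to be the main (modest) obstacle — is the verification that $\mathcal L_d q_1=w_2$, i.e.\ that the minimizer defining $\mathcal L_d q_1$ in \eqref{eq:tralala1} actually recovers $w_2$ and not merely some other element of $W^2_d(\mathbb R^2)$ with the same Neumann trace. This comes down to showing $w_2\in\mathscr B_d(\mathbb R^2)=\big(W^2_d\cap W^2_n\big)^\perp\cap W^2_d$, which is immediate here since $w_2$ was extracted as the $\mathscr B_d(\mathbb R^2)$-component of $w$ in the orthogonal decomposition above; the minimizer in \eqref{eq:tralala1} is by construction the orthogonal projection of any $\gamma_n$-preimage onto $\mathscr B_d(\mathbb R^2)$, so it equals $w_2$. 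The second bullet point follows verbatim with the substitutions $(W^2_d,\mathsf L_d^S,\mathcal L_d,\mathscr B_d,\|\cdot\|_S)\leftrightarrow(W^2_n,\mathsf L_n^D,\mathcal L_n,\mathscr B_n,\|\cdot\|_D)$, starting instead from the fact (\eqref{def_harmL2} again) that $u=\Delta w$ for a $w$ in the $D$-orthogonal complement of $W^2_d(\mathbb R^2)\cap W^2_n(\mathbb R^2)$, and decomposing $w=\Pi_n^D w+(w-\Pi_n^D w)$ with $\Pi_n^D w\in W^2_n(\mathbb R^2)^\perp$ and $w-\Pi_n^D w\in\mathscr B_n(\mathbb R^2)$.
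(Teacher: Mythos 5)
Your proof is correct and follows essentially the same route as the paper: write $u=\Delta w$ with $w\in\big(W^2_d(\mathbb R^2)\cap W^2_n(\mathbb R^2)\big)^\perp$ via \eqref{def_harmL2}, split $w$ along the orthogonal decomposition $\big(W^2_d\cap W^2_n\big)^\perp=\mathscr B_d(\mathbb R^2)\oplus W^2_d(\mathbb R^2)^\perp$ in the $S$-geometry (and the analogous one in the $D$-geometry), and identify the two components as $\mathsf L_d^Sp_1$ and $\mathcal L_dq_1$ using \eqref{nice:rel}, \eqref{isom:FRT} and \eqref{isom:D2}. Your explicit verification that $\mathcal L_dq_1=w_2$ just spells out the surjectivity onto $\mathscr B_d(\mathbb R^2)$ that the paper invokes implicitly by citing the isometry \eqref{isom:D2}, so no substantive difference.
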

\begin{proof}
Let $u$ be in  $\mathscr H^0(\mathbb R^2\setminus\varGamma)$. According to \eqref{def_harmL2}, there exists 
$v\in \big({W^2_d}(\mathbb R^2)\cap {W^2_n}(\mathbb R^2)\big)^\perp$ such that $u=\Delta v$. In line with the orthogonal decomposition
in $\big(W^2(\mathbb R^2);\|\cdot\|_S\big)$:
$$\big({W^2_d}(\mathbb R^2)\cap {W^2_n}(\mathbb R^2)\big)^\perp=
\underbrace{\big[\big({W^2_d}(\mathbb R^2)\cap {W^2_n}(\mathbb R^2)\big)^\perp\cap W^2_d(\mathbb R^2)\big]}_{\mathscr B_d(\mathbb R^2)}\oplus W^2_d(\mathbb R^2)^\perp,$$
we can decompose $v$ as $v=v_1+v_2$ and there exists $q_1\in \mathcal H_d^{1/2}(\varGamma)$ such that $v_1=\mathcal L_dq_1$ 
(see \eqref{isom:D2}) and $p_1\in \mathcal H^{3/2}(\varGamma)$ such that $v_2=\mathsf L_d^S p_1$ (see \eqref{isom:FRT}). This proves 
the first point of the Proposition. For the second, we use the orthogonal decomposition in $\big(W^2(\mathbb R^2);\|\cdot\|_D\big)$:
$$\big({W^2_d}(\mathbb R^2)\cap {W^2_n}(\mathbb R^2)\big)^\perp=\underbrace{
\big[\big({W^2_d}(\mathbb R^2)\cap {W^2_n}(\mathbb R^2)\big)^\perp\cap W^2_n(\mathbb R^2)\big]}_{\mathscr B_n(\mathbb R^2)}\oplus W^2_n(\mathbb R^2)^\perp,$$
and we conclude the same way.
\end{proof}
\begin{theorem}
\label{mpsdt}
Let $u$ be in $\mathscr H^0(\mathbb R^2\setminus\varGamma)$. 
\begin{enumerate}
\item If $[\gamma_d u]_\varGamma\in {\mathcal H}^{-1/2}(\varGamma)$ 
or $[\gamma_n u]_\varGamma\in \mathcal H^{-3/2}(\varGamma)$ then $\big([\gamma_d u]_\varGamma,[\gamma_n u]_\varGamma\big)
\in{\mathcal H}^{-1/2}(\varGamma)\times   \mathcal H^{-3/2}(\varGamma)$.
\item If $[\gamma_d v]_\varGamma=0$, then $v= \mathscr S_\varGamma^\dagger [\gamma_n v]_\varGamma$. 
If $[\gamma_n v]_\varGamma=0$, then $v= \mathscr D_\varGamma^\dagger  [\gamma_d v]_\varGamma$.
\item \label{3:gggh} If $[\gamma_d u]_\varGamma=0$ and $[\gamma_n u]_\varGamma=0$ then $u=0$.
\end{enumerate}
\end{theorem}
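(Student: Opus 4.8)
The three assertions rest on the decomposition of $u$ furnished by Lemma~\ref{lem_decomp} together with the exchange formulas of Lemma~\ref{qqsiso}. The preliminary remark I would make is that, merging \eqref{bounded_SD1} with \eqref{knklpoo} and using both the linearity of $\mathscr S_\varGamma^\dagger$ and the fact that it restricts to the classical single layer potential on $H^{-1/2}(\varGamma)$, one obtains for every $p\in\mathcal H^{3/2}(\varGamma)$:
\begin{equation*}
\Delta\mathsf L_d^Sp=\mathscr S_\varGamma^\dagger\big[\gamma_n\Delta\mathsf L_d^Sp\big]_\varGamma,\qquad\text{where}\qquad\big[\gamma_n\Delta\mathsf L_d^Sp\big]_\varGamma=-\mathsf T_dp+\sum_{k=1}^3\langle\mathsf q_k,p\rangle_{-\frac12,\frac12}\mathsf q_k\in\mathcal H^{-3/2}(\varGamma),
\end{equation*}
and symmetrically, from \eqref{bounded_SD2} and \eqref{knklpoogt}, $\Delta\mathsf L_n^Dq=\mathscr D_\varGamma^\dagger\big[\gamma_d\Delta\mathsf L_n^Dq\big]_\varGamma$ with $\big[\gamma_d\Delta\mathsf L_n^Dq\big]_\varGamma\in\mathcal H^{-1/2}(\varGamma)$ for every $q\in\mathcal H^{1/2}(\varGamma)$. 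In other words, the ``$\mathsf L$-building blocks'' produced by Lemma~\ref{lem_decomp} are already honest single, resp.\ double, layer potentials whose data lie in the small boundary spaces.

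I would first prove (2) and then deduce (3). Assume $[\gamma_du]_\varGamma=0$ and write $u=\Delta\mathsf L_d^Sp_1+\Delta\mathcal L_dq_1$ as in Lemma~\ref{lem_decomp}. Taking Dirichlet jumps and invoking Lemma~\ref{qqsiso}(1) and \eqref{vbwppl} gives $0=[\gamma_du]_\varGamma=-\mathcal T_dq_1$, so $q_1=0$ since $\mathcal T_d$ is an isomorphism, whence $u=\Delta\mathsf L_d^Sp_1$; the identity above then yields $u=\mathscr S_\varGamma^\dagger[\gamma_nu]_\varGamma$ with $[\gamma_nu]_\varGamma\in\mathcal H^{-3/2}(\varGamma)$. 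If instead $[\gamma_nu]_\varGamma=0$, use the other decomposition $u=\Delta\mathcal L_np_2+\Delta\mathsf L_n^Dq_2$ ($p_2\in\mathcal H^{3/2}_n(\varGamma)$): since $[\gamma_n\Delta\mathsf L_n^Dq_2]_\varGamma=0$ by Lemma~\ref{qqsiso}(1), we get $[\gamma_n\Delta\mathcal L_np_2]_\varGamma=0$, and testing \eqref{ghbnhk} against $\tilde p\in\mathcal H^{3/2}_n(\varGamma)$ gives $(\Delta\mathcal L_np_2,\Delta\mathcal L_n\tilde p)_{L^2(\mathbb R^2)}=0$; with $\tilde p=p_2$ this forces $\Delta\mathcal L_np_2=0$, hence $u=\Delta\mathsf L_n^Dq_2=\mathscr D_\varGamma^\dagger[\gamma_du]_\varGamma$ with $[\gamma_du]_\varGamma\in\mathcal H^{-1/2}(\varGamma)$. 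Assertion (3) is then immediate: if both jumps vanish, the first case of (2) gives $u=\mathscr S_\varGamma^\dagger0=0$.

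For (1), the Dirichlet/Neumann symmetry lets me treat only the implication $[\gamma_nu]_\varGamma\in\mathcal H^{-3/2}(\varGamma)\Rightarrow[\gamma_du]_\varGamma\in\mathcal H^{-1/2}(\varGamma)$. Writing once more $u=\Delta\mathsf L_d^Sp_1+\Delta\mathcal L_dq_1$, the preliminary remark shows that the $\mathsf L_d^S$-term has both jumps in the small spaces, so the two conditions on $u$ reduce to $\mathcal T_dq_1\in\mathcal H^{-1/2}(\varGamma)$ and $[\gamma_n\Delta\mathcal L_dq_1]_\varGamma\in\mathcal H^{-3/2}(\varGamma)$; one is thus reduced to the single function $w=\Delta\mathcal L_dq_1$, $q_1\in\mathcal H^{1/2}_d(\varGamma)$, for which $[\gamma_dw]_\varGamma$ is represented by $\tilde q\mapsto-(\mathcal L_dq_1,\mathcal L_d\tilde q)_S$ on $\mathcal H^{1/2}_d(\varGamma)$ and $[\gamma_nw]_\varGamma$ by $\tilde p\mapsto-(\mathcal L_dq_1,\mathcal L_n\tilde p)_D$ on $\mathcal H^{3/2}_n(\varGamma)$ (use \eqref{weak_trace0}, \eqref{weak_trace1} and the definitions of $\mathcal T_d$, $\mathcal L_d$, $\mathcal L_n$). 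The claim becomes: the second functional is bounded for the $\mathcal H^{3/2}(\varGamma)$-norm if and only if the first is bounded for the $\mathcal H^{1/2}(\varGamma)$-norm. I would establish this by transporting the question to $W^2(\mathbb R^2)$ via the isometries \eqref{op:L}--\eqref{op:calL} and the orthogonal decompositions underlying Lemma~\ref{lem_decomp}: using the density of $\mathcal H^{3/2}_n(\varGamma)$ in $\mathcal H^{3/2}(\varGamma)$ (Theorem~\ref{dense:prop}) and Riesz representation, a bound on one side produces a genuine element of $\mathcal H^{3/2}(\varGamma)$, resp.\ $\mathcal H^{1/2}(\varGamma)$, from which the bound on the other side is recovered.

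The delicate point — and the main obstacle — is precisely this equivalence. On a genuine polygon the inclusions $\mathcal H^{-3/2}(\varGamma)\subsetneq\mathcal H^{-3/2}_n(\varGamma)$ and $\mathcal H^{-1/2}(\varGamma)\subsetneq\mathcal H^{-1/2}_d(\varGamma)$ are strict, so membership of one jump of $u$ in the smaller dual space has no formal consequence for the other; extracting it needs the density theorem~\ref{dense:prop} and careful control of the finitely many asymptotic (affine and $\mathcal O(1/|x|)$) terms \eqref{expand_asymp} that the generalized potentials carry at infinity. Heuristically, (1) says that $u-\mathscr S_\varGamma^\dagger[\gamma_nu]_\varGamma$ has both traces in the small spaces because its Neumann jump vanishes — which is exactly (2) — but $\mathscr S_\varGamma^\dagger[\gamma_nu]_\varGamma$ need not be square integrable over $\mathbb R^2$, and making this subtraction rigorous is where the work lies; everything else is routine manipulation of Lemmas~\ref{lem_decomp} and~\ref{qqsiso}.
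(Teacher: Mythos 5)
Your preliminary identity $\Delta\mathsf L_d^Sp=\mathscr S_\varGamma^\dagger\big[\gamma_n\Delta\mathsf L_d^Sp\big]_\varGamma$ (and its double-layer counterpart) is correct, and it makes your treatment of points (2) and (3) both valid and slightly more direct than the paper's: the paper deduces (2) from (1) by subtracting $\mathscr S_\varGamma^\dagger([\gamma_nv]_\varGamma-\mathsf q)$ and then showing the remainder is the Laplacian of an affine function, whereas you read the representation straight off \eqref{bounded_SD1} and \eqref{knklpoo} once the decomposition of Lemma~\ref{lem_decomp} has collapsed to a single term. Your argument that $\Delta\mathcal L_np_2=0$ via \eqref{ghbnhk} is also sound. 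That part of the proposal stands.

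Point (1), however, is not proved. You reduce it to the assertion that, for $w=\Delta\mathcal L_dq_1$, boundedness of $\tilde p\mapsto(\Delta\mathcal L_dq_1,\Delta\mathcal L_n\tilde p)_{L^2(\mathbb R^2)}$ for the $\mathcal H^{3/2}(\varGamma)$-norm is equivalent to boundedness of $\tilde q\mapsto(\mathcal L_dq_1,\mathcal L_d\tilde q)_S$ for the $\mathcal H^{1/2}(\varGamma)$-norm, and you then declare this equivalence to be ``the main obstacle'' without establishing it; density and Riesz representation alone do not yield it. The reason you get stuck is the choice of decomposition: for the implication $[\gamma_nu]_\varGamma\in\mathcal H^{-3/2}(\varGamma)\Rightarrow[\gamma_du]_\varGamma\in\mathcal H^{-1/2}(\varGamma)$ you keep $u=\Delta\mathsf L_d^Sp_1+\Delta\mathcal L_dq_1$, in which the hypothesis only constrains $[\gamma_n\Delta\mathcal L_dq_1]_\varGamma$ and has no direct grip on $\mathcal T_dq_1$. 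The paper avoids the equivalence entirely: since $[\gamma_nu]_\varGamma$ lies in $\mathcal H^{-3/2}(\varGamma)$ it is an admissible density for $\mathscr S_\varGamma^\dagger$, so one replaces $u$ by $v=u-\mathscr S_\varGamma^\dagger[\gamma_nu]_\varGamma+\mathscr D_\varGamma\mathsf p$ with $\mathsf p\in\mathscr A_D^{1/2}$ chosen via \eqref{expand_asymp} to restore square integrability at infinity (the logarithmic term is absent because $\llangle[\gamma_nu]_\varGamma,\mathbf 1_\varGamma\rrangle_{-\frac32,\frac32,n}=0$ for $u\in\mathscr H^0(\mathbb R^2\setminus\varGamma)$); then $[\gamma_nv]_\varGamma=0$, and the \emph{second} decomposition $v=\Delta\mathcal L_np_2+\Delta\mathsf L_n^Dq_2$ gives $\Delta\mathcal L_np_2=0$ by \eqref{ghbnhk}, hence $[\gamma_dv]_\varGamma=[\gamma_d\Delta\mathsf L_n^Dq_2]_\varGamma\in\mathcal H^{-1/2}(\varGamma)$ by \eqref{knklpoogt}. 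The ``heuristic'' you relegate to your last sentence is in fact the proof: the subtraction you worry about is made rigorous in one line by the finite-dimensional asymptotic corrections already in place, and the missing step in your write-up is that after subtracting one must switch to the decomposition adapted to the jump that now vanishes. Finally, the converse implication does not follow by a formal Dirichlet/Neumann symmetry (the two cases use different operators, different correction spaces $\mathscr A_S^{-1/2}$ versus $\mathscr A_D^{1/2}$, and the extra $\mu$-term in the $D$-norm); it must be run separately with $\mathscr D_\varGamma^\dagger[\gamma_du]_\varGamma$ and the first decomposition.
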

The first point seems particularly noteworthy. It means that as soon as the jump of the one-sided Dirichlet traces or the jump of the one-sided Neumann traces 
is ``regular'' (in full generality $[\gamma_d u]_\varGamma$ is only in $\mathcal H^{-1/2}_d(\varGamma)$ 
and $[\gamma_n u]_\varGamma$  in $\mathcal H^{-3/2}_n(\varGamma)$), the other jump inherits the same regularity.
\begin{proof}Addressing the first point of the Theorem, let
assume that $[\gamma_d u]_\varGamma\in {\mathcal H}^{-1/2}(\varGamma)$. According to \eqref{expand_asymp} 
(the asymptotic expansions of the single layer potential and of harmonic functions) there exists $\mathsf q\in\mathscr A_S^{-1/2}$ such 
that $\mathscr D^\dagger_\varGamma[\gamma_d u]_\varGamma-\mathscr S_\varGamma\mathsf q\in L^2(\mathbb R^2)$. 
Let $v=u-\mathscr D^\dagger_\varGamma[\gamma_d u]_\varGamma+\mathscr S_\varGamma\mathsf q$. This function is in 
$\mathscr H^0(\mathbb R^2\setminus\varGamma)$ and satisfies $[\gamma_d v]_\varGamma=0$ and $[\gamma_n v]_\varGamma=
[\gamma_n u]_\varGamma+\mathsf q$. This entails that for our purpose, 
up to replacing $u$ by 
$v$, 
we can assume that $[\gamma_d u]_\varGamma=0$. According to Lemma~\ref{lem_decomp}, the function $u$ can be decomposed 
as $u=\Delta\mathsf L_d^Sp_1+\Delta\mathcal L_dq_1$ with $p_1\in\mathcal H^{3/2}(\varGamma)$ and $q_1\in\mathcal H_d^{1/2}$. 
Invoking next the first point of Lemma~\ref{qqsiso}, we obtain:
$$\big[\gamma_du\big]_\varGamma=\big[\gamma_d\Delta\mathsf L_d^Sp_1\big]_\varGamma
+\big[\gamma_d\Delta\mathcal L_dq_1\big]_\varGamma=\big[\gamma_d\Delta\mathcal L_dq_1\big]_\varGamma=0,$$
which entails that $q_1=0$ with \eqref{vbwppl}. It follows that $\big[\gamma_nu\big]_\varGamma=\big[\gamma_n\Delta\mathsf L_d^Sp_1\big]_\varGamma$ and 
therefore $\big[\gamma_nu\big]_\varGamma\in \mathcal H^{-3/2}(\varGamma)$ according to \eqref{knklpoo}.
\par
In a similar fashion, assuming that $[\gamma_n u]_\varGamma\in   \mathcal H^{-3/2}(\varGamma)$ can be reduced to assuming that 
$[\gamma_n u]_\varGamma=0$ up to replacing $u$ by $u-\mathscr S^\dagger_\varGamma[\gamma_n u]_\varGamma+\mathscr D_\varGamma \mathsf p$ for some $\mathsf p\in\mathscr A_D^{1/2}$. Then 
we use the latter decomposition provided by Lemma~\ref{lem_decomp}:
$$u=\Delta\mathcal L_n p_2+\Delta\mathsf L_n^Dq_2,$$
for some $p_2\in\mathcal H^{3/2}_d(\varGamma)$ and $q_2\in\mathcal H^{1/2}$. Based on Lemma~\ref{qqsiso}, we deduce that:
$$[\gamma_nu]_\varGamma=\big[\gamma_n\Delta\mathcal L_n p_2\big]_\varGamma+
\big[\gamma_n\Delta\mathsf L_n^Dq_2\big]_\varGamma
=\big[\gamma_n\Delta\mathcal L_n p_2\big]_\varGamma=0.$$
This condition means, according to \eqref{ghbnhk}, that $\mathcal T_{\!n} p_2=\mu(p_2)|\varGamma|^{-1}\mathbf 1_\varGamma$ where the operator 
$\mathcal T_{\!n}$ is defined in  \eqref{tD}. We deduce that $p_2=\mu(p_2)\mathbf 1_\varGamma$ and then that $\mathcal L_n p_2=\mu(p_2)
\mathbf 1_{\mathbb R^2}$. Finally, $\Delta \mathcal L_n p_2=0$ and $[\gamma_du]_\varGamma=
\big[\gamma_d\Delta\mathsf L_n^Dq_2\big]_\varGamma$ which belongs to $\mathcal H^{-1/2}(\varGamma)$ according to 
\eqref{knklpoogt}. 
\par
We consider now the second assertion of the Theorem. If $[\gamma_d v]_\varGamma=0$ then $[\gamma_n v]_\varGamma$ belongs to $\mathcal H^{-3/2}(\varGamma)$ according to the first point of the Theorem.
 Let $\mathsf q$ be in $\mathscr A_S^{-1/2}$ such that $\mathscr S_\varGamma^\dagger ([\gamma_n v]_\varGamma-\mathsf q)$
is in $\mathscr H^0(\mathbb R^2\setminus\varGamma)$ and introduce 
$u=v-\mathscr S_\varGamma^\dagger ([\gamma_n v]_\varGamma-\mathsf q)$.
 This function is in $\mathscr H^0(\mathbb R^2\setminus\varGamma)$ and satisfies $[\gamma_d u]_\varGamma=0$ and 
 $[\gamma_n u]_\varGamma=\mathsf q$. Proceeding as in the proof of the first point of the theorem, this entails that $u=\Delta\mathsf L_d^S p$ for some $p\in\mathcal H^{3/2}(\varGamma)$, with $[\gamma_n\Delta\mathsf L_d^Sp]_\varGamma=\mathsf q$. This means, with  \eqref{knklpoo}
 that $$\mathsf T_{\! d}p=\sum_{k=1}^3\langle \mathsf q_k,p\rangle_{-\frac12,\frac12}
\mathsf q_k-\mathsf q,$$
and therefore that $p\in\mathscr A_S^{1/2}$ taking into account \eqref{45e}. It follows that $\mathsf L_{\!d}^Sp\in\mathscr A$ and then $u=0$,
 $\mathsf q=0$ and finally $v=
 \mathscr S_\varGamma^\dagger [\gamma_n v]_\varGamma$. We proceed in the same manner to prove the other statements involving 
 the double layer potential and since the last point of the theorem is obvious, the proof is  complete.
\end{proof}
%
\section{The Laplace equation in $L^2$}
\label{sec:repres}
In this section, we assume that $\varGamma$ is a straight polygon.
An important point to keep in mind when looking for solutions in $L^2_{\ell oc}$ to Dirichlet and Neumann problems in a polygonal domain, 
is the loss of uniqueness. Indeed, there exist non-zero harmonic functions in $L^2(\varOmega^+)$ and in $L^2(\varOmega^-)$ 
with zeros Dirichlet data or with zero Neumann data.
\begin{figure}[h]
\centerline{
\begingroup%
  \makeatletter%
  \providecommand\color[2][]{%
    \errmessage{(Inkscape) Color is used for the text in Inkscape, but the package 'color.sty' is not loaded}%
    \renewcommand\color[2][]{}%
  }%
  \providecommand\transparent[1]{%
    \errmessage{(Inkscape) Transparency is used (non-zero) for the text in Inkscape, but the package 'transparent.sty' is not loaded}%
    \renewcommand\transparent[1]{}%
  }%
  \providecommand\rotatebox[2]{#2}%
  \newcommand*\fsize{\dimexpr\f@size pt\relax}%
  \newcommand*\lineheight[1]{\fontsize{\fsize}{#1\fsize}\selectfont}%
  \ifx\svgwidth\undefined%
    \setlength{\unitlength}{277.30465386bp}%
    \ifx\svgscale\undefined%
      \relax%
    \else%
      \setlength{\unitlength}{\unitlength * \real{\svgscale}}%
    \fi%
  \else%
    \setlength{\unitlength}{\svgwidth}%
  \fi%
  \global\let\svgwidth\undefined%
  \global\let\svgscale\undefined%
  \makeatother%
  \begin{picture}(1,0.44312557)%
    \lineheight{1}%
    \setlength\tabcolsep{0pt}%
    \put(0,0){\includegraphics[width=\unitlength,page=1]{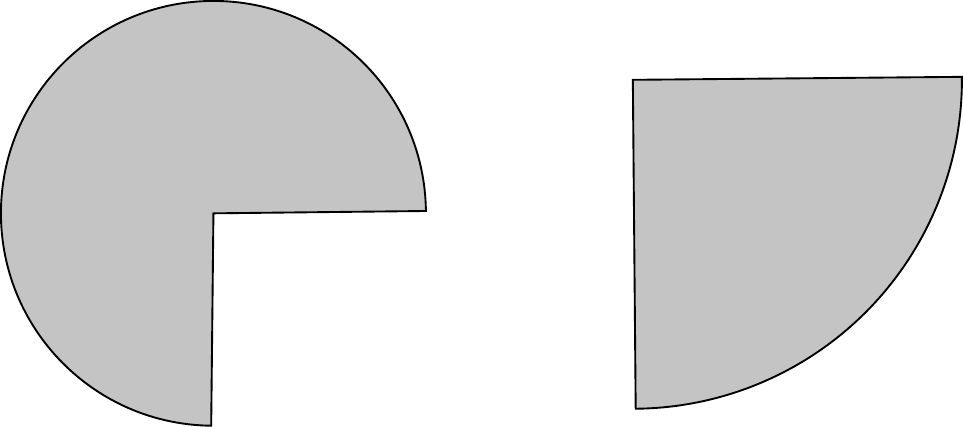}}%
    \put(0.08325249,0.24751321){\color[rgb]{0,0,0}\makebox(0,0)[lt]{\lineheight{1.25}\smash{\begin{tabular}[t]{l}$\varOmega^-$\end{tabular}}}}%
    \put(0.73278961,0.20302499){\color[rgb]{0,0,0}\makebox(0,0)[lt]{\lineheight{1.25}\smash{\begin{tabular}[t]{l}$\varOmega^-$\end{tabular}}}}%
    \put(0.43868426,0.37560535){\color[rgb]{0,0,0}\makebox(0,0)[lt]{\lineheight{1.25}\smash{\begin{tabular}[t]{l}$\varOmega^+$\end{tabular}}}}%
  \end{picture}%
\endgroup%
}
\caption{\label{Fig1} Examples of domains for which there exist square integrable harmonic functions with vanishing boundary data.}
\end{figure}
As explained in \cite{Dauge:1987tg}, in the domain $\varOmega^-$ on the left of Fig.~\ref{Fig1}, there exists a square integrable harmonic function 
with zero Dirichlet 
data. In polar coordinates define the function $U(r,\theta)=r^{-2/3}\sin(2\theta/3)$ and let $\eta\in\mathscr D(\mathbb R^2)$ be a cut-off function equal 
to 1 near the corner. Then let $X$ be the variationnal solution in $H^1_0(\varOmega^-)$ to the problem $\Delta X=\Delta(\eta U)$ 
(notice that the right hand side is smooth in $\varOmega^-$). 
The function $\eta U-X$ is in $L^2(\varOmega^-)$, non zero (because $X$ belongs to $H^1(\varOmega^-)$ and $\eta U$ does not), harmonic in $\varOmega^-$ and equal to zero on the boundary of the domain. There exists also a harmonic function 
with zero Neumann data which is equal, near the corner to $r^{-2/3}\cos(2\theta/3)$ (this is actually 
the harmonic conjugate of the preceeding one). The same constructions apply with the domain on the right of 
Fig.~\ref{Fig1} and provide examples of non-zero harmonic functions in $\varOmega^+$ with zero Dirichlet or Neumann data.
%
\begin{theorem}[Solvability of the interior Dirichlet problem]
\label{diri_interior}
For every $p\in{\mathcal H}^{-1/2}_d(\varGamma)$, there exists $v_p$ in $\mathscr H^0(\varOmega^-)$ such that $\gamma_d^-v_p=p$. Moreover, the application $p\longmapsto v_p$ is continuous from $\mathcal H^{-1/2}_d(\varGamma)$ to $\mathscr H^0(\varOmega^-)$.
\end{theorem}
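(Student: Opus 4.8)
The plan is to prove that the bounded operator
$\gamma_d^-\colon\mathscr H^0(\varOmega^-)\to\mathcal H^{-1/2}_d(\varGamma)$ is onto; the continuous right inverse is then automatic, because the induced continuous bijection $(\ker\gamma_d^-)^\perp\to\mathcal H^{-1/2}_d(\varGamma)$ between Hilbert spaces has a bounded inverse by the open mapping theorem, and it then suffices to precompose with the orthogonal projection onto $(\ker\gamma_d^-)^\perp$. Here $\gamma_d^-$ is meant as follows: for $w\in\mathscr H^0(\varOmega^-)$ its extension $\tilde w$ by zero lies in $\mathscr H^0(\mathbb R^2\setminus\varGamma)$, and one sets $\gamma_d^-w:=\gamma_d^-\tilde w$ in the sense of Definition~\ref{def_trace_ext}; boundedness is exactly \eqref{bound:1}. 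By the closed range theorem, surjectivity of $\gamma_d^-$ is equivalent to its adjoint $(\gamma_d^-)^\star\colon\mathcal H^{1/2}_d(\varGamma)\to\mathscr H^0(\varOmega^-)$ being bounded from below.

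First I would compute the adjoint. For $w\in\mathscr H^0(\varOmega^-)$ and $q\in\mathcal H^{1/2}_d(\varGamma)$, unwinding Definition~\ref{def_trace_ext} and \eqref{weak_trace0} (and using $\tilde w^-=\tilde w$) gives $\llangle\gamma_d^-w,q\rrangle_{-\frac12,\frac12,d}=(w,\Delta\mathcal L_dq)_{L^2(\varOmega^-)}$; since $\mathcal L_dq$ is biharmonic in $\varOmega^-$, the function $\Delta\mathcal L_dq|_{\varOmega^-}$ is harmonic and square integrable there, so $(\gamma_d^-)^\star q=\Delta\mathcal L_dq|_{\varOmega^-}$. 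Injectivity of $(\gamma_d^-)^\star$ is elementary: if $\Delta\mathcal L_dq$ vanishes on $\varOmega^-$, then $\mathcal L_dq|_{\varOmega^-}$ is harmonic, belongs to $H^1(\varOmega^-)$ and has zero Dirichlet trace, hence vanishes; this forces $\mathcal L_dq\in W^2_d(\mathbb R^2)\cap W^2_n(\mathbb R^2)$, whereas $\mathcal L_dq\in\mathscr B_d(\mathbb R^2)\subset\bigl(W^2_d(\mathbb R^2)\cap W^2_n(\mathbb R^2)\bigr)^\perp$, so $\mathcal L_dq=0$ and $q=\gamma_n(\mathcal L_dq)=0$.

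The remaining, and decisive, point is the lower bound, which is the heart of the proof. Since $\mathcal L_d$ and $\Delta_d$ are isometric one has $\|q\|_{\frac12,d}=\|\Delta\mathcal L_dq\|_{L^2(\mathbb R^2)}$ and $\|(\gamma_d^-)^\star q\|_{L^2(\varOmega^-)}=\|\Delta\mathcal L_dq\|_{L^2(\varOmega^-)}$; as $\Delta\mathcal L_d$ maps $\mathcal H^{1/2}_d(\varGamma)$ bijectively onto $\mathscr H^0_d(\mathbb R^2\setminus\varGamma)=\Delta\mathscr B_d(\mathbb R^2)$, the estimate $\|(\gamma_d^-)^\star q\|_{L^2(\varOmega^-)}\geqslant c\,\|q\|_{\frac12,d}$ is equivalent to the existence of $C>0$ such that
\begin{equation*}
\|h\|_{L^2(\varOmega^+)}\leqslant C\,\|h\|_{L^2(\varOmega^-)}\qquad\text{for every }h\in\mathscr H^0_d(\mathbb R^2\setminus\varGamma).
\end{equation*}
I would argue by contradiction and compactness. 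Suppose $h_k=\Delta\psi_k$ with $\psi_k\in\mathscr B_d(\mathbb R^2)$, $\|\psi_k\|_S=1$ and $\|h_k\|_{L^2(\varOmega^-)}\to0$. The sequence $(\psi_k)$ is bounded in $W^2(\mathbb R^2)$, so along a subsequence $\psi_k\rightharpoonup\psi$ weakly there; local Rellich compactness and continuity of the traces give $\Delta\psi|_{\varOmega^-}=0$ and $\gamma_d\psi=0$, whence $\psi=0$ by the injectivity argument above. One is then reduced to showing $\|\Delta\psi_k\|_{L^2(\mathbb R^2)}\to0$, contradicting $\|\psi_k\|_S=1$. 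Two ingredients are needed: (i) \emph{tightness at infinity} --- each $h_k$ is harmonic and square integrable outside a fixed ball containing $\overline{\varOmega^-}$, so by the expansion \eqref{exp_dfolp} it is $\mathscr O(|x|^{-2})$ with implied constant controlled, through interior estimates, by $\|h_k\|_{L^2(\mathbb R^2)}$, whence $\int_{|x|>\rho}|h_k|^2\to0$ as $\rho\to+\infty$ uniformly in $k$; and (ii) \emph{strong $L^2$-convergence of $\Delta\psi_k$ up to $\varGamma$ on the bounded part of $\varOmega^+$} --- in the interior of $\varOmega^+$ this is automatic, since $\Delta\psi_k$ is harmonic there and a weakly $L^2$-convergent sequence of harmonic functions converges strongly in $L^2_{\ell oc}$, but near $\varGamma$ one must combine $\psi_k\rightharpoonup0$ with the \emph{exact} condition $\gamma_d^+\psi_k=0$ and elliptic regularity for the bilaplacian in the polygonal domain $\varOmega^+$. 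It is precisely this last step that uses the hypothesis that $\varGamma$ is a straight polygon (via the explicit corner singularities and the associated boundary estimates), and I expect it to be the main obstacle of the whole argument.

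Once $(\gamma_d^-)^\star$ is bounded from below, the closed range theorem gives surjectivity of $\gamma_d^-$, and choosing $v_p$ to be the unique element of $(\ker\gamma_d^-)^\perp$ with $\gamma_d^-v_p=p$ produces the required continuous map $p\mapsto v_p$.
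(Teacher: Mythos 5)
Your reduction is sound as far as it goes: identifying $\gamma_d^-$ on $\mathscr H^0(\varOmega^-)$ through extension by zero, computing $\llangle\gamma_d^-w,q\rrangle_{-\frac12,\frac12,d}=(w,\Delta\mathcal L_dq)_{L^2(\varOmega^-)}$, the injectivity of $q\mapsto\Delta\mathcal L_dq|_{\varOmega^-}$, and the equivalence of the theorem with the lower bound $\|\Delta\mathcal L_dq\|_{L^2(\varOmega^-)}\geqslant c\,\|\Delta\mathcal L_dq\|_{L^2(\mathbb R^2)}$ are all correct (and this dual formulation is close in spirit to the Riesz-representation argument the paper actually uses). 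The gap is that you never prove this lower bound: your compactness scheme reduces everything to excluding concentration of $|\Delta\psi_k|^2$ on $\varGamma$ from the $\varOmega^+$ side, and the tool you invoke for that step — ``elliptic regularity for the bilaplacian in $\varOmega^+$ combined with $\gamma_d^+\psi_k=0$'' — cannot deliver it. The bilaplacian is fourth order, so a single boundary condition leaves the boundary value problem underdetermined and yields no a priori control of $\Delta\psi$ near $\varGamma$: for instance $\psi_k(x,y)=k^{-1/2}\,y\,e^{-ky}\sin(kx)$ is biharmonic in the half-plane, vanishes on $\{y=0\}$, tends to $0$ in $H^1$ together with its Neumann trace, yet $\|\Delta\psi_k\|_{L^2}$ stays bounded away from $0$ and concentrates on the boundary. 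Whatever excludes such behaviour for your sequence must exploit the global coupling across $\varGamma$ (membership in $\mathscr B_d(\mathbb R^2)$ and the smallness of $\Delta\psi_k$ in $\varOmega^-$), i.e.\ essentially the estimate you are trying to prove; as written, the decisive step is a restatement of the theorem, not a proof of it.

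Note also that you are aiming at a strictly stronger inequality than the paper needs. The paper only establishes
\begin{equation*}
\|q\|_{\frac12,d}\;\leqslant\; C\big(\|\Delta\mathcal L_dq\|_{L^2(\varOmega^-)}+\|\mathcal L_dq\|_{L^2(\varOmega^-)}\big),
\end{equation*}
which follows from a purely one-sided argument: a universal $H^2$ extension operator from $\varOmega^-$ to $\mathbb R^2$ gives $\|q\|_{\frac12,d}\leqslant C\|\mathcal L_dq|_{\varOmega^-}\|_{H^2(\varOmega^-)}$, and Grisvard's a priori estimate for $H^2$ functions vanishing on the boundary of a straight polygon bounds $\|\mathcal L_dq|_{\varOmega^-}\|_{H^2(\varOmega^-)}$ by the right-hand side above (this is where the polygonal hypothesis enters, not through any exterior bilaplacian regularity). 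The extra lower-order term $\|\mathcal L_dq\|_{L^2(\varOmega^-)}$ is then not removed but absorbed in the representation: after applying Riesz in the equivalent inner product, the term $(\mathcal L_dq_p,\mathcal L_dq)_{L^2(\varOmega^-)}$ is rewritten as $(w_p,\Delta\mathcal L_dq)_{L^2(\varOmega^-)}$ by solving an auxiliary variational Dirichlet problem, and $v_p=\Delta\mathcal L_dq_p+\Pi_{\varOmega^-}w_p$ does the job. If you want to salvage your route, you should either prove the lower-order-term version and then absorb it as the paper does, or find a genuinely new argument for the no-concentration statement; the latter is not supplied by one-sided elliptic regularity.
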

\begin{proof}
Recall that, for every $q\in\mathcal H_d^{1/2}(\varGamma)$:
$$\|q\|_{\frac12,d}=\inf\big\{\|u\|_S\,:\,u\in {W^2_d}(\mathbb R^2),\,\gamma_nu=q\big\}=\|\mathcal L_d q\|_S.$$
According to \cite[Theorem 10.4.1]{Agranovich:2015us}, there exists 
an extension operator from $H^2(\varOmega^-)$ to $H^2(\mathbb R^2)$ and since $H^2(\mathbb R^2)$ is continuously embedded in 
$W^2(\mathbb R^2)$, we can assume that this operator is valued in this latter space. This yields the existence of a constant $C>0$ 
such that:
\begin{subequations}
\label{mmmpppmm}
\begin{equation}
\|q\|_{\frac12,d}
\leqslant C\|\mathcal L_d q|_{\varOmega^-}\|_{H^2(\varOmega^-)}\forallt q\in\mathcal H^{1/2}_d(\varGamma).
\end{equation}
On the other hand, according to \cite[Theorem 4.3.1.4]{Grisvard:1985aa} and since $\varOmega^-$ is assumed 
to be a straight polygonal domain, there exists a constant $C>0$ such that:
\begin{equation}
\|\mathcal L_dq|_{\varOmega^-}\|_{H^2(\varOmega^-)}\leqslant C \big(\|\Delta \mathcal L_d q\|_{L^2(\varOmega^-)}+
\|\mathcal L_d q\|_{L^2(\varOmega^-)}\big)\forallt q\in{\mathcal H}^{1/2}_d(\varGamma).
\end{equation}
\end{subequations}
We deduce from both estimates \eqref{mmmpppmm} that, on the space $\mathcal H^{1/2}_d(\varGamma)$, the norm deriving from the scalar product:
$$\big(\Delta\mathcal L_d q_1,\Delta\mathcal L_d q_2\big)_{L^2(\varOmega^-)}+
\big(\mathcal L_d q_1,\mathcal L_d q_2\big)_{L^2(\varOmega^-)}\forallt q_1,q_2\in{\mathcal H}^{1/2}_d(\varGamma),$$
is equivalent to the norm $\|\cdot\|_{\frac12,d}$ associated to the scalar product \eqref{def_norm_10}.  Riesz representation theorem 
asserts that for 
every $p\in {\mathcal H}^{-1/2}_d(\varGamma)$, there exists $q_p\in {\mathcal H}^{1/2}_d(\varGamma)$ such that:
$$\llangle p,q\rrangle_{-\frac12,\frac12,d}=\big(\Delta\mathcal L_d q_p,\Delta\mathcal L_d q\big)_{L^2(\varOmega^-)}+\big(\mathcal L_d q_p,\mathcal L_d q\big)_{L^2(\varOmega^-)}\forallt q\in
{\mathcal H}^{1/2}_d(\varGamma),$$
and that  the mapping $p\longmapsto q_p$ is continuous from 
$\mathcal H^{-1/2}_d(\varGamma)$ to $\mathcal H^{1/2}_d(\varGamma)$.
Let now $w_p$ be the unique solution in $H^1_0(\varOmega^-)$ of the variational Dirichlet problem:
$$(\nabla w_p,\nabla\theta)_{L^2(\varOmega^-;\mathbb R^2)}=-\big(\mathcal L_d q_p,\theta\big)_{L^2(\varOmega^-)}\forallt \theta\in H^1_0(\varOmega^-).$$
Since $p\longmapsto \mathcal L_d q_p|_{\varOmega^-}$ is continuous from $\mathcal H^{-1/2}_d(\varGamma)$ to $H^2(\varOmega^-)$, we 
deduce that $p\longmapsto w_p$ is also continuous from $\mathcal H^{-1/2}_d(\varGamma)$ to $H^1_0(\varOmega^-)$.
Applying   Green's formula \eqref{green}, we then obtain that:
$$\big(\mathcal L_d q_p,\mathcal L_d q\big)_{L^2(\varOmega^-)}=\big(w_p,\Delta \mathcal L_d q\big)_{L^2(\varOmega^-)}
\forallt q\in
{\mathcal H}^{1/2}_d(\varGamma).$$
Let $\Pi_{\varOmega^-}:L^2(\varOmega^-)\longrightarrow \mathscr H^0(\varOmega^-)$ be the Bergman projection i.e. the orthogonal projection 
in $L^2(\varOmega^-)$ onto the closed subspace $\mathscr H^0(\varOmega^-)$ of the harmonic functions and define the function $v_p= 
\Delta\mathcal L_d q_p+\Pi_{\varOmega^-}w_p$. It is clear that the mapping
$p\longmapsto v_p$ is continuous from $\mathcal H^{-1/2}_d(\varGamma)$ to $\mathscr H^0(\varOmega^-)$ and since:
$$\llangle p,q\rrangle_{-\frac12,\frac12,d}=\big(v_p,\Delta\mathcal L_d q\big)_{L^2(\varOmega^-)}
\forallt q\in
{\mathcal H}^{1/2}_d(\varGamma),$$
the proof is completed.
 \end{proof}
 %
%
Recall that $\mathscr A_S^{\frac12}$ is the three dimensional subspace of $H^{1/2}(\varGamma)$ spanned by the traces of the affine functions. 
Since $H^{1/2}(\varGamma)\subset L^2(\varGamma)\subset \mathcal H^{-1/2}_d(\varGamma)$, the space $\mathscr A_S^{\frac12}$ can 
also be seen as a subspace of $\mathcal H^{-1/2}_d(\varGamma)$ and we denote by $\mathscr A^{1/2}_{S,d}$ the space 
$\mathcal T_d^{-1}\mathscr A_S^{\frac12}$, where the operator $\mathcal T_d$ is defined in \eqref{tD}.  Therefore $\mathscr A^{1/2}_{S,d}$ is the subspace of $\mathcal H^{1/2}_d(\varGamma)$ such that $[\gamma_d\Delta \mathcal L_dq]_\varGamma\in \mathscr A_S^{1/2}$
for every $q\in \mathscr A^{1/2}_{S,d}$. It follows that 
$$\big(\mathscr A^{1/2}_{S,d}\big)^\perp=\big\{q\in\mathcal H^{1/2}_d(\varGamma)\,:\,(q,\theta)_{L^2(\varGamma)}=0\quad\text{for all }\,\theta\in\mathscr A_S^{1/2}
\big\}.$$
The spaces  $\mathscr A^{1/2}_{S,d}$ and $\big(\mathscr A^{1/2}_{S,d}\big)^\perp$ will be used in the proof of the next theorem.
\begin{theorem}[Solvability of the exterior Dirichlet problem]
\label{diri_exterior}
For every $p\in {\mathcal H}^{-1/2}_d(\varGamma)$, there exists $v_p$ in $\mathscr H^0(\varOmega^+)$ and
$\mathsf q_p\in \mathscr A^{-1/2}_S$ such that $\gamma_d^+(v_p+\mathscr S_\varGamma \mathsf q_p)=p$. 
\end{theorem}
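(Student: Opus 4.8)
The plan is to follow the scheme of the proof of Theorem~\ref{diri_interior}, now on the exterior domain $\varOmega^+$; the two new features are that $\varOmega^+$ is unbounded (so \cite[Theorem~4.3.1.4]{Grisvard:1985aa} is only available after a truncation) and that the correction term $\mathscr S_\varGamma\mathsf q_p$ must be introduced to carry off the part of the datum not attainable by a function of $\mathscr H^0(\varOmega^+)$. The two elementary facts driving the construction are: $\gamma_d^+\mathscr S_\varGamma\mathsf q=\mathsf S_\varGamma\mathsf q$ for every $\mathsf q\in\mathscr A^{-1/2}_S$ (one-sided Dirichlet traces of a classical single layer potential coincide), and $\mathsf S_\varGamma$ maps $\mathscr A^{-1/2}_S$ isomorphically onto $\mathscr A^{1/2}_S$ by construction. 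Accordingly, I would split $\mathcal T_d^{-1}p=a+b$ orthogonally in $\big(\mathcal H^{1/2}_d(\varGamma),(\cdot,\cdot)_{\frac12,d}\big)$ with $a\in\mathscr A^{1/2}_{S,d}$ and $b\in\big(\mathscr A^{1/2}_{S,d}\big)^\perp$; then $\mathcal T_da\in\mathscr A^{1/2}_S$, one sets $\mathsf q_p:=\mathsf S_\varGamma^{-1}(\mathcal T_da)$, and everything reduces to producing $v_p\in\mathscr H^0(\varOmega^+)$ with $\gamma_d^+v_p=p-\mathsf S_\varGamma\mathsf q_p=\mathcal T_db$ (with $\gamma_d^+$ the operator of Definition~\ref{def_trace_ext} applied to $v_p$ extended by zero into $\varOmega^-$).

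The first technical step is an exterior counterpart of the norm equivalence~\eqref{mmmpppmm}: fixing $R$ with $\overline{\varOmega^-}\subset B_{R/2}$ and setting $\varOmega^+_R=\varOmega^+\cap B_R$, one shows that on $\mathcal H^{1/2}_d(\varGamma)$ the norm $\|q\|_{\frac12,d}=\|\Delta\mathcal L_dq\|_{L^2(\mathbb R^2)}$ is equivalent to $\big(\|\Delta\mathcal L_dq\|^2_{L^2(\varOmega^+)}+\|\mathcal L_dq\|^2_{L^2(\varOmega^+_R)}\big)^{1/2}$. One inequality is immediate from $W^2(\mathbb R^2)\hookrightarrow H^2_{\ell oc}(\mathbb R^2)$ and the equivalence of $\|\cdot\|_S$ with the natural norm of $W^2(\mathbb R^2)$. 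For the other it suffices to control $\|\Delta\mathcal L_dq\|_{L^2(\varOmega^-)}$: extending $\mathcal L_dq|_{\varOmega^+_R}$ by a universal extension operator $H^2(\varOmega^+_R)\to H^2(\mathbb R^2)$ and using $\gamma_d\mathcal L_dq=0$ and $\gamma_n\mathcal L_dq=q$, the restriction to $\varOmega^-$ of this extension is admissible in the infimum~\eqref{eq:tralala1} defining $\mathcal L_dq|_{\varOmega^-}$, so $\|\Delta\mathcal L_dq\|_{L^2(\varOmega^-)}\leqslant C\|\mathcal L_dq\|_{H^2(\varOmega^+_R)}$, and the right-hand side is bounded by $\|\Delta\mathcal L_dq\|_{L^2(\varOmega^+_R)}+\|\mathcal L_dq\|_{L^2(\varOmega^+_R)}$ by applying \cite[Theorem~4.3.1.4]{Grisvard:1985aa} to $\chi\mathcal L_dq$ for a cut-off $\chi\in\mathscr D(B_R)$ equal to $1$ near $\varGamma$, the $\nabla\chi$-terms being controlled by interior elliptic estimates for the biharmonic function $\mathcal L_dq$ on the annulus $B_R\setminus B_{R/2}\subset\varOmega^+$.

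With this at hand, Riesz' representation theorem yields $q_b\in\mathcal H^{1/2}_d(\varGamma)$, depending continuously on $p$, such that $\llangle\mathcal T_db,q\rrangle_{-\frac12,\frac12,d}=(\Delta\mathcal L_dq_b,\Delta\mathcal L_dq)_{L^2(\varOmega^+)}+(\mathcal L_dq_b,\mathcal L_dq)_{L^2(\varOmega^+_R)}$ for all $q$. Exactly as in Theorem~\ref{diri_interior}, the lower-order term is rewritten, by solving an auxiliary Poisson problem with source $-\mathcal L_dq_b$ and invoking Green's identity, as the pairing of $\Delta\mathcal L_dq$ with an $L^2$ function on $\varOmega^+$, so that $v_p$ can be taken to be the orthogonal projection onto $\mathscr H^0(\varOmega^+)$ of $\Delta\mathcal L_dq_b$ plus this function. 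The crux, which I expect to be the main obstacle, is to carry out this Poisson solve and the integration by parts so that no spurious contributions appear on the auxiliary circle $\partial B_R$ nor at infinity — most naturally by working directly on $\varOmega^+$ in the weighted space $W^1(\varOmega^+)$ of \cite[\S 7]{Amrouche:1994aa} — and then to verify that, with $\mathsf q_p$ chosen as above, the resulting $v_p$ is genuinely square integrable: the non-$L^2$ part of the asymptotic expansion (a logarithm and a $1/|x|$-term, by~\eqref{exp_dfolp}, forming a three-parameter family) has been transferred to $\mathscr S_\varGamma\mathsf q_p$, whose restriction to $\varOmega^+$ is precisely of that form. Once $\gamma_d^+v_p=\mathcal T_db$ holds against all test functions, $v_p+\mathscr S_\varGamma\mathsf q_p$ has one-sided Dirichlet trace $\mathcal T_db+\mathsf S_\varGamma\mathsf q_p=p$, which proves the claim (and, the construction being linear and bounded, $p\mapsto(v_p,\mathsf q_p)$ is moreover continuous).
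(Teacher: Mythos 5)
Your overall strategy mirrors the paper's: an exterior analogue of the norm equivalence \eqref{mmmpppmm}, Riesz representation in $\mathcal H^{1/2}_d(\varGamma)$, auxiliary Poisson solves on $\varOmega^+$ in weighted spaces, and a Bergman projection. The genuine gap lies in the reduction step, where you fix $\mathsf q_p=\mathsf S_\varGamma^{-1}(\mathcal T_d a)$ a priori and claim that everything reduces to producing $v_p\in\mathscr H^0(\varOmega^+)$ with $\gamma_d^+v_p=\mathcal T_db$ \emph{exactly}. Your subsequent construction cannot deliver this. When you rewrite the lower-order term $(\mathcal L_dq_b,\mathcal L_dq)_{L^2(\varOmega^+_R)}$ by means of an auxiliary solution $u$ of $-\Delta u=\mathbf 1_{\varOmega^+_R}\mathcal L_dq_b$ in $\varOmega^+$ and Green's identity, a boundary term $(\gamma_d^+u,q)_{L^2(\varGamma)}$ appears, and the exterior Poisson problem cannot in general be solved with \emph{zero} Dirichlet data and a square-integrable solution: the variational solution in $W^1_d(\varOmega^+)$ behaves like $\mathfrak p_0+\mathfrak q_1(x)/|x|^{2}+\mathscr O(1/|x|^2)$ at infinity, and neither the constant nor the $1/|x|$ term is in $L^2$ near infinity in dimension two; removing them (by subtracting a constant and a classical single layer potential, as in Lemma~\ref{lmmme}) forces $\gamma_d^+u$ to be a nonzero element of $\mathscr A_S^{1/2}$. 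Consequently the identity $\llangle\mathcal T_db,q\rrangle_{-\frac12,\frac12,d}=-\big(v_p,\Delta\mathcal L_dq\big)_{L^2(\varOmega^+)}$ is obtained only for $q\in\big(\mathscr A^{1/2}_{S,d}\big)^\perp$; on the three-dimensional space $\mathscr A^{1/2}_{S,d}$ there remains a residual lying in $\mathscr A_S^{1/2}$ which is \emph{not} determined by the component $a$ alone, and which must itself be absorbed into $\mathsf q_p$. You locate the possible ``spurious contributions'' at $\partial B_R$ and at infinity, but the unavoidable one sits on $\varGamma$ itself, and it is not to be avoided but to be absorbed: this is precisely the role of the final paragraph of the paper's proof, where $\mathsf q_p$ is constructed a posteriori using dual families $\{\tilde{\mathsf q}_j\}\subset\mathscr A_S^{-1/2}$ and $\{\hat q_j\}\subset\mathscr A_{S,d}^{1/2}$ with $\big(\mathsf S_\varGamma\tilde{\mathsf q}_j,\hat q_k\big)_{L^2(\varGamma)}=\delta_{j,k}$ so as to match the residual on $\mathscr A^{1/2}_{S,d}$.

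Put differently, your intermediate claim amounts to asserting that $\mathcal T_d\big(\mathscr A^{1/2}_{S,d}\big)^\perp$ is contained in the range of $\gamma_d^+$ on $\mathscr H^0(\varOmega^+)$; the theorem only guarantees that this range together with $\mathscr A_S^{1/2}$ spans $\mathcal H^{-1/2}_d(\varGamma)$, and nothing forces it to contain that particular complement of $\mathscr A_S^{1/2}$. A secondary overreach: you conclude that $p\longmapsto(v_p,\mathsf q_p)$ is continuous, whereas the paper explicitly notes that continuity of the exterior solution with respect to the data is not clear, precisely because the affine corrections are extracted from asymptotic expansions in Lemma~\ref{lmmme}.
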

The continuity of the solution with respect to the boundary data is not  clear so far. 
The proof relies on the following Lemma:
\begin{lemma}
\label{lmmme}
Let $\xi$ be a distribution in $H^{-1}(\varOmega^+)$ compactly supported in $\overline{\varOmega^+}$. Then there exists 
$p\in \mathscr A_S^{\frac12}$  such that the Dirichlet problem
\begin{equation}
\label{diri}
-\Delta u=\xi\quad\text{ in }\varOmega^+\qquad\text{and}\qquad \gamma_d^+u=p\quad\text{ on }\varGamma,
\end{equation}
admits a  solution in $H^1(\varOmega^+)$. If $\xi\in L^2(\varOmega^+)$, this solution is in $H^1(\varOmega^+,\Delta)$.
\end{lemma}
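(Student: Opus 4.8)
The plan is to reduce to the whole plane, build a particular solution as a Newtonian potential, and then subtract a single layer potential which simultaneously restores the square–integrability at infinity and pushes the Dirichlet trace into $\mathscr A_S^{\frac12}$. The underlying idea is that a single layer potential carries exactly a three‑parameter family of admissible behaviours at infinity, and that this matches the three‑dimensional space $\mathscr A_S^{\frac12}$ of affine traces, which is precisely the freedom we are allowed on the boundary datum.

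First I would extend $\xi$ to a compactly supported $\hat\xi\in H^{-1}(\mathbb R^2)$ with $\hat\xi|_{\varOmega^+}=\xi$. Writing $\xi=g_0+\partial_1 g_1+\partial_2 g_2$ with $g_j\in L^2(\varOmega^+)$ and multiplying by a cut-off $\chi\in\mathscr D(\mathbb R^2)$ equal to $1$ near $\operatorname{supp}\xi$, one may assume the $g_j$ are supported in $\overline{\varOmega^+}\cap\overline{B_R}$ for a ball $B_R\supset\overline{\varOmega^-}$ large enough; extending the $g_j$ by zero and setting $\hat\xi=\tilde g_0+\partial_1\tilde g_1+\partial_2\tilde g_2$ does the job, and $\hat\xi\in L^2(\mathbb R^2)$ (still compactly supported) when $\xi\in L^2(\varOmega^+)$. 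Let $w=G\ast\hat\xi$, so that $-\Delta w=\hat\xi$, $w\in H^1_{\ell oc}(\mathbb R^2)$ (hence $w\in H^2_{\ell oc}(\mathbb R^2)$ in the $L^2$ case), $w$ is harmonic in $\varOmega^-$, and, $\hat\xi$ being compactly supported, $w$ admits for $|x|$ large the multipole expansion
\[
w(x)=-\frac{\langle\hat\xi,1\rangle}{2\pi}\ln|x|+\frac{x_1\langle\hat\xi,y_1\rangle+x_2\langle\hat\xi,y_2\rangle}{2\pi|x|^2}+\mathscr O(1/|x|^2),
\]
where $\langle\hat\xi,\cdot\rangle$ denotes the pairing of the compactly supported distribution $\hat\xi$ with polynomials; this is of exactly the same shape as a classical single layer potential, cf. \eqref{asymp_sl} and \eqref{exp_dfolp}.

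Next, for $q\in\mathscr A_S^{-\frac12}$ the single layer potential $\mathscr S_\varGamma q$ lies in $H^1_{\ell oc}(\mathbb R^2)$, has one-sided Dirichlet traces both equal to $\mathsf S_\varGamma q\in\mathscr A_S^{\frac12}$, and its behaviour at infinity is governed through \eqref{asymp_sl} by $\Lambda(q):=\bigl(\langle q,\mathbf 1_\varGamma\rangle_{-\frac12,\frac12},\langle q,y_1\rangle_{-\frac12,\frac12},\langle q,y_2\rangle_{-\frac12,\frac12}\bigr)\in\mathbb R^3$. From the normalisation $\langle\mathsf q_j,\mathsf S_\varGamma\mathsf q_k\rangle_{-\frac12,\frac12}=\delta_{j,k}$ and the linear independence of $\{\mathbf 1_\varGamma,y_1|_\varGamma,y_2|_\varGamma\}$ in $H^{1/2}(\varGamma)$ (the Jordan curve $\varGamma$ is not contained in a line), a short computation shows that $\Lambda:\mathscr A_S^{-\frac12}\to\mathbb R^3$ is an isomorphism. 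Put $q_1:=\mathsf S_\varGamma^{-1}(\gamma_d^- w)\in H^{-1/2}(\varGamma)$ (note $\gamma_d^+w=\gamma_d^-w$ since $w\in H^1_{\ell oc}(\mathbb R^2)$), pick $q'\in\mathscr A_S^{-\frac12}$ with $\Lambda(q')=\bigl(\langle\hat\xi,1\rangle,\langle\hat\xi,y_1\rangle,\langle\hat\xi,y_2\rangle\bigr)-\Lambda(q_1)$, set $\sigma:=q_1+q'\in H^{-1/2}(\varGamma)$ and
\[
u:=\bigl(w-\mathscr S_\varGamma\sigma\bigr)\big|_{\varOmega^+}.
\]
Then $-\Delta u=-\Delta w=\xi$ in $\varOmega^+$; by the choice of $\Lambda(\sigma)=\Lambda(q_1)+\Lambda(q')$ the three leading asymptotic coefficients of $w$ and of $\mathscr S_\varGamma\sigma$ agree, so $u=\mathscr O(1/|x|^2)$ at infinity, and since $u$ is harmonic near infinity the expansion \eqref{exp_dfolp} forces all non-decaying contributions to vanish, whence $u$ and $\nabla u$ decay like $1/|x|^2$ and $1/|x|^3$; as $w$ and $\mathscr S_\varGamma\sigma$ are in $H^1_{\ell oc}(\mathbb R^2)$ we also have $u\in H^1_{\ell oc}(\overline{\varOmega^+})$, and altogether $u\in H^1(\varOmega^+)$, with moreover $\Delta u=-\xi\in L^2(\varOmega^+)$, i.e. $u\in H^1(\varOmega^+,\Delta)$, when $\xi\in L^2(\varOmega^+)$. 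Finally $\gamma_d^+u=\gamma_d^-w-\mathsf S_\varGamma\sigma=\gamma_d^-w-\mathsf S_\varGamma q_1-\mathsf S_\varGamma q'=-\mathsf S_\varGamma q'\in\mathscr A_S^{\frac12}$, and $p:=-\mathsf S_\varGamma q'$ proves the lemma.

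The main obstacle is the bookkeeping at infinity: one must produce a particular solution genuinely in $H^1(\varOmega^+)$, not merely in $H^1_{\ell oc}(\overline{\varOmega^+})$, which forces the removal of both the $\ln|x|$ term and the $\mathscr O(1/|x|)$ terms, and one must see that the only freedom available on the boundary datum — the three dimensions of $\mathscr A_S^{\frac12}$ — matches exactly this three-parameter obstruction; this is where the isomorphism property of $\Lambda$ is used (ultimately resting on the invertibility of $\mathsf S_\varGamma$, hence on the standing assumption on the logarithmic capacity of $\varGamma$). The extension of $\xi$, though elementary, has to be performed through the representation $\xi=g_0+\partial_j g_j$ rather than by restricting a Hahn–Banach extension, since we need $\hat\xi$ to be at once compactly supported and to coincide with $\xi$ as a distribution on $\varOmega^+$.
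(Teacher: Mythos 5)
Your proof is correct, but it follows a genuinely different route from the paper's. The paper never leaves $\varOmega^+$: it solves the variational problem $(\nabla v,\nabla\theta)_{L^2(\varOmega^+;\mathbb R^2)}=\langle\xi,\theta\rangle$ in the weighted space $W^1_d(\varOmega^+)$ of Amrouche--Girault--Giroire (zero Dirichlet trace, weighted Poincar\'e inequality), so that membership in that space automatically rules out the growing harmonics \emph{and} the logarithm in the expansion \eqref{exp_dfolp}; only a constant $\mathfrak p_0$ and a dipole $\mathfrak q_1(x)/|x|^2$ survive, and these are removed by subtracting $\mathscr S_\varGamma\mathsf q+\mathfrak p_0$ with $\mathsf q\in\mathscr A_S^{-1/2}$, which shifts the (zero) trace into $\mathscr A_S^{1/2}$. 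You instead extend $\xi$ across $\varGamma$, take the whole-plane Newtonian potential, and perform a single correction by $\mathscr S_\varGamma\sigma$ with $\sigma=q_1+q'$, where $q_1=\mathsf S_\varGamma^{-1}(\gamma_d^-w)$ kills the trace up to an affine contribution and $q'\in\mathscr A_S^{-1/2}$ matches the three asymptotic moments via the isomorphism $\Lambda$. What the paper's route buys: no extension of $\xi$ is needed, the logarithm disappears ``for free'' from the functional setting, and the same scheme transfers verbatim to the Neumann variant (Lemma~\ref{lbnjiolkiol}) by merely changing $W^1_d(\varOmega^+)$ to $W^1(\varOmega^+)$. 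What your route buys: it avoids the exterior weighted Poincar\'e inequality entirely, it is more explicitly potential-theoretic, and it makes transparent why the admissible boundary freedom must be exactly the three-dimensional space $\mathscr A_S^{1/2}$ (three asymptotic obstructions: one logarithmic coefficient and two dipole coefficients). Two points worth making explicit if you write this up: the injectivity of $\Lambda$ rests on the coercivity of $\mathsf S_\varGamma$ on $H^{-1/2}(\varGamma)$, hence on the standing assumption that the logarithmic capacity of $\varGamma$ is less than $1$ (you acknowledge this); and the multipole expansion of $w=G\ast\hat\xi$ with remainder $\mathscr O(1/|x|^2)$ should be justified through the decomposition $w=G\ast\tilde g_0+\sum_j(\partial_jG)\ast\tilde g_j$, using the uniformity in $y$ of the Taylor expansions of $G(x-y)$ and $\partial_jG(x-y)$ over the compact support of the $\tilde g_j$, since $\hat\xi$ is only a compactly supported $H^{-1}$ distribution and not a measure.
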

\begin{proof}
Following the method described in 
\cite{Amrouche:1997aa}, we 
introduce the weighted Sobolev space (remind that the functions $\rho$ and $\lg$ were defined earlier in \eqref{weighted}):
$$W^1_d(\varOmega^+)=\Big\{u\in \mathscr D'(\varOmega^+)\,:\, \frac{u}{\rho\,{\rm lg}}\in L^2(\varOmega^+),\,\frac{\partial u}{\partial x_j}\in L^2(\varOmega^+),\,(j=1,2)\text{ and }\gamma_d^+u=0\Big\}.$$
This space is strictly bigger than $H^1_0(\varOmega^+)$ and its purpose is that the norm deriving from the scalar product:
$$(\nabla u_1,\nabla u_2)_{L^2(\varOmega^+;\mathbb R^2)}\forallt u_1,u_2\in W^1_d(\varOmega^+),$$
is equivalent to the natural norm (i.e. Poincar\'e inequality holds true). Furthermore, the space $\mathscr D(\varOmega^+)$ is dense in $W^1_d(\varOmega^+)$ so that 
$W^1_d(\varOmega^+)$ is a distribution space. Let 
$v$ be 
the solution in  $W^1_d(\varOmega^+)$ of the variational problem:
\begin{equation}
\label{folpmpoll}
(\nabla v,\nabla \theta)_{L^2(\varOmega^+;\mathbb R^2)}=\langle \xi,\theta\rangle_{H^{-1}(\varOmega^+),
H^1(\varOmega^+)}\forallt \theta\in W^1_d(\varOmega^+).
\end{equation}
Since 
$W^1_d(\varOmega^+)\subset H^1_{\ell oc}(\varOmega^+)$ the term in the right hand side makes sens, recalling that $\xi$ is assumed 
to be compactly 
supported. The function $v$ being harmonic outside a compact set, according to \eqref{exp_dfolp}, it can be expanded in this region as:
$$v(x)=\sum_{j=0}^{+\infty}\mathfrak p_m(x)+\mathfrak q_0\ln|x|+\sum_{j=0}^{+\infty}\frac{\mathfrak q_m(x)}{|x|^{2m}},$$
where $\mathfrak q_0\in\mathbb R$ and, for every integer $m$, $\mathfrak p_m, \mathfrak q_m$ are harmonic polynomials on $\mathbb R^2$ of degree $m$. By definition of $W^1_d(\varOmega^+)$, 
the function $v/(\rho\lg)$ is in $L^2(\varOmega^+)$, which entails that $\mathfrak p_m=0$ for every $m\geqslant 1$ and $\mathfrak q_0=0$. On the other hand, according to \eqref{asymp_sl}, 
there exists $\mathsf q\in \mathscr A_S^{-1/2}(\varGamma)$ (the subspace of $H^{-1/2}(\varGamma)$ spanned by $\mathsf q_1,\mathsf q_2$ and 
$\mathsf q_2$) such that, for $|x|$ large:
$$\mathscr S_\varGamma \mathsf q(x)=\frac{\mathfrak q_1(x)}{|x|^2}+\mathscr O(1/|x|^2).$$
The function $u=v-\mathscr S_\varGamma \mathsf q|_{\varOmega^+}-\mathfrak p_0\mathbf 1_{\varOmega^+}$ is the solution we are looking for (in particular 
it is in $L^2(\varOmega^+)$). 

\end{proof}
We can now carry out the 
%
\begin{proof}[Proof of Theorem~\ref{diri_exterior}]
Let $D^-$ and $D^+$ be two large open disks containing $\varOmega^-$ such that $\overline{\varOmega^-}\subset D^-$ and 
$\overline{D^-}\subset D^+$. Let $\chi$ be a smooth cut-off function defined in $\mathbb R^2$ such that 
$0\leqslant \chi\leqslant 1$, $\chi=1$ in $D^-$ and $\chi=0$ in $\mathbb R^2\setminus\overline{D^+}$. Following the lines of the proof of Theorem~\ref{diri_interior}, 
the norm deriving from the scalar product:
$$
\big(\Delta\chi\mathcal L_d q_1,\Delta\chi\mathcal L_d q_2\big)_{L^2(\varOmega^+)}+
\big(\chi\mathcal L_d q_1,\chi\mathcal L_d q_2\big)_{L^2(\varOmega^+)}\forallt q_1,q_2\in{\mathcal H}^{1/2}_d(\varGamma),
$$
is equivalent to the norm    $\|\cdot\|_{\frac12,d}$ associated to the scalar product \eqref{def_norm_10} in the space 
$\mathcal H^{1/2}_d(\varGamma)$. Applying Riesz representation Theorem 
we deduce that for 
every $p\in {\mathcal H}^{-1/2}_d(\varGamma)$, there exists $q_p\in {\mathcal H}^{1/2}_d(\varGamma)$ such that:
\begin{equation}
\label{ghnjko}
\llangle p,q\rrangle_{-\frac12,\frac12,d}=\big(\Delta\chi\mathcal L_d q_p,\Delta\chi\mathcal L_d q\big)_{L^2(\varOmega^+)}+\big(\chi\mathcal L_d q_p,\chi\mathcal L_d q\big)_{L^2(\varOmega^+)}\forallt q\in
{\mathcal H}^{1/2}_d(\varGamma).
\end{equation}
The first term in the right hand side is next expanded as follows:
\begin{multline}
\label{mulkjiopp}
\big(\Delta\chi\mathcal L_d q_p,\Delta\chi\mathcal L_d q\big)_{L^2(\varOmega^+)}=\big(\chi\Delta\chi\mathcal L_d q_p,\Delta \mathcal L_d q\big)_{L^2(\varOmega^+)}+2\big(\Delta \chi\mathcal L_d q_p\nabla\chi,\nabla\mathcal L_dq\big)_{L^2(\varOmega^+,\mathbb R^2)}
\\+\big(\Delta \chi\Delta\chi\mathcal L_dq_p,\mathcal L_dq\big)_{L^2(\varOmega^+)}.
\end{multline}
Focusing on the second term of this expansion, Lemma~\ref{lmmme} ensures the existence of a function 
$ u_1\in H^1(\mathbb \varOmega^+)$ such that $\gamma_d^+ u_1\in\mathscr A_S^{1/2}$ and:
$$(\nabla u_1,\nabla \theta)_{L^2(\varOmega^+,\mathbb R^2)}=
\big(\Delta \chi\mathcal L_d q_p\nabla\chi,\nabla\theta\big)_{L^2(\varOmega^+,\mathbb R^2)}\forallt \theta\in W^1_d(\varOmega^+).$$
For every $k\geqslant 1$ denote by $\mathcal L_d^k q$ the function $\phi_k\mathcal L_dq$ where $\phi_k$ is the truncation function 
mentioned in Proposition~\ref{mqwxf}. For $k$ large enough, since the function $\Delta \chi\mathcal L_d q_p\nabla\chi$ 
is compactly supported, we are allowed to write:
\begin{align*}
\big(\Delta \chi\mathcal L_d q_p\nabla\chi,\nabla\mathcal L_d q\big)_{L^2(\varOmega^+,\mathbb R^2)}
&=\big(\nabla u_1,\nabla \mathcal L_d^k q\big)_{L^2(\varOmega^+,\mathbb R^2)}\\
&=-\big(\gamma_d^+ u_1,q\big)_{L^2(\varGamma)}-\big( u_1,\Delta  \mathcal L_d^k q\big)_{L^2(\varOmega^+)}.
\end{align*}
Assume from now on that $q$ is in $\big(\mathscr A^{1/2}_{S,d}\big)^\perp$.
If follows that $(\gamma_d^+ u_1,q)_{L^2(\varGamma)}=0$ and letting $k$ go  to $+\infty$ we obtain:
\begin{subequations}
\label{cvwqso}
\begin{equation}
\big(\Delta \chi\mathcal L_d q_p\nabla\chi,\nabla\mathcal L_d q\big)_{L^2(\varOmega^+,\mathbb R^2)}
=-\big( u_1,\Delta  \mathcal L_d q\big)_{L^2(\varOmega^+)}.
\end{equation}
Considering now the last term in \eqref{mulkjiopp}, we denote by $ u_2$ the function in $H^1(\varOmega^+)$, provided by Lemma~\ref{lmmme}, 
satisfying $-\Delta  u_2=\Delta \chi\Delta\chi\mathcal L_dq_p$ and $\gamma_d^+ u_2\in\mathscr A^{\frac12}_S$. For $k$ large enough, we 
can write that:
\begin{align*}
\big(\Delta \chi\Delta\chi\mathcal L_dq_p,\mathcal L_dq\big)_{L^2(\varOmega^+)}&=-\big(\Delta u_2,\mathcal L^k_dq\big)_{L^2(\varOmega^+)}\\
&=-\big(q,\gamma_d^+ u_2\big)_{L^2(\varGamma)}-\big( u_2,\Delta \mathcal L^k_dq\big)_{L^2(\varOmega^+)}.
\end{align*}
Again, since $q$ is assumed to be in $\big(\mathscr A^{1/2}_{S,d}\big)^\perp$ the boundary integral vanishes and letting $k$ go  to $+\infty$ we are left with:
\begin{equation}
\big(\Delta \chi\Delta\chi\mathcal L_dq_p,\mathcal L_dq\big)_{L^2(\varOmega^+)}=-\big( u_2,\Delta \mathcal L_dq\big)_{L^2(\varOmega^+)}.
\end{equation}
In the same manner, for the second term in the right hand side of \eqref{ghnjko}, there exists a function $ u_3$ in $H^1(\varOmega^+)$ such that:
\begin{equation}
\big(\chi\mathcal L_d q_p,\chi\mathcal L_d q\big)_{L^2(\varOmega^+)}=\big(\chi^2\mathcal L_d q_p,\mathcal L_d q\big)_{L^2(\varOmega^+)}=
-\big( u_3,\Delta \mathcal L_dq\big)_{L^2(\varOmega^+)}.
\end{equation}
\end{subequations}
Using the expressions \eqref{cvwqso} in \eqref{mulkjiopp} and \eqref{ghnjko}, we obtain eventually:
$$\llangle p,q\rrangle_{-\frac12,\frac12,d}=-\big(v_p,\Delta \mathcal L_dq\big)_{L^2(\mathbb R^2)}\forallt q\in\big(\mathscr A^{1/2}_{S,d}\big)^\perp,$$
where $v_p=\Pi_{\varOmega^+}\big(2 u_1+ u_2+ u_3-\chi\Delta\chi\mathcal L_d q_p\big)$ and $\Pi_{\varOmega^+}$ stands for 
the Bergman projection in $L^2(\varOmega^+)$. 
\par
It remains to construct $\mathsf q_p\in\mathscr A_S^{-1/2}$ as announced in the statement of the theorem. Let $\{P_1,P_2,P_3\}$ be a basis of $\mathscr A$ such that $(P_k,P_j)_{L^2(\varOmega^-)}=\delta_{j,k}$. For every $j=1,2,3$, 
let $\tilde{\mathsf q}_j\in\mathscr A_S^{-1/2}$ and  $\hat q_j\in\mathscr A_{S,d}^{1/2}$ 
be such that $\mathscr S_\varGamma\tilde{\mathsf q}_j|_{\varOmega^-}=\Delta\mathcal L_d\hat q_j|_{\varOmega^-}=P_j|_{\varOmega^-}$. 
If follows that 
$$\big(\mathsf S_\varGamma\tilde{\mathsf q}_j,\hat q_k\big)_{L^2(\varGamma)}=(P_j,P_k)_{L^2(\varOmega^-)}=\delta_{j,k}\forallt j,k=1,2,3.$$
This proves that we can always define $\mathsf q_p\in \mathscr A_S^{-1/2}$ such that:
$$\big(\mathsf S_\varGamma\mathsf q_p,q\big)_{L^2(\varGamma)}=-\llangle 
p,q\rrangle_{-\frac12,\frac12,d}-\big(v_p,\Delta \mathcal L_dq\big)_{L^2(\varOmega^+)}\forallt q\in \mathscr A^{1/2}_{S,d},$$
and completes the proof.
\end{proof}
 Let us address now the Neumann problems. Recall that:
 $$\widetilde{\mathcal H}_n^{-3/2}(\varGamma)=\big\{q\in {\mathcal H}_n^{-3/2}(\varGamma)\,:\, \llangle q,\mathbf 1_\varGamma 
 \rrangle_{-\frac32,\frac32,n}=0\}.$$
\begin{theorem}[Solvability of the interior Neumann problem]
\label{neuman_inter}
For every $q\in\widetilde{\mathcal H}^{-3/2}_n(\varGamma)$ 
 there exists $v_q$ in $\mathscr H^0(\varOmega^-)$ such that $\gamma_n^-v_q=q$. Moreover, the application $q\longmapsto v_q$ 
 is continuous from $\widetilde{\mathcal H}^{-3/2}_n(\varGamma)$ to $\mathscr H^0(\varOmega^-)$.
\end{theorem}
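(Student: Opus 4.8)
The plan is to mimic the proof of Theorem~\ref{diri_interior}, replacing $\mathcal L_d$ by $\mathcal L_n$ and the interior Dirichlet problem by an interior Neumann problem; what makes the hypothesis $q\in\widetilde{\mathcal H}^{-3/2}_n(\varGamma)$ (rather than merely $q\in\mathcal H^{-3/2}_n(\varGamma)$) essential is that it is exactly the compatibility condition of the auxiliary Neumann problem appearing below. First I would show that, on $\mathcal H^{3/2}_n(\varGamma)$, the norm deriving from the scalar product $(p_1,p_2)\longmapsto(\Delta\mathcal L_np_1,\Delta\mathcal L_np_2)_{L^2(\varOmega^-)}+(\mathcal L_np_1,\mathcal L_np_2)_{L^2(\varOmega^-)}$ is equivalent to $\|\cdot\|_{\frac32,n}$. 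For the non-obvious inequality I would note that $\mathcal L_np|_{\varOmega^-}\in H^2(\varOmega^-)$ with $\gamma_n^-(\mathcal L_np)=0$; extending it to $W^2(\mathbb R^2)$ by the universal extension operator of \cite[Theorem 10.4.1]{Agranovich:2015us} (whose restriction to $\varOmega^-$ is the identity, hence which leaves the one-sided traces on $\varGamma$ unchanged) provides an admissible competitor in the infimum defining $\|p\|_{\frac32,n}$, see \eqref{def_norm_10_bis}, yielding $\|p\|_{\frac32,n}\leqslant C\|\mathcal L_np|_{\varOmega^-}\|_{H^2(\varOmega^-)}$; then I would bound $\|\mathcal L_np|_{\varOmega^-}\|_{H^2(\varOmega^-)}$ by $C(\|\Delta\mathcal L_np\|_{L^2(\varOmega^-)}+\|\mathcal L_np\|_{L^2(\varOmega^-)})$ using the a priori $H^2$ estimate for the Neumann problem in the straight polygon $\varOmega^-$ (the Neumann counterpart of \cite[Theorem 4.3.1.4]{Grisvard:1985aa}; here the fact that $\mathcal L_np$ is, a priori, of class $H^2$ up to $\varGamma$ is precisely what excludes the corner singular functions). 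The reverse inequality is immediate, since $\|\Delta\mathcal L_np\|_{L^2(\varOmega^-)}\leqslant\|p\|_{\frac32,n}$ and $\|\mathcal L_np\|_{L^2(\varOmega^-)}\leqslant C\|\mathcal L_np\|_D=C\|p\|_{\frac32,n}$, the weights \eqref{weighted} being bounded from above and below on the bounded set $\varOmega^-$.

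Next, for a given $q\in\widetilde{\mathcal H}^{-3/2}_n(\varGamma)$, Riesz's theorem applied in $\mathcal H^{3/2}_n(\varGamma)$ endowed with the above scalar product would furnish $p_q\in\mathcal H^{3/2}_n(\varGamma)$, depending continuously on $q$, with $\llangle q,p\rrangle_{-\frac32,\frac32,n}=(\Delta\mathcal L_np_q,\Delta\mathcal L_np)_{L^2(\varOmega^-)}+(\mathcal L_np_q,\mathcal L_np)_{L^2(\varOmega^-)}$ for all $p\in\mathcal H^{3/2}_n(\varGamma)$. Testing with $p=\mathbf 1_\varGamma$, for which $\mathcal L_n\mathbf 1_\varGamma=\mathbf 1_{\mathbb R^2}$, so $\Delta\mathcal L_n\mathbf 1_\varGamma=0$ and $\mathcal L_n\mathbf 1_\varGamma|_{\varOmega^-}=\mathbf 1_{\varOmega^-}$, and using $\llangle q,\mathbf 1_\varGamma\rrangle_{-\frac32,\frac32,n}=0$, I would deduce $\int_{\varOmega^-}\mathcal L_np_q\dx=0$. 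This is exactly the compatibility condition permitting to select a unique $w_q\in H^1(\varOmega^-)$ with $\int_{\varOmega^-}w_q\dx=0$, depending continuously on $\mathcal L_np_q|_{\varOmega^-}\in L^2(\varOmega^-)$, solving $(\nabla w_q,\nabla\theta)_{L^2(\varOmega^-;\mathbb R^2)}=-(\mathcal L_np_q,\theta)_{L^2(\varOmega^-)}$ for all $\theta\in H^1(\varOmega^-)$, so that $\Delta w_q=\mathcal L_np_q$ in $\varOmega^-$ and $\gamma_n^-w_q=0$. Applying Green's identity \eqref{green} to $w_q$ and to $\mathcal L_np|_{\varOmega^-}$ and using $\gamma_n^-w_q=0=\gamma_n^-(\mathcal L_np)$, every boundary term cancels, whence $(\mathcal L_np_q,\mathcal L_np)_{L^2(\varOmega^-)}=(w_q,\Delta\mathcal L_np)_{L^2(\varOmega^-)}$ and therefore $\llangle q,p\rrangle_{-\frac32,\frac32,n}=(\Delta\mathcal L_np_q+w_q,\Delta\mathcal L_np)_{L^2(\varOmega^-)}$ for all $p\in\mathcal H^{3/2}_n(\varGamma)$.

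Finally, since each $\Delta\mathcal L_np$ is harmonic in $\varOmega^-$ and thus lies in $\mathscr H^0(\varOmega^-)$, I would set $v_q=-\Pi_{\varOmega^-}(\Delta\mathcal L_np_q+w_q)$, where $\Pi_{\varOmega^-}$ is the Bergman projection onto $\mathscr H^0(\varOmega^-)$; this gives $\llangle q,p\rrangle_{-\frac32,\frac32,n}=-(v_q,\Delta\mathcal L_np)_{L^2(\varOmega^-)}$ for all $p\in\mathcal H^{3/2}_n(\varGamma)$, which is precisely the identity characterising $\gamma_n^-v_q=q$ in the sense of Definition~\ref{def_trace_ext} and \eqref{weak_trace1}, and the continuity of $q\longmapsto v_q$ is inherited from the chain of continuous maps $q\mapsto p_q\mapsto(\Delta\mathcal L_np_q|_{\varOmega^-},w_q)\mapsto v_q$. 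The step I expect to be the main obstacle is the norm equivalence of the first paragraph, i.e. correctly combining the trace-preserving extension with the right $H^2$ a priori estimate for the Neumann problem in a polygon; the remaining steps are essentially bookkeeping once one has observed that $q\in\widetilde{\mathcal H}^{-3/2}_n(\varGamma)$ is exactly what is needed for the auxiliary Neumann problem to be solvable.
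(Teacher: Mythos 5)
Your proposal is correct and is precisely the adaptation the paper intends: the paper omits this proof, saying only that it is similar to that of Theorem~\ref{diri_interior}, and you reproduce that Dirichlet argument mutatis mutandis (trace-preserving $H^2$ extension plus the polygonal a priori estimate for the norm equivalence, Riesz representation, an auxiliary variational Neumann problem, Green's identity, and the Bergman projection). You also correctly pinpoint the role of the hypothesis $q\in\widetilde{\mathcal H}^{-3/2}_n(\varGamma)$ -- the compatibility condition obtained by testing with $\mathbf 1_\varGamma$ and using $\mathcal L_n\mathbf 1_\varGamma=\mathbf 1_{\mathbb R^2}$ -- and the sign needed to match the convention \eqref{neu_ext}.
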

We omit the proof which is similar to that of Theorem~\ref{diri_interior}.
 \begin{theorem}[Solvability of the exterior Neumann problem]
 \label{ghbppl}
For every $q\in\widetilde{\mathcal H}^{-3/2}_n(\varGamma)$   there exists $v_q$ in $\mathscr H^0(\varOmega^+)$ and $\mathsf p_q\in\mathscr A^{1/2}_D$ such that $\gamma_n^+\big(v_q+\mathscr D_\varGamma \mathsf p_q\big)=q$.
\end{theorem}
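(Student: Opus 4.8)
The plan is to transcribe the proof of Theorem~\ref{diri_exterior}, replacing throughout the Dirichlet objects ($\gamma_d$, $\mathcal L_d$, $\mathcal H^{1/2}_d(\varGamma)$, $\mathscr A_S$, $\mathscr S_\varGamma$) by their Neumann counterparts ($\gamma_n$, $\mathcal L_n$, $\mathcal H^{3/2}_n(\varGamma)$, $\mathscr A_D$, $\mathscr D_\varGamma$). By the weak definition \eqref{weak_trace1} of $\mathsf J_n$ and Definition~\ref{def_trace_ext}, since a function of $\mathscr H^0(\varOmega^+)$ extended by $0$ coincides with its own $+$-part, and since $\gamma_n^+\mathscr D_\varGamma\mathsf p_q\in\mathscr A^{-1/2}_D\subset\widetilde{H}^{-1/2}(\varGamma)$ whenever $\mathsf p_q\in\mathscr A^{1/2}_D$, the assertion amounts to producing $v_q\in\mathscr H^0(\varOmega^+)$ and $\mathsf p_q\in\mathscr A^{1/2}_D$ such that
$$\llangle q,p\rrangle_{-\frac32,\frac32,n}=-\big(v_q,\Delta\mathcal L_n p\big)_{L^2(\varOmega^+)}+\big\langle \gamma_n^+\mathscr D_\varGamma\mathsf p_q,p\big\rangle_{-\frac12,\frac12}\forallt p\in\mathcal H^{3/2}_n(\varGamma).$$
Both sides annihilate $\mathbf 1_\varGamma$ (the left because $q\in\widetilde{\mathcal H}^{-3/2}_n(\varGamma)$, the right because $\mathcal L_n\mathbf 1_\varGamma=\mathbf 1_{\mathbb R^2}$ and $\gamma_n^+\mathscr D_\varGamma\mathsf p_q\perp\mathbf 1_\varGamma$), so it suffices to match the two functionals on a complement of $\mathbf 1_\varGamma$ in $\mathcal H^{3/2}_n(\varGamma)$.

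As in Theorems~\ref{diri_interior} and \ref{diri_exterior}, one first localizes the scalar product $(\cdot,\cdot)_{\frac32,n}$ of \eqref{def_norm_10_bis} to $\varOmega^+$: using the universal $H^2$-extension from $\varOmega^-$ of \cite[Theorem~10.4.1]{Agranovich:2015us}, the polygonal a~priori estimate of \cite[Theorem~4.3.1.4]{Grisvard:1985aa}, and a cut-off $\chi$ equal to $1$ on a large disk $D^-\supset\overline{\varOmega^-}$ and vanishing outside a larger disk $D^+$, one checks that
$$\big(\Delta(\chi\mathcal L_n p_1),\Delta(\chi\mathcal L_n p_2)\big)_{L^2(\varOmega^+)}+\big(\chi\mathcal L_n p_1,\chi\mathcal L_n p_2\big)_{L^2(\varOmega^+)}+\mu(p_1)\mu(p_2)$$
is an equivalent scalar product on $\mathcal H^{3/2}_n(\varGamma)$. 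Riesz' theorem then yields, for each $q\in\widetilde{\mathcal H}^{-3/2}_n(\varGamma)$, an element $p_q\in\mathcal H^{3/2}_n(\varGamma)$ representing $p\mapsto\llangle q,p\rrangle_{-\frac32,\frac32,n}$. Expanding $\Delta(\chi\mathcal L_n p)$ by the Leibniz rule, exactly as in \eqref{mulkjiopp}, the term carrying $\Delta\mathcal L_n p$ is already of the required shape and goes directly into $v_q$; each remaining volume term — the one pairing $\nabla\mathcal L_n p$ with the compactly supported field $2\,\Delta(\chi\mathcal L_n p_q)\nabla\chi$, and the two pairing $\mathcal L_n p$ with $(\Delta\chi)\Delta(\chi\mathcal L_n p_q)$ and $\chi^2\mathcal L_n p_q$ — is rewritten, after introducing an auxiliary $u_i\in H^1(\varOmega^+)$ solving an exterior Neumann problem with that source and truncating $\mathcal L_n p$ by the functions $\phi_k$ of Proposition~\ref{mqwxf}, in the form $-(u_i,\Delta\mathcal L_n p)_{L^2(\varOmega^+)}-\langle\gamma_n^+u_i,p\rangle_{-\frac12,\frac12}$, where $\gamma_n^+\mathcal L_n p=0$ is used to discard the second boundary integral of Green's identity.

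The decisive ingredient — and the main obstacle — is a Neumann analogue of Lemma~\ref{lmmme}: for a compactly supported $\xi\in H^{-1}(\varOmega^+)$ there is $g\in\mathscr A^{-1/2}_D\oplus\spn\{\mathbf 1_\varGamma\}$, whose $\mathbf 1_\varGamma$-component equals $-|\varGamma|^{-1}\langle\xi,\mathbf 1\rangle_{H^{-1}(\varOmega^+),H^1(\varOmega^+)}$, such that $-\Delta u=\xi$ in $\varOmega^+$, $\gamma_n^+u=g$ on $\varGamma$ admits a solution $u\in H^1(\varOmega^+)$ (in $H^1(\varOmega^+,\Delta)$ when $\xi\in L^2(\varOmega^+)$). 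This is proved as Lemma~\ref{lmmme} (following \cite{Amrouche:1997aa}), replacing the Dirichlet weighted space $W^1_d(\varOmega^+)$ by the full weighted Sobolev space $W^1(\varOmega^+)=\{u:u/(\rho\,{\rm lg})\in L^2(\varOmega^+),\ \partial u/\partial x_j\in L^2(\varOmega^+)\}$, in which the Dirichlet form is coercive modulo constants; solvability of the variational problem with prescribed flux fixes the $\mathbf 1_\varGamma$-component of $g$, the expansion \eqref{exp_dfolp} forces the harmonic-polynomial part of positive degree and the logarithmic coefficient of the solution to vanish, the surviving constant is dropped and the surviving dipole $\mathfrak q_1(x)/|x|^2$ is removed by subtracting $\mathscr D_\varGamma\mathsf p|_{\varOmega^+}$ for a suitable $\mathsf p\in\mathscr A^{1/2}_D$ via \eqref{asymp_dl}, leaving a function in $L^2(\varOmega^+)$. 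Applying this with $\xi$ equal to each of the three sources above, the $\mathbf 1_\varGamma$-components of the resulting Neumann traces are controlled by the means of those sources; testing the Riesz identity against $\mathbf 1_\varGamma$ gives $\langle(\Delta\chi)\Delta(\chi\mathcal L_n p_q)+\chi^2\mathcal L_n p_q,\mathbf 1\rangle_{\varOmega^+}=-\mu(p_q)$, so that these $\mathbf 1_\varGamma$-components contribute to $\sum_i\langle\gamma_n^+u_i,p\rangle_{-\frac12,\frac12}$ precisely the quantity $\mu(p_q)\mu(p)$, which cancels the $\mu(p_1)\mu(p_2)$ term produced by the localized scalar product. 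This cancellation is the delicate point of the argument.

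After this cancellation one is left with
$$\llangle q,p\rrangle_{-\frac32,\frac32,n}=-\big(w_q,\Delta\mathcal L_n p\big)_{L^2(\varOmega^+)}+\langle r_q,p\rangle_{-\frac12,\frac12}\forallt p\in\mathcal H^{3/2}_n(\varGamma),$$
where $w_q\in L^2(\varOmega^+)$ is the sum of the auxiliary functions and of the compactly supported coefficient of the ``good'' term, and $r_q\in\mathscr A^{-1/2}_D=\spn\{n_1,n_2\}$ gathers the residual parts of the auxiliary Neumann traces. Since $\Delta\mathcal L_n p$ is harmonic in $\varOmega^+$, one may replace $w_q$ by its Bergman projection $v_q=\Pi_{\varOmega^+}w_q\in\mathscr H^0(\varOmega^+)$; and since $\gamma_n^+\mathscr D_\varGamma$ maps $\mathscr A^{1/2}_D$ isomorphically onto $\mathscr A^{-1/2}_D$, one chooses $\mathsf p_q\in\mathscr A^{1/2}_D$ with $\gamma_n^+\mathscr D_\varGamma\mathsf p_q=r_q$. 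This yields the identity displayed at the beginning, hence $\gamma_n^+(v_q+\mathscr D_\varGamma\mathsf p_q)=q$. I expect all the genuinely new work to sit in the Neumann analogue of Lemma~\ref{lmmme} and in the bookkeeping of the $\mathbf 1_\varGamma$/$\mu$ cancellation; everything else is a line-by-line adaptation of the proof of Theorem~\ref{diri_exterior}.
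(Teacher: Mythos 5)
Your proposal is correct and follows essentially the same route as the paper, which itself only states that the proof is ``roughly the same as that of Theorem~\ref{diri_exterior}'' and rests on a Neumann analogue of Lemma~\ref{lmmme} (the paper's Lemma~\ref{lbnjiolkiol}, proved exactly as you indicate by replacing $W^1_d(\varOmega^+)$ with $W^1(\varOmega^+)$ equipped with the scalar product $(\nabla\cdot,\nabla\cdot)_{L^2(\varOmega^+;\mathbb R^2)}+\mu(\gamma_d\cdot)\mu(\gamma_d\cdot)$). Your version is in fact more detailed than the paper's, correctly isolating the two genuinely new points --- the auxiliary exterior Neumann problems with traces in $\mathscr A_D^{-1/2}\oplus\spn\{\mathbf 1_\varGamma\}$ and the cancellation of the $\mu(p_q)\mu(p)$ term --- and your sign $-|\varGamma|^{-1}\langle\xi,\mathbf 1\rangle$ for the $\mathbf 1_\varGamma$-component is the one forced by the flux compatibility condition.
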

The proof is roughly the same as the one of Theorem~\ref{diri_exterior} and rests on the following lemma:
\begin{lemma}
\label{lbnjiolkiol}
Let $\xi$ be a distribution in $H^{-1}(\varOmega^+)$  either compactly supported in $\varOmega^+$ or in $L^2(\varOmega^+)$ 
and compactly supported in $\overline{\varOmega^+}$ and define the constant 
$\alpha= |\varGamma|^{-1}\langle\xi,\mathbf 1_{\varOmega^+}\rangle_{H^{-1}(\varOmega^+),
H^1(\varOmega^+)}$. Then there exists  $\mathsf q\in \mathscr A_D^{- 1/2}$  such that the Dirichlet problem
\begin{equation}
\label{diri2}
-\Delta u=\xi\quad\text{ in }\varOmega^+\qquad\text{and}\qquad \gamma_n^+u=\alpha\mathbf 1_\varGamma+\mathsf q\quad\text{ on }\varGamma
,
\end{equation}
admits  a solution in $H^1(\varOmega^+)$. The solution is in $H^1(\varOmega^+,\Delta)$ if $\xi$ is in $L^2(\varOmega^+)$.
\end{lemma}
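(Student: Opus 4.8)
The plan is to follow the proof of Lemma~\ref{lmmme} step by step; the one genuinely new feature is that the exterior Neumann problem on $\varOmega^+$ carries a solvability (compatibility) condition, which is exactly the role of the constant $\alpha$ in the statement. First I would introduce, in the spirit of \cite{Amrouche:1997aa}, the weighted Sobolev space
\[
W^1(\varOmega^+)=\Big\{u\in\mathscr D'(\varOmega^+)\,:\,\frac{u}{\rho\,{\rm lg}}\in L^2(\varOmega^+),\ \frac{\partial u}{\partial x_j}\in L^2(\varOmega^+),\ j=1,2\Big\},
\]
with $\rho$ and ${\rm lg}$ as in \eqref{weighted}, in which $\mathscr D_{\varOmega^+}(\mathbb R^2)$ is dense and $\|\nabla\cdot\|_{L^2(\varOmega^+;\mathbb R^2)}$ is a norm on $W^1(\varOmega^+)/\mathbb R$ equivalent to the quotient norm (weighted Poincar\'e--Wirtinger inequality). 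Since $\xi$ is compactly supported, the linear form $\theta\longmapsto\langle\xi,\theta\rangle_{H^{-1}(\varOmega^+),H^1(\varOmega^+)}-\alpha\,(\mathbf 1_\varGamma,\gamma_d^+\theta)_{L^2(\varGamma)}$ is continuous on $W^1(\varOmega^+)$, and it vanishes on the constants \emph{precisely} because of the value of $\alpha$ fixed in the statement --- this vanishing is the compatibility condition of the exterior Neumann problem (equivalently, by the divergence theorem, the relation $\alpha|\varGamma|=\langle\xi,\mathbf 1_{\varOmega^+}\rangle$, up to the orientation convention for $n^+$). Hence the form descends to $W^1(\varOmega^+)/\mathbb R$, and the Riesz representation theorem yields $v\in W^1(\varOmega^+)$, unique modulo constants, with
\[
(\nabla v,\nabla\theta)_{L^2(\varOmega^+;\mathbb R^2)}=\langle\xi,\theta\rangle_{H^{-1}(\varOmega^+),H^1(\varOmega^+)}-\alpha\,(\mathbf 1_\varGamma,\gamma_d^+\theta)_{L^2(\varGamma)}\forallt\theta\in W^1(\varOmega^+).
\]
Testing against $\theta\in\mathscr D(\varOmega^+)$ gives $-\Delta v=\xi$ in $\varOmega^+$, and testing against arbitrary $\theta$ together with Green's formula on $\varOmega^+$ identifies the Neumann datum $\gamma_n^+v=\alpha\mathbf 1_\varGamma$ (understood in the weak sense).

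Next I would describe $v$ at infinity. As $v$ is harmonic outside the compact set $\operatorname{supp}\xi$, the expansion \eqref{exp_dfolp} holds, and since $\nabla v\in L^2(\varOmega^+;\mathbb R^2)$ on an exterior planar domain, the harmonic polynomials $\mathfrak p_m$ with $m\geqslant1$ and the coefficient $\mathfrak q_0$ of $\ln|x|$ must vanish (their gradients are not square-integrable at infinity); so $v(x)=\mathfrak p_0+\mathfrak q_1(x)/|x|^2+\mathscr O(1/|x|^2)$ for $|x|$ large, with $\mathfrak q_1$ a harmonic polynomial of degree $1$. By the asymptotic expansion \eqref{asymp_dl}, there is a unique $\mathsf p\in\mathscr A_D^{1/2}$ with $\mathscr D_\varGamma\mathsf p(x)=-\mathfrak q_1(x)/|x|^2+\mathscr O(1/|x|^2)$ --- the map $\mathsf p\longmapsto(\langle n_1,\mathsf p\rangle_{-\frac12,\frac12},\langle n_2,\mathsf p\rangle_{-\frac12,\frac12})$ is a linear bijection of the two-dimensional space $\mathscr A_D^{1/2}$ onto $\mathbb R^2$. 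I would then set $u=v+\mathscr D_\varGamma\mathsf p|_{\varOmega^+}-\mathfrak p_0\mathbf 1_{\varOmega^+}$ and $\mathsf q=\gamma_n^+(\mathscr D_\varGamma\mathsf p)\in\mathscr A_D^{-1/2}$. Then $-\Delta u=\xi$ in $\varOmega^+$ (both $\mathscr D_\varGamma\mathsf p$ and the constant are harmonic there), $\gamma_n^+u=\alpha\mathbf 1_\varGamma+\mathsf q$ on $\varGamma$ (constants have vanishing Neumann trace), and $u(x)=\mathscr O(1/|x|^2)$ as $|x|\to+\infty$, whence $u\in L^2(\varOmega^+)$; together with $\nabla u\in L^2(\varOmega^+;\mathbb R^2)$ (here one uses $\mathscr D_\varGamma\mathsf p|_{\varOmega^+}\in H^1_{\ell oc}(\overline{\varOmega^+})$ and $\nabla\mathscr D_\varGamma\mathsf p=\mathscr O(1/|x|^2)$ at infinity) this gives $u\in H^1(\varOmega^+)$. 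If in addition $\xi\in L^2(\varOmega^+)$, then $\Delta u=-\xi\in L^2(\varOmega^+)$, i.e. $u\in H^1(\varOmega^+,\Delta)$.

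The step I expect to be the main obstacle is the functional-analytic groundwork for $W^1(\varOmega^+)$ in the Neumann framework: the weighted Poincar\'e--Wirtinger inequality on $W^1(\varOmega^+)/\mathbb R$, the density of $\mathscr D_{\varOmega^+}(\mathbb R^2)$, and the validity of Green's formula on $\varOmega^+$ for the functions involved (so that $\gamma_n^+v=\alpha\mathbf 1_\varGamma$ is meaningful), together with checking that the prescribed value of $\alpha$ is exactly the one making the variational right-hand side vanish on the constants --- this compatibility check is what distinguishes the present lemma from the Dirichlet Lemma~\ref{lmmme}, which requires no such condition. No modification is needed when $\operatorname{supp}\xi$ touches $\varGamma$ (the case $\xi\in L^2(\varOmega^+)$, compactly supported in $\overline{\varOmega^+}$); when $\operatorname{supp}\xi\Subset\varOmega^+$, the function $v$ is moreover harmonic near $\varGamma$, which lightens the trace bookkeeping.
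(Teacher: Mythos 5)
Your construction is sound and follows the same two--step skeleton as the paper: solve a variational problem in a weighted space of Amrouche--Girault--Giroire type, expand the solution at infinity via \eqref{exp_dfolp}, and repair the non-decaying part by subtracting a constant and adding a double layer potential $\mathscr D_\varGamma\mathsf p$ with $\mathsf p\in\mathscr A_D^{1/2}$, so that $\mathsf q=\gamma_n^+\mathscr D_\varGamma\mathsf p\in\mathscr A_D^{-1/2}$ (your bijectivity claim for $\mathsf p\mapsto(\langle n_1,\mathsf p\rangle_{-\frac12,\frac12},\langle n_2,\mathsf p\rangle_{-\frac12,\frac12})$ is exactly the non-degeneracy that underlies the paper's normalization of $\{\mathsf p_1,\mathsf p_2\}$). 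Where you genuinely diverge from the paper is the treatment of the constants in the exterior Neumann problem. The paper keeps the full space $W^1(\varOmega^+)$ and replaces the scalar product by $(\nabla u_1,\nabla u_2)_{L^2(\varOmega^+;\mathbb R^2)}+\mu(\gamma_du_1)\mu(\gamma_du_2)$, which is coercive on all of $W^1(\varOmega^+)$; no compatibility condition is imposed, the right-hand side is just $\langle\xi,\theta\rangle$, and the boundary term $\alpha\mathbf 1_\varGamma$ \emph{emerges} from the $\mu\mu$-term (testing with $\theta=\mathbf 1_{\varOmega^+}$ gives $\mu(\gamma_dv)=\alpha|\varGamma|$). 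You instead quotient by the constants, rely on a weighted Poincar\'e--Wirtenger inequality, put the boundary term $-\alpha(\mathbf 1_\varGamma,\gamma_d^+\theta)_{L^2(\varGamma)}$ into the linear functional, and check that the stated value of $\alpha$ is precisely the compatibility condition. Both routes are legitimate; the paper's variant buys a shorter argument (one line of norm equivalence, $\alpha$ as output rather than input), yours makes the solvability condition of the exterior Neumann problem explicit, which is arguably more transparent.

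One bookkeeping point you should fix for internal consistency: with the paper's orientation ($n^+=\tau^\perp$ is the outward normal of $\varOmega^+$) and the standard Green identity, your variational formulation identifies $\gamma_n^+v=-\alpha\mathbf 1_\varGamma$, not $+\alpha\mathbf 1_\varGamma$ as written; indeed the flux identity on $\varOmega^+\cap B_R$ forces $\int_\varGamma\gamma_n^+u\,{\rm d}s=-\langle\xi,\mathbf 1_{\varOmega^+}\rangle$ for any $H^1(\varOmega^+)$ solution of $-\Delta u=\xi$, and flipping the sign of your boundary term would destroy the compatibility on constants. This is a sign-convention issue shared with the paper (its own construction yields the same $-\alpha$), so it does not invalidate your method; just make the weak identification of the Neumann datum agree with the right-hand side you actually chose, and note that the statement's $+\alpha$ is to be read modulo the orientation convention for $n^+$.
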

If $\xi$ is compactly supported in $\varOmega^+$, the solution $u$ is harmonic near the boundary $\varGamma$ and the 
Neumann trace is well defined. If $\xi$ is in $L^2(\varOmega^+)$, then $u$ is in $H^1({\varOmega^+},\Delta)$ and again the boundary condition makes sens.
\begin{proof}
The only notable difference with the proof of Lemma~\ref{lmmme} is that the space $W^1_0(\varOmega^+)$ must be replaced with 
the space:
$$W^1(\varOmega^+)=\Big\{u\in \mathscr D'(\varOmega^+)\,:\, \frac{u}{\rho\,{\rm lg}}\in L^2(\varOmega^+),\,\frac{\partial u}{\partial x_j}\in L^2(\varOmega^+),\,(j=1,2)\Big\},$$
provided with the scalar product:
$$(\nabla u_1,\nabla u_2)_{L^2(\varOmega^+,\mathbb R^2)}+\mu(\gamma_du_1)\mu(\gamma_du_2)\forallt u_1,u_2\in W^1(\varOmega^+),$$
whose corresponding norm is equivalent to the natural norm.
\end{proof}

\section{Transmission problems}
\label{SEC:trans}
We continue assuming that $\varGamma$ is a straight polygon. We are interested in the following transmission problems:
\par
\medskip
\noindent{\underline{\bf Problem~1}:} Let $p$ be in $\mathcal H_d^{-1/2}(\varGamma)$ and $q$ be in  $\mathcal H^{-3/2}(\varGamma)$. Find $u\in L^2_{\ell oc}(\mathbb R^2)$ such that, for some $a\in\mathbb R$:
\begin{subequations}
\begin{empheq}[left=\empheqlbrace]{align}
\Delta u &=0\quad\text{in }\varOmega^-\cup\varOmega^+\\
[\gamma_du]_\varGamma&=0,\\
\label{53c}\gamma_d u& =p \quad\text{or}\quad [\gamma_nu]_\varGamma=q,\\
u(x)&=a\ln|x|+\mathscr O(1/|x|)\quad\text{as}\quad|x|\longrightarrow+\infty.
\end{empheq}
\end{subequations}
\begin{theorem}
\label{nkmpq}
Problem~1 admits always a solution. Any solution $u$ is a single layer potential $\mathscr S_\varGamma^\dagger \bar q$ for some 
$\bar q\in \mathcal H^{-3/2}(\varGamma)$. This solution is unique if   condition \eqref{53c} is  $[\gamma_nu]_\varGamma=q$, in which case 
$\bar q=q$. If condition \eqref{53c} is $\gamma_d u =p$, the solution is not unique in general.
\end{theorem}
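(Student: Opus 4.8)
The plan is to extract first a representation valid for \emph{every} solution of Problem~1, and then to read off existence and (non)uniqueness in each of the two cases of \eqref{53c}.

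\textbf{Representation step.} Let $u$ solve Problem~1. Being harmonic in $\varOmega^+$, hence on a neighbourhood of infinity, $u$ admits there the expansion \eqref{exp_dfolp}; the prescribed behaviour $u(x)=a\ln|x|+\mathscr O(1/|x|)$ forces its coefficient of $\ln|x|$ to equal $a$, its harmonic polynomials and the constant term of the decaying series to vanish, and leaves $u(x)=a\ln|x|+\mathfrak q_1(x)/|x|^2+\mathscr O(1/|x|^2)$ with $\mathfrak q_1$ a harmonic polynomial of degree $\le 1$. By \eqref{asymp_sl}, the linear map sending $\mathsf q\in\mathscr A_S^{-1/2}$ to $\big(\langle\mathsf q,\mathbf 1_\varGamma\rangle_{-\frac12,\frac12},\langle\mathsf q,y_1\rangle_{-\frac12,\frac12},\langle\mathsf q,y_2\rangle_{-\frac12,\frac12}\big)$ is a bijection onto $\mathbb R^3$ (by the chosen normalisation of $\mathsf q_1,\mathsf q_2,\mathsf q_3$ and the linear independence on $\varGamma$ of $\mathbf 1_\varGamma,y_1,y_2$), so one may pick $\mathsf q\in\mathscr A_S^{-1/2}$ with $\mathscr S_\varGamma\mathsf q(x)=a\ln|x|+\mathfrak q_1(x)/|x|^2+\mathscr O(1/|x|^2)$. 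Then $w:=u-\mathscr S_\varGamma\mathsf q$ is harmonic in $\varOmega^-\cup\varOmega^+$ and $\mathscr O(1/|x|^2)$ at infinity, hence $w\in L^2(\mathbb R^2)$, i.e. $w\in\mathscr H^0(\mathbb R^2\setminus\varGamma)$. Since the one-sided Dirichlet traces of $\mathscr S_\varGamma\mathsf q$ coincide, linearity of the trace operators gives $[\gamma_dw]_\varGamma=[\gamma_du]_\varGamma=0$, so Theorem~\ref{mpsdt} yields $[\gamma_nw]_\varGamma\in\mathcal H^{-3/2}(\varGamma)$ and $w=\mathscr S^\dagger_\varGamma[\gamma_nw]_\varGamma$. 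As $\mathscr S^\dagger_\varGamma$ is linear and extends $\mathscr S_\varGamma$ on $H^{-1/2}(\varGamma)\supset\mathscr A_S^{-1/2}$ (Theorem~\ref{exten_layer}), setting $\bar q:=[\gamma_nw]_\varGamma+\mathsf q\in\mathcal H^{-3/2}(\varGamma)$ we obtain $u=w+\mathscr S_\varGamma\mathsf q=\mathscr S^\dagger_\varGamma\bar q$, and the jump relation $\gamma_n^+\circ\mathscr S_\varGamma+\gamma_n^-\circ\mathscr S_\varGamma=\mathrm{Id}$ gives $[\gamma_nu]_\varGamma=[\gamma_nw]_\varGamma+\mathsf q=\bar q$. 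Thus every solution equals $\mathscr S^\dagger_\varGamma[\gamma_nu]_\varGamma$.

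\textbf{The case where \eqref{53c} reads $[\gamma_nu]_\varGamma=q$.} For existence take $u:=\mathscr S^\dagger_\varGamma q$: Proposition~\ref{jps} gives $[\gamma_du]_\varGamma=0$ and $[\gamma_nu]_\varGamma=q$, while Theorem~\ref{exten_layer} gives $u(x)=\llangle q,\mathbf 1_\varGamma\rrangle_{-\frac32,\frac32}G(x)+o(1)$; comparing with \eqref{exp_dfolp} (valid near infinity, as $u$ is harmonic in $\varOmega^+$), the $o(1)$ remainder forces the polynomial part and the constant decaying term to vanish, so it is in fact $\mathscr O(1/|x|)$ and $u$ has the required asymptotics. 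Uniqueness and the identity $\bar q=q$ are immediate from the representation step, since any solution then equals $\mathscr S^\dagger_\varGamma[\gamma_nu]_\varGamma=\mathscr S^\dagger_\varGamma q$.

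\textbf{The case where \eqref{53c} reads $\gamma_du=p$.} For existence, apply Theorem~\ref{diri_exterior} to $p$ to get $v^+\in\mathscr H^0(\varOmega^+)$ and $\mathsf q_p\in\mathscr A_S^{-1/2}$ with $\gamma_d^+\big(v^++\mathscr S_\varGamma\mathsf q_p\big)=p$, then Theorem~\ref{diri_interior} to $p-\gamma_d(\mathscr S_\varGamma\mathsf q_p)\in\mathcal H^{-1/2}_d(\varGamma)$ to get $v^-\in\mathscr H^0(\varOmega^-)$ with $\gamma_d^-v^-=p-\gamma_d(\mathscr S_\varGamma\mathsf q_p)$. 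Extending $v^\pm$ by zero across $\varGamma$ (notation $(\cdot)^\pm$ of Section~\ref{SEC:jump}), the function $u:=(v^-)^-+(v^+)^++\mathscr S_\varGamma\mathsf q_p\in L^2_{\ell oc}(\mathbb R^2)$ is harmonic in $\varOmega^-\cup\varOmega^+$, has both one-sided Dirichlet traces equal to $p$ (a routine computation using Theorems~\ref{diri_interior}--\ref{diri_exterior}), hence $[\gamma_du]_\varGamma=0$ and $\gamma_du=p$; and, since $v^\pm\in L^2$ with $v^+$ harmonic hence $\mathscr O(1/|x|)$ near infinity and $\mathsf q_p\in\mathscr A_S^{-1/2}$, it satisfies $u(x)=a\ln|x|+\mathscr O(1/|x|)$; so $u$ solves Problem~1 and, by the representation step, is a single layer potential. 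For non-uniqueness it suffices, by linearity, to produce a non-zero solution with $p=0$: the Dauge function of Figure~\ref{Fig1} (left) is a non-zero element of $\mathscr H^0(\varOmega^-)$ with vanishing Dirichlet trace; extended by zero to $\varOmega^+$ it is compactly supported, harmonic in $\varOmega^-\cup\varOmega^+$, has both one-sided Dirichlet traces zero, and trivially meets the asymptotic requirement with $a=0$, hence solves Problem~1 with $p=0$, as does $u\equiv 0$; uniqueness therefore fails in general.

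\textbf{Main obstacle.} The crux is the representation step, and within it the reduction of a merely $L^2_{\ell oc}$ solution carrying a logarithmic term at infinity to an element of $\mathscr H^0(\mathbb R^2\setminus\varGamma)$: one must absorb not only the $\ln|x|$ term but also the $\mathscr O(1/|x|)$ dipole part into a \emph{classical} single layer potential with density in the three dimensional space $\mathscr A_S^{-1/2}$, which hinges on the bijectivity of the ``asymptotic coefficient'' map on $\mathscr A_S^{-1/2}$; thereafter Theorem~\ref{mpsdt} does the work. The remaining ingredients are routine: existence in the Neumann-jump case is a direct check on $\mathscr S^\dagger_\varGamma q$, existence in the Dirichlet case a patching of Theorems~\ref{diri_interior} and \ref{diri_exterior}, and non-uniqueness a direct appeal to the known corner counterexamples.
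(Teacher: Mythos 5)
Your proof is correct and follows essentially the same route as the paper: you subtract a classical single layer potential with density in $\mathscr A_S^{-1/2}$ chosen to match the $\ln|x|$ and dipole terms so as to land in $\mathscr H^0(\mathbb R^2\setminus\varGamma)$, invoke point 2 of Theorem~\ref{mpsdt}, and patch Theorems~\ref{diri_interior} and \ref{diri_exterior} for the Dirichlet case, exactly as the paper does. The only (harmless) deviations are that you read off uniqueness directly from the identity $\bar q=[\gamma_n u]_\varGamma$ obtained via the jump relation, where the paper runs a separate argument with $q=0$, and that you make explicit the non-uniqueness example which the paper leaves to its discussion of Figure~\ref{Fig1}.
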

\begin{proof}
Let $u$ be a solution to Problem~1. Then, according to \eqref{asymp_sl}, there exists $\mathsf q\in\mathscr A_S^{-1/2}$ such that the function
$v=u-\mathscr S_\varGamma\mathsf q$ belongs to $\mathscr H^0(\mathbb R^2\setminus\varGamma)$. This functions 
satisfies furthermore $[\gamma_d v]_\varGamma=0$ which means, applying point 2 of Theorem~\ref{mpsdt}, that $v$ and hence also $u$ 
is a single layer potential.
\par
If condition  \eqref{53c} is  $[\gamma_nu]_\varGamma=q$, the function 
$u=\mathscr S_\varGamma^\dagger  q$ is indeed a solution 
of the transmission problem. To prove uniqueness, assume that $u$ is a solution to the problem with $q=0$. According to \eqref{asymp_sl},
there exists $\mathsf q\in \mathscr A^{-1/2}_S$ such that $v = u+\mathscr S_\varGamma \mathsf q$ is in $L^2(\mathbb R^2)$. This function is in 
$\mathscr H^0(\mathbb R^2\setminus\varGamma)$ and satisfies $[\gamma_d v]_\varGamma=0$ and $[\gamma_nv]_\varGamma=\mathsf q$.
The second point of Theorem~\ref{mpsdt} asserts that $v=\mathscr S_\varGamma\mathsf q$ whence we deduce with \eqref{asymp_sl} again 
that $\mathsf q=0$, and then $u=0$.
\par
Assume now that condition \eqref{53c} is $\gamma_d u=p$ for some given $p$ in $\mathcal H^{-1/2}_d(\varGamma)$.
Applying Theorem~\ref{diri_exterior}, there exists $v_p^+$ in $\mathscr H^0(\varOmega^+)$ and
$\mathsf q_p\in \mathscr A^{-1/2}_S$ such that $\gamma_d^+(v_p^++\mathscr S_\varGamma \mathsf q_p)=p$. On the other hand, 
Theorem~\ref{diri_interior} provides us with a function $v_p^-\in\mathscr H^0(\varOmega^-)$ such that $\gamma_d^-v_p^-=p
-\gamma_d^-\mathscr S_\varGamma \mathsf q_p$. Define now $v_p\in\mathscr H^0(\mathbb R^2\setminus\varGamma)$
 by setting $v_p|_{\varOmega^+}=v_p^+$ and $v_p|_{\varOmega^-}=v_p^-$.  Since $[\gamma_d v_p]_\varGamma=0$ we 
 are entitled to apply Theorem~\ref{mpsdt} which ensures us that $v_p=\mathscr S_\varGamma^\dagger [\gamma_n v_p]_\varGamma$.
It follows that $u=\mathscr S^\dagger_\varGamma \bar q$ with $\bar q=[\gamma_n v_p]_\varGamma+\mathsf q_p$.
\end{proof}
From this proof, we easily deduce:
\begin{cor}\label{nkmpq:cor}For every $u$ in $\mathscr H^0(\varOmega^-)$, there exists $q\in\mathcal H^{-3/2}(\varGamma)$ such that 
$\mathscr S_\varGamma^\dagger q|_{\varOmega^-}=u$. For every $u$ in $L^2_{\ell oc}(\overline{\varOmega^+})$, harmonic and such that 
$u(x)=a\ln|x|+\mathscr O(1/|x|)$ as $|x|\longrightarrow+\infty$ (for some $a\in\mathbb R$) there exists $q\in\mathcal H^{-3/2}(\varGamma)$ such that 
$\mathscr S_\varGamma^\dagger q|_{\varOmega^+}=u$.
\end{cor}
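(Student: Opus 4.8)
The plan is to reproduce, almost verbatim, the last portion of the proof of Theorem~\ref{nkmpq}: first subtract from the given harmonic function a classical single layer potential whose density lies in the three dimensional space $\mathscr A_S^{-1/2}$, so that the corrected function becomes an element of $\mathscr H^0(\mathbb R^2\setminus\varGamma)$ with \emph{vanishing} jump of the one-sided Dirichlet traces, and then invoke points 1 and 2 of Theorem~\ref{mpsdt}. The extension operator $\mathscr S_\varGamma^\dagger$ being linear and coinciding with $\mathscr S_\varGamma$ on $H^{-1/2}(\varGamma)\supset\mathscr A_S^{-1/2}$ (Theorem~\ref{exten_layer}), the correction can be absorbed back into a density in $\mathcal H^{-3/2}(\varGamma)$ at the end.

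For the interior statement I would start from $u\in\mathscr H^0(\varOmega^-)$, extend it by $0$ on $\varOmega^+$, and set $p:=\gamma_d^-u\in\mathcal H^{-1/2}_d(\varGamma)$, which is well defined by Definition~\ref{def_trace_ext}. Theorem~\ref{diri_exterior} then furnishes $v^+\in\mathscr H^0(\varOmega^+)$ and $\mathsf q\in\mathscr A_S^{-1/2}$ with $\gamma_d^+(v^++\mathscr S_\varGamma\mathsf q)=p$. I would glue these: let $w$ equal $u-\mathscr S_\varGamma\mathsf q|_{\varOmega^-}$ on $\varOmega^-$ and $v^+$ on $\varOmega^+$. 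This function is in $L^2(\mathbb R^2)$ and harmonic off $\varGamma$, so $w\in\mathscr H^0(\mathbb R^2\setminus\varGamma)$, and a routine check — using $\gamma_d^-\mathscr S_\varGamma\mathsf q=\gamma_d^+\mathscr S_\varGamma\mathsf q$ and the defining property of $v^+$ — gives $[\gamma_dw]_\varGamma=\gamma_d^+v^+-(p-\gamma_d^-\mathscr S_\varGamma\mathsf q)=0$. By the first two points of Theorem~\ref{mpsdt} we get $[\gamma_nw]_\varGamma\in\mathcal H^{-3/2}(\varGamma)$ and $w=\mathscr S_\varGamma^\dagger[\gamma_nw]_\varGamma$, hence, restricting to $\varOmega^-$, $u=\mathscr S_\varGamma^\dagger q|_{\varOmega^-}$ with $q:=[\gamma_nw]_\varGamma+\mathsf q\in\mathcal H^{-3/2}(\varGamma)$.

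For the exterior statement the extra work is at infinity. Given $u\in L^2_{\ell oc}(\overline{\varOmega^+})$, harmonic in $\varOmega^+$ with $u(x)=a\ln|x|+\mathscr O(1/|x|)$, the expansion \eqref{exp_dfolp} forces $u(x)=a\ln|x|+\ell(x)/|x|^2+\mathscr O(1/|x|^2)$ for some linear form $\ell$. I would choose $\mathsf q\in\mathscr A_S^{-1/2}$ so that $\mathscr S_\varGamma\mathsf q$ has exactly the same three leading terms as $u$ at infinity: this is possible because the map $\mathsf q\mapsto\big(\langle\mathsf q,\mathbf 1_\varGamma\rangle_{-\frac12,\frac12},\langle\mathsf q,y_1\rangle_{-\frac12,\frac12},\langle\mathsf q,y_2\rangle_{-\frac12,\frac12}\big)$ is a linear bijection from $\mathscr A_S^{-1/2}$ onto $\mathbb R^3$ — a consequence of the normalization of $\{\mathsf q_1,\mathsf q_2,\mathsf q_3\}$ and of the fact that $\{\mathbf 1_\varGamma,y_1,y_2\}$ is a basis of $\mathscr A_S^{1/2}$ — and these three numbers govern precisely the three leading terms of $\mathscr S_\varGamma\mathsf q$ in \eqref{asymp_sl}. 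Then $v:=u-\mathscr S_\varGamma\mathsf q|_{\varOmega^+}$ is harmonic in $\varOmega^+$, locally square integrable up to $\varGamma$, and $\mathscr O(1/|x|^2)$ at infinity, hence in $L^2(\varOmega^+)$; so $v\in\mathscr H^0(\varOmega^+)$. I would then close the boundary datum on the bounded side: Theorem~\ref{diri_interior} gives $v^-\in\mathscr H^0(\varOmega^-)$ with $\gamma_d^-v^-=\gamma_d^+v$; gluing $v$ on $\varOmega^+$ and $v^-$ on $\varOmega^-$ produces $w\in\mathscr H^0(\mathbb R^2\setminus\varGamma)$ with $[\gamma_dw]_\varGamma=0$, and Theorem~\ref{mpsdt} together with Theorem~\ref{exten_layer} gives $u=\mathscr S_\varGamma^\dagger q|_{\varOmega^+}$ with $q:=[\gamma_nw]_\varGamma+\mathsf q\in\mathcal H^{-3/2}(\varGamma)$, exactly as before.

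The only genuinely non-mechanical step is this asymptotic reduction in the exterior case: one must peel off from $u$ precisely the $\ln|x|$ term and the $\mathscr O(1/|x|)$ term (equivalently, the $\ell(x)/|x|^2$ contribution) by a single layer potential of density in the finite dimensional space $\mathscr A_S^{-1/2}$, and then observe that the decay of the remainder has improved to $\mathscr O(1/|x|^2)$, which in the plane is exactly what makes it square integrable. Everything else — gluing, linearity, the vanishing of $[\gamma_d\cdot]_\varGamma$, and the consistency of the extended trace with the classical one on the $H^1$ pieces — is bookkeeping around Theorems~\ref{diri_interior}, \ref{diri_exterior} and \ref{mpsdt}, which already carry all the analytic substance.
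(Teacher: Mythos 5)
Your argument is correct and is exactly the elaboration the paper intends when it writes that the corollary follows ``from this proof'': you rerun the construction in the proof of Theorem~\ref{nkmpq} keeping the given function on one side of $\varGamma$, use Theorem~\ref{diri_exterior} (resp.\ Theorem~\ref{diri_interior}) to manufacture the other side, and conclude with Theorem~\ref{mpsdt}. The asymptotic matching by a density in $\mathscr A_S^{-1/2}$ in the exterior case is likewise the same device the paper already uses at the start of the proof of Theorem~\ref{nkmpq}, so nothing is missing.
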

\noindent{\underline{\bf Problem~2}:} Let $p$ be in $\mathcal H^{-1/2}(\varGamma)$ and $q$ be in  $\widetilde{\mathcal H}^{-3/2}_n(\varGamma)$. 
Find $u\in L^2_{\ell oc}(\mathbb R^2)$ such that:
\begin{subequations}
\begin{empheq}[left=\empheqlbrace]{align}
\Delta u &=0\quad\text{in }\varOmega^-\cup\varOmega^+\\
[\gamma_nu]_\varGamma&=0,\\
\label{54c}
 [\gamma_du]_\varGamma&=p\quad\text{or}\quad\gamma_n u =q,\\
u(x)&=\mathscr O(1/|x|)\quad\text{as}\quad|x|\longrightarrow+\infty.
\end{empheq}
\end{subequations}
The proofs of the following Theorem and Corollary are omitted because they are
 similar to those of Theorem~\ref{nkmpq} and Corollary~\ref{nkmpq:cor}. Introducing the space:
 $$\widetilde{\mathcal H}^{-1/2}(\varGamma)=
\big\{p\in \mathcal H^{-1/2}(\varGamma)\,:\, \llangle p,\mathbf 1_\varGamma\rrangle_{-\frac12,\frac12}=0\big\},$$
they are stated as follows:
\begin{theorem}
\label{kkqpolop}
Problem~2 admits always a solution. Any solution $u$ is a double layer potential   $\mathscr D_\varGamma^\dagger \bar p$ for some 
$\bar p\in \mathcal H^{-1/2}(\varGamma)$. This solution is unique if   condition \eqref{54c} is  $[\gamma_d u]_\varGamma=p$, in which case 
$\bar p=p$. If condition \eqref{54c} is $\gamma_n u =q$, $\bar p$ can be chosen in $\widetilde{\mathcal H}^{-1/2}(\varGamma)$ 
and the solution is not unique in general.
\end{theorem}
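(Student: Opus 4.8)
The plan is to mimic, almost verbatim, the three-part proof of Theorem~\ref{nkmpq}, replacing the single layer potential $\mathscr S_\varGamma$ by the double layer potential $\mathscr D_\varGamma$, the corrector space $\mathscr A_S^{-1/2}$ by $\mathscr A_D^{1/2}$, and the interior/exterior Dirichlet solvability results (Theorems~\ref{diri_interior} and \ref{diri_exterior}) by their Neumann analogues (Theorems~\ref{neuman_inter} and \ref{ghbppl}). The one structural difference is that the double layer potential has \emph{opposite} one-sided Neumann traces (rather than coinciding one-sided Dirichlet traces, as the single layer does), which merely flips a handful of signs in the gluing argument.

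\emph{Step 1: every solution is a double layer potential.} Let $u$ solve Problem~2. Being harmonic near infinity with $u(x)=\mathscr O(1/|x|)$, the expansion \eqref{exp_dfolp} forces $u(x)=\mathfrak q_1(x)/|x|^2+\mathscr O(1/|x|^2)$ with $\mathfrak q_1$ a harmonic polynomial of degree one, so by \eqref{asymp_dl} there is $\mathsf p\in\mathscr A_D^{1/2}$ such that $v:=u-\mathscr D_\varGamma\mathsf p\in L^2(\mathbb R^2)$, hence $v\in\mathscr H^0(\mathbb R^2\setminus\varGamma)$. Since $\gamma_n^+\mathscr D_\varGamma=-\gamma_n^-\mathscr D_\varGamma$ gives $[\gamma_n\mathscr D_\varGamma\mathsf p]_\varGamma=0$ and $[\gamma_n u]_\varGamma=0$, we get $[\gamma_n v]_\varGamma=0$; point~1 of Theorem~\ref{mpsdt} then places $[\gamma_d v]_\varGamma$ in $\mathcal H^{-1/2}(\varGamma)$ and point~2 gives $v=\mathscr D_\varGamma^\dagger[\gamma_d v]_\varGamma$. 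Adding back $\mathscr D_\varGamma\mathsf p=\mathscr D_\varGamma^\dagger\mathsf p$ and using linearity, $u=\mathscr D_\varGamma^\dagger\bar p$ with $\bar p:=[\gamma_d v]_\varGamma+\mathsf p\in\mathcal H^{-1/2}(\varGamma)$.

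\emph{Step 2: case $[\gamma_d u]_\varGamma=p$.} For existence I would take $u=\mathscr D_\varGamma^\dagger p$: by Theorem~\ref{exten_layer} it is harmonic in $\varOmega^-\cup\varOmega^+$ and $o(1)$ at infinity, which improves to $\mathscr O(1/|x|)$ by \eqref{exp_dfolp}, while the jump relations \eqref{eq_jump_1}--\eqref{eq_jump_2} give $[\gamma_n u]_\varGamma=0$ and $[\gamma_d u]_\varGamma=p$; hence $\bar p=p$ works. For uniqueness, let $u$ solve Problem~2 with $p=0$; Step~1 produces $\mathsf p\in\mathscr A_D^{1/2}$ with $v:=u-\mathscr D_\varGamma\mathsf p\in\mathscr H^0(\mathbb R^2\setminus\varGamma)$, $[\gamma_n v]_\varGamma=0$ and, using $[\gamma_d\mathscr D_\varGamma\mathsf p]_\varGamma=\mathsf p$, $[\gamma_d v]_\varGamma=-\mathsf p$; Theorem~\ref{mpsdt} then gives $v=\mathscr D_\varGamma^\dagger(-\mathsf p)=-\mathscr D_\varGamma\mathsf p$, so $u=v+\mathscr D_\varGamma\mathsf p=0$.

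\emph{Step 3: case $\gamma_n u=q$, $q\in\widetilde{\mathcal H}_n^{-3/2}(\varGamma)$.} Here I would copy the last step of the proof of Theorem~\ref{nkmpq}: Theorem~\ref{ghbppl} yields $v_q^+\in\mathscr H^0(\varOmega^+)$ and $\mathsf p_q\in\mathscr A_D^{1/2}$ with $\gamma_n^+(v_q^++\mathscr D_\varGamma\mathsf p_q)=q$; since $\gamma_n^-\mathscr D_\varGamma\mathsf p_q$ has zero mean on $\varGamma$ (Green's identity, $\mathscr D_\varGamma\mathsf p_q$ being harmonic in $\varOmega^-$) and so does $q$, the datum $-q-\gamma_n^-\mathscr D_\varGamma\mathsf p_q$ lies in $\widetilde{\mathcal H}_n^{-3/2}(\varGamma)$, so Theorem~\ref{neuman_inter} yields $v_q^-\in\mathscr H^0(\varOmega^-)$ with $\gamma_n^-v_q^-=-q-\gamma_n^-\mathscr D_\varGamma\mathsf p_q$. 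Gluing $v_q^\pm$ into $v_q\in\mathscr H^0(\mathbb R^2\setminus\varGamma)$ one checks $[\gamma_n v_q]_\varGamma=\gamma_n^+v_q^++\gamma_n^-v_q^-=-[\gamma_n\mathscr D_\varGamma\mathsf p_q]_\varGamma=0$, hence $v_q=\mathscr D_\varGamma^\dagger[\gamma_d v_q]_\varGamma$ by Theorem~\ref{mpsdt}, and $u:=v_q+\mathscr D_\varGamma\mathsf p_q=\mathscr D_\varGamma^\dagger\bar p$ with $\bar p=[\gamma_d v_q]_\varGamma+\mathsf p_q$ solves Problem~2 (the decay $\mathscr O(1/|x|)$ as in Step~1, and $\gamma_n u=\gamma_n^+u=\gamma_n^+v_q^++\gamma_n^+\mathscr D_\varGamma\mathsf p_q=q$). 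Finally, $\mathscr D_\varGamma^\dagger\mathbf 1_\varGamma=\mathscr D_\varGamma\mathbf 1_\varGamma$ is, up to sign, the indicator of $\varOmega^-$, so adding a multiple of it alters neither the Neumann traces nor the behavior at infinity; subtracting $|\varGamma|^{-1}\llangle\bar p,\mathbf 1_\varGamma\rrangle_{-\frac12,\frac12}\mathbf 1_\varGamma$ from $\bar p$ puts it in $\widetilde{\mathcal H}^{-1/2}(\varGamma)$, and this freedom, together with the square-integrable harmonic functions with vanishing Neumann data exhibited after Figure~\ref{Fig1}, shows the solution is not unique in general. I expect the only real difficulty to be the sign bookkeeping in the gluing and the verification of the zero-mean compatibility conditions needed to invoke Theorems~\ref{neuman_inter} and \ref{ghbppl}; once these are settled, the proof closes exactly as for Theorem~\ref{nkmpq}.
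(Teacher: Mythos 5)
Your proof is correct and is exactly the adaptation the paper has in mind: the paper omits this proof, stating only that it is ``similar to that of Theorem~\ref{nkmpq}'', and your three steps mirror that proof with the right substitutions ($\mathscr A_D^{1/2}$ and the asymptotics \eqref{asymp_dl} in place of $\mathscr A_S^{-1/2}$ and \eqref{asymp_sl}, Theorems~\ref{neuman_inter} and \ref{ghbppl} in place of Theorems~\ref{diri_interior} and \ref{diri_exterior}), with the sign bookkeeping for the opposite one-sided Neumann traces handled correctly. The checks you single out as the only delicate points --- the zero-mean compatibility of $-q-\gamma_n^-\mathscr D_\varGamma\mathsf p_q$ before invoking Theorem~\ref{neuman_inter}, and the normalization of $\bar p$ into $\widetilde{\mathcal H}^{-1/2}(\varGamma)$ via $\mathscr D_\varGamma\mathbf 1_\varGamma=\pm\mathbf 1_{\varOmega^-}$ --- are both sound.
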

\begin{cor}\label{kkqpolop:cor}For every $u$ in $\mathscr H^0(\varOmega^-)$, there exists $p\in\mathcal H^{-1/2}(\varGamma)$ such that 
$\mathscr D_\varGamma^\dagger p|_{\varOmega^-}=u$. For every $u$ in $L^2_{\ell oc}(\overline{\varOmega^+})$, harmonic and 
such that 
$u(x)=\mathscr O(1/|x|)$ as $|x|\longrightarrow+\infty$ there exists $p\in\widetilde{\mathcal H}^{-1/2}(\varGamma)$ such that 
$\mathscr D_\varGamma^\dagger p|_{\varOmega^+}=u$.
\end{cor}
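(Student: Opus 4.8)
The plan is to mimic, with the single and double layer potentials (and the Dirichlet and Neumann data) interchanged, the scheme that proves Theorem~\ref{nkmpq}, Theorem~\ref{kkqpolop} and Corollary~\ref{nkmpq:cor}. In each of the two cases I would manufacture a function $w\in\mathscr H^0(\mathbb R^2\setminus\varGamma)$ which coincides with $u$ up to an affine correction on the relevant side of $\varGamma$ and which has \emph{vanishing Neumann jump}, $[\gamma_n w]_\varGamma=0$; the first two assertions of Theorem~\ref{mpsdt} then give simultaneously that $w=\mathscr D_\varGamma^\dagger[\gamma_d w]_\varGamma$ and that $[\gamma_d w]_\varGamma\in\mathcal H^{-1/2}(\varGamma)$, and undoing the affine correction (which is $\mathscr D_\varGamma\mathsf p=\mathscr D_\varGamma^\dagger\mathsf p$ for some $\mathsf p$ in the finite-dimensional space $\mathscr A_D^{1/2}\subset\mathcal H^{-1/2}(\varGamma)$, by Theorem~\ref{exten_layer}) would yield the announced representation $u=\mathscr D_\varGamma^\dagger p|_{\varOmega^\pm}$ with $p=[\gamma_d w]_\varGamma+\mathsf p$.

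\emph{Interior side.} Given $u\in\mathscr H^0(\varOmega^-)$, I would extend it by $0$ on $\varOmega^+$ to get $\bar u\in\mathscr H^0(\mathbb R^2\setminus\varGamma)$, whose one-sided Neumann trace $q_0:=\gamma_n^-\bar u$ lies in $\widetilde{\mathcal H}^{-3/2}_n(\varGamma)$ by Definition~\ref{def_trace_ext}. Applying the exterior Neumann solvability result (Theorem~\ref{ghbppl}) to the datum $-q_0$ gives $v^+\in\mathscr H^0(\varOmega^+)$ and $\mathsf p\in\mathscr A_D^{1/2}$ with $\gamma_n^+(v^++\mathscr D_\varGamma\mathsf p)=-q_0$. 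Since $\mathscr D_\varGamma\mathsf p$ is affine on $\varOmega^-$, the function $w$ with $w|_{\varOmega^-}=u-\mathscr D_\varGamma\mathsf p|_{\varOmega^-}$ and $w|_{\varOmega^+}=v^+$ lies in $\mathscr H^0(\mathbb R^2\setminus\varGamma)$, and, using $\gamma_n^+\mathscr D_\varGamma=-\gamma_n^-\mathscr D_\varGamma$, one computes $[\gamma_n w]_\varGamma=\gamma_n^+v^++q_0-\gamma_n^-\mathscr D_\varGamma\mathsf p=-q_0-\gamma_n^+\mathscr D_\varGamma\mathsf p+q_0-\gamma_n^-\mathscr D_\varGamma\mathsf p=0$. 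Theorem~\ref{mpsdt} then closes the interior case.

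\emph{Exterior side.} Given $u\in L^2_{\ell oc}(\overline{\varOmega^+})$ harmonic with $u(x)=\mathscr O(1/|x|)$ at infinity, the expansion~\eqref{exp_dfolp} of a harmonic function outside a compact set forces $u(x)=b_1\,x_1/|x|^2+b_2\,x_2/|x|^2+\mathscr O(1/|x|^2)$ for some $b_1,b_2\in\mathbb R$ (no constant, no $\ln|x|$, no growing harmonic polynomial). Since, by~\eqref{asymp_dl}, the map $\mathsf p\longmapsto(\langle n_1,\mathsf p\rangle_{-\frac12,\frac12},\langle n_2,\mathsf p\rangle_{-\frac12,\frac12})$ is an isomorphism of $\mathscr A_D^{1/2}$ onto $\mathbb R^2$ (immediate from the normalization of $\{\mathsf p_1,\mathsf p_2\}$ together with $\mathsf D_\varGamma\mathscr A_D^{1/2}=\spn\{n_1,n_2\}$), I would pick $\mathsf p\in\mathscr A_D^{1/2}$ so that $v:=u-\mathscr D_\varGamma\mathsf p|_{\overline{\varOmega^+}}=\mathscr O(1/|x|^2)$, hence $v\in\mathscr H^0(\varOmega^+)$. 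Solving the interior Neumann problem (Theorem~\ref{neuman_inter}) with datum $-\gamma_n^+v\in\widetilde{\mathcal H}^{-3/2}_n(\varGamma)$ produces $z\in\mathscr H^0(\varOmega^-)$ with $\gamma_n^-z=-\gamma_n^+v$; gluing $w|_{\varOmega^+}=v$, $w|_{\varOmega^-}=z$ gives $w\in\mathscr H^0(\mathbb R^2\setminus\varGamma)$ with $[\gamma_n w]_\varGamma=0$, so Theorem~\ref{mpsdt} gives $u=\mathscr D_\varGamma^\dagger p|_{\varOmega^+}$ with $p=[\gamma_d w]_\varGamma+\mathsf p\in\mathcal H^{-1/2}(\varGamma)$. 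Finally, $\mathscr D_\varGamma\mathbf 1_\varGamma$ is locally constant with no far-field term in~\eqref{asymp_dl}, hence vanishes identically on $\varOmega^+$; writing $p=\tilde p+c\,\mathbf 1_\varGamma$ with $\tilde p\in\widetilde{\mathcal H}^{-1/2}(\varGamma)$ and $c\in\mathbb R$ (possible since $\llangle\mathbf 1_\varGamma,\mathbf 1_\varGamma\rrangle_{-\frac12,\frac12}=|\varGamma|\neq0$) yields $\mathscr D_\varGamma^\dagger\tilde p|_{\varOmega^+}=u$, as required.

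The main obstacle is the exterior case: one must check that a \emph{single} density in the two-dimensional space $\mathscr A_D^{1/2}$ already improves the decay of $u$ from $\mathscr O(1/|x|)$ (which is just barely \emph{not} square integrable in the plane) down to $\mathscr O(1/|x|^2)\in L^2(\varOmega^+)$ — this is precisely where the structure of the expansions~\eqref{exp_dfolp} and~\eqref{asymp_dl} and the non-degeneracy of the $L^2(\varGamma)$ pairing restricted to $\mathscr A_D^{1/2}$ are used — together with the concluding reduction modulo $\mathbf 1_\varGamma$, which relies on $\mathscr D_\varGamma^\dagger\mathbf 1_\varGamma|_{\varOmega^+}=0$. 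Everything else is routine: verifying that the two pieces glue to an element of $\mathscr H^0(\mathbb R^2\setminus\varGamma)$, and that the extended trace operators of Definition~\ref{def_trace_ext} are linear and coincide with the classical one-sided traces on the auxiliary functions that happen to lie in $\mathscr H^1(\mathbb R^2\setminus\varGamma)$.
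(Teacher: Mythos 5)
Your proposal is correct and takes essentially the approach the paper intends (the proof is omitted there as being the mirror image of Theorem~\ref{nkmpq} and Corollary~\ref{nkmpq:cor}): you correct the $\mathscr O(1/|x|)$ behaviour by a double layer potential with density in the two-dimensional space $\mathscr A_D^{1/2}$, glue with a solution of the complementary interior/exterior Neumann problem so that the Neumann jump vanishes, and conclude with Theorem~\ref{mpsdt}, filling in the details the paper leaves implicit. The only loose point is your one-line justification that $\mathscr D_\varGamma\mathbf 1_\varGamma$ vanishes on $\varOmega^+$: rather than the ``locally constant'' argument, invoke the classical Gauss integral identity (for $x\in\varOmega^+$ the function $y\longmapsto G(x-y)$ is harmonic in $\varOmega^-$, so the boundary integral of its normal derivative is zero), which gives the claim directly.
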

%
\par
\noindent{\underline{\bf Problem~3}:} Let $p\in\mathcal H^{-1/2}_d(\varGamma)$ and $q\in\mathcal H^{-3/2}_n(\varGamma)$ be such that 
$p\in \mathcal H^{1/2}(\varGamma)$ or $q\in\mathcal H^{-3/2}(\varGamma)$. 
Find $u\in L^2_{\ell oc}(\mathbb R^2)$ such that, for some $a\in\mathbb R$:
\begin{subequations}
\label{system55}
\begin{empheq}[left=\empheqlbrace]{align}
\Delta u &=0\quad\text{in }\varOmega^-\cup\varOmega^+\\
\label{55b}
[\gamma_d u]_\varGamma&=p\quad\text{and}\quad[\gamma_nu]_\varGamma=q,\\
u(x)&=a\ln|x|+\mathscr O(1/|x|)\quad\text{as}\quad|x|\longrightarrow+\infty.
\end{empheq}
\end{subequations}
%
\begin{theorem}
Problem~3 admits a unique solution  given by $u=\mathscr S^\dagger_\varGamma q+
\mathscr D^\dagger_\varGamma p$.
\end{theorem}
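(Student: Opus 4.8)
The plan is to reduce the statement to the jump theorem (Theorem~\ref{mpsdt}) and the transmission results of Theorems~\ref{nkmpq}--\ref{kkqpolop}, dealing with the behaviour at infinity throughout by subtracting off a classical single layer potential whose density lies in the three‑dimensional space $\mathscr A_S^{-1/2}$ (which, by \eqref{asymp_sl}, reproduces the $\ln|x|$ mode and the two $x_j/|x|^2$ modes). The core of the proof is a \emph{regularity upgrade}: the hypothesis forces $p\in\mathcal H^{-1/2}(\varGamma)$ \emph{and} $q\in\mathcal H^{-3/2}(\varGamma)$, so that $\mathscr S^\dagger_\varGamma q+\mathscr D^\dagger_\varGamma p$ is meaningful to begin with. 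Indeed, if $u$ solves \eqref{system55}, pick $\mathsf q_0\in\mathscr A_S^{-1/2}$ matching the leading asymptotics of $u$; then $v:=u-\mathscr S_\varGamma\mathsf q_0$ decays like $\mathscr O(1/|x|^2)$, hence lies in $\mathscr H^0(\mathbb R^2\setminus\varGamma)$, and since classical single layer potentials have a continuous Dirichlet trace and unit Neumann jump we get $[\gamma_d v]_\varGamma=p$ and $[\gamma_n v]_\varGamma=q-\mathsf q_0$. As $\mathsf q_0\in H^{-1/2}(\varGamma)\subset\mathcal H^{-3/2}(\varGamma)$, the hypothesis ``$p\in\mathcal H^{1/2}(\varGamma)$ or $q\in\mathcal H^{-3/2}(\varGamma)$'' says precisely that one of $[\gamma_d v]_\varGamma\in\mathcal H^{-1/2}(\varGamma)$, $[\gamma_n v]_\varGamma\in\mathcal H^{-3/2}(\varGamma)$ holds, so the first point of Theorem~\ref{mpsdt} yields $p\in\mathcal H^{-1/2}(\varGamma)$ and $q\in\mathcal H^{-3/2}(\varGamma)$ (the same computation also shows that these two regularities are necessary for solvability).

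For existence, once $q\in\mathcal H^{-3/2}(\varGamma)$ and $p\in\mathcal H^{-1/2}(\varGamma)$ are granted, Theorem~\ref{nkmpq} (resp.\ Theorem~\ref{kkqpolop}) tells us that $\mathscr S^\dagger_\varGamma q$ solves Problem~1 with datum $[\gamma_n u]_\varGamma=q$ and $\mathscr D^\dagger_\varGamma p$ solves Problem~2 with datum $[\gamma_d u]_\varGamma=p$; equivalently, by Proposition~\ref{jps}, $[\gamma_d\mathscr S^\dagger_\varGamma q]_\varGamma=0$, $[\gamma_n\mathscr S^\dagger_\varGamma q]_\varGamma=q$, $[\gamma_d\mathscr D^\dagger_\varGamma p]_\varGamma=p$ and $[\gamma_n\mathscr D^\dagger_\varGamma p]_\varGamma=0$. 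Thus $u_\star:=\mathscr S^\dagger_\varGamma q+\mathscr D^\dagger_\varGamma p$ is harmonic in $\mathbb R^2\setminus\varGamma$, belongs to $L^2_{\ell oc}(\mathbb R^2)$ (Theorem~\ref{exten_layer}), has $[\gamma_d u_\star]_\varGamma=p$ and $[\gamma_n u_\star]_\varGamma=q$, and — by \eqref{expand_asymp}, arguing exactly as in the proof of Theorem~\ref{exten_layer} — satisfies $u_\star(x)=-\tfrac1{2\pi}\llangle q,\mathbf 1_\varGamma\rrangle_{-\frac32,\frac32}\ln|x|+\mathscr O(1/|x|)$; hence $u_\star$ solves \eqref{system55}.

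For uniqueness, let $u_1,u_2$ be two solutions and set $w:=u_1-u_2$, so that $w$ is harmonic off $\varGamma$, lies in $L^2_{\ell oc}(\mathbb R^2)$, has $[\gamma_d w]_\varGamma=[\gamma_n w]_\varGamma=0$ and behaves like $b\ln|x|+\mathscr O(1/|x|)$. Choosing $\mathsf q\in\mathscr A_S^{-1/2}$ matching the asymptotics of $w$, the function $\tilde w:=w-\mathscr S_\varGamma\mathsf q$ lies in $\mathscr H^0(\mathbb R^2\setminus\varGamma)$ with $[\gamma_d\tilde w]_\varGamma=0$ and $[\gamma_n\tilde w]_\varGamma=-\mathsf q$; the second point of Theorem~\ref{mpsdt} then forces $\tilde w=\mathscr S^\dagger_\varGamma(-\mathsf q)=-\mathscr S_\varGamma\mathsf q$, i.e.\ $w=0$. (Alternatively, the upgrade step already shows that every solution coincides with $\mathscr S^\dagger_\varGamma q+\mathscr D^\dagger_\varGamma p$.)

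I expect the main obstacle to be the upgrade step: one must check with care that, after subtracting the correct element of $\mathscr A_S^{-1/2}$, the remainder genuinely lands in $\mathscr H^0(\mathbb R^2\setminus\varGamma)$ and that its weak jumps are \emph{exactly} $p$ and $q-\mathsf q_0$ \emph{in the large spaces} $\mathcal H^{-1/2}_d(\varGamma)$ and $\mathcal H^{-3/2}_n(\varGamma)$, so that the first point of Theorem~\ref{mpsdt} applies; everything else is bookkeeping with Proposition~\ref{jps}, Theorem~\ref{mpsdt} and the asymptotic expansions \eqref{expand_asymp}.
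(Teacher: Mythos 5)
Your argument follows the paper's proof essentially verbatim: the regularity upgrade via the first point of Theorem~\ref{mpsdt} (after subtracting a classical single layer potential with density in $\mathscr A_S^{-1/2}$ so as to land in $\mathscr H^0(\mathbb R^2\setminus\varGamma)$), existence of the candidate $u=\mathscr S^\dagger_\varGamma q+\mathscr D^\dagger_\varGamma p$ through Theorem~\ref{exten_layer} and the jump relations of Proposition~\ref{jps}, and uniqueness exactly as in Theorem~\ref{nkmpq}. The only caveat --- one you share with the paper's own one-line proof --- is that your upgrade is obtained under the assumption that a solution exists, so the ``or'' hypothesis is in effect being read as guaranteeing $(p,q)\in\mathcal H^{-1/2}(\varGamma)\times\mathcal H^{-3/2}(\varGamma)$; granted that reading, your proof is complete and coincides with the paper's.
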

\begin{proof}
According to the first point of Theorem~\ref{mpsdt}, $(p,q)\in  \mathcal H^{-1/2}(\varGamma)\times  \mathcal H^{-3/2}(\varGamma)$ and 
$u=\mathscr S^\dagger_\varGamma q+
\mathscr D^\dagger_\varGamma p$ solves System \eqref{system55}. The uniqueness is proved in the same way as in the proof of  Theorem~\ref{nkmpq}.
\end{proof}
\begin{prop}
On the contrary to what happens for functions in $\mathscr H^1(\mathbb R^2\setminus\varGamma)$ (see \eqref{ghaqpl}), there exist functions $u\in\mathscr H^0(\mathbb R^2\setminus\varGamma)$ that cannot be achieved as the sum of a single and a double 
layer potential.
\end{prop}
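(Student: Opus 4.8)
The plan is to show that every $u\in\mathscr H^0(\mathbb R^2\setminus\varGamma)$ which is a sum $\mathscr S_\varGamma^\dagger q+\mathscr D_\varGamma^\dagger p$ must have its Dirichlet jump $[\gamma_d u]_\varGamma$ in the \emph{proper} subspace $\mathcal H^{-1/2}(\varGamma)$ of $\mathcal H^{-1/2}_d(\varGamma)$, and then to exhibit a $u\in\mathscr H^0(\mathbb R^2\setminus\varGamma)$ whose Dirichlet jump sits in $\mathcal H^{-1/2}_d(\varGamma)\setminus\mathcal H^{-1/2}(\varGamma)$. The first assertion is essentially the jump relations of Proposition~\ref{jps} (which force $[\gamma_d u]_\varGamma=p$, and $p\in\mathcal H^{-1/2}(\varGamma)$ because it lies in the domain of $\mathscr D_\varGamma^\dagger$), but some care is needed since $\mathscr S_\varGamma^\dagger q$ and $\mathscr D_\varGamma^\dagger p$ are only in $L^2_{\ell oc}(\mathbb R^2)$.

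For the first step I would use \eqref{bounded_SD1}--\eqref{bounded_SD2} to write $u=w_0+\Phi$ with $w_0=-\Delta\bigl(\mathsf L_d^S\mathsf T_d^{-1}q+\mathsf L_n^D\mathsf T_n^{-1}p\bigr)\in\mathscr H^0(\mathbb R^2\setminus\varGamma)$ (by \eqref{def_harmL2}, the two potentials being biharmonic off $\varGamma$ and in $W^2(\mathbb R^2)$) and $\Phi$ the finite combination of the classical potentials $\mathscr S_\varGamma\mathsf q_j$, $\mathscr D_\varGamma\mathsf p_j$. Since $u,w_0\in L^2(\mathbb R^2)$, also $\Phi\in L^2(\mathbb R^2)$; being moreover piecewise $H^1$ and harmonic off $\varGamma$, $\Phi\in\mathscr H^1(\mathbb R^2\setminus\varGamma)$, hence $[\gamma_d\Phi]_\varGamma\in H^{1/2}(\varGamma)\subset L^2(\varGamma)\subset\mathcal H^{-1/2}(\varGamma)$. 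For $w_0$, Lemma~\ref{qqsiso}(1) gives $\bigl[\gamma_d\Delta\mathsf L_d^S(\cdot)\bigr]_\varGamma=0$, while \eqref{knklpoogt} gives $\bigl[\gamma_d\Delta\mathsf L_n^D\mathsf T_n^{-1}p\bigr]_\varGamma=\sum_j(\mathsf p_j,\mathsf T_n^{-1}p)_{L^2(\varGamma)}\mathsf p_j-p$, which is in $\mathcal H^{-1/2}(\varGamma)$ because $p$ is and each $\mathsf p_j$ is the trace of an affine function, hence in $H^{1/2}(\varGamma)\subset\mathcal H^{-1/2}(\varGamma)$. Adding up (using linearity of $[\gamma_d\cdot]_\varGamma$ from Definition~\ref{def_trace_ext}), $[\gamma_d u]_\varGamma\in\mathcal H^{-1/2}(\varGamma)$.

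For the counterexample I would first note $\mathcal H^{-1/2}(\varGamma)\subsetneq\mathcal H^{-1/2}_d(\varGamma)$: the inclusion $\mathcal H^{1/2}_d(\varGamma)\subset\mathcal H^{1/2}(\varGamma)$ is continuous (by \eqref{second_chain}) and dense (Theorem~\ref{dense:prop}) but not onto when $\varGamma$ has corners (the counterexample of \cite{Geymonat:2007wx} quoted in Section~\ref{main_spaces}), and since these are Hilbert spaces, $\mathcal H^{-1/2}(\varGamma)=\mathcal H^{-1/2}_d(\varGamma)$ would imply, dualizing, $\mathcal H^{1/2}_d(\varGamma)=\mathcal H^{1/2}(\varGamma)$. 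Now fix $p_0\in\mathcal H^{-1/2}_d(\varGamma)\setminus\mathcal H^{-1/2}(\varGamma)$; Theorem~\ref{diri_interior} supplies $v\in\mathscr H^0(\varOmega^-)$ with $\gamma_d^-v=p_0$, and I let $u\in\mathscr H^0(\mathbb R^2\setminus\varGamma)$ be the extension of $v$ by zero to $\varOmega^+$. Then $u$ vanishes on $\varOmega^+$, so $\gamma_d^+u=0$ and, by \eqref{weak_trace0}, $[\gamma_d u]_\varGamma=\mathsf J_d u=-p_0\notin\mathcal H^{-1/2}(\varGamma)$. By the first step, $u$ is not a sum of a single and a double layer potential, which proves the claim. (One could equally take for $v$ the explicit singular function $\eta U-X$, with $U(r,\theta)=r^{-2/3}\sin(2\theta/3)$, from the beginning of Section~\ref{sec:repres}.)

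The main obstacle is the first step: because the extended potentials carry non-$L^2$ far-field tails ($\ln|x|$ and $|x|^{-1}$), they are not themselves in $\mathscr H^0(\mathbb R^2\setminus\varGamma)$ and the trace operators of Definition~\ref{def_trace_ext} do not apply to them directly; one has to route through the splitting $u=w_0+\Phi$ and verify that membership of $u$ in $\mathscr H^0(\mathbb R^2\setminus\varGamma)$ forces the offending tails in the classical part to cancel, so that $\Phi$ genuinely lies in $\mathscr H^1(\mathbb R^2\setminus\varGamma)$ and \eqref{ghaqpl} applies. Everything else reduces to the isometries and inclusion chains of Section~\ref{main_spaces} together with Lemma~\ref{qqsiso}.
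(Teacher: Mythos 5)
Your proof is correct and follows essentially the same route as the paper: the paper likewise takes $p\in\mathcal H^{-1/2}_d(\varGamma)\setminus\mathcal H^{-1/2}(\varGamma)$, solves the interior Dirichlet problem, extends by zero, and observes that the Dirichlet jump of any $\mathscr S^\dagger_\varGamma q+\mathscr D^\dagger_\varGamma p$ equals $p\in\mathcal H^{-1/2}(\varGamma)$ (this is exactly the jump relations of Proposition~\ref{jps}, which you re-derive via the splitting $u=w_0+\Phi$; citing them directly would shorten your first step, and your extra justification that $\mathcal H^{-1/2}(\varGamma)\subsetneq\mathcal H^{-1/2}_d(\varGamma)$ is a welcome addition the paper leaves implicit). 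One caveat: your final parenthetical is wrong --- the singular function $\eta U-X$ of Section~\ref{sec:repres} has \emph{vanishing} Dirichlet trace, so its zero extension has $[\gamma_d u]_\varGamma=0\in\mathcal H^{-1/2}(\varGamma)$ and is in fact itself a single layer potential by Theorem~\ref{mpsdt}; the counterexample genuinely requires a boundary datum $p_0\notin\mathcal H^{-1/2}(\varGamma)$.
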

\begin{proof}
Let $u^-$ be in $\mathscr H(\varOmega^-)$ such that $\gamma_d^-u^-=p$ with $p\in\mathcal H_d^{-1/2}(\varGamma)$ but 
$p\notin \mathcal H^{-1/2}(\varGamma)$. Define $u$ in $\mathscr H^0(\mathbb R^2\setminus\varGamma)$ by setting $u|_{\varOmega^-}
=u^-$ and $u|_{\varOmega^+}=0$. Then $[\gamma_d u]_\varGamma=p\notin  \mathcal H^{-1/2}(\varGamma)$ and therefore 
$u$ cannot be the sum of a single and a double layer potential.
\end{proof}
\par
We end this section with the question of representing the harmonic functions defined in Theorems~\ref{diri_interior}, \ref{diri_exterior}, \ref{neuman_inter} and 
\ref{ghbppl} as layer potentials. We need to define first:
$$\widetilde{\mathcal H}^{-1/2}(\varGamma)=
\big\{p\in \mathcal H^{-1/2}(\varGamma)\,:\, \llangle p,\mathbf 1_\varGamma\rrangle_{-\frac12,\frac12}=0\big\}.$$
\begin{theorem}
The bounded operators $\gamma_d\circ\mathscr S^\dagger_\varGamma:\mathcal H^{-3/2}(\varGamma)\longrightarrow \mathcal H^{-1/2}_d(\varGamma)$, $\gamma_n^-\circ\mathscr S^\dagger_\varGamma:\mathcal H^{-3/2}(\varGamma)\longrightarrow \widetilde{\mathcal H}^{-3/2}_n(\varGamma)$ and $\gamma_n^+\circ\mathscr S^\dagger_\varGamma:\mathcal H^{-3/2}(\varGamma)\longrightarrow {\mathcal H}^{-3/2}_n(\varGamma)$
are surjective but not injective in general. The same conclusion applies for 
 $\gamma_n\circ\mathscr D^\dagger_\varGamma:\widetilde{\mathcal H}^{-1/2}(\varGamma)\longrightarrow \widetilde{\mathcal H}^{-3/2}_n(\varGamma)$  and 
 $\gamma_d^-\circ\mathscr D^\dagger_\varGamma:{\mathcal H}^{-1/2}(\varGamma)\longmapsto \mathcal H^{-1/2}_d(\varGamma)$.
 \end{theorem}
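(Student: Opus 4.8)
The plan is to obtain every surjectivity claim from the solvability results of Section~\ref{sec:repres} combined with the representation corollaries of Section~\ref{SEC:trans}, and every non-injectivity claim from the square-integrable harmonic functions with vanishing Cauchy data recalled at the opening of Section~\ref{sec:repres} (see also \cite{Dauge:1987tg}). The only point that I expect to require a genuinely new argument is the surjectivity of $\gamma_n^+\circ\mathscr S_\varGamma^\dagger$ onto the \emph{full} space $\mathcal H^{-3/2}_n(\varGamma)$ rather than onto its zero-mean subspace.

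\emph{Surjectivity, the routine cases.} For $\gamma_d\circ\mathscr S_\varGamma^\dagger$ onto $\mathcal H^{-1/2}_d(\varGamma)$ I would simply read off Theorem~\ref{nkmpq}: for $p\in\mathcal H^{-1/2}_d(\varGamma)$, Problem~1 with the boundary condition $\gamma_d u=p$ has a solution, and this solution is a single layer potential $\mathscr S_\varGamma^\dagger\bar q$ with $\bar q\in\mathcal H^{-3/2}(\varGamma)$, so $\gamma_d\mathscr S_\varGamma^\dagger\bar q=p$. Symmetrically, Theorem~\ref{kkqpolop} applied to Problem~2 with the condition $\gamma_n u=q$ yields at once the surjectivity of $\gamma_n\circ\mathscr D_\varGamma^\dagger:\widetilde{\mathcal H}^{-1/2}(\varGamma)\to\widetilde{\mathcal H}^{-3/2}_n(\varGamma)$. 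For $\gamma_n^-\circ\mathscr S_\varGamma^\dagger$ onto $\widetilde{\mathcal H}^{-3/2}_n(\varGamma)$, given $q$ there I would produce $v_q\in\mathscr H^0(\varOmega^-)$ with $\gamma_n^-v_q=q$ by Theorem~\ref{neuman_inter} and then write $v_q=\mathscr S_\varGamma^\dagger\bar q|_{\varOmega^-}$ by Corollary~\ref{nkmpq:cor}; since the one-sided interior trace depends only on the restriction to $\varOmega^-$, this gives $\gamma_n^-\mathscr S_\varGamma^\dagger\bar q=q$. The same two-step scheme, with Theorem~\ref{diri_interior} and Corollary~\ref{kkqpolop:cor} in place of Theorem~\ref{neuman_inter} and Corollary~\ref{nkmpq:cor}, covers $\gamma_d^-\circ\mathscr D_\varGamma^\dagger$ onto $\mathcal H^{-1/2}_d(\varGamma)$.

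\emph{Surjectivity, the delicate case.} The main obstacle is that $\gamma_n^+\circ\mathscr S_\varGamma^\dagger$ should reach all of $\mathcal H^{-3/2}_n(\varGamma)$, whereas Theorem~\ref{ghbppl} solves the exterior Neumann problem only under the compatibility condition $\llangle q,\mathbf 1_\varGamma\rrangle_{-\frac32,\frac32,n}=0$. I would remove this restriction with a logarithmic corrector: the classical single layer potential $\mathscr S_\varGamma\mathbf 1_\varGamma$ is harmonic in $\varOmega^+$, has the expansion $\mathscr S_\varGamma\mathbf 1_\varGamma(x)=-\tfrac{|\varGamma|}{2\pi}\ln|x|+\mathscr O(1/|x|)$ by \eqref{asymp_sl}, and its exterior Neumann trace $g_0:=\gamma_n^+\mathscr S_\varGamma\mathbf 1_\varGamma$ lies in $H^{-1/2}(\varGamma)\subset\mathcal H^{-3/2}_n(\varGamma)$ with $\llangle g_0,\mathbf 1_\varGamma\rrangle_{-\frac32,\frac32,n}=|\varGamma|\neq0$, since the classical jump relation gives $g_0+\gamma_n^-\mathscr S_\varGamma\mathbf 1_\varGamma=\mathbf 1_\varGamma$ while $\gamma_n^-\mathscr S_\varGamma\mathbf 1_\varGamma$ has zero mean on $\varGamma$ (harmonicity of $\mathscr S_\varGamma\mathbf 1_\varGamma$ in $\varOmega^-$). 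Then, given $r\in\mathcal H^{-3/2}_n(\varGamma)$, I would set $c=|\varGamma|^{-1}\llangle r,\mathbf 1_\varGamma\rrangle_{-\frac32,\frac32,n}$ so that $r-c\,g_0\in\widetilde{\mathcal H}^{-3/2}_n(\varGamma)$; Theorem~\ref{ghbppl} supplies $u_0=v+\mathscr D_\varGamma\mathsf p$ with $v\in\mathscr H^0(\varOmega^+)$ and $\mathsf p\in\mathscr A^{1/2}_D$, harmonic in $\varOmega^+$, with $\gamma_n^+u_0=r-c\,g_0$; the function $u:=u_0+c\,\mathscr S_\varGamma\mathbf 1_\varGamma$ then belongs to $L^2_{\ell oc}(\overline{\varOmega^+})$, is harmonic in $\varOmega^+$, behaves like $a\ln|x|+\mathscr O(1/|x|)$ at infinity and satisfies $\gamma_n^+u=r$; finally Corollary~\ref{nkmpq:cor} represents $u$ as $\mathscr S_\varGamma^\dagger\bar q|_{\varOmega^+}$ for some $\bar q\in\mathcal H^{-3/2}(\varGamma)$, whence $\gamma_n^+\mathscr S_\varGamma^\dagger\bar q=r$.

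\emph{Non-injectivity.} For suitably chosen polygons $\varGamma$ (for instance the domains of Fig.~\ref{Fig1}) there exist a nonzero $u\in\mathscr H^0(\varOmega^-)$ with $\gamma_d^-u=0$, a nonzero $u\in\mathscr H^0(\varOmega^-)$ with $\gamma_n^-u=0$, and a nonzero harmonic $u\in L^2_{\ell oc}(\overline{\varOmega^+})$ with $\gamma_n^+u=0$ which, after subtracting a constant, is $\mathscr O(1/|x|)$ at infinity. Representing each of the first two as $\mathscr S_\varGamma^\dagger q|_{\varOmega^-}$ and the last as $\mathscr S_\varGamma^\dagger q|_{\varOmega^+}$ by Corollary~\ref{nkmpq:cor}, necessarily with $q\neq0$ since $\mathscr S_\varGamma^\dagger q=0$ would force $u=0$, exhibits nonzero elements of $\ker(\gamma_d\circ\mathscr S_\varGamma^\dagger)$, $\ker(\gamma_n^-\circ\mathscr S_\varGamma^\dagger)$ and $\ker(\gamma_n^+\circ\mathscr S_\varGamma^\dagger)$ respectively. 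For the double layer I would represent a nonzero $u\in\mathscr H^0(\varOmega^-)$ with $\gamma_d^-u=0$ (resp.\ $\gamma_n^-u=0$) through Corollary~\ref{kkqpolop:cor} as $\mathscr D_\varGamma^\dagger p|_{\varOmega^-}$ with $p\in\mathcal H^{-1/2}(\varGamma)$, $p\neq0$; this already gives a nonzero element of $\ker(\gamma_d^-\circ\mathscr D_\varGamma^\dagger)$. In the Neumann case I would first replace $p$ by $\bar p=p-|\varGamma|^{-1}\llangle p,\mathbf 1_\varGamma\rrangle_{-\frac12,\frac12}\mathbf 1_\varGamma\in\widetilde{\mathcal H}^{-1/2}(\varGamma)$, which changes $\mathscr D_\varGamma^\dagger p|_{\varOmega^-}$ only by an additive constant — recall that $\mathscr D_\varGamma^\dagger\mathbf 1_\varGamma=\mathscr D_\varGamma\mathbf 1_\varGamma$ is constant on $\varOmega^-$ because $\mathbf 1_\varGamma$ spans $\ker\mathsf D_\varGamma$ — and hence does not affect the vanishing of $\gamma_n^-$, while $\bar p\neq0$ because $u$ is unbounded near a corner and therefore not constant. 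This produces the desired nonzero element of $\ker(\gamma_n\circ\mathscr D_\varGamma^\dagger)$ and completes the plan.
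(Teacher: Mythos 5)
Your argument is correct and follows the same overall strategy as the paper: all surjectivity claims are reduced to the solvability results of Section~\ref{sec:repres} together with the single/double layer representations of Section~\ref{SEC:trans}, and non-injectivity comes from the singular harmonic functions with vanishing Cauchy data on non-convex polygons. Two localized differences are worth recording. First, for the only genuinely delicate point --- reaching all of $\mathcal H^{-3/2}_n(\varGamma)$ with $\gamma_n^+\circ\mathscr S_\varGamma^\dagger$ despite the zero-mean constraint in Theorem~\ref{ghbppl} --- the paper corrects the mean with the equilibrium density, i.e.\ with $\mathscr S_\varGamma\mathsf e_\varGamma$, exploiting $\gamma_n^-\mathscr S_\varGamma\mathsf e_\varGamma=0$ so that $\gamma_n^+\mathscr S_\varGamma\mathsf e_\varGamma=\mathsf e_\varGamma$ has unit mass; you instead use $\mathscr S_\varGamma\mathbf 1_\varGamma$ and compute $\llangle \gamma_n^+\mathscr S_\varGamma\mathbf 1_\varGamma,\mathbf 1_\varGamma\rrangle_{-\frac32,\frac32,n}=|\varGamma|$ from the jump relation plus Green's identity in $\varOmega^-$. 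Both correctors do the job; yours has the small advantage that the nonvanishing of the mean is verified explicitly rather than inherited from the normalization of $\mathsf e_\varGamma$, while the paper's choice is the one reused in its final theorem (where the vanishing interior Neumann trace of $\mathscr S_\varGamma\mathsf e_\varGamma$ actually matters). Second, you invoke Corollaries~\ref{nkmpq:cor} and~\ref{kkqpolop:cor} directly where the paper re-runs the interior/exterior gluing and appeals to Theorems~\ref{nkmpq} and~\ref{kkqpolop}; this is only a repackaging, since those corollaries encapsulate exactly that gluing. Finally, you spell out the non-injectivity (including the passage from $p$ to $\bar p=p-|\varGamma|^{-1}\llangle p,\mathbf 1_\varGamma\rrangle_{-\frac12,\frac12}\mathbf 1_\varGamma$ in the kernel of $\gamma_n\circ\mathscr D_\varGamma^\dagger$, legitimate because $\mathscr D_\varGamma\mathbf 1_\varGamma$ is locally constant and the singular function is unbounded, hence non-constant), whereas the paper leaves this part implicit; that is a welcome addition rather than a deviation.
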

 \begin{proof}
 The subjectivity of $\gamma_d\circ\mathscr S^\dagger_\varGamma:\mathcal H^{-3/2}(\varGamma)\longrightarrow \mathcal H^{-1/2}_d(\varGamma)$ 
 results from Theorem~\ref{nkmpq}.
 \par
  Let now $q$ be given in $\widetilde{\mathcal H}^{-3/2}_n(\varGamma)$. According to Theorem~\ref{neuman_inter}, 
 there exists a function $v^-$ in $\mathscr H(\varOmega^-)$ whose normal trace $\gamma_n^-v^-$ is equal to $q$. 
 We apply next Theorem~\ref{diri_exterior} which asserts the existence of a function $v^+$ (the sum of a function in $\mathscr H^0(\varOmega^+)$ 
 and a single layer potential) such that $\gamma_d^+v^+=\gamma_d^-v^-$. The function $v$ defined by 
 $v|_{\varOmega^+}=v^+$ and $v|_{\varOmega^-}=v^-$. This function $v$ is a solution to the transmission problem~1 and 
 therefore, according to Theorem~\ref{nkmpq}, it is a single layer potential, what proves that 
 $\gamma_n^-\circ\mathscr S_\varGamma^\dagger$ is surjective. 
 \par
 Let us verify that the range of $\gamma_n^+\circ\mathscr S_\varGamma^\dagger$ is $\mathcal H^{-3/2}(\varGamma)$. Any $q$ 
 in $\mathcal H^{-3/2}(\varGamma)$  can be decomposed as $\bar q+\alpha\mathbf 1_\varGamma$
 with $\bar q\in\widetilde{\mathcal H}^{-3/2}(\varGamma)$ and 
 $\alpha=|\varGamma|^{-1}\langle q,\mathbf 1_\varGamma\rangle_{-\frac12,\frac12} $.
 According to Theorem~\ref{ghbppl}, there exists $v_{\bar q}\in \mathscr H^0(\varOmega^+)$ and $\mathsf p_{\bar q}$ in $\mathscr A_D^{1/2}$ 
 such that $\gamma_n^+(v_{\bar q}+\mathscr D^\dagger_\varGamma \mathsf p_{\bar q})=\bar q$. Denote by $p$ the external 
 one-sided Dirichlet trace $\gamma_d^+(v_{\bar q}+\mathscr D^\dagger_\varGamma \mathsf p_{\bar q})$ which belongs to 
 $\mathcal H^{-1/2}_d(\varGamma)$ and apply Theorem~\ref{diri_interior}:
 There exists $u^-\in \mathscr H^0(\varOmega^-)$ such that $\gamma_d^-u^-=p$. Define now a function $v$ by setting $v|_{\varOmega^+}=\big(v_{\bar q}+\mathscr D^\dagger_\varGamma \mathsf p_{\bar q}\big)|_{\varOmega^+}$   and $v|_{\varOmega^-}=u^-$. 
 Since $[\gamma_d v]_\varGamma=0$, we can apply Theorem~\ref{nkmpq} and conclude that $v$ is a single layer potential. 
 Denote by $\mathsf e_\varGamma$ the equilibrium density of $\varGamma$  i.e. the unique element in $H^{-1/2}(\varGamma)$ such that 
 $\gamma_d\circ\mathscr S_\varGamma \mathsf e_\varGamma$ is a constat function on $\varGamma$ normalized in such a way 
 that $\langle \mathsf e_\varGamma,\mathbf 1_\varGamma\rangle_{-\frac12,\frac12}=1$ (see \cite[page 263]{McLean:2000aa}). 
 The function $v+(\alpha/c_\varGamma)\mathscr S_\varGamma\mathsf e_\varGamma$ ($c_\varGamma$ is the constant value taken by $\mathscr S_\varGamma\mathsf e_\varGamma$ on $\varGamma$) is a preimage of $q$ by $\gamma_n^+$. 
 \par
The remaining two results are proved in the same way, so the proof is omitted.
 \end{proof}
 The last operator however deserves a special treatment:
 \begin{theorem}
 Let $p$ be given in $\mathcal H^{-1/2}_d(\varGamma)$. There exists a constant $c$ and $\bar p\in \widetilde{\mathcal H}^{-1/2}(\varGamma)$ such that $\gamma_d^+\circ\mathscr D^\dagger_\varGamma\bar p=p+c$.
 \end{theorem}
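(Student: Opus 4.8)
The plan is to produce the desired double layer potential as (the extension of) the exterior solution of the Dirichlet problem with datum $p$, after removing its possible logarithmic growth at infinity, which is exactly what costs the additive constant $c$.

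First I would apply Theorem~\ref{diri_exterior}: there exist $v^+\in\mathscr H^0(\varOmega^+)$ and $\mathsf q\in\mathscr A^{-1/2}_S$ such that, putting $w^+:=\bigl(v^++\mathscr S_\varGamma\mathsf q\bigr)\bigr|_{\varOmega^+}$, one has $\gamma_d^+w^+=p$. Since $v^+\in L^2(\varOmega^+)$ is harmonic outside a compact set, its expansion \eqref{exp_dfolp} cannot contain any polynomial term (constants are not square integrable on the unbounded set $\varOmega^+$) nor a $\ln|\cdot|$ term, so $v^+(x)=\mathscr O(1/|x|)$; combining this with the asymptotic expansion \eqref{asymp_sl} of $\mathscr S_\varGamma\mathsf q$ gives
\[
w^+(x)=a\,\ln|x|+\mathscr O(1/|x|)\quad\text{as}\quad|x|\longrightarrow+\infty,\qquad a:=-\tfrac{1}{2\pi}\langle\mathsf q,\mathbf 1_\varGamma\rangle_{-\frac12,\frac12}.
\]
Next I would use the equilibrium density $\mathsf e_\varGamma\in H^{-1/2}(\varGamma)$ (normalized by $\langle\mathsf e_\varGamma,\mathbf 1_\varGamma\rangle_{-\frac12,\frac12}=1$, with $\mathsf S_\varGamma\mathsf e_\varGamma=c_\varGamma\mathbf 1_\varGamma$ for a constant $c_\varGamma$, see \cite[page 263]{McLean:2000aa}): by \eqref{asymp_sl} again, $\mathscr S_\varGamma\mathsf e_\varGamma(x)=-\tfrac{1}{2\pi}\ln|x|+\mathscr O(1/|x|)$. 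Setting $\tilde w:=w^++2\pi a\,\mathscr S_\varGamma\mathsf e_\varGamma|_{\varOmega^+}$, the logarithmic terms cancel, so $\tilde w$ is harmonic in $\varOmega^+$, lies in $L^2_{\ell oc}(\overline{\varOmega^+})$ and satisfies $\tilde w(x)=\mathscr O(1/|x|)$. Corollary~\ref{kkqpolop:cor} then furnishes $\bar p\in\widetilde{\mathcal H}^{-1/2}(\varGamma)$ with $\mathscr D^\dagger_\varGamma\bar p|_{\varOmega^+}=\tilde w$. Finally, since $\gamma_d^+$ depends only on the restriction to $\varOmega^+$, is linear, and $\gamma_d\circ\mathscr S_\varGamma\mathsf e_\varGamma=c_\varGamma\mathbf 1_\varGamma$, I get $\gamma_d^+\circ\mathscr D^\dagger_\varGamma\bar p=\gamma_d^+\tilde w=\gamma_d^+w^++2\pi a\,\gamma_d\mathscr S_\varGamma\mathsf e_\varGamma=p+2\pi a c_\varGamma\,\mathbf 1_\varGamma$, which is the claim with $c:=2\pi a c_\varGamma$.

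The main obstacle, and the reason a nonzero $c$ is in general unavoidable, is the behavior at infinity: the exterior Dirichlet solution of Theorem~\ref{diri_exterior} generically carries a nonzero net flux through $\varGamma$, hence a $\ln|x|$ tail, whereas every double layer potential decays like $\mathscr O(1/|x|)$. The natural object that simultaneously has a $\ln|x|$ tail and a \emph{constant} Dirichlet trace on $\varGamma$ is precisely $\mathscr S_\varGamma\mathsf e_\varGamma$, so absorbing the log with it shifts the boundary datum by a constant but by nothing more. Everything else is bookkeeping with the asymptotic expansions \eqref{expand_asymp} and with the consistency of the various trace operators established earlier in the paper; note that the interior Dirichlet solvability (Theorem~\ref{diri_interior}) is not needed here, only the exterior one together with Corollary~\ref{kkqpolop:cor}.
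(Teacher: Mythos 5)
Your argument is correct, but it takes a genuinely different route from the paper's. The paper proceeds through the single layer potential: it invokes Theorem~\ref{nkmpq} to write $\gamma_d\circ\mathscr S^\dagger_\varGamma\bar q=p$, adds $\alpha\mathsf e_\varGamma$ with $\alpha$ chosen so that the \emph{exterior Neumann trace} $\tilde q+\alpha\mathsf e_\varGamma$ has zero mean (this is what shifts the Dirichlet datum to $p+\alpha c_\varGamma$), glues the exterior restriction of $\mathscr S^\dagger_\varGamma(\bar q+\alpha\mathsf e_\varGamma)$ to the interior Neumann solution of Theorem~\ref{neuman_inter}, and identifies the glued function as a double layer potential via Problem~2 and Theorem~\ref{kkqpolop}. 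You instead start from the exterior Dirichlet solvability (Theorem~\ref{diri_exterior}), use $\mathsf e_\varGamma$ to cancel the \emph{logarithmic tail at infinity} rather than the mean of a Neumann trace, and then call Corollary~\ref{kkqpolop:cor} as a black box to get $\bar p\in\widetilde{\mathcal H}^{-1/2}(\varGamma)$ with $\mathscr D^\dagger_\varGamma\bar p|_{\varOmega^+}=\tilde w$. The two corrections absorb the same obstruction (the net logarithmic charge, which is what both the zero-mean condition of Problem~2 and the decay $\mathscr O(1/|x|)$ exclude), and in both cases the constant is a multiple of $c_\varGamma$. Your route is shorter because it bypasses the single-layer detour and the explicit interior gluing; the price is that Corollary~\ref{kkqpolop:cor}, whose proof is omitted in the paper, contains essentially that gluing, so the overall content is comparable -- but since the corollary is stated earlier, there is no circularity. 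One point you state without proof, namely that $\gamma_d^+\circ\mathscr D^\dagger_\varGamma\bar p=\gamma_d^+\tilde w$ because the one-sided trace only sees the restriction to $\varOmega^+$ (i.e.\ the trace computed from the representative $\mathscr D^\dagger_\varGamma\bar p$ agrees with the one computed from the explicit representative $v^++\mathscr S_\varGamma(\mathsf q+2\pi a\,\mathsf e_\varGamma)$, by consistency of Definition~\ref{def_trace_ext} with the classical traces), is exactly the locality the paper itself uses when it reads off the exterior Dirichlet trace of its glued function $v$ from its single-layer expression on $\varOmega^+$; so your proof is at the same level of rigor as the paper's.
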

 \begin{proof}
 Let $p$ be given in $\mathcal H^{-1/2}_d(\varGamma)$. According to Theorem~\ref{nkmpq}, there exists $\bar q\in\mathcal H^{-3/2}(\varGamma)$ such 
 that $\gamma_d\circ \mathscr S_\varGamma^\dagger \bar q=p$. Next, denote by $\tilde q$ 
 the external one-sided Neumann trace $\gamma_n^+\circ \mathscr S^\dagger_\varGamma\bar q$ 
and let $\alpha\in\mathbb R$  be such that $\llangle \tilde q+\alpha \mathsf e_\varGamma,
 \mathbf 1_\varGamma\rrangle_{-\frac32,\frac32,n}=0$. It holds $\gamma_d\circ \mathscr S_\varGamma^\dagger (\bar q+\alpha\mathsf e_\varGamma)=p+\alpha
 c_\varGamma$ (recall that $c_\varGamma$ is the constant value taken by the function $\gamma_d\circ\mathscr S_\varGamma\mathsf e_\varGamma$) and
 $\gamma_n^+\circ \mathscr S_\varGamma^\dagger (\bar q+\alpha\mathsf e_\varGamma)=\tilde q+\alpha \mathsf e_\varGamma$ since 
 $\gamma_n^-\circ \mathscr S_\varGamma \mathsf e_\varGamma=0$. We define now a function $v$ be setting 
 $v|_{\varOmega^+}=\big(\mathscr S_\varGamma^\dagger (\bar q+\alpha\mathsf e_\varGamma)\big)|_{\varOmega^+}$ and $v|_{\varOmega^-}$ 
 is the solution, provided by Theorem~\ref{neuman_inter}, to the interior Neumann problem 
 with boundary data $\tilde q+\alpha\mathsf e_\varGamma$. The function $v$ is a solution to Problem~2 
 and therefore, according to Theorem~\ref{kkqpolop}, it is a double layer potential.
  \end{proof}
\appendix
\section{List of the main function spaces and operators}
\label{list_of_content}
\subsubsection*{Weigthed Sobolev spaces}
The space
\vspace{-5mm}

\begin{align*}
W^2(\mathbb R^2)&=\Big\{u\in \mathscr D'(\mathbb R^2)\,:\, \frac{u}{\rho^2\,{\rm lg}}\in L^2(\mathbb R^2),\, \frac{1}{\rho\,{\rm lg}}\frac{\partial u}{\partial x_j}\in L^2(\mathbb R^2)~\text{ and }~\frac{\partial^2 u}{\partial x_j\partial x_k}\in L^2(\mathbb R^2),\,\forall\,j,k=1,2\Big\},\\[-1mm]
\intertext{and its subspaces}
{W_d^2}(\mathbb R^2)&=\big\{u\in W^2(\mathbb R^2)\,:\, \gamma_du=0\big\},\\
{W_n^2}(\mathbb R^2)&=\big\{u\in W^2(\mathbb R^2)\,:\, \gamma_nu=0\big\},\\
\mathscr B_n(\mathbb R^2)&=\big({W^2_d}(\mathbb R^2)\cap {W^2_n}(\mathbb R^2)\big)^\perp\cap {W^2_n}(\mathbb R^2),\\
\mathscr B_d(\mathbb R^2)&=\big({W^2_d}(\mathbb R^2)\cap {W^2_n}(\mathbb R^2)\big)^\perp\cap {W^2_d}(\mathbb R^2),\\
\mathscr A&=\{(x_1,x_2)\longmapsto a+b_1x_1+b_2x_2\,:\, a,b_1,b_2\in\mathbb R\}\quad\text{(the affine functions)}
\end{align*}
are provided with either one of the scalar products:

\vspace{-5mm}
\begin{align*}
(\cdot,\cdot)_S&=(\Delta\, \cdot,\Delta\, \cdot)_{L^2(\mathbb R^2)}+
\sum_{j=1}^3\langle  \mathsf q_j,\gamma_d \,\cdot\rangle_{-\frac12,\frac12}\langle \mathsf q_j,\gamma_d \,\cdot\rangle_{-\frac12,\frac12}\\[-2mm]
(\cdot,\cdot)_D&=(\Delta\,\cdot,\Delta \,\cdot)_{L^2(\mathbb R^2)}+\sum_{j=1}^2( \mathsf p_j,\gamma_n \,\cdot)_{L^2(\varGamma)}( \mathsf p_j,\gamma_n \,\cdot)_{L^2(\varGamma)}
+\mu(\gamma_d \,\cdot)\mu(\gamma_d \,\cdot).
\end{align*}
\subsubsection*{Boundary spaces}
\begin{alignat*}{3}
{\mathcal H}^{3/2}(\varGamma)&=\gamma_d W^2(\mathbb R^2)\qquad&\text{and}&\qquad {\mathcal H}^{1/2}(\varGamma)=\gamma_n W^2(\mathbb R^2).\\
{\mathcal H}^{3/2}_n(\varGamma)&=\gamma_dW_n^2(\mathbb R^2)\qquad&\text{and}&\qquad {\mathcal H}^{1/2}_d(\varGamma)=\gamma_nW^2_d(\mathbb R^2),\\
\mathscr A_S^{1/2}&=\gamma_d \mathscr A\qquad&\text{and}&\qquad\mathscr A_S^{-1/2}=\mathsf S_\varGamma^{-1}\mathscr A_S^{1/2},\\
\mathscr A_D^{-1/2}&=\gamma_n \mathscr A\qquad&\text{and}&\qquad\mathscr A_D^{1/2}=\mathsf D_\varGamma^{-1}\mathscr A_D^{-1/2}.
\end{alignat*}
$$
\begin{array}{|l|l|l|}
\hline
\text{Space and dual space} & \text{Duality bracket}&\text{Scalar product}\\
\hline
H^{1/2}(\varGamma),\quad H^{-1/2}(\varGamma) & \langle\cdot,\cdot\rangle_{-\frac12,\frac12} & (\cdot,\cdot)_{\frac12}=
\langle\mathsf S_\varGamma^{-1}\cdot,\cdot\rangle_{-\frac12,\frac12}\\
\mathcal H^{1/2}(\varGamma),\quad \mathcal H^{-1/2}(\varGamma) & \llangle\cdot,\cdot\rrangle_{-\frac12,\frac12} & (\cdot,\cdot)^A_{\frac12}=
(\mathsf L_n^A\,\cdot,\mathsf L_n^A\,\cdot)_A,\quad A\in\{S,D\}\\
\mathcal H^{3/2}(\varGamma),\quad \mathcal H^{-3/2}(\varGamma) & \llangle\cdot,\cdot\rrangle_{-\frac32,\frac32} & (\cdot,\cdot)^A_{\frac32}=
(\mathsf L_d^A\,\cdot,\mathsf L_d^A\,\cdot)_A,\quad A\in\{S,D\}\\
\mathcal H^{1/2}_d(\varGamma),\quad \mathcal H^{-1/2}_d(\varGamma) & \llangle\cdot,\cdot\rrangle_{-\frac12,\frac12,d} & (\cdot,\cdot)_{\frac12,d}
=(\mathcal L_d\,\cdot,\mathcal L_d\,\cdot)_S\\
\mathcal H^{3/2}_n(\varGamma),\quad \mathcal H^{-3/2}_n(\varGamma) & \llangle\cdot,\cdot\rrangle_{-\frac32,\frac32,n} & (\cdot,\cdot)_{\frac32,n}
=(\mathcal L_n\,\cdot,\mathcal L_n\,\cdot)_D\\
\hline
\end{array}
$$
\begin{align*}
\widetilde{\mathcal H}^{-1/2}(\varGamma)&=
\big\{p\in \mathcal H^{-1/2}(\varGamma)\,:\, \llangle p,\mathbf 1_\varGamma\rrangle_{-\frac12,\frac12}=0\big\},\\
\widetilde{\mathcal H}_n^{-3/2}(\varGamma)&=\big\{q\in {\mathcal H}_n^{-3/2}(\varGamma)\,:\, \llangle q,\mathbf 1_\varGamma 
 \rrangle_{-\frac32,\frac32,n}=0\}.
 \end{align*}
%
\subsubsection*{Some isometric operators}
$A=S$ or $A=D$ in the definitions below:
\begin{align*}
\mathsf L_d^A:\big({\mathcal H}^{3/2}(\varGamma),\|\cdot\|_{\frac32}^{A}\big)&\longrightarrow \big({W^2_d}(\mathbb R^2)^\perp,\|\cdot\|_A\big)\\
p&\longmapsto\inf\big\{\|u\|_A\,:\,u\in W^2(\mathbb R^2),\,\gamma_du=p\big\},\\[2mm]
\mathsf L_n^A:\big({\mathcal H}^{1/2}(\varGamma),\|\cdot\|_{\frac12}^{A}\big)&\longrightarrow \big({W^2_n}(\mathbb R^2)^\perp,\|\cdot\|_A\big)
\\
q&\longmapsto\inf\big\{\|u\|_A\,:\,u\in W^2(\mathbb R^2),\,\gamma_nu=q\big\},\\[2mm]
{\mathcal L}_n:\big({\mathcal H}^{3/2}_n(\varGamma),\|\cdot\|_{\frac32,n}\big)&\longrightarrow 
\big(\mathscr B_n(\mathbb R^2),\|\cdot\|_D\big)\\
p&\longmapsto\inf\big\{\|u\|_S\,:\,u\in {W^2_n}(\mathbb R^2),\,\gamma_du=p\big\},\\[2mm]
\mathcal L_d:\big({\mathcal H}^{1/2}_d(\varGamma),\|\cdot\|_{\frac12,d}\big)&\longrightarrow 
\big(\mathscr B_d(\mathbb R^2),\|\cdot\|_S\big)\\
q&\longmapsto\inf\big\{\|u\|_S\,:\,u\in {W^2_d}(\mathbb R^2),\,\gamma_nu=q\big\}.
\end{align*}
%
\subsubsection*{Continuous and dense inclusions}
\begin{gather*}
{\mathcal H}^{3/2}_n(\varGamma)\subset {\mathcal H}^{3/2}(\varGamma)\subset H^{1/2}(\varGamma)\subset L^2(\varGamma)
\subset H^{-1/2}(\varGamma)\subset {\mathcal H}^{-3/2}(\varGamma)\subset {\mathcal H}^{-3/2}_n(\varGamma),\\
{\mathcal H}^{1/2}_d(\varGamma)\subset {\mathcal H}^{1/2}(\varGamma)\subset L^2(\varGamma)
\subset {\mathcal H}^{-1/2}(\varGamma)\subset {\mathcal H}^{-1/2}_d(\varGamma).
\end{gather*}
%
\subsubsection*{Further isometric operators}
$$
\fct{\mathsf T_{\! d}:{\mathcal H}^{3/2}(\varGamma)}{\mathcal H^{-3/2}(\varGamma)}
{p}{(p,\cdot)_{\frac32}^S}
\qquad\text{and}\qquad
\fct{\mathsf T_{\! n}:{\mathcal H}^{1/2}(\varGamma)}{\mathcal H^{-1/2}(\varGamma)}
{q}{(q,\cdot)_{\frac12}^D,}
$$
$$
\fct{\mathcal T_{\! d}:{\mathcal H}^{1/2}_d(\varGamma)}{\mathcal H^{-1/2}_d(\varGamma)}
{q}{(q,\cdot)_{\frac12,d},}
\qquad\text{and}\qquad
\fct{\mathcal T_{\! n}:{\mathcal H}^{3/2}_n(\varGamma)}{\mathcal H^{-3/2}_n(\varGamma)}
{p}{(p,\cdot)_{\frac32,n}.}
$$
We use $L^2(\varGamma)$ as pivot space, so none of these operators reduce to the identity.



\end{document}